\newcommand\bt{\beta}
\newcommand\gm{\gamma}
\newcommand\g{\gamma}
\newcommand\kp{\kappa}
\newcommand\ka{\kappa}
\newcommand\lmbd{\lambda}
\newcommand\la{\lambda}
\newcommand\lph{\alpha}
\newcommand\ps{\psi}
\newcommand\rh{\rho}
\newcommand\vph{\varphi}
\newcommand\x{\xi}
\newcommand\wi{i}
\newcommand\wj{j}
\newcommand\wk{k}
\newcommand\wu{u}
\newcommand\wv{v}
\newcommand\ww{w}
\newcommand\wx{x}
\newcommand\wy{y}
\newcommand\wz{z}
\newcommand\uov{\wu;1;\wv}
\newcommand\woz{\ww;1;\wz}
\newcommand\xox{\wx;1;\wx}
\newcommand\xoy{\wx;1;\wy}
\newcommand\xoz{\wx;1;\wz}
\newcommand\yox{\wy;1;\wx}
\newcommand\yoy{\wy;1;\wy}
\newcommand\yoz{\wy;1;\wz}
\newcommand\ygy{\wy;g_1;\wy}
\newcommand\gxlab{G_\wx/\L_{a;b}}
\newcommand\gxla{G_\wx/\L_a}
\newcommand\gxlb{G_\wx/\L_b}
\newcommand\gyra{G_\wy/\R_a}
\newcommand\gyrb{G_\wy/\R_b}
\newcommand\gzrab{G_\wz/\R_{a;b}}
\newcommand\labla{\L_{a;b}/\L_a}
\newcommand\ralbra{(\R_a;\L_b)/\R_a}
\newcommand\cm[1]{Cm(#1\/)}
\newcommand\gcm[1]{{\mathfrak {Cm}}(#1\/)}
\newcommand\h[1]{H_{#1}}
\renewcommand\k[1]{K_{#1}}
\newcommand\m[1]{M_{#1}}
\newcommand\hs[2]{H_{#1,#2}}
\newcommand\ks[2]{K_{#1,#2}}
\newcommand\e[1]{e_{#1}}
\newcommand\vphi[1]{\vph_{#1}}
\newcommand\pair[2]{(#1,#2)}
\newcommand\kai[1]{\ka_{#1}}
\newcommand\G[1]{G_{#1}}
\newcommand\R{R}
\newcommand\mo{^{-1}}
\newcommand\seq{\subseteq}
\newcommand\ssm{^{\scriptscriptstyle\smile}}
\newcommand\scir{\raise2pt\hbox{$\,\scriptscriptstyle\circ\,$}}
\newcommand\suml{\sum\limits}
\newcommand\tsbigcreg{\textstyle \bigcup}
\newcommand\tbigcup{\textstyle \bigcup}
\newcommand\sumreg{\textstyle\sum}
\newcommand\tsum{\textstyle\sum}
\newcommand\prodreg{\textstyle\prod}
\newcommand\tprod{\textstyle\prod}
\newcommand\id[1]{id_{#1}}
\newcommand\co{\textnormal{,}\ }     
\newcommand\opar{\textnormal{(}}     
\newcommand\cpar{\textnormal{)}}     
\mathchardef\sim="2218
\newcommand\smbcomma{\,,}
\newcommand\f[1]{{\mathfrak {#1}}}
\newcommand\inv{{}^{\scriptstyle -1}}
\newcommand\comp{{}^-}
\newcommand\conv{{}^{\scriptstyle\smallsmile}}
\newcommand\rp{\!\mid\!}
\newcommand\diff{\sim}
\newcommand\mult{@!@!@!\cdot@!@!@!}
\newcommand\rmult{\!@!@!@!\cdot@!@!@!}
\newcommand\ssum[1]{{\textstyle\sum {#1}}}
\newcommand\ra{relation algebra}
\newcommand\sub[1]{\bssS@,@,@,(#1)}
\newcommand\qeddef{\qed}
\newcommand\qedcon{\qed}
\newcommand{\num}[2]{\par\medskip\smallskip\noindent(#1)\hfill\parbox[t]{4.50in}{#2}}
\newcommand{\enum}{\par\medskip\smallskip\noindent}
\renewcommand\r[2]{R_{{#1},{#2}}}
\newcommand\craset[2]{#1\myshortspace[\mathcal{#2}\,]}
\newcommand\cra[2]{\mathfrak{#1}\myshortspace[\mathcal{#2}\,]}
\newcommand{\mc}[1]{\mathcal{#1}}
\newcommand{\refL}[1]{Lemma~\ref{L:#1}}
\newcommand{\refD}[1]{Definition~\ref{D:#1}}
\newcommand{\refC}[1]{Corollary~\ref{C:#1}}
\newcommand{\refT}[1]{Theorem~\ref{T:#1}}
\newcommand{\refS}[1]{Section~\ref{S:#1}}
\newcommand{\myspace}{{\hspace*{.5pt}}}
\newcommand{\myshortspace}{{\hspace*{.1pt}}}
\newcommand{\comma}{\textnormal{,}\ }
\newcommand\rcomma{\textnormal{\myshortspace,}}
\newcommand\rrcomma{\,\textnormal{,}}
\newcommand\po{\textnormal{.}\ }     
\newcommand\ident{1\mynegspace\textnormal{\rq}}
\renewcommand\k[1]{K_{#1}}
\renewcommand\a{\alpha}
\renewcommand\b{\beta}
\def\mpicture #1 by #2 (#3 scaled #4){
      \vbox to #2{
      \hrule width #1 height 0pt depth 0pt
      \vfill
      \special{picture #3 scaled #4}}}
\def\centerpicture #1 by #2 (#3 scaled #4){
   \dimen0=#1 \dimen1=#2                                                
    \divide\dimen0 by 1000 \multiply\dimen0 by #4        
    \divide\dimen1 by 1000 \multiply\dimen1 by #4
         \noindent                                                                      
         \vbox{                                                                          
            \hspace*{\fill}                                                         
            \mpicture \dimen0 by \dimen1 (#3 scaled #4)       
            \hspace*{\fill}                                                         
            \vfill}}                                                                      
\renewcommand\comp{-}
\newcommand\vp{\varphi}
\renewcommand\ident{1\mynegspace\textnormal{\rq}}
\renewcommand\L{L}
\newcommand\refEq[1]{(\ref{Eq:#1})}
\newcommand\refCo[1]{Convention~\ref{Co:#1}}
\newcommand\hvphs[1]{\hat{\vph}_{#1}}
\newcommand\vphs[1]{\vph_{#1}}
\newcommand\LS[1]{\L_{#1}}
\newcommand\RS[1]{\R_{#1}}
\newcommand\MS[1]{\M_{#1}}
\newcommand\axy{{\al_{\wx\wy}}}
\newcommand\axx{{\al_{\wx\wx}}}
\newcommand\axz{{\al_{\wx\wz}}}
\newcommand\ayz{{\al_{\wy\wz}}}
\newcommand\ayx{{\al_{\wy\wx}}}
\newcommand\ai{{\al_{\wi}}}
\newcommand\cl{{c}}
\newcommand\per{\textnormal{\myspace.\ }}
\newcommand\vth{\vartheta}
\renewcommand\comp{-}
\renewcommand\ident{1\mynegspace\textnormal{\rq}}
\renewcommand\L{L}
\renewcommand\R{R}
\renewcommand\P{P}
\newcommand\M{M}
\newcommand\bl{b}
\newcommand\al{a}
\newcommand\lax{\L_{a,\x}}
\newcommand\lbe{\L_{b,\eta}}
\newcommand\lal{\L_{a}}
\newcommand\lbl{\L_{b}}
\newcommand\rbx{\R_{b,\x}}
\newcommand\lbx{\L_{b,\x}}
\newcommand\rax{\R_{a,\x}}
\newcommand\rbe{\R_{b,\eta}}
\newcommand\rar{\R_{a}}
\newcommand\rbr{\R_{b}}
\newcommand\aab{\al;\al\ssm;b}
\newcommand\bba{\bl;\bl\ssm;\al}
\newcommand\abp{{\al;\bl}}
\newcommand\wh{h}
\newcommand\PS[1]{\P_{#1}}
\newcommand\azy{\al_{\wz\wy}}
\newcommand\aj{\al_{\wj}}
\newcommand\ak{\al_{\wk}}
\newcommand\ls[2]{\L_{#1,#2}}
\newcommand\rs[2]{\R_{#1,#2}}
\newcommand\as[2]{a_{#1,#2}}
\renewcommand\ident{1\textnormal{\rq}}
\newcommand\xx{{xx}}
\newcommand\xy{{xy}}
\newcommand\yx{{yx}}
\newcommand\zy{{zy}}
\newcommand\yz{{yz}}
\newcommand\xz{{xz}}
\newcommand\zx{{zx}}
\newcommand\uv{{uv}}
\newcommand\wwz{{wz}}
\newcommand\wwy{{wy}}
\newcommand\dl{d}
\newcommand\famxi{^{(\xi)}}
\newcommand\cc[3]{C_{#1#2#3}}
\newcommand\trip[3]{(#1,#2,#3)}
\newcommand\ez[1]{\mc E_{#1}}
\newcommand\hh{\h\xy\scir\h\xz}
\newcommand\sk[1]{\textsf{#1}}
\renewcommand\wi{\xy}
\renewcommand\wj{\yz}
\renewcommand\wk{\xz}
\newcommand\zoy{z;1;y}
\newcommand\zox{z;1;x}
\newcommand\et{\eta}
\newcommand\alp[1]{p_{#1}}
\newcommand\pairalp[2]{\pair {p_{#1}}{p_{#2}}}
\newcommand\cs[2]{{#1}_{#2}}
\newcommand\comment[1]{\,}
\newcommand\varnot{\varnothing}
\newcommand\relsum{\setbox0\hbox{$+$}\ensuremath
    \mathop{\rlap{\lower.31ex\hbox to\wd0{\hfil\kern-.02em \upshape,\hfil}}+}}
\newtheorem{theorem}{Theorem}[section]
\newtheorem{cor}[theorem]{Corollary}
\newtheorem{lm}[theorem]{Lemma}
\theoremstyle{definition}
\newtheorem{df}[theorem]{Definition}
\newtheorem{con}[theorem]{Convention}
\theoremstyle{remark}
\numberwithin{equation}{section}
\newcommand{\lmref}[1]{Lemma~\ref{#1}}
\newcommand{\conref}[1]{Convention~\ref{#1}}
\newif\ifShowLabels
\newdimen\theight
\newcommand\TeXref[1]{%
     \leavevmode\vadjust{\setbox0=\hbox{{\tt
               \quad\quad #1}}%
                    \theight=\ht0
                         \advance\theight by \dp0
                              \advance\theight by \lineskip
                                   \kern -\theight \vbox to
                                             \theight{\rightline{\rlap{\box0}}%
                                                   \vss}%
                                                         }}%
\newcommand{\labelp}[1]{\label{#1}%
    \ifShowLabels \TeXref{{#1}} \fi}
    \renewcommand\L{H}
\renewcommand\R{K}
\begin{document}

\title[A representation theorem for measurable relation algebras]{A representation theorem for measurable relation algebras}
\author{Steven Givant and Hajnal Andr\'eka}%
\address{Steven Givant\\Mills College\\5000 MacArthur Boulevard, Oakland, CA 94613}\email{givant@mills.edu}
\address{Hajnal Andr\'eka\\Alfr\'ed R\'enyi Institute of Mathematics\\Hungarian Academy of Sciences\\Re\'altanoda utca 13-15\\ Budapest\\ 1053 Hungary}\email{andreka.hajnal@renyi.mta.hu}
\thanks{This research was
partially supported  by Mills College and  the Hungarian National
Foundation for Scientific Research, Grants T30314 and T35192.}
\keywords{relation algebra, group, coset, measurable atom, Boolean
algebra}\subjclass[2010]{Primary: 03G15; Secondary: 20A15}
\comment{\begin{abstract} Givant\,\cite{giv1}  generalized the
notion of a pair-dense relation algebra from Maddux\,\cite{ma91} by
defining the notion of a \textit{measurable relation algebra}, that
is to say, a relation algebra in which the identity element is a sum
of atoms that can be measured in the sense that the ``size" of each
such atom can be defined in an intuitive and reasonable way (within
the framework of the first-order theory of relation algebras). He
constructed a large class of examples of such algebras using systems
of groups and coordinated systems of isomorphisms between quotients
of the groups. Unfortunately, this class of \textit{group relation
algebras} is not large enough to prove a generalization of Maddux's
representation theorem for pair-dense relation algebras, namely that
every atomic, measurable relation algebra is isomorphic to a set
relation algebra. In fact, such a generalization proves to be false:
there exist finite, measurable relation algebras that are not
representable as set relation algebras at all.

 In Andr\'eka-Givant\,\cite{andgiv1},
the class of examples of measurable relation algebras was
considerably extended by adding one more ingredient to the mix:
systems of cosets that are used to ``shift" the operation of
relative multiplication.  It was shown there that, under certain
additional hypotheses on the system of cosets, each such
\textit{coset relation algebra} with a shifted operation of
relative multiplication is an example of a complete and  atomic, measurable relation
algebra.
The class of
coset relation algebras  contains  examples of measurable
relation algebras that are not representable as set relation
algebras.

We prove in this article that the class of coset relation
algebras is adequate to the task of  describing all atomic, measurable
relation algebras in the sense that every atomic, measurable
relation algebra is essentially isomorphic to a coset relation
algebra.
\end{abstract}}

\begin{abstract} A relation algebra is called measurable when its identity is the sum
of measurable atoms, where an atom is called measurable if its
square is the sum of functional elements.

In this paper we show that atomic measurable relation algebras have
rather strong structural properties: they are constructed from
systems of groups, coordinated systems of isomorphisms between
quotients of the groups, and systems of cosets that are used to
"shift" the operation of relative multiplication. An atomic and
complete measurable relation algebra is completely representable if
and only if there is a stronger coordination between these
isomorphisms induced by a scaffold (the shifting cosets are not
needed in this case). We also prove that a measurable relation
algebra in which the associated groups are all finite is atomic.
\end{abstract}

\maketitle

\section{Introduction}\labelp{S:sec1}

The well-known pair of papers \cite{jt51} and \cite{jt52}, by J\'onsson and Tarski, were motivated by Tarski's efforts to prove that every model of his axiomatization of the calculus of relation algebras is representable, that is to say, every (abstract) relation algebra is isomorphic
to a set relation algebra  consisting of a universe of (binary) relations on some base set, under the standard set-theoretic operations on such relations. In the second of these papers, the
 authors proved several representation theorems for limited classes of relation algebras.  In particular, they proved that an atomic relation algebra in which the atoms satisfy a specific ``singleton  inequality"
is isomorphic to a set relation algebra.  The singleton inequality is an inequality that is satisfied by a non-empty relation $R$ and its converse in a set relation algebra of all binary relations on a base set   if and only if $R$ is a singleton relation in the sense that it has the form $R=\{\pair pq\}$ for some elements $p$ and $q$ in the base set.

Maddux\,\cite{ma91} considerably strengthened this representation theorem.  He eliminated the assumptions that the given relation algebra be atomic and that every atom satisfy the singleton inequality.  Instead, he assumed only that the identity element be the sum of a set of non-zero \emph{elements} satisfying the singleton inequality.  Actually, he  proved an even stronger version of this theorem   by showing that every relation algebra in which the identity element is the sum of a set of non-zero elements satisfying the singleton inequality or a corresponding ``doubleton inequality" is isomorphic to a set relation algebra.  He called such relation algebras \emph{pair dense}.

The purpose of this paper is to prove a substantial generalization of Maddux's theorem.  The task  is complicated by the fact that for no natural number $n\ge 3$ is there  an equation or inequality that characterizes relations consisting of at most $n$ pairs.  This obstacle may be overcome by
allowing oneself to use formulas from first-order logic instead of just equations and inequalities.  In \cite{giv1}, an atom $x\le\ident$ is defined to be \emph{measurable} if the square $x;1;x$ is the sum of a set of \emph{functions}, that is to say, a set of abstract  elements $f$ satisfying the functional inequality $f\ssm;f\le \ident$. These
 functions turn out to be abstract versions of permutations, and the set of these permutations that are non-zero and below the square $x;1;x$  form a group.  The size of the group
 gives a measure of the size of $x$.  A relation algebra is said to be \emph{measurable} if the identity element is the sum of measurable atoms, and \emph{finitely measurable} if all of the atoms in this sum have finite measure.

 In \cite{giv1}, a large class of examples of measurable set relation algebras is constructed using systems  of groups  and corresponding systems
 of isomorphisms between quotients of the groups. The resulting algebras are
called (\emph{generalized}) \emph{group relation algebras}, and every such   algebra is an example of a complete and atomic measurable relation algebra.  The class of these examples, however,
does not comprehend all possible examples of complete and atomic  measurable relation algebras.  In \cite{andgiv1}, the class of examples is expanded by using systems of cosets to ``shift", or change the value,
of the operation of relational
composition in group relation algebras.  A characterization is given in \cite{andgiv1} of when such ``shifted" group relation algebras are relation algebra, and therefore   examples of
complete and atomic measurable relation algebras. They are called \emph{coset relation algebras} An example is given in \cite{andgiv1} of a coset relation algebra---and therefore of an atomic, measurable relation algebra---that is not isomorphic to any set relation algebra, so not all atomic measurable relation algebras are representable in the classical sense of the word.

The purpose of the present paper is to prove that the class of coset relation algebras is adequate for the task of ``representing in a wider sense" all atomic, measurable relation algebras.
In the main theorem of the paper, we show that every atomic, measurable relation algebra $\f B$ is\emph{ essentially isomorphic} to a coset
relation algebra $\f C$ in the sense that the completion (the minimal complete extension) of $\f B$ is isomorphic to $\f C$.  (The passage to the completion does not change the structure of $\f B$, it only fills in any missing infinite sums that are needed in order to obtain isomorphism with the complete relation algebra $\f C$).  In particular, every measurable relation algebra that is finite is isomorphic to a coset relation algebra.
 If the algebra $\f B$ is not finite, but is finitely measurable, then the assumption that $\f B$ be atomic may be dropped.
We also prove that a measurable relation algebra $\f B$ is essentially isomorphic to a group relation algebra if and only if $\f B$ has a ``scaffold" of atoms,
and this occurs if and only if $\f B$ is completely representable.

Except for basic facts about groups, this article is intended to be self-contained. The definition of a relation algebra, and the relatively few basic relation
algebraic laws that are needed to follow the proof in the paper are presented in \refS{sec2}.  Readers who wish to learn more about the subject  are recommended to look
at Hirsch-Hodkinson\,\cite{hh02}, Maddux\,\cite{ma06}, or Givant\,\cite{giv18}.

\section{Relation Algebras}\labelp{S:sec2}
 In the next few sections, most of the calculations will involve the arithmetic of relation algebras.  This section provides
 a  review the essential
 results that will be needed.

 A relation algebra is an algebra   of the form
\[\f A=( A\smbcomma +\smbcomma
-\smbcomma ;\smbcomma\,\conv\smbcomma\ident)\comma  \]  where $\,+\,$ and $\,;\,$ are binary operations called \textit{addition} and \textit{relative multiplication},
while $\,-\,$ and $\,\conv\,$ are unary operations called \textit{complement} and \textit{converse}, and $\ident$ is a distinguished constant called the
\textit{identity element}, such that the
following axioms are satisfied for all elements $r$, $s$, and $t$
in $\f A$\per   \begin{enumerate} \item [(R1)] $r+s=s+r$\per \item
[(R2)] $r+(s+t)=(r+s)+t$\per \item [(R3)] $-(-r+s) +
-(-r+-s)=s$\per
\item[(R4)]
$r;(s;t)=(r;s);t$\per
\item[(R5)]
$r;\ident=r$\per \item[(R6)] $r\conv\conv=r$\per
\item[(R7)]
$(r;s)\conv=s\conv;r\conv$\per \item[(R8)] $(r+s);t=r;t +s;t$\per
\item[(R9)]
$(r+s)\conv=r\conv +s\conv$\per \item[(R10)] $r\conv;-(r;s)+
-s=-s$.\labelp{Eq:10} \end{enumerate}  The axioms are commonly referred to by the following names: (R1)
is the \textit{commutative law for addition}, (R2) is the
\textit{associative law for addition}, (R3) is
\textit{Huntington's law}, (R4) is the \textit{associative law for
relative multiplication},  (R5) is the (right-hand)
\textit{identity law for relative multiplication}, (R6) is the
\textit{first involution law},  (R7) is the \textit{second
involution law}, (R8) is the (right-hand) \textit{distributive law
for relative multiplication}, (R9) is the \textit{distributive law
for converse}, and (R10) is \textit{Tarski's law}.  Under the
assumption of the remaining axioms, (R10) is equivalent to the
implication   \begin{equation*}\tag{R11}\labelp{Eq:11}
  \text{if}\quad(r;s)\cdot t=0,\quad
\text{then}\quad(r\conv;t)\cdot s=0, \end{equation*}
   which we shall call the \textit{cycle law}. It is this form of
(R10) that we  shall always use. Axioms (R1)--(R3) secure that $( A\smbcomma +\smbcomma
-)$ is a Boolean algebra. It is called the \emph{Boolean part} of $\f A$.   We shall justify a consequence of these three axioms with   the phrase \emph{by Boolean algebra}.  The Boolean operation of multiplication $\,\cdot\,$ is defined in the  usual way in terms of addition and complement.
An element $x$ in $\f A$ is called a \textit{subidentity element}
  if it
is below the identity element, in symbols $x\le \ident$.

Whenever parentheses indicating the order of
performing operations are  lacking, it is
understood that unary operations
have priority over
binary operations, and multiplications   have priority over addition.

\begin{lm}\labelp{L:laws} The operation of converse is an automorphism of the Boolean part
of a relation algebra\per In particular\comma the following laws hold\per
\begin{enumerate}
\item[(i)] $1\ssm = 1,\quad 0\ssm =
0,\quad
            \ident\,\ssm =\ident$\per
\item[(ii)] $(a \cdot b)\ssm = a\ssm \cdot
b\ssm$\po
\item[(iii)] $(\comp a)\ssm = \comp(a\ssm)$\po
\item[(iv)]$a \leq b$ if and only if $a\ssm \leq
b\ssm$\po
\item[(v)] $a=0$ if and only if $a\ssm=0$\po
\item[(vi)] $a$ is an atom if and only if $a\ssm$ is an atom\po
\item[(vii)]   $x\ssm=x$ whenever $x$ is a subidentity element\po
\end{enumerate}
\end{lm}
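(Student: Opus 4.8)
The plan is to derive almost everything from one structural observation: by (R6) the operation $\conv$ is a bijection equal to its own inverse, and by (R9) it preserves $+$, so it is an \emph{additive involution} of the Boolean part. The first step is to prove (iv). Since $a\le b$ means $a+b=b$, applying $\conv$ and using (R9) gives $a\conv+b\conv=b\conv$, that is $a\conv\le b\conv$; the reverse implication follows by applying this to $a\conv,b\conv$ and invoking (R6). Thus $\conv$ is an order-isomorphism of the Boolean part onto itself.

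Once (iv) is in hand, the remaining Boolean items are purely order-theoretic and need no further axioms. By Boolean algebra, an order-isomorphism of a Boolean algebra fixes the least and greatest elements, preserves meets, and carries the complement of $a$ to the complement of $a\conv$; this yields $0\conv=0$ and $1\conv=1$ (two of the three equations in (i)), together with (ii) and (iii). Item (v) is the special case of (iv) with $b=0$ (using $0\conv=0$), and (vi) follows because atoms are exactly the minimal nonzero elements, a property preserved by any order-isomorphism that fixes $0$. At this point $\conv$ is established as a Boolean automorphism, and only the assertions about $\ident$ remain.

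The claims $\ident\conv=\ident$ and (vii) are the only ones that invoke the non-Boolean axioms. For $\ident\conv=\ident$ I would apply (R7) to the right-identity law (R5): from $a;\ident=a$ one gets $a\conv=(a;\ident)\conv=\ident\conv;a\conv$, and since every element is of the form $a\conv$ by (R6), this shows $\ident\conv$ is a \emph{left} identity for $;$; as $\ident$ is a right identity, $\ident\conv=\ident\conv;\ident=\ident$. For (vii) the substantive point---and the step I expect to be the main obstacle---is showing $x\le x\conv$ for $x\le\ident$, since this is where the cycle law (R11) is genuinely needed. From (R11) one obtains the standard inequality $a\le a;a\conv;a$; taking $a=x$, using $x\le\ident$ with monotonicity of $;$ (which follows from (R8) and its converse-dual) and (R5) gives $x\le x;x\conv;x\le x;x\conv;\ident=x;x\conv$, and then $x;x\conv\le\ident;x\conv=x\conv$ by the left-identity law. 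Hence $x\le x\conv$; applying the order-isomorphism $\conv$ and (R6) gives $x\conv\le x$, and antisymmetry yields $x=x\conv$.
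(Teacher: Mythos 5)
The paper states this lemma without proof, presenting it as part of its review of standard relation-algebraic background, so there is no in-text argument to compare yours against; I can only assess your proposal on its own terms. It is correct and efficiently organized: establishing (iv) first from (R9) and (R6), and then reading off the purely Boolean items --- (i) for $0$ and $1$, together with (ii), (iii), (v), and (vi) --- from the fact that an order-automorphism of a Boolean algebra preserves every order-definable notion (least and greatest elements, infima, complements, minimal nonzero elements) is a clean way to package the first part of the lemma, and your derivations of $\ident\ssm=\ident$ from (R5) and (R7) and of (vii) from $x\le x;x\ssm;x$ are the standard ones. The one step you assert rather than derive is the inequality $a\le a;a\ssm;a$. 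It does follow from the cycle law, but not in one move: the usual route is first to prove the Dedekind inequality $(r;s)\cdot t\le r;(s\cdot(r\ssm;t))$ from (R11) together with monotony and complete distributivity, and then to apply it to $a=(a;\ident)\cdot a$ to get $a\le a;(\ident\cdot(a\ssm;a))\le a;a\ssm;a$. If you intend the proof to be self-contained from (R1)--(R10), that derivation should be included; with it, the argument is complete.
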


\begin{lm}\labelp{L:laws.1}
\begin{enumerate}
 \item[(i)] $a;0 = 0;a=0$\po
\item[(ii)] $1;1 = 1$\po
\item[(iii)] If $a \leq b$ and $c\leq d$\rcomma\ then
$a;c \leq b;d$\po
\item[(iv)] $(a;b)\cdot c = 0$\quad if and only if\quad
$(a\ssm;c)\cdot b = 0$,\quad if and only if\quad
$(c;b\ssm)\cdot a = 0$\po
\item[(v)] If $a$\comma $b$\comma and $c$ are atoms\comma then
      \[c\le a;b\quad\text{if and only if}\quad
b\le a\ssm;c,\quad\text{if and only if}\quad
a\le c;b\ssm\per\]
\item[(vi)] $a \leq a;1$\po
\item[(vii)] $(a;1) \cdot (1;b) = a;1;b$\po
\item[(viii)] If $x$ and $y$ are subidentity atoms\co then \[1;x;1=1;y;1\qquad\text{if and only if}\qquad x;1;y\neq 0\per\]
\end{enumerate} \end{lm}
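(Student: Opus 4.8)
The plan is to derive all eight parts from the axioms, the cycle law (R11), and \lmref{L:laws}, after first putting in place two preliminary facts: the left-hand identity law $\ident;a=a$ and the left-hand distributive law $a;(b+c)=a;b+a;c$. Both follow by applying converse: for the first, $(\ident;a)\ssm=a\ssm;\ident=a\ssm$ by (R7), (R5) and \lmref{L:laws}(i), so $\ident;a=a$ by (R6); for the second, take converses of both sides, use (R9) to split the sum, apply (R8), and convert back. With left distributivity in hand, part (iii) is immediate: from $a\le b$ and $c\le d$ we get $b;d=(a+b);d=a;d+b;d$ and $a;d=a;(c+d)=a;c+a;d$, so $a;c\le a;d\le b;d$. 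Part (ii) then follows from $1=1;\ident\le 1;1\le 1$ using (R5), and part (vi) from $a=a;\ident\le a;1$ using $\ident\le 1$. For part (i), I would apply the cycle law (R11) with the roles chosen so that the hypothesis reads $(a\ssm;t)\cdot 0=0$, which holds trivially; the conclusion gives $(a;0)\cdot t=0$ for every $t$, and taking $t=1$ yields $a;0=0$, after which $0;a=0$ follows by converse and \lmref{L:laws}(v).

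The parts (iv) and (v) are the cycle law in sharpened forms. For (iv), the first equivalence is exactly (R11) together with its reverse, obtained by substituting $a\ssm$ for $a$ and using (R6); the second equivalence I would get by taking converses throughout, using \lmref{L:laws}(ii),(v) and (R7) to rewrite $((a;b)\cdot c)\ssm$ as $(b\ssm;a\ssm)\cdot c\ssm$, applying the first equivalence, and converting back. Part (v) is the atomic reformulation: since $c$ is an atom, $c\le x$ holds exactly when $c\cdot x\ne 0$, so $c\le a;b$ iff $(a;b)\cdot c\ne 0$, and the two equivalences of (iv) translate directly into the two equivalences of (v), using that $b$ and $a$ are atoms to read off the right-hand sides.

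The first genuinely nontrivial step is part (vii), and I expect it, together with (viii), to be the main obstacle. The inclusion $a;1;b\le(a;1)\cdot(1;b)$ is easy: $a;1;b\le a;1;1=a;1$ and $a;1;b\le 1;1;b=1;b$ by (ii) and monotonicity. For the reverse inclusion I would first establish the modular law $(a;b)\cdot c\le a;(b\cdot(a\ssm;c))$: by left distributivity $a;b=a;(b\cdot(a\ssm;c))+a;(b\cdot\comp(a\ssm;c))$, and meeting with $c$ annihilates the second summand because, by (iv), $[a;(b\cdot\comp(a\ssm;c))]\cdot c=0$ is equivalent to $(a\ssm;c)\cdot(b\cdot\comp(a\ssm;c))=0$, which holds since $(a\ssm;c)\cdot\comp(a\ssm;c)=0$. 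Applying this law with $b$ replaced by $1$ and $c$ replaced by $1;b$ gives $(a;1)\cdot(1;b)\le a;(a\ssm;1;b)$, and since $a\ssm;1;b\le 1;b$ this is at most $a;(1;b)=a;1;b$, completing the equality.

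Finally, part (viii) concerns subidentity atoms, for which $x\ssm=x$ and $y\ssm=y$ by \lmref{L:laws}(vii). The key computation I would carry out is that $x;1;y\ne 0$ is equivalent to $y\le 1;x;1$: writing $1;x;1=(1;x);1$ and applying the cycle law (iv), $(1;x;1)\cdot y\ne 0$ iff $(1;x)\ssm;y\ne 0$, and $(1;x)\ssm=x;1$, so this says precisely $x;1;y\ne 0$; since $y$ is an atom, $(1;x;1)\cdot y\ne 0$ is the same as $y\le 1;x;1$. Granting this, for the forward direction I note $x=\ident;x;\ident\le 1;x;1$ (using the left identity law), so $1;x;1=1;y;1$ forces $x\le 1;y;1$, whence $x;1;y\ne 0$ after passing to converses with $x\ssm=x$ and $y\ssm=y$. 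For the converse direction, $x;1;y\ne 0$ gives $y\le 1;x;1$, hence $1;y;1\le 1;(1;x;1);1=1;x;1$ using $1;1=1$ and associativity; the reverse inclusion $1;x;1\le 1;y;1$ follows by the symmetric argument, since $x;1;y\ne 0$ is equivalent to $y;1;x\ne 0$ by converse. The delicate points throughout are keeping the cycle-law substitutions straight and exploiting $x\ssm=x$ and $y\ssm=y$ to symmetrize the two inclusions.
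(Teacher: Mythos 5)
Your proof is correct. The paper itself states Lemma \ref{L:laws.1} without proof, treating it (like Lemma \ref{L:laws}) as standard background from the arithmetic of relation algebras and referring the reader to the textbooks cited in the introduction, so there is no in-paper argument to compare against; what you have written is the standard textbook derivation, and every step checks out. In particular: the bootstrapping of the left identity law and left distributivity via converse is the usual trick; your application of (R11) with $r=a\ssm$, $s=t$, $t=0$ for part (i) is valid; the two equivalences in (iv) do follow from (R11) by the substitution $a\mapsto a\ssm$ and by conjugating with converse as you describe; and the Dedekind-type modular law $(a;b)\cdot c\le a;(b\cdot(a\ssm;c))$ that you isolate for part (vii) is exactly the right auxiliary fact (it is the semimodular law one finds in Maddux or Givant). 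The argument for (viii) via the equivalence of $x;1;y\neq 0$ with $y\le 1;x;1$, symmetrized using $x\ssm=x$ and $y\ssm=y$ from Lemma \ref{L:laws}(vii), is also sound; the only hypothesis you use implicitly is that atoms are non-zero, which is part of their definition. In short, the proposal supplies a complete and correct proof of a lemma the paper deliberately leaves unproved.
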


 The laws in \refL{laws}(iv) and \refL{laws.1}(iii) are known as the
 \emph{monotony laws} for converse and relative multiplication respectively.  In referring to one of these laws
to justify a step in some proof, we shall usually just say \emph{by monotony}. The equivalences in \refL{laws.1}(iv),(v) are
usually called the \emph{cycle laws} and the \emph{cycle laws for atoms}, respectively---as opposed to the cycle law, which is (R11) and which is just one of the implications in (iv).  Again,
in using these equivalences to justify some step in a proof, we shall usually just say \emph{by the cycle laws.}

 The operations of relative multiplication and
converse are
\textit{completely distributive} over  addition
in the  sense that   for any two sets  of elements $X$ and $Y$, if the sums $\tsum X$ and
$\tsum Y$ exist,   then the  sums of the sets
\begin{gather*}X;Y=\{a;b:a\in X\text{ and } b\in Y\},\qquad Y\ssm=\{b\ssm:b\in Y\}\\
\intertext{exist,
and}(\tsum X);(\tsum Y)=\tsum X;Y,\qquad \tsum Y\ssm=(\tsum Y)\ssm\per\end{gather*}
   In referring to one of these laws to justify a step
in some proof, we shall usually just say  \emph{by complete distributivity}.

The \textit{domain} and \emph{range} of an element $a$   are
defined to be the elements $(a;1)\cdot \ident$ and $(1;a)\cdot \ident$ respectively.  Notice that they are subidentity elements.
Every law about domains has a corresponding dual law about ranges.  Therefore, only the law
concerning domains will usually be given.

\begin{lm} \labelp{L:domain}
Let  $\wx$\comma $\wy $\comma and $\wz$ be subidentity atoms\per Every
non-zero element
$a\le \xoy$ has $x$ as its domain  and $y$ as its range\comma and consequently\ the following laws hold\per
\begin{enumerate}
\item[\opar i\cpar] $\wx= (a;a\ssm)\cdot \ident=(a;1)\cdot\ident$
and $y=(a\ssm;a)\cdot \ident=(1;a)\cdot \ident$\per
\item[\opar ii\cpar] $\wx;1= a;1$ and $1;y=1;a$\per
\item[\opar iii\cpar] $\wx;a=a$ and $a;\wy=a$\po
\item[(iv)] lf $a\leq \xoy$ and $ b\leq
\yoz$\per
 then $a;b=0$ if and only if $a=0$ or  $b=0$\per
In particular\co if $a\neq 0$\co then $a;a\ssm\neq 0$ and
$a\ssm;a\neq 0$\po
\end{enumerate}
\end{lm}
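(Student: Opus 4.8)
The plan is to reduce the whole lemma to a single assertion---that the domain $(a;1)\cdot\ident$ equals $x$ and the range $(1;a)\cdot\ident$ equals $y$---and then to read off (i)--(iv) from it. The lever is the atomicity of $x$ and $y$: I would show that for a non-zero $a\le x;1;y$ the domain is a \emph{non-zero} subidentity element lying \emph{below} $x$, and hence equals $x$ because $x$ is an atom, treating the range dually. Two elementary facts about subidentity elements are isolated first. If $u,v\le\ident$ and $u\cdot v=0$, then $u;v=0$: indeed $u;v\le u;\ident=u\le\ident$, so $u;v=(u;v)\cdot\ident$, and the cycle law \lmref{L:laws.1}(iv) turns this into $u\cdot v=0$. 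Consequently $(u;1)\cdot\ident=u$ for every $u\le\ident$: the inclusion $u\le(u;1)\cdot\ident$ is \lmref{L:laws.1}(vi), while for the reverse one applies the first fact to $u$ and $\ident\cdot\comp u$ (again via the cycle law).

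With these in hand the main claim splits into a monotonicity step and a non-vanishing step. From $a\le x;1;y$ and $y\le\ident$ I get $a;1\le x;1$ by monotony together with $1;1=1$ (\lmref{L:laws.1}(ii),(iii)), whence $(a;1)\cdot\ident\le(x;1)\cdot\ident=x$ by the second fact above. For non-vanishing, the cycle law gives that $(a;1)\cdot\ident=0$ iff $a\ssm=0$ iff $a=0$ (using \lmref{L:laws}(v)), so the domain of a non-zero $a$ is non-zero. As $x$ is an atom, $(a;1)\cdot\ident=x$, and the range equality $(1;a)\cdot\ident=y$ is dual.

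Now the consequences. For (i): $(a;a\ssm)\cdot\ident\le(a;1)\cdot\ident=x$ by monotony, and it is non-zero whenever $a$ is (the cycle law reduces $(a;a\ssm)\cdot\ident=0$ to $a\ssm\cdot a\ssm=0$), so it equals $x$ by atomicity; the range half is dual. For (ii): $a;1\le x;1$ was just shown, and $x=(a;1)\cdot\ident\le a;1$ gives $x;1\le a;1$. For (iv): suppose $a\le x;1;y$ and $b\le y;1;z$ are both non-zero; taking converses in the equality $1;y=1;a$ from (ii) (with \lmref{L:laws}(i),(vii)) yields $a\ssm;1=y;1$, and since $b\le y;1;z\le y;1$ we get $(a\ssm;1)\cdot b=b$. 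The cycle law rewrites $a;b=0$ as $(a\ssm;1)\cdot b=0$, i.e. $b=0$, a contradiction; the converse implication is \lmref{L:laws.1}(i). The ``in particular'' clause follows by applying this with $b=a\ssm\le y;1;x$, and symmetrically for $a\ssm;a$.

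The one genuinely calculational step, and the place I expect the most friction, is (iii): the identity $x;a=a$, that is, that the domain acts as a left identity on $a$. I would prove the general fact $((a;1)\cdot\ident);a=a$ by splitting the identity as $\ident=x+u$ with $x=(a;1)\cdot\ident$ and $u=\ident\cdot\comp x$, so that $a=\ident;a=x;a+u;a$ by the left-hand identity law $\ident;a=a$ (a standard consequence of the axioms, obtained by taking converses in (R5) using (R6), (R7), and \lmref{L:laws}(i)) together with distributivity. It then remains to see $u;a=0$, which holds because $u;a\le u;1$ and $u;a\le\ident;a=a\le x;1$, while the domains $u;1$ and $x;1$ of the disjoint subidentities $u$ and $x$ are orthogonal: $(u;1)\cdot(x;1)=0$ reduces by the cycle law to $u;x=0$, the first elementary fact above. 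Specializing $x$ to the atom then gives $x;a=a$, with $a;y=a$ dual. This left-identity calculation, resting on the orthogonality of complementary subidentity domains, is the crux; everything else is monotony, the cycle law, and atomicity.
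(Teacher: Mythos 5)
Your proof is correct. The paper itself states this lemma without proof, as part of the background review in \S\ref{S:sec2}, so there is no argument of the authors' to compare against; but your derivation is sound and is essentially the standard textbook one. The two preliminary facts about subidentity elements ($u\cdot v=0$ implies $u;v=0$, and $(u;1)\cdot\ident=u$) are exactly the right levers, the cycle law is applied correctly in each of the reductions (non-vanishing of the domain, part (i), part (iv), and the orthogonality $(u;1)\cdot(x;1)=0$ in part (iii)), and the decomposition $\ident=x+u$ with the left-hand identity law correctly yields $x;a=a$. The only point worth flagging is cosmetic: in part (iii) the cycle law reduces $(u;1)\cdot(x;1)=0$ to $u;(x;1)=0$ rather than literally to $u;x=0$, but since $u;x=0$ gives $u;x;1=0;1=0$ by \lmref{L:laws.1}(i), the step goes through as you intend.
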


 An element $f$  is called a \textit{function}, or a \textit{functional element},  if
$f\ssm;f\le\ident$.   If the converse of a function  $f$ is also a function,
then $f$ is said to be \textit{bijective}.
The element $f$ is a \textit{permutation}, or a  \textit{permutational element}, with domain $x$ if it is bijective and
if its domain and range are $x$.

\begin{lm}\labelp{L:funclaws}
Let $f,g$ be functions\comma  and
$a,b$  arbitrary elements\po
\begin{enumerate}
\item[(i)] $f;(a\cdot  b)=(f;a)\cdot  (f;b)$ and $(a\cdot  b);f\ssm = (a;f\ssm)\cdot
(b;f\ssm)$\po
\item[(ii)] $f;g$ is a function\po
\item[(iii)] If $a\le f$\co then $a$ is also a function\po
\item[(iv)] If $f$ and $g$ are bijective\co then so are $f\ssm$ and $f;g$\po
\item[(v)] If $f$ and $g$ are  permutations with domain $x$\co then so are
$f\ssm$ and
$f;g$\po\  Consequently\co the permutations with domain $x$ form a
group under the operations of relative multiplication and converse\comma
with $x$  as the identity element of
the group\po
\item[(vi)] A function  is an atom if and only if its domain is an
atom\po
\end{enumerate}
\end{lm}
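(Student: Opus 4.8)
The plan is to deduce everything from the functional inequality $f\ssm;f\le\ident$ together with the involution laws of \lmref{L:laws} and the monotony laws, the one genuinely arithmetic ingredient being a ``half-modular law'' that I would extract from the cycle law before anything else. Specifically, I would first prove that
\[(a;b)\cdot c\le a;(b\cdot(a\ssm;c))\]
holds for all elements $a,b,c$. To see this, split $b$ as $b_1+b_2$ with $b_1=b\cdot(a\ssm;c)$ and $b_2=b\cdot -(a\ssm;c)$; then $a;b=a;b_1+a;b_2$ by distributivity, so $(a;b)\cdot c=((a;b_1)\cdot c)+((a;b_2)\cdot c)$. Since $b_2\cdot(a\ssm;c)=0$, the cycle law (\lmref{L:laws.1}(iv)) gives $(a;b_2)\cdot c=0$, and the remaining term is below $a;b_1=a;(b\cdot(a\ssm;c))$, as required. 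This single inequality is the main obstacle: once it is in hand, the rest is bookkeeping.

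Part~(i) would then follow at once. The inequality $f;(a\cdot b)\le(f;a)\cdot(f;b)$ is monotony. For the reverse, I would apply the half-modular law with $f$, $a$, and $f;b$ in the roles of $a$, $b$, $c$ to get $(f;a)\cdot(f;b)\le f;(a\cdot(f\ssm;(f;b)))$; now $f\ssm;(f;b)=(f\ssm;f);b\le\ident;b=b$ by associativity (R4), the hypothesis $f\ssm;f\le\ident$, monotony, and the left-hand identity law $\ident;b=b$ (which follows from (R5), (R6), (R7), and $\ident\ssm=\ident$ of \lmref{L:laws}(i)). Hence $(f;a)\cdot(f;b)\le f;(a\cdot b)$, and the second equation of~(i) is obtained from the first by applying converse and using \lmref{L:laws}. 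Parts~(ii)--(iv) need no more than the involution and monotony laws: for~(ii), $(f;g)\ssm;(f;g)=g\ssm;(f\ssm;f);g\le g\ssm;\ident;g=g\ssm;g\le\ident$; part~(iii) is monotony, since $a\le f$ gives $a\ssm;a\le f\ssm;f\le\ident$; and for~(iv) one checks $(f\ssm)\ssm;f\ssm=f;f\ssm\le\ident$, while $(f;g)\ssm=g\ssm;f\ssm$ is a product of two functions, hence a function by~(ii).

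For part~(v) I would use the half-modular law again to establish the general domain identity $(a;1)\cdot\ident=(a;a\ssm)\cdot\ident$ and the absorption laws $(\text{dom}\,a);a=a$ and $a;(\text{range}\,a)=a$, the latter by applying the half-modular inequality with $a$, $\ident$, $a$ in the three roles and invoking (R5). Closure of the permutations with domain $x$ under converse and under relative multiplication is given by part~(iv), together with the remark that converse interchanges the defining expressions for domain and range, both of which equal $x$ (using that subidentities are self-converse, \lmref{L:laws}(vii)). For the group structure, the identity $(a;1)\cdot\ident=(a;a\ssm)\cdot\ident$ shows that for a bijective $f$ with domain and range $x$ one has $f;f\ssm=(f;f\ssm)\cdot\ident=(f;1)\cdot\ident=x$ and, dually, $f\ssm;f=x$; together with the absorption laws $x;f=f=f;x$ and associativity (R4), this exhibits $x$ as a two-sided identity and $f\ssm$ as the inverse of $f$, so these permutations form a group.

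Finally, part~(vi) rests on~(i). If the domain $x$ of a function $f$ is an atom and $0\neq a\le f$, I would set $b=f\cdot -a$; should $b\neq0$, then $a,b$ are disjoint functions below $f$, so the second equation of~(i) gives $(a;f\ssm)\cdot(b;f\ssm)=(a\cdot b);f\ssm=0$, whence $\text{dom}\,a\cdot\text{dom}\,b\le(a;f\ssm)\cdot(b;f\ssm)=0$. But a nonzero element has nonzero domain (by the absorption law), so each of $\text{dom}\,a$ and $\text{dom}\,b$ is a nonzero subidentity below the atom $x$, forcing both to equal $x$ and contradicting $x\neq0$; hence $a=f$ and $f$ is an atom. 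Conversely, if $f$ is an atom and its domain $x$ split as $y+z$ with $y,z$ disjoint and nonzero, then $y;f$ and $z;f$ would be nonzero (by the cycle law, since $y,z\le x=\text{dom}\,f$ give $y,z\le f;1$), disjoint (disjoint subidentities yield disjoint restrictions, again by the half-modular law), and would sum to $x;f=f$, contradicting the atomicity of $f$; thus the domain of $f$ is an atom.
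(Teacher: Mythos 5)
The paper states \lmref{L:funclaws} without proof: it belongs to the background review in \S2, where the reader is referred to Hirsch--Hodkinson, Maddux, and Givant for the standard arithmetic of relation algebras, so there is no internal proof to compare yours against. Your argument is correct, and it follows what is essentially the canonical textbook route: the ``half-modular law'' $(a;b)\cdot c\le a;(b\cdot (a\ssm;c))$ that you extract from the cycle law is the classical Dedekind (semimodular) law, and deriving the distributive law for functions in (i) from it, then (ii)--(iv) from the involution and monotony laws, and (v)--(vi) from (i) and the domain/range identities, is exactly how these facts are established in the cited references. Two spots are compressed enough that a full write-up should add a line each, though neither is a real gap. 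First, in (v) you never explicitly verify that $f;g$ has domain and range $x$; this follows from the identities you do prove, via $(f;g);(f;g)\ssm=f;(g;g\ssm);f\ssm=f;x;f\ssm=f;f\ssm=x$ and its dual. Second, in (vi) the disjointness of $y;f$ and $z;f$ for disjoint nonzero subidentities $y,z\le x$ deserves its one-line justification: since $y\ssm;(z;f)=y;z;f\le (y\cdot z);f=0$ (using $y;z\le y\cdot z$ for subidentities), the cycle law of \refL{laws.1}(iv) gives $(y;f)\cdot(z;f)=0$ directly, which is slightly cleaner than invoking the half-modular law again. With those two sentences added, the proof is complete and correct.
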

Part (i) of the preceding lemma  says that if the left-hand argument of a relative product is a function, or the right-hand argument is the converse of a function, then  the operation of
 relative multiplication distributes over multiplication.  We shall refer to this law as the \emph{distributive law for functions}.
 It plays an extremely important role in this work.

A \textit{square}   is an element of the form $x;1;x$
for some subidentity element $x$, and a \textit{rectangle} is an element of the form
$x;1;y$ for some subidentity elements $x$ and $y$. The elements $x$ and $y$ are sometimes referred
to as the  \textit{sides} of the rectangle

\begin{lm}\labelp{L:square} Let $x,y,z,w$ be  subidentity
elements\po\
\begin{itemize}
\item [(i)] $(x;1;y)\cdot \ident = x\cdot y$\po
\item[(ii)] $(x;1;y)\cdot  (w;1;z) =(x\cdot  w);1;(y\cdot  z)$\po
\item[(iii)] $(x;1;y)\ssm = y;1;x$\po
\item[(iv)] $(x;1;y);(y;1;z)\leq x;1;z$\rcomma\  and equality holds whenever
$x$\comma $y$\comma and $z$ are atoms such that $x;1;y$ and $y;1;z$ are both non-zero\per
\end{itemize}
\end{lm}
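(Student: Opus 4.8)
The plan is to prove the four parts in the order (iii), (i), (ii), (iv), relying throughout on the fact that a subidentity element equals its own converse (\refL{laws}(vii)) and is in particular a function. Part (iii) is immediate from the second involution law (R7) applied twice, which gives $(x;1;y)\ssm=y\ssm;1\ssm;x\ssm$; since $1\ssm=1$ by \refL{laws}(i) and $x\ssm=x$, $y\ssm=y$ by \refL{laws}(vii), this is $y;1;x$. For part (i), the inclusion $x\cdot y\le(x;1;y)\cdot\ident$ is easy: $x\cdot y\le\ident$, and $x\cdot y\le x\le x;1$ together with $x\cdot y\le y\le 1;y$ (\refL{laws.1}(vi) and its dual) give $x\cdot y\le(x;1)\cdot(1;y)=x;1;y$ by \refL{laws.1}(vii). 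For the reverse inclusion I would split $\ident=x+x'$ with $x':=\ident\cdot-x$ and show $(x;1;y)\cdot x'=0$. The key observation is that disjoint subidentities $s,t$ satisfy $s;t=0$ (because $s;t\le s$ and $s;t\le t$ force $s;t\le s\cdot t=0$); applying the cycle laws (\refL{laws.1}(iv)) then gives $(x;1;y)\cdot x'=0$, since it reduces to $(x\ssm;x')\cdot(1;y)=(x;x')\cdot(1;y)=0$. Hence $(x;1;y)\cdot\ident\le x$, and symmetrically $\le y$.

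For part (ii) I would reduce everything to cylinders. Rewriting each rectangle as $x;1;y=(x;1)\cdot(1;y)$ (\refL{laws.1}(vii)), the left-hand side becomes $[(x;1)\cdot(w;1)]\cdot[(1;y)\cdot(1;z)]$, so it suffices to prove the cylinder identity $(x;1)\cdot(w;1)=(x\cdot w);1$ and its dual $(1;y)\cdot(1;z)=1;(y\cdot z)$. The inclusion $\ge$ is monotony. For $\le$, split $x=(x\cdot w)+(x\cdot-w)$ and $w=(x\cdot w)+(-x\cdot w)$, distribute $;$ over $+$ using (R8), and note that every cross term $(s;1)\cdot(t;1)$ with $s,t$ disjoint subidentities vanishes: by the cycle laws this reduces to $s;t;1=0$, which holds since $s;t=0$. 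Reassembling via \refL{laws.1}(vii) yields $(x\cdot w);1;(y\cdot z)$.

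For part (iv), the inclusion $(x;1;y);(y;1;z)\le x;1;z$ is routine: associativity rewrites the left side as $x;(1;(y;y);1);z$, and $y;y\le y\le 1$ gives $1;(y;y);1\le 1$, so monotony finishes it. The equality under the atom hypotheses is the crux. I would first record that $y;y=y$: one has $y;y\le y$, while $y;y\ne0$ by \refL{domain}(iv), so atomicity of $y$ forces equality. Next, since $x;1;y\ne0$ and $y;1;z\ne0$, \refL{laws.1}(viii) gives $1;x;1=1;y;1=1;z;1$. Finally, using $x\le x;1;x$ (from \refL{laws.1}(vi), its dual, and \refL{laws.1}(vii)) and monotony, $x;1;z\le(x;1;x);1;z=x;(1;x;1);z=x;(1;y;1);z=x;1;(y;y);1;z=(x;1;y);(y;1;z)$, where the third equality replaces $1;x;1$ by $1;y;1$ and the last uses $y;y=y$.

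The \textbf{main obstacle} is isolating the handful of small facts that drive the argument---that disjoint subidentities annihilate under $;$ and hence have disjoint cylinders (parts (i), (ii)), that a subidentity atom is idempotent, $y;y=y$, and that $x;1;y\ne0$ equalizes the squares $1;x;1$ and $1;y;1$ (part (iv)). Each of these is a one- or two-line cycle-law computation, but getting the statements right is what makes the rest of the lemma dissolve into routine monotony and associativity steps; I expect no serious difficulty beyond bookkeeping once they are in place.
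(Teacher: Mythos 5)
The paper states this lemma without proof: it sits in the background section (\S2) as one of the standard relation-algebraic facts the authors import from the literature, so there is no proof of record to compare against. Your argument is correct and complete. Part (iii) is the two involution laws plus $x\ssm=x$ for subidentities; your proof of (i) via the splitting $\ident=x+(\ident\cdot -x)$ and the observation that disjoint subidentities $s,t$ satisfy $s;t=0$ (hence, by the cycle laws, $(x;1;y)\cdot(\ident\cdot -x)=0$) is sound; the reduction of (ii) to the cylinder identities $(x;1)\cdot(w;1)=(x\cdot w);1$ and its dual, proved by distributing over the splittings and killing the cross terms with the same disjoint-subidentity fact, works; and in (iv) the chain $x;1;z\le (x;1;x);1;z=x;(1;x;1);z=x;(1;y;1);z=x;1;(y;y);1;z=(x;1;y);(y;1;z)$ correctly exploits $y;y=y$ (idempotence of a nonzero subidentity atom, via \lmref{L:domain}(iv)) and \lmref{L:laws.1}(viii). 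Two tiny points worth making explicit if this were written out in full: you silently use the left-hand identity law $\ident;t=t$ (to get $s;t\le t$) and the left-hand dual $a\le 1;a$ of \lmref{L:laws.1}(vi); both are immediate from the stated laws by taking converses, so they are not gaps, just steps to record.
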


\section{Measurable atoms}\labelp{S:sec3}

Throughout this and the next few sections, we assume that all elements belong to an arbitrary, but fixed, relation algebra $\f A$ with universe $A$. In order not to have to worry about the existence of certain infinite sums, we assume that $\f A$ is complete.  This assumption   in no way restricts the applicability of the main results of the paper.

\begin{df}\labelp{D:numeric}  A subidentity atom   $x$ is \textit{measurable} if the square $x;1;x$ with side $x$ is a sum of functions.  If
this square is
actually the sum of finitely many functions\co then $x$ is said to be
\textit{finitely measurable}\po
\end{df}

It turns out that the set of non-zero functions   below the square
$x;1;x$ of a measurable atom $x$ coincides with the set of atoms below the square,
and the cardinality of this set is a measure of the ``size" of  $x$.

\begin{lm}\labelp{L:funcatom1}  If $x$ is a measurable atom\comma then an element below the square $x;1;x$ is an atom if and only if it is
non-zero  function\po
\end{lm}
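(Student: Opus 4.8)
The plan is to prove the two implications separately, both as short deductions from the definition of measurability together with the lemmas already established. By \dfref{D:numeric}, measurability of $x$ means we may write $x;1;x=\tsum F$ for some set $F$ of functions.

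First I would treat the direction: if $a\le x;1;x$ is an atom, then $a$ is a non-zero function. The key step is to show that an atom lying below the sum $\tsum F$ must lie below one of the summands. Suppose not, so that $a\not\le f$ for every $f\in F$. Since $a$ is an atom, $a\not\le f$ forces $a\cdot f=0$, that is $f\le -a$, for each $f\in F$; hence $-a$ is an upper bound of $F$, and therefore $x;1;x=\tsum F\le -a$. This contradicts $0\neq a\le x;1;x$. Consequently $a\le f$ for some function $f\in F$, and \lmref{L:funclaws}(iii) then guarantees that $a$ itself is a function. It is non-zero because atoms are, by definition, non-zero.

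For the converse direction --- if $a\le x;1;x$ is a non-zero function, then $a$ is an atom --- I would argue through the domain. The square $x;1;x$ is the rectangle $x;1;y$ with $y=x$, and $x$ is a subidentity atom, so \lmref{L:domain}(i) applies to the non-zero element $a$ and shows that the domain $(a;1)\cdot\ident$ of $a$ equals $x$. Thus $a$ is a function whose domain is an atom, and \lmref{L:funclaws}(vi) immediately yields that $a$ is an atom.

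The argument has no serious obstacle: once the definition of measurability is unpacked, each implication reduces to a single application of an earlier lemma. The only step requiring any care is the observation that an atom below a (possibly infinite) sum lies below a single summand, and even this is just the routine Boolean fact recorded above, using only that $\f A$ is complete so that $\tsum F$ exists. In particular, full complete distributivity of meet over arbitrary sums is not needed.
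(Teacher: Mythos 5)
Your proof is correct and follows essentially the same route as the paper: both directions reduce to the same two facts, namely that an atom below $\tsum F$ lies below a single function (hence is a function by \lmref{L:funclaws}(iii)), and that a non-zero function below the square has the atom $x$ as its domain (hence is an atom by Lemmas \ref{L:domain} and \ref{L:funclaws}(vi)). The only difference is cosmetic: where the paper distributes the meet over the sum to find a summand $g$ with $f\cdot g\neq 0$, you argue by contradiction that $-a$ would otherwise be an upper bound of $F$, which is the same Boolean fact in a slightly more elementary dress.
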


\begin{proof} 
Let $x$ be a measurable atom, and   $F$   the set of   functions
below $\xox$. The definition of measurability implies that \begin{equation*}\tag{1}\labelp{Eq:funcatom1}\xox = \tsum F\per
\end{equation*} If $f$ is an atom below
  $\xox$, then
\[0< f = f\cdot (\xox) = f\cdot (\tsum F) = \tsum\{f\cdot g: g\in F\}\comma
\] by Boolean algebra and \refEq{funcatom1}, so
there must be a function $g$ in $F$ such that $f\cdot g$ is non-zero.  But then
$f\le g$, because $f$ is an atom.  Any element below a function is itself
a function, by
\refL{funclaws}(iii), so $f$ is a non-zero function.

On the other hand, if $f$
is a non-zero function  below $\xox$, then the domain of $f$ is $x$, by \refL{domain}, and $x$ is an atom, by assumption, so $f$ is an atom, by \refL{funclaws}(vi).
\end{proof}

The non-zero functions below  the square on a measurable atom actually form
a group umder the operatons of relative multiplication and converse.

\begin{lm} \labelp{L:nonzerofunct}
If $\wx$ is a measurable atom\comma  then the set of  non-zero functions below the square
$\xox$ coincides with the set of permutations with domain  $x$\po
This set forms  a group under the operations $\,;\,$ and $\ssm$\co
with identity element $\wx$\po
\end{lm}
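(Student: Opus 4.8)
The plan is to show two things: first, that the set of non-zero functions below the square $\xox$ equals the set of permutations with domain $x$, and second, that this common set is closed under $;$ and $\ssm$ with $x$ as a two-sided identity, and hence forms a group.

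For the equality of the two sets, I would argue by double inclusion. Suppose $f$ is a non-zero function below $\xox$. By \lmref{L:domain}, every non-zero element below $\xox$ has domain $x$ and range $x$, so $f$ has domain $x$ and range $x$; to conclude $f$ is a permutation it remains to check that $f$ is bijective, i.e. that $f\ssm$ is also a function. By \lmref{L:square}(iii) we have $(\xox)\ssm=\xox$, so $f\ssm\le\xox$ as well, and by \lmref{L:funcatom1} both $f$ and $f\ssm$ are atoms below the square. The crux is then to see that $f\ssm$ is a function, i.e. $f;f\ssm\le\ident$. Since $f\ssm$ is a non-zero atom below $\xox$, by \lmref{L:funcatom1} it is a function, which is exactly the statement that $f\ssm$ is a function; hence $f$ is bijective and therefore a permutation with domain $x$. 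Conversely, a permutation with domain $x$ is by definition bijective with domain and range $x$, so it lies below its square $\xox$ (using \lmref{L:domain}(ii), which gives $f\le x;1=f;1$ and the dual, so $f\le(x;1)\cdot(1;x)=\xox$ via \lmref{L:laws.1}(vii)), and it is non-zero because $x$ is an atom; so it belongs to the first set.

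For the group structure, I would invoke \lmref{L:funclaws}(v): the permutations with domain $x$ are closed under $;$ and $\ssm$ and form a group with identity $x$. The only points still needing verification are that $x$ itself is a permutation with domain $x$ (hence the identity of the group genuinely lies in our set) and that $x$ acts as a two-sided identity on the elements below $\xox$. That $x$ is a permutation with domain $x$ follows since $x$ is a subidentity atom, so $x\ssm=x$ by \lmref{L:laws}(vii), giving $x\ssm;x=x;x=x\le\ident$, so $x$ is bijective, and its domain and range are $x$. That $x$ is a two-sided identity on any non-zero $f\le\xox$ follows from \lmref{L:domain}(iii), which gives $x;f=f$ and $f;x=f$ once we know $x$ is the domain and range of $f$.

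The main obstacle is the bijectivity argument in the first inclusion: one must confirm that a non-zero function below the square automatically has a functional converse. The clean way to secure this is precisely the symmetry $(\xox)\ssm=\xox$ from \lmref{L:square}(iii) together with \lmref{L:funcatom1}, which upgrades any non-zero element below the square to a function; applying this to $f\ssm$ (which is non-zero by \lmref{L:laws}(v) and below the square by the symmetry just noted) yields that $f\ssm$ is a function, closing the gap. Once this is in hand, the remaining closure and identity facts are immediate from \lmref{L:funclaws}(v) and \lmref{L:domain}, and the group claim follows.
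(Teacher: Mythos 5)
Your proof is correct and follows essentially the same route as the paper: use \lmref{L:funcatom1} to identify the non-zero functions below the square with the atoms below it, pass to $f\ssm$ to get bijectivity, use \lmref{L:domain} for the domain and range, and then quote \lmref{L:funclaws}(v) for the group structure. One citation needs repair, though: the fact that $f\ssm$ is an atom comes from \lmref{L:laws}(vi) (the converse of an atom is an atom), not from \lmref{L:funcatom1}, and your closing summary overstates \lmref{L:funcatom1} --- it upgrades non-zero \emph{atoms} below the square to functions, not arbitrary non-zero elements (a sum of two distinct atoms below $\xox$ is a non-zero non-function), so the chain must run $f$ function $\Rightarrow$ $f$ atom $\Rightarrow$ $f\ssm$ atom $\Rightarrow$ $f\ssm$ function, exactly as in your second paragraph.
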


\begin{proof} 
If  $f$ is a non-zero function below $\xox$, then    $f$
is an atom, by \refL{funcatom1}. The converse of an atom is an atom, by \refL{laws}(vi), so $f\ssm$ is also an atom.  Apply \refL{funcatom1}
again to
conclude that $f\ssm$  is a function, and therefore $f$ is a bijection. The element $x$ is assumed to be an atom,
so every non-zero element below the square $x;1;x$ has domain and range $x$, by \refL{domain}.  In particular, the domain and range
of $f$ are both $x$, so $f$ is a permutation with domain $x$.

 If
$g$ is an arbitrary permutation  with domain $\wx$, then $g$ is an atom, and hence non-zero, by \refL{funclaws}(vi).   Furthermore,
\begin{equation*}
g \le (g;1)\cdot (1;g)
= (\wx;1)\cdot (1;\wx)
= \xox\comma
\end{equation*}
by Lemmas \ref{L:laws.1}(vi), \ref{L:domain}(ii), and \ref{L:laws.1}(vii), so
 $g$ is a non-zero function below $\xox$. Conclusion: the set of non-zero functions below $\xox$ coincides
 with the set of permutations with domain $x$.  This last set is a group under the operations of relative multiplication
 and converse, with $x$ as the identity element, by \refL{funclaws}(v), so the same must be true of the set of non-zero functions below $\xox$.
\end{proof}

The preceding lemma justifies the following definition.

\begin{df}\labelp{D:gxdef} The group of non-zero functions below the square on a measurable atom $x$ is denoted by  $\G x$\po  The cardinality
of this group is called the \textit{measure} of $x$.
\end{df}

The following corollary is an immediate consequence of Definitions \ref{D:numeric} and \ref{D:gxdef}
\begin{cor}\labelp{C:sumofGx} If $x$ is a measurable atom\comma then $\xox=\tsum\cs Gx$\per
\end{cor}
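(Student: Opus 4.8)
The plan is to unwind the two definitions and reduce the arbitrary functional decomposition that measurability provides to the canonical one furnished by the group $\G x$. By Definition~\ref{D:numeric}, since $x$ is measurable there is \emph{some} set $G$ of functions with $\xox=\tsum G$. The one genuine subtlety is that the members of $G$ need neither lie below $\xox$ nor coincide with the non-zero functions that make up $\cs Gx$; so the first task is to replace $G$ by functions below the square without altering the sum.

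First I would meet each member of $G$ with the square. Setting $G'=\{g\cdot\xox:g\in G\}$, each $g\cdot\xox$ is below the function $g$ and hence is itself a function by \refL{funclaws}(iii), while trivially lying below $\xox$. By complete Boolean distributivity, $\tsum G'=(\tsum G)\cdot\xox=\xox\cdot\xox=\xox$, so the square is already the sum of a set of functions each of which is below it; this is exactly the content of equation \refEq{funcatom1} in the proof of \refL{funcatom1}. Next I would pass to the non-zero members: discarding the zero element (if present) from $G'$ leaves the sum unchanged, so $\xox$ is the sum of a set of \emph{non-zero} functions below $\xox$. Every such function belongs to $\cs Gx$ by \refL{nonzerofunct}, and conversely every element of $\cs Gx$ is by definition a non-zero function below $\xox$. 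Hence $\xox=\tsum(G'\setminus\{0\})\le\tsum\cs Gx\le\xox$, and the two outer terms squeeze to give $\xox=\tsum\cs Gx$, as required.

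I expect no serious obstacle here, which is consistent with the statement being labelled an immediate consequence of the two definitions. All the real work sits in the \textbf{canonicalization step} of the second paragraph: recognizing that Definition~\ref{D:numeric} delivers only an \emph{arbitrary} decomposition into functions, and that meeting with the square (justified by \refL{funclaws}(iii)) is precisely what turns it into a decomposition into the non-zero functions below the square, i.e. into the elements of $\cs Gx$. Everything after that is the elementary observation that deleting $0$ does not change a sum, followed by a two-sided inequality.
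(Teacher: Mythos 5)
Your proof is correct and matches the paper's intended argument: the paper offers no proof at all, calling the corollary an immediate consequence of Definitions \ref{D:numeric} and \ref{D:gxdef}, and your unwinding (the square is a sum of functions, the non-zero ones among them are exactly the elements of $\cs Gx$, and a two-sided inequality closes the gap) is precisely what is implicit there and also what the proof of \refL{funcatom1} silently uses. The only remark is that the ``genuine subtlety'' you flag is not one: if $\xox=\tsum G$, then each $g\in G$ already satisfies $g\le\tsum G=\xox$ by Boolean algebra, so your set $G'$ equals $G$ and the canonicalization step, while harmless, is redundant.
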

\begin{lm}\labelp{L:disjointgrps} If $\wx$ and $\wy$ are distinct measurable atoms\comma then the groups $\G\wx$ and $\G\wy$ are disjoint\po
\end{lm}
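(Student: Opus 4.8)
The plan is to argue by contradiction, exploiting the fact that every element of $\G x$ lies below the square $x;1;x$ while every element of $\G y$ lies below $y;1;y$, together with the observation that these two squares meet in $0$. Suppose, toward a contradiction, that some element $f$ belonged to both groups. By \dfref{D:gxdef}, $f$ would then be a \emph{non-zero} function with $f\le x;1;x$ and $f\le y;1;y$, and hence $f\le (x;1;x)\cdot(y;1;y)$.

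The next step is to evaluate the meet of the two squares. By \lmref{L:square}(ii) we have $(x;1;x)\cdot(y;1;y)=(x\cdot y);1;(x\cdot y)$. Since $x$ and $y$ are \emph{distinct} subidentity atoms, their Boolean product satisfies $x\cdot y=0$ (two distinct atoms have no common non-zero lower bound, which is pure Boolean algebra). Substituting this and applying \lmref{L:laws.1}(i), which gives $0;1=0$ and $0;a=0$, yields $(x\cdot y);1;(x\cdot y)=0;1;0=0$.

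Combining the two displays forces $f\le 0$, so $f=0$, contradicting the fact that membership in $\G x$ (or in $\G y$) guarantees $f\neq 0$. Hence $\G x$ and $\G y$ have no common element, i.e.\ the two groups are disjoint. I do not anticipate any genuine obstacle: the entire argument reduces to a single computation once the meet-of-squares identity of \lmref{L:square}(ii) is invoked. The only point deserving (minor) care is the elementary Boolean fact that distinct atoms meet in $0$, which is what collapses the intersection of the two squares to $0$.
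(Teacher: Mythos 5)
Your proof is correct and follows essentially the same route as the paper: both arguments reduce to the computation $(x;1;x)\cdot(y;1;y)=(x\cdot y);1;(x\cdot y)=0;1;0=0$ via \lmref{L:square}(ii) and \lmref{L:laws.1}(i), combined with the fact that the elements of $\G x$ and $\G y$ are non-zero and lie below the respective squares. The only cosmetic difference is that you phrase it as a proof by contradiction while the paper states it directly.
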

\begin{proof} The squares $\xox$ and $\yoy$ are disjoint, because
\[(\xox)\cdot(\yoy)=(x\cdot y);1;(x\cdot y) = 0;1;0 = 0\comma\]
by Lemmas \ref{L:square}(ii) and \ref{L:laws.1}(i).  The groups $\G\wx$ and $\G\wy$ consist
of non-zero elements below these respective squares, so they can have no elements in
common.
 \end{proof}

\comment{
\textbf{Perhaps delete this remark and the next lemma.  }The next lemma says that the relativization of the \ra\ $\f A$ to the
square of a measurable atom is faithfully embeddable into the complex algebra of a group.
(Recall that a faithful embedding is one that preserves arbitrary sums and that maps
the set of atoms of the domain algebra onto the set of atoms of the target algebra; see
\refD{faith}.)

\begin{lm}\labelp{L:grprel}  Let $x$ be a measurable atom in a \ra\
$\f A$\po Then mapping
from the relativization  $\f A(\xox)$
into the group relation algebra $\gcm{G_\wx}$ defined by
\[
a\longmapsto \{ f:f\text{ is a functional and } 0< f\le a\}
\]
for each element $a$  is a faithful embedding\po The mapping is onto
just in case the sum of each set of atoms below $\xox$ exists\po
\end{lm}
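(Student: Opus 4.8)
The plan is to recognize the displayed map as the standard atom map and then to verify the relation-algebraic clauses one at a time, the only substantial one being relative multiplication. First I would pin down the two algebras concretely. By \refL{funcatom1} and \refL{nonzerofunct} the atoms of the relativization $\f A(\xox)$ are exactly the elements of the group $\G x$, and by \refC{sumofGx} we have $\xox=\tsum\G x$; hence every non-zero $a\le\xox$ lies above some atom (expand $a=a\cdot\tsum\G x$ by distributivity and take a non-zero summand), so $\f A(\xox)$ is atomic. For $a\le\xox$ the set $\{f:0<f\le a\text{ and }f\text{ is functional}\}$ is then precisely the set of atoms of $\f A(\xox)$ below $a$, so the map in question is the atom map $\varphi\colon a\mapsto\{f\in\G x:f\le a\}$ into the full complex algebra $\gcm{\G x}$ of the group $\G x$.

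Next I would dispose of the Boolean clauses and the converse, which are routine atom-map arguments. An atom is below a sum $\tsum X$ iff it is below some member of $X$, giving preservation of arbitrary sums; an atom $p\le\xox$ is below the relative complement of $a$ iff $p\not\le a$, giving preservation of complement; and $\varphi(0)=\varnothing$. Injectivity is then immediate, since any non-zero $a\le\xox$ has an atom below it and hence $\varphi(a)\ne\varnothing$. For converse, $f\le a\ssm$ iff $f\ssm\le a$ by monotony (\refL{laws}(iv)), and $\G x$ is closed under $\ssm$ (\refL{nonzerofunct}), so $\varphi(a\ssm)=\varphi(a)\ssm$. The relativized unit of $\f A(\xox)$ is $(\xox)\cdot\ident=x$ by \refL{square}(i), and since $x$ is the identity of $\G x$ we get $\varphi(x)=\{x\}$, the unit of $\gcm{\G x}$.

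The heart of the argument is relative multiplication, and this is the step I expect to be the main obstacle. Because $\xox$ is symmetric (\refL{square}(iii)) and transitive (\refL{square}(iv)), the elements below it are closed under $\,;\,$ and $\ssm$, so relative multiplication in $\f A(\xox)$ is inherited from that of $\f A$. Writing $a$ and $b$ as the sums of the atoms below them and invoking complete distributivity yields
\[
a;b=\tsum\{f;g:f\in\varphi(a)\text{ and }g\in\varphi(b)\}.
\]
The essential point is that $\G x$ is a \emph{group} under $\,;\,$ (\refL{nonzerofunct}): each product $f;g$ again belongs to $\G x$ and is therefore an atom. Consequently an atom lies below $a;b$ iff it equals some such $f;g$, so $\varphi(a;b)$ coincides with the complex product $\{f;g:f\in\varphi(a),\ g\in\varphi(b)\}=\varphi(a);\varphi(b)$ computed in $\gcm{\G x}$. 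I single this step out because it is the only place where the group structure---products of atoms being atoms---is what forces the relation-algebraic product to agree with the set-theoretic complex product; everything else is formal atom bookkeeping.

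Finally I would record faithfulness and the surjectivity criterion. The map carries each atom $f$ to the singleton $\{f\}$, which is a bijection of the atoms of $\f A(\xox)$ onto the atoms of $\gcm{\G x}$, so together with sum-preservation and injectivity the map is a faithful embedding in the sense of \refD{faith}. A subset $S\subseteq\G x$ equals $\varphi(a)$ exactly when $a=\tsum S$; hence $\varphi$ is onto precisely when $\tsum S$ exists for every set $S$ of atoms below $\xox$, which is the stated condition.
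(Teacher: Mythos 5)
Your proof is correct and follows essentially the same route as the paper's: identify the map as the atom map of an atomic relativization whose atoms are exactly the elements of $\G x$, handle the Boolean operations and converse by atom bookkeeping, and use complete distributivity together with the closure of the group $\G x$ under $\,;\,$ to show that $\cs Xa;\cs Xb$ is again a set of atoms whose sum is $a;b$, which forces $\vth(a;b)=\vth(a);\vth(b)$; the surjectivity criterion is argued identically. The only blemish is terminological: $\{x\}$ is the \emph{identity element} of $\gcm{G_\wx}$ (its unit is all of $\G x$), just as $(\xox)\cdot\ident=x$ is the identity element, not the unit, of the relativization.
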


\begin{proof}
It follows from Lemmas \ref{L:funcatom1} and \ref{L:nonzerofunct} that
the relativization  $\f A(\xox)$ of $\f A$ to $\xox$ is atomic with functional
atoms.  Thus, if
$a$ is an element below
$\xox$ and
$F_\al$ is the set of non-zero functions below $a$, then
\begin{equation*}a=\tsum F_\al\per\tag{1}\end{equation*}

Consider now the mapping $\vth$ from $ A(\xox)$
to $\cm{G_\wx}$ defined by
\[
\vth(a)= F_\al =\{f\in G_\wx: f \leq a\}\per
\]
Since the relativization is atomic, $\vth$ must be one-one.  If  $f$ is an atom of the
relativization, then
\[\vth(f)= F_f =\{f\}\per\]
In other words, $\vth$ maps the set of atoms of the relativization onto the set of
atoms of  $\gcm{G_\wx}$.  Since $x$ is an atom of the relativization, the argument also
shows that $\vth$ maps the identity element of the relativization onto the identity element
of $\gcm{G_\wx}$.

Suppose that
$a$ and
$b$ are below
$\xox$. It follows from atomicity   that
\[
\vth(a+b)= F_{a+b} = F_\al \cup F_\bl=\vth(a)\cup \vth(b)
\]
and
\[
\vth(-a\cdot (\xox))= F_{-a\cdot(\xox)}= G_\wx \sim F_\al=\vth(\xox)\sim\vth(a)\per
\]
Also, since the elements of $\G x$  arepermutations,   $f$ will be a
function below
$a\ssm$ just in case
$f\ssm$ is a function below $a$. In other words,
\[
\vth(a\ssm)= F_{a\ssm}= F_\al\ssm= \vth(a)\ssm\per
\]
Finally,
\begin{equation*}
a;b = (\ssum F_\al); (\ssum F_\bl)
 = \ssum\{ f;g: f\in F_\al\text{ and } g\in F_\bl\}
= \ssum F_\al;F_\bl\comma
\tag{2}\end{equation*}
by the complete distributivity of relative multiplication.  Since $F_\al$ and
$F_\bl$ are subsets of
$\G x$\comma and since $\G x$ is closed under relative multiplication, the complex
product
$F_\al;F_\bl$ is itself a subset of
$G_\wx$\per  Now
$a;b=\ssum F_{a;b}
$\comma by (1).  Combine this with (2) --- and use the fact that an element uniquely determines,
and is uniquely determined by, the set of atoms below it --- to conclude that
$F_{a;b}= F_\al;F_\bl$\per Hence,
\[
\vth(a;b)= F_{a;b}= F_\al;F_\bl=\vth(a);\vth(b)\per
\]
This shows that $\vth$ is a faithful embedding of $\f A(\xox)$ into
$\gcm{G_\wx}$. The function $\vth$ maps each element to the set of atoms
beneath it.  Furthermore, the universe of $\gcm{G_\wx}$ consists of
all subsets of $G_\wx$\per  Therefore, $\vth$ will be surjective just in case $\ssum F$ exists
in $\f A$ for each $F\seq G_\wx$\per This completes the proof of the
lemma.
\end{proof}

}

Fix two measurable atoms $x$ and $y$, and
let   $f$ be an element in $\G x$\comma that is to say, let $f$ be a non-zero function below $\xox$.  Define a mapping
$\vth_f$  on the set
\[A(\xoy)=\{a\in A: a\le \xoy\}\] by stipulating that
\begin{equation*} \vth_f(a) = f;a
\end{equation*} for every $a$ in $A(\xoy)$.   The relative
product
$f;a$ is called the
\textit{left translation of}  $a$ \emph{by} $f$, so  $\vth_f$ maps every element in
$A(\xoy)$ to its left translation by $f$.

\begin{lm} \labelp{L:mappings} Each mapping $\vth_f$ is a
permutation of the set $A(\xoy)$\co and the correspondence $(f , a)\longmapsto \vth_f(a)$ defines a
left action of the group \,$G_\wx$ on the set  $A(\xoy)$ in the sense that
\[\vth_x(a)=a\qquad\text{and}\qquad \vth_g(\vth_f(a))=\vth_{g;f}(a) \] for all $a$ in $A(\xoy)$\per
\end{lm}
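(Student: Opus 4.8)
The plan is to verify directly that the correspondence $(f,a)\mapsto\vth_f(a)$ satisfies the two defining identities of a left action, and then to read off from these that each $\vth_f$ is a permutation. Before any of this, however, I must check that $\vth_f$ is genuinely a self-map of $A(\xoy)$, that is, that $f;a$ lies below $\xoy$ whenever $a\le\xoy$ and $f\in\G x$. Since $f\le\xox$, monotony (\refL{laws.1}(iii)) gives $f;a\le(\xox);(\xoy)$, and \refL{square}(iv) yields $(\xox);(\xoy)\le\xoy$; hence $f;a\in A(\xoy)$. This closure step is the only place where genuine relation-algebraic arithmetic is needed, so I expect it to be the main (though modest) obstacle; everything after it is formal.

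Next I would verify the action identities. For the first, $\vth_x(a)=x;a$, and since every element of $A(\xoy)$ has domain $x$, \refL{domain}(iii) gives $x;a=a$ (the case $a=0$ being covered by \refL{laws.1}(i)); thus $\vth_x$ is the identity map. For the second, I compute, for $f,g\in\G x$ and $a\in A(\xoy)$,
\[
\vth_g(\vth_f(a)) = g;(f;a) = (g;f);a = \vth_{g;f}(a),
\]
the middle equality being the associative law (R4). Here $g;f$ again lies in $\G x$, since $\G x$ is closed under relative multiplication (\refL{nonzerofunct}), so $\vth_{g;f}$ is defined. These two identities are exactly the requirement that the correspondence be a left action of $\G x$ on $A(\xoy)$.

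Finally, the permutation claim follows formally from the action axioms, with no further computation. By \refL{nonzerofunct} the set $\G x$ is a group under $\,;\,$ and $\ssm$ with identity $x$, so the converse $f\ssm$ is the group inverse of $f$, that is $f\ssm;f=f;f\ssm=x$; moreover $f\ssm\in\G x$, so $\vth_{f\ssm}$ is itself a self-map of $A(\xoy)$ by the closure argument above. Applying the compatibility identity twice then gives
\[
\vth_{f\ssm}(\vth_f(a)) = \vth_{f\ssm;f}(a) = \vth_x(a) = a
\quad\text{and}\quad
\vth_f(\vth_{f\ssm}(a)) = \vth_{f;f\ssm}(a) = \vth_x(a) = a
\]
for every $a\in A(\xoy)$. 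Thus $\vth_{f\ssm}$ is a two-sided inverse of $\vth_f$, so $\vth_f$ is a bijection of $A(\xoy)$ onto itself, that is, a permutation. This completes the plan.
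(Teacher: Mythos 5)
Your proof is correct and follows essentially the same route as the paper's: the closure computation $f;a\le(\xox);(\xoy)\le\xoy$ via monotony and \refL{square}(iv), the two action identities via \refL{domain}(iii) and the associative law, and the permutation claim by exhibiting $\vth_{f\ssm}$ as a two-sided inverse using the group structure of $\G x$ from \refL{nonzerofunct}. The only (harmless) difference is that you explicitly handle the $a=0$ case of $x;a=a$ via \refL{laws.1}(i), a detail the paper glosses over.
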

\begin{proof} 
Consider elements   $f$ and $g$   in $G_\wx$, and
$a$     in $A(\xoy)$. We have
\begin{equation*}
  \vth_f (a) =f;a\le (\xox);(\xoy)\le \xoy\comma
\end{equation*}
 by the definition  of $\vth_f$, the assumptions on $f$ and $a$,    monotony, and \refL{square}(iv).
Consequently,  $\vth_f(a)$ belongs to the set $A(\xoy)$.
Also,
\begin{equation*}
\vth_g( \vth_f(a))=g;(f;a) =(g;f);a=\vth_{g;f}(a)\comma \tag{1}
\label{Eq:map1}
\end{equation*}
by the definitions of $\vth_f$,  $\vth_g$, and $\vth_{g;f}$, and  the associative law; and
\begin{equation*}
\vth_\wx(a) = x;a = a\comma\tag{2}\labelp{Eq:map2}
\end{equation*}
by the definition of $\vth_\wx$ and \refL{domain}(iii). Thus, the correspondence  \[(f , a)\longmapsto \vth_f(a)\]does define a
left action of the group \,$G_\wx$ on the set  $A(\xoy)$.  It follows that
\begin{equation*}
      a = \vth_\wx (a)   = \vth_{f\ssm;f}(a)  = \vth_{f\ssm}(\vth_f (a))\comma
         \end{equation*}
 by \refEq{map2},   \refL{nonzerofunct}, and  \refEq{map1}, and dually,
 \[a =\vth_{f}(\vth_{f\ssm} (a))\comma \] so that  the mappings $\vth_{f\ssm}$
and
$\vth_f$ are inverses of one another. In particular, they must be one-to-one and onto, and hence
permutations of the set  $A(\xoy)$\per
\end{proof}

The set $A(\xoy)$ is closed under the binary operations of addition $\,+\,$ and multiplication $\,\cdot\,$ in $\f A$,
and also under the unary relativized complement operation $\comp_{\xoy}$ that is defined by
\[\comp_{\xoy} a=(\xoy)\cdot (-a) \] for all $a$ in $A(\xoy)$,  where $-a$ is the complement of $a$ in $\f A$.  Under these operations, the set $A(\xoy)$ becomes a Boolean algebra, and actually a relativization of the Boolean part of $\f A$.  Notice that an element belonging to this relativization  is an atom in $\f A$ just in case it is
an atom in the relativization.

\begin{lm}\labelp{L:auto} The mapping $\vth_f$ is an automorphism
of
the relativized Boolean algebra
\[
( A(\xoy)\smbcomma +\smbcomma\cdot \smbcomma \comp_{\xoy} \,)\per
\]
In particular\co an element $a\le \xoy$ is an atom if and only if $f;a$ is an
atom\po
\end{lm}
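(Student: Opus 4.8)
The plan is to show that $\vth_f$ preserves the three relativized Boolean operations and is a bijection, the latter being already established in \lmref{L:mappings} since $\vth_f$ was shown there to be a permutation of $A(\xoy)$. So the real content is verifying that $\vth_f$ is a Boolean homomorphism. First I would handle addition: by complete distributivity of relative multiplication over addition, $\vth_f(a+b)=f;(a+b)=f;a+f;b=\vth_f(a)+\vth_f(b)$, which is immediate and needs no special hypothesis on $f$.

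The key observation is that $f$ is a \emph{function}, so I can invoke the distributive law for functions (\lmref{L:funclaws}(i)): since $f$ is the left-hand argument of the relative product, $\vth_f$ distributes over multiplication, giving $\vth_f(a\cdot b)=f;(a\cdot b)=(f;a)\cdot(f;b)=\vth_f(a)\cdot\vth_f(b)$. This is exactly where the functionality of the elements of $\G x$ is essential—ordinary relative multiplication does not distribute over $\,\cdot\,$, so without the hypothesis $f\in\G x$ the map would fail to be a meet-homomorphism. I expect this to be the conceptual heart of the argument, though it reduces to a direct citation of \lmref{L:funclaws}(i).

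Next I would treat the relativized complement $\comp_{\xoy}$. The plan is to combine preservation of meets and the bijectivity from \lmref{L:mappings}. Concretely, for $a\le\xoy$ one has $a\cdot\comp_{\xoy}a=0$ and $a+\comp_{\xoy}a=\xoy$; applying $\vth_f$ (using additivity and multiplicativity just established, together with $\vth_f(\xoy)=\xoy$ and $\vth_f(0)=0$, which follow from $f;\xoy\le\xoy$ and \lmref{L:laws.1}(i)) shows that $\vth_f(\comp_{\xoy}a)$ is the complement of $\vth_f(a)$ relative to $\xoy$, i.e.\ $\vth_f(\comp_{\xoy}a)=\comp_{\xoy}\vth_f(a)$. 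The only subtlety is confirming $\vth_f(\xoy)=\xoy$, which I would get from the fact that $\vth_f$ maps $A(\xoy)$ \emph{onto} itself and is additive, so it carries the top element $\xoy$ to the top element.

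Finally, since $\vth_f$ is an automorphism of the relativized Boolean algebra, it preserves the property of being an atom: an element $a\le\xoy$ is an atom of this algebra (equivalently an atom of $\f A$, as noted in the paragraph preceding the lemma) if and only if its image $f;a=\vth_f(a)$ is an atom. I do not anticipate a genuine obstacle here—the work is essentially a clean assembly of \lmref{L:funclaws}(i), \lmref{L:mappings}, and complete distributivity—so the main thing to be careful about is stating the complement-preservation step correctly, since that is the one place where I must use bijectivity rather than a purely algebraic identity.
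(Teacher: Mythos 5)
Your proposal is correct and follows essentially the same route as the paper: additivity from the distributive law (R8), multiplicativity from the distributive law for functions (\lmref{L:funclaws}(i)), preservation of $0$ and of the unit $\xoy$, complement-preservation by definability of the relativized complement from the other operations, and atom-preservation because automorphisms preserve atoms. The only (harmless) variation is that you obtain $\vth_f(\xoy)=\xoy$ from surjectivity plus monotonicity, whereas the paper computes it directly as $f;\xoy=f;1;y=\xoy$ using \lmref{L:nonzerofunct} and \lmref{L:domain}(ii).
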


\begin{proof}  The mapping $\vth_f$ is a permutation of the set $A(\xoy)$, by \refL{mappings}. The distributive law (R8) implies that $\vth_f$
preserves the operation of addition,
\[\vth_f(a+b) = f;(a+b) = f;a +f;b=\vth_f(a) + \vth_f(b)\per
\]
 The distributive law for    functions, \refL{funclaws}(i), and the assumption
 that $f$ is a function, imply
  that $\vth_f$  preserves  multiplication,
\[\vth_f(a\cdot b) = f;(a\cdot b) = (f;a) \cdot (f;b)=\vth_f(a) \cdot  \vth_f(b)\per
\]
The element  $0$ is mapped to itself,
\[\vth_f(0) = f;0 = 0\comma
\] by  \refL{laws.1}(i).
Finally, $\vth_f$ maps the unit $\xoy$ of the relativization to itself,
\begin{equation*}
  \vth_f (\xoy) = f;\xoy= f;1;\wy= \xoy\comma
\end{equation*}
by the definition of $\vth_f$, and Lemmas \ref{L:nonzerofunct} and \ref{L:domain}(ii).
The operation of complement in the relativization  can be defined in terms of
  addition and multiplication, with the help of the elements $0$ and $\xoy$, so
$\vth_f$ must also preserve the operation of complement in the relativization. Conclusion: $\vth_f$ is an automorphism of the relativization.
Automorphisms obviously map atoms to atoms, so the second assertion of the lemma follows
at once from the first one, together with the remarks preceding the lemma.
\end{proof}

One of the main  points of \refL{auto} is that left translation by an element $f$ in the group $\G x$ maps the set of atoms of $\f A$ that are below $\xoy$ bijectively to itself.
\begin{df}\labelp{D:stabnot}
For each element $a\le \xoy$, the \emph{left stabilizer} of $a$ \emph{in}
$\G x$  under the group action of left translation
is defined to be  the set
\[\{f\in\G\wx:f;a=a\}\per\] It will be denoted by
$\L_a$\po\end{df}

The next
corollary is an immediate consequence of
\refL{mappings} and   well-known basic facts about group actions.

\begin{cor}\labelp{C:stab} For each $a \le \xoy$\comma the left stabilizer
$\L_a$ is a subgroup of $\G x$\po\  For any two elements  $f$ and $g$ in $
G_\wx$\comma  we have
$f;a=g;a$ if and only if
$f$ and $g$ are in the same left coset of $\L_a$\per
\end{cor}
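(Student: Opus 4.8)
The plan is to observe that $\L_a$ is nothing but the point-stabilizer of $a$ under the left action of $\G x$ on $A(\xoy)$ exhibited in \refL{mappings}, and then to carry the two standard facts about group actions over into the present notation, where the group operation is $\,;\,$, the group inverse is $\ssm$, and the group identity is $\wx$. The composition law $\vth_g(\vth_f(a))=\vth_{g;f}(a)$ together with $\vth_\wx(a)=a$ from \refL{mappings} is essentially the only input required, and the entire argument is a transcription of the orbit--stabilizer bookkeeping.

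First I would check that $\L_a$ is a subgroup of $\G x$. The identity $\wx$ lies in $\L_a$ since $\vth_\wx(a)=a$. If $f$ and $g$ both fix $a$, then $\vth_{g;f}(a)=\vth_g(\vth_f(a))=\vth_g(a)=a$, so $g;f\in\L_a$ and $\L_a$ is closed under $\,;\,$. And if $f$ fixes $a$, then applying $\vth_{f\ssm}$ to $f;a=a$ and simplifying $\vth_{f\ssm}(\vth_f(a))=\vth_{f\ssm;f}(a)=\vth_\wx(a)=a$ yields $f\ssm;a=a$, so $\L_a$ is closed under $\ssm$. Here I am using \refL{nonzerofunct} to know that $f\ssm;f=\wx$ inside the group $\G x$.

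For the coset statement I would argue both directions by composing the defining equation with a suitable translation. Suppose first that $f;a=g;a$. Composing both sides with $\vth_{g\ssm}$ turns the right-hand side into $\vth_{g\ssm;g}(a)=\vth_\wx(a)=a$ and the left-hand side into $\vth_{g\ssm;f}(a)=(g\ssm;f);a$, whence $(g\ssm;f);a=a$, i.e. $g\ssm;f\in\L_a$; this is precisely the condition that $f$ and $g$ lie in a common left coset of $\L_a$. The converse is the same computation run backwards: if $g\ssm;f\in\L_a$, then composing $(g\ssm;f);a=a$ with $\vth_g$ and simplifying $\vth_{g;(g\ssm;f)}(a)=\vth_\wx(\vth_f(a))=f;a$ recovers $f;a=g;a$. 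The only point demanding any attention is bookkeeping of the group conventions and, in particular, respecting the order in $\vth_g\circ\vth_f=\vth_{g;f}$, so that the stabilizer condition lands on a left coset rather than a right one; beyond that there is no genuine obstacle.
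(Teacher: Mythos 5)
Your proposal is correct and follows exactly the route the paper intends: the paper dismisses this corollary as an immediate consequence of \refL{mappings} and standard facts about group actions, and your argument is simply the explicit transcription of those facts (stabilizers are subgroups; $f;a=g;a$ iff $g\ssm;f\in\L_a$ iff $f$ and $g$ share a left coset), with the correct appeal to \refL{nonzerofunct} for the group structure on $\G\wx$.
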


The preceding corollary implies that all elements in a coset   $\L_\xi$   of $\L_a$ give rise to
the same left translation of $a$. Write $\L_\xi;a$ to denote this left translation.  This notation  helps to avoid the
 cumbersome task of specifying in advance  a representative $f$ of the coset $\L_\xi$, and writing $f;a$.
In a similar vein, for any subset $X$ of $\cs Gx$,   write \[X;a=\{f;a:f\in X\}\comma\qquad\text{so that}\qquad \tsum X;a=\tsum\{f;a:f\in X\}\per\]

In complete analogy with the definition of $\vth_f$\comma for each element
$g$ in $\G y$ one can define  a mapping $\psi_g$ that sends every element $a$ below $\xoy$ to its
\emph{right translation} by $g$,
\begin{equation*} \psi_g(a) = a;g\per
\end{equation*}
The  mapping $\psi_g$ is a permutation of the set $A(\xoy)$, and the correspondence $(g, a)\longmapsto \psi_g(a)$ defines
a
right action of the group $G_\wy$ on   the set $A(\xoy)$. The mapping $\psi_g$ is an automorphism of the relativized Boolean algebra corresponding to $A(\xoy)$.  In particular, an element $a\le \xoy$ is an atom if and only if $a;g$ is an atom.
The \emph{right stabilizer} of an element
$a\le \xoy$ under the group action of right translation is defined to be the set
\[\{g\in\G\wy:a;g=a\}\comma
\]
and is denoted by $\R_a$\per  The right stabilizer proves to be a subgroup of $\G\wy$\comma and for any two elements $f$ and
$g$ in $\G\wy$\comma the right translations $a;f$ and $a;g$ are equal just in case $f$ and
$g$ are in the same right coset of $\R_a$\per  Consequently, if $\R_\eta$ is a right  coset of $\R_a$, then it makes sense to write $a;\R_\eta$ to denote
the uniquely determined element that is the right translation of $a$ by   elements in $\R_\eta$.

Every result about left translations and left stabilizers has a corresponding dual result about right translations and right stabilizers.  In general, we  will usually formulate only the left-hand version, while allowing ourselves to refer to the right-hand versions in later proofs that require it.  The following easy lemma gives an example.
\begin{lm} \labelp{L:include} Let $\wx$\comma $\wy$\comma and $\wz$ be
measurable atoms\po If $\al\le\xoy$ and
$\bl\le\yoz$\co then $\lal\seq\LS\abp$\po
\end{lm}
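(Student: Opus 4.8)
The plan is to verify the inclusion elementwise, reducing it to a single application of the associative law for relative multiplication. First I would check that the left stabilizer $\L_{a;b}$ is even meaningful: since $a\le\xoy$ and $b\le\yoz$, monotony together with \lmref{L:square}(iv) gives
\[
a;b\le(\xoy);(\yoz)\le\xoz,
\]
so $a;b$ lies below a rectangle whose left side is the measurable atom $x$. Hence, exactly as for $a$, the group $\G x$ acts on $A(\xoz)$ by left translation, and the left stabilizer $\L_{a;b}=\{f\in\G x:f;(a;b)=a;b\}$ is a well-defined subgroup of $\G x$, by \dfref{D:stabnot} and \corref{C:stab} applied to the rectangle $\xoz$. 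In particular $\L_a$ and $\L_{a;b}$ are both subgroups of the single group $\G x$, so the asserted containment between them makes sense.

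The core step is then immediate. I would take an arbitrary $f\in\L_a$, so that $f\in\G x$ and $f;a=a$, and compute, using the associative law (R4),
\[
f;(a;b)=(f;a);b=a;b.
\]
This says precisely that $f\in\L_{a;b}$. Since $f$ was arbitrary, $\L_a\subseteq\L_{a;b}$, as required.

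There is essentially no obstacle here; the only point that requires any care is the bookkeeping in the first paragraph, namely confirming that $a;b\le\xoz$ so that its left stabilizer genuinely lives inside $\G x$ and the claimed inclusion is between two subgroups of a common group. Once that is in place, associativity does all the work---which is why this lemma is \emph{easy}. The dual statement, that the right stabilizer of $a;b$ in $\G z$ contains the right stabilizer of $b$, would follow by the symmetric argument using right translations.
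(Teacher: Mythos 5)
Your proof is correct and follows essentially the same route as the paper: take $f$ in $\L_a$, so $f;a=a$, and conclude $f;(a;b)=(f;a);b=a;b$ by associativity, hence $f\in\L_{a;b}$. The preliminary check that $a;b\le\xoz$ is harmless extra bookkeeping that the paper leaves implicit.
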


\begin{proof}
If $f$ is  in  $\lal$\comma then $f;\al=\al$\comma by the
definition of $\lal$, and therefore   \[f;\abp=\abp\per\]  It follows that
$f$ is in $\L_{a;b}$.
\end{proof}

\medskip

\section{Left-regular and right-regular elements}\labelp{S:sec4}

A special class of elements called regular  elements   plays an
important role in the subsequent discussion. As will be seen, the
prototypical regular element  is an atom.  More generally, if $a$
is an atom below $\xoy$, and if $M$ is a subgroup of $\cs G\wx$
that extends the left stabilizer $\cs Ha$ of $a$, then the element
\[b=\tsum  M;a=\tsum\{f;a:f\in M\}\] is a regular element.

Many of the properties of regular elements hold for broader
classes of elements called left-regular elements and right-regular
elements respectively.  We begin with a study of these elements.

\begin{df} \labelp{D:regular}
Let $\wx$ and $\wy$ be measurable atoms\per  An
 element $a\le \xoy$  is called \textit{left-regular} or \textit{right-regular} respectively\comma according to whether
\[a;a\ssm=\suml \L_\al\qquad\text{or}\qquad a\ssm;a=\suml \R_\al\comma\]  and $a$ is called
\textit{regular} if it is both left and
right-regular\po
\qeddef
\end{df}

The next lemma and the   remarks following it are intended to clarify this
definition.

\begin{lm} \labelp{L:unique.set}
Let   $\wx$ and $\wy$ be measurable atoms\per For any   elements
$a$ and $b$ below $\xoy$\comma
   there are  uniquely determined  sets $E\seq
G_\wx$ and $F\seq G_\wy$ such that
\[
a;b\ssm=\ssum E
\qquad\text{and}\qquad
a\ssm;b=\ssum F\per
\]
If $a=b\neq 0$\co then $E$ and $F$ contain $\wx$ and $\wy$
respectively and  are closed under the operation of converse\po
\end{lm}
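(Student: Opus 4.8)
The plan is to reduce both claims to a single representation fact for squares: every element below $\xox$ is the sum of exactly those members of $G_\wx$ lying beneath it, and that set of members is uniquely determined. First I would check that the two products live below squares. Since $a\le\xoy$, \refL{square}(iii) and monotony give $b\ssm\le y;1;x$; then monotony together with \refL{square}(iv) yields $a;b\ssm\le(\xoy);(y;1;x)\le\xox$, and dually $a\ssm;b\le\yoy$. Thus it suffices to analyse an arbitrary element below $\xox$ (and, dually, below $\yoy$).

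Next I would establish the representation fact. By \corref{C:sumofGx} we have $\xox=\ssum G_\wx$, and by \refL{funcatom1} and \refL{nonzerofunct} the members of $G_\wx$ are precisely the atoms of $\f A$ lying below $\xox$. Working in the complete Boolean part of $\f A$, the infinite De Morgan law $-\ssum G_\wx=\prod\{-f:f\in G_\wx\}$ holds, and hence every $c\le\xox$ satisfies $c=\ssum\{f\in G_\wx:f\le c\}$: were the difference $d=c\cdot-\ssum\{f\in G_\wx:f\le c\}$ non-zero, it would lie below $\xox$ and yet be disjoint from every $f\in G_\wx$, forcing $d\le-\xox$ and so $d=0$. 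Applying this to $c=a;b\ssm$ produces $E=\{f\in G_\wx:f\le a;b\ssm\}$ with $a;b\ssm=\ssum E$, and dually $F=\{g\in G_\wy:g\le a\ssm;b\}$ with $a\ssm;b=\ssum F$, which settles existence.

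Uniqueness is then immediate. If $E'\seq G_\wx$ also satisfies $\ssum E'=a;b\ssm$, then each $f\in E'$ is an atom below $a;b\ssm$, so $E'\seq E$; conversely an atom $f\in G_\wx$ below $a;b\ssm=\ssum E'$ cannot (again by infinite De Morgan) be disjoint from every member of $E'$, and since distinct members of $G_\wx$ are disjoint, the member it meets is $f$ itself, so $f\in E'$ and $E\seq E'$. Hence $E$ is forced to be the set of $G_\wx$-atoms below $a;b\ssm$, which depends only on $a;b\ssm$; the same reasoning pins down $F$.

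For the final assertion, suppose $a=b\neq0$, so that $E$ collects the members of $G_\wx$ below $a;a\ssm$. Since $a\neq0$, \refL{domain}(i) gives $\wx=(a;a\ssm)\cdot\ident\le a;a\ssm$, while $\wx$ is the identity element of $G_\wx$ by \refL{nonzerofunct}; therefore $\wx\in E$, and dually $\wy\in F$. Closure under converse follows from the symmetry of the product: by the involution laws (R7) and (R6), $(a;a\ssm)\ssm=(a\ssm)\ssm;a\ssm=a;a\ssm$, so $f\le a;a\ssm$ forces $f\ssm\le a;a\ssm$, and $f\ssm\in G_\wx$ because $G_\wx$ is closed under converse; thus $f\in E$ implies $f\ssm\in E$. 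Repeating with $a\ssm;a$ shows $F$ is closed under converse. I expect the only genuine content to be the representation fact of the second step; the one subtle point there is the appeal to completeness of $\f A$, via the infinite De Morgan law, to exclude a non-zero element below $\xox$ that meets no atom of $G_\wx$.
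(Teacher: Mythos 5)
Your proposal is correct and follows essentially the same route as the paper: both reduce the claim to the observation that $a;b\ssm$ and $a\ssm;b$ lie below the squares $x;1;x$ and $y;1;y$, whose relativizations are atomic Boolean algebras with atom sets exactly $G_\wx$ and $G_\wy$, so that existence and uniqueness of $E$ and $F$ follow from the unique atomic decomposition, and the final assertion follows from $x\le a;a\ssm$ and the invariance of $a;a\ssm$ under converse. The only (harmless) differences are that you prove the atomic-decomposition fact from first principles where the paper simply cites Boolean algebra, and you verify closure under converse element-by-element rather than via complete distributivity of converse over sums.
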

\begin{proof} 
Assume
\begin{equation*}
  0 \le a,b\le \xoy\per\tag{1}\labelp{Eq:unique.1}
\end{equation*}
Use \refEq{unique.1}, \refL{laws}(i), monotony, and \refL{square}(iii) to obtain
 \begin{equation*}
 0\le b\ssm\le (\xoy)\ssm=\yox\per\tag{2}\labelp{Eq:unique.2}
\end{equation*}
Use  \refEq{unique.1}, \refEq{unique.2}, monotony, and
\refL{square}(iv) to arrive at
\begin{equation*}
0\le a;b\ssm  \le (\xoy) ; (\yox)\tag{3}\labelp{Eq:unique.3}
   \le \xox\per
\end{equation*}

The set $A(\xox)$ of elements below $\xox$ is a Boolean algebra with unit $\xox$, under
 the operations of addition and  complement relativized to $\xox$,
by the remarks preceding
\refL{auto}.  The unit $\xox$ is the sum of the set $G_x$\comma by \refC{sumofGx} and the assumption that $x$ is measurable.
  Moreover,    $G_x$ coincides with the set of atoms that are below $\xox$, by \refL{funcatom1}.
    It follows that the relativized Boolean algebra $A(\xox)$ is atomic, and
its set of atoms is $\G\wx$.  In an atomic Boolean algebra, each
 element  is the sum of a uniquely determined
 set of atoms.  Combine these remarks to conclude that each
element below $\xox$ is the sum of a uniquely determined subset of
$G_x$.  This applies in particular to the element $a;b\ssm$, by
\refEq{unique.3}, so there must be   a unique
 subset $E $ of $G_{\wx}$ such that
\begin{equation*}
a;b\ssm=\ssum E\per\tag{4}\labelp{Eq:unique.40}
\end{equation*}

Assume next that $a=b\neq 0$, and observe that $a;a\ssm\neq 0$, by
\refEq{unique.1}, \refEq{unique.2} (with $a$ in place of $b$), and \refL{domain}(iv). The element $a;a\ssm$ is left fixed by converse,
because
\begin{equation*}
  (a;a\ssm)\ssm=a\ssm{}\ssm;a\ssm=a;a\ssm\comma\tag{5}\labelp{Eq:unique.4}
\end{equation*}
by the involution laws (R7) and (R6).  Consequently,
\begin{equation*}
  \ssum E =  a;a\ssm = (a;a\ssm)\ssm =
(\ssum E)\ssm= \ssum\{f\ssm:f\in E\}\comma\tag{6}\labelp{Eq:unique.5}
\end{equation*}
by  \refEq{unique.40} (with $a$ in place of $b$),
\refEq{unique.4}, \refEq{unique.40},  and  complete
distributivity. Two sums of sets of atoms are equal
just in case the sets themselves are equal, so \refEq{unique.5}
implies that
\[ E=\{f\ssm:f\in E\}\per\] Thus,  the set
$E$ is closed under converse. The element $a$ is non-zero, by
assumption, and below $\xoy$, by \refEq{unique.1}, so it
 has as
its domain the atom $x$, by \refL{domain}. It follows that $x\le
a;a\ssm$, by the assumption on $x$ and \refL{domain}(i), and therefore $x$ is in $E$, by the definition of $E$.

The proof for   $a\ssm;b$ is just the dual of the preceding argument.
\end{proof}

Fix an element $a$ below $\xoy$.  Because of the preceding lemma
there is always a unique set of atoms $E\seq G_\wx$ such that
$a;a\ssm=\suml E$.  In what follows, this set will be denoted by
$X_\al$\per  Similarly, there is a unique set of atoms $F\seq
G_\wy$ such that $a\ssm;a=\suml F$, and this set will be denoted
by $Y_\al $.  This notation and the definitions of left- and right-regular elements immediately
imply the following corollary.
\begin{cor}\labelp{C:lrreg}
  An element $a\le\xoy$ is left-regular or right-regular if and
  only if $\cs Xa=\cs Ha$ or $\cs Ya=\cs Ka$ respectively\per
\end{cor}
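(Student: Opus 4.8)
The plan is to read this corollary as a direct translation of \dfref{D:regular} into the notation $\cs Xa$, $\cs Ya$ that was introduced just after \lmref{L:unique.set}, with the equivalence supplied by the uniqueness of atomic decompositions in that lemma. Indeed, by \dfref{D:regular} the element $a$ is left-regular exactly when $a;a\ssm=\tsum\cs Ha$ (recalling that $\cs Ha=\L_a$), whereas by the remark following \lmref{L:unique.set} the set $\cs Xa$ is \emph{defined} to be the unique subset of $\cs Gx$ for which $a;a\ssm=\tsum\cs Xa$. So the whole matter reduces to comparing two subsets of $\cs Gx$ that have the same sum.

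The one point needing care is to confirm that $\cs Ha$ is an admissible competitor in this uniqueness. First I would observe, via \corref{C:stab}, that $\cs Ha=\L_a$ is a subgroup of $\cs Gx$, and hence in particular a subset of $\cs Gx$; and via \lmref{L:funcatom1} together with \corref{C:sumofGx}, that every element of $\cs Gx$ is an atom below $\xox$. Thus $\cs Ha$ is, just like $\cs Xa$, a set of atoms drawn from $\cs Gx$. Since distinct sets of atoms have distinct sums, the equality $a;a\ssm=\tsum\cs Ha$ holds if and only if $\tsum\cs Ha=\tsum\cs Xa$, and this in turn holds if and only if $\cs Ha=\cs Xa$. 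Chaining these equivalences with \dfref{D:regular} yields that $a$ is left-regular if and only if $\cs Xa=\cs Ha$.

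The right-regular assertion is then obtained by the exact dual argument: replace $a;a\ssm$ by $a\ssm;a$, the group $\cs Gx$ by $\cs Gy$, and the stabilizer $\cs Ha$ by $\cs Ka$, and invoke the corresponding uniqueness of $\cs Ya$ furnished by \lmref{L:unique.set}. I do not anticipate any genuine obstacle, since the content is pure bookkeeping; the sole step that calls for a moment's thought is the verification that the stabilizer is a set of atoms lying inside the group $\cs Gx$, which is exactly what licenses the appeal to uniqueness.
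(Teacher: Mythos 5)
Your proof is correct and follows exactly the route the paper intends: the paper offers no explicit proof, stating only that the corollary is an immediate consequence of the definition of $\cs Xa$ (via the uniqueness in \lmref{L:unique.set}) and \dfref{D:regular}, which is precisely the bookkeeping you carry out. Your extra verification that $\cs Ha$ is a subset of $\cs Gx$, and hence a set of atoms eligible for the uniqueness argument, is a sound and welcome filling-in of the detail the paper leaves implicit.
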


\begin{lm} \labelp{L:regnonzero}
A left-regular or right-regular element is always non-zero\po
\end{lm}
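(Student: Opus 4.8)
The plan is to argue by contradiction, exploiting the fact that the stabilizer of the zero element is as large as it can possibly be. Suppose $a\le\xoy$ is left-regular but that $a=0$. On the one hand, $a;a\ssm=0$ at once, by \refL{laws.1}(i) together with \refL{laws}(i). On the other hand, the left stabilizer $\L_a=\L_0=\{f\in\G x:f;0=0\}$ is the \emph{entire} group $\G x$, because $f;0=0$ holds for every $f$ by \refL{laws.1}(i). The defining equation for left-regularity then forces
\[0=a;a\ssm=\tsum\L_a=\tsum\G x=\xox\comma\]
where the final equality is \refC{sumofGx}.

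It remains only to observe that the square $\xox$ of a measurable atom is never zero, which contradicts the display above. This is immediate: $x$ is the identity element of the group $\G x$ by \refL{nonzerofunct}, so $x\in\G x$ and hence $x\le\tsum\G x=\xox$; since $x$ is an atom we have $x\neq 0$, and therefore a nonzero element lies below $\xox$, giving $\xox\neq 0$. This settles the left-regular case, and the right-regular case follows by the dual argument, replacing $\L_a$, $\G x$, and $a;a\ssm$ throughout by $\R_a$, $\G y$, and $a\ssm;a$ respectively (using the right-hand analogue of \refC{sumofGx}).

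The only point that requires any care—and essentially the single place the argument could go astray—is the identification of the stabilizer $\L_0$ with all of $\G x$; every other step is routine bookkeeping. Conceptually, regularity of $a$ pins the ``domain part'' $a;a\ssm$ to the sum over its left stabilizer, and since $0$ is fixed by the whole group, regularity of $0$ would collapse the full square $\xox$ to zero, which the nondegeneracy of a measurable atom forbids.
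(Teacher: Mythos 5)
Your argument is correct and is essentially the paper's own proof: both hinge on the observation that the left stabilizer of $0$ is all of $\G x$ while $0;0\ssm=0$, the only cosmetic difference being that you compare the sums ($\tsum\G x=\xox\neq 0$ versus $0$) where the paper compares the underlying sets of atoms ($\cs H0\neq\varnothing=\cs X0$) via \refC{lrreg}. The dual argument for right-regular elements is handled the same way in both.
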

\begin{proof}
The stabilizer $\cs H0$ of  the zero element $0$ is $\G x$\comma because $f;0=0$ for all
$f\in G_\wx$\comma by \refL{laws.1}(i).  Notice that this stabilizer is not empty since
it contains, for example, the  element $x$. On the other hand,
\[0;0\ssm =0=\tsum \varnot\comma\] by \refL{laws.1}(i), so the set $\cs X0$ of atoms below $0;0\ssm$ is empty.
It follows that the sets $\cs H0$ and $\cs X0$ cannot be equal, and therefore $0$
cannot be left-regular, by \refC{lrreg}. A dual argument proves the corresponding result for right-regular elements.
\end{proof}

In the remainder of this section, we shall usually only formulate lemmas and theorems for left-regular element,
leaving the formulations and proofs of the dual results for right-regular elements to the
reader. When there is a need to refer to such a result, we shall simply refer to ``the right-regular version of \dots".
\begin{lm} \labelp{L:leftcos}
Suppose  $\wx $ and $\wy$ are measurable atoms\comma and $0<a \le \xoy$\po
\begin{itemize}
\item[\opar i\cpar] An element $f$ in $ G_\wx$  belongs to $ X_\al$ if and only if
$(f;a)\cdot a\not= 0$\po
\item[\opar ii\cpar] $\L_\al\seq \L_{a;a\ssm}\seq X_\al\seq
G_\wx$\po
\end{itemize}
\end{lm}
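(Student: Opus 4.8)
The plan is to reduce both parts to the single observation, already contained in the proof of \lmref{L:unique.set}, that the relativized Boolean algebra $A(\xox)$ is atomic with set of atoms $\G x$, and that $X_a$ is exactly the set of those atoms lying below $a;a\ssm$. In other words, for $f\in\G x$ we have $f\in X_a$ if and only if $f\le a;a\ssm$, and, since every element of $\G x$ is an atom, this is in turn equivalent to $f\cdot(a;a\ssm)\neq 0$. To prove part (i) it therefore suffices to show that $f\cdot(a;a\ssm)\neq 0$ if and only if $(f;a)\cdot a\neq 0$. I would obtain this straight from the cycle laws of \lmref{L:laws.1}(iv): taking the first and third forms with $b$ replaced by $a\ssm$ and $c$ by $f$ gives $(a;a\ssm)\cdot f=0$ if and only if $(f;a)\cdot a=0$ (using $(a\ssm)\ssm=a$), and passing to contrapositives yields the claim.

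For part (ii) the two outer links are routine. The containment $X_a\seq\G x$ holds by the very definition of $X_a$ as a subset of $\G x$. The containment $\L_a\seq\L_{a;a\ssm}$ is the special case $b=a\ssm$ of \lmref{L:include} (valid since $a\ssm\le\yox$); unwound, it is just the remark that $f;a=a$ forces $f;(a;a\ssm)=(f;a);a\ssm=a;a\ssm$ by associativity.

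The one link carrying real content is the middle inclusion $\L_{a;a\ssm}\seq X_a$. Here I would exploit that the group identity $x$ already lies below $a;a\ssm$: since $0<a\le\xoy$, the element $a$ has domain $x$, so $x=(a;a\ssm)\cdot\ident\le a;a\ssm$ by \lmref{L:domain}(i). Given $f\in\L_{a;a\ssm}$, so that $f;(a;a\ssm)=a;a\ssm$, and using $f=f;x$ from \lmref{L:domain}(iii) together with monotony, I obtain
\[ f=f;x\le f;(a;a\ssm)=a;a\ssm, \]
whence $f\le a;a\ssm$, that is, $f\in X_a$ by the characterization from the first paragraph. The main obstacle is precisely recognizing this last step: membership in $X_a$ is a \emph{containment} condition ($f\le a;a\ssm$), whereas membership in $\L_{a;a\ssm}$ is a \emph{stabilization} condition, and the bridge between them is to multiply the known containment $x\le a;a\ssm$ on the left by $f$ and invoke that $f$ fixes $a;a\ssm$. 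Everything else is standard cycle-law and monotony bookkeeping.
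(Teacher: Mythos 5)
Your proposal is correct and follows essentially the same route as the paper: part (i) via the atomicity of $f$ and the cycle laws of Lemma~\ref{L:laws.1}(iv), the first inclusion of (ii) via Lemma~\ref{L:include} applied to $a$ and $a\ssm$, and the middle inclusion via $x\le a;a\ssm$ together with $f=f;x\le f;(a;a\ssm)=a;a\ssm$. The only cosmetic difference is that you cite Lemma~\ref{L:domain}(i) directly for $x\le a;a\ssm$ where the paper invokes the second assertion of Lemma~\ref{L:unique.set}, which amounts to the same thing.
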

\begin{proof} 
For any element  $f$  in $G_\wx$,
\begin{align*}
f\in X_\al \qquad&\text{if and only if}\qquad f\le a;a\ssm\comma\\
\qquad&\text{if and only if}\qquad f\cdot (a;a\ssm)\not= 0\comma\\
\qquad&\text{if and only if}\qquad  (f;(a\ssm)\ssm)\cdot a\not= 0\comma\\
\qquad&\text{if and only if}\qquad (f;a)\cdot a\not= 0\comma
\end{align*}
 by the definition of $X_\al$\comma the fact that $f$ is an atom (by Lemmas \ref{L:funcatom1}, \ref{L:nonzerofunct},
 and the definition of $\cs Gx$),  the cycle laws, and the first involution law.
This proves (i).

To establish the first inclusion in (ii), recall that
\[\al\ssm\le (\xoy)\ssm=\yox\comma\] by the assumption on $a$, monotony, and \refL{square}(iii).  Apply  \refL{include} (with $a\ssm$ and $x$ in place of $b$ and $z$ respectively)
to arrive at the desired inclusion.  To establish the second inclusion, assume that $f$ is in $\L_{a;a\ssm}$\per
The second assertion of \refL{unique.set} implies that $x$ is below $a;a\ssm$.  Consequently,
\begin{equation*}
f= f;x\le f;a;a\ssm=a;a\ssm=\ssum X_\al\comma\tag{1}\labelp{Eq:leftcos.1}
\end{equation*}
 by \refL{nonzerofunct},   monotony, the assumption that $f $ is in the stabilizer of $ a;a\ssm$, and the definition of  $X_\al$\per
  Since $f$ is an atom, and $\cs Xa$  a set of atoms,
it follows from \refEq{leftcos.1} that $f$ must belong to
$X_\al$\per
The final inclusion in (ii) is a consequence of the definition of  $\cs Xa$.
\end{proof}

It is of some interest to conclude from \refL{leftcos} that the set $X_\al$ is a union of left cosets of
$\L_\al$ in $G_\wx$.
Indeed, if
$f $ and $g$  are in the same left coset of
$\L_\al$\comma then $f;a=g;a$ by \refC{stab}, and therefore
\[
(f;a)\cdot a\not=0
\quad\text{if and only if}\quad
(g;a)\cdot a\not=0\per
\]
It  follows from this equivalence and  part (i) of the preceding lemma that $f$ is in $X_\al$ if and only if $g$ is in $
X_\al$\per In other words, if one element of a left coset of $\L_\al$ belongs to $X_\al$\comma then
the entire coset is included in $X_\al$\per

\begin{lm} \labelp{L:equiv}
Let $\wx$ and $\wy$ be measurable atoms\per For each non-zero $a\le \xoy$\comma the
following are equivalent\per
\begin{enumerate}
\item[\opar i\cpar] $a$ is left-regular\per
\item[\opar ii\cpar] For any $f$ in $G_\wx$\rrcomma\ either $f;a=a$ or
$(f;a)\cdot a=0$\per
\item[\opar iii\cpar] For any $f$ and $g$ in $ G_\wx$\rrcomma\ either $f;a=g;a$ or
$(f;a)\cdot (g;a)=0$\po
\end{enumerate}
\end{lm}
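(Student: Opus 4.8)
The plan is to route both equivalences through the sets $X_a$ and $\L_a$ together with the automorphism property of left translation, reducing the two-variable condition (iii) to the one-variable condition (ii), and then identifying (ii) with left-regularity. First I would dispatch (i)$\,\Leftrightarrow\,$(ii). By \refC{lrreg}, $a$ is left-regular exactly when $X_a=\L_a$, and \refL{leftcos}(ii) supplies the inclusion $\L_a\seq X_a$ for free; so left-regularity is equivalent to the reverse inclusion $X_a\seq\L_a$. Now \refL{leftcos}(i) says that $f\in X_a$ iff $(f;a)\cdot a\neq 0$, while $f\in\L_a$ means $f;a=a$ by definition. Hence $X_a\seq\L_a$ asserts precisely that $(f;a)\cdot a\neq 0$ forces $f;a=a$ for every $f\in\G\wx$, which is the contrapositive of (ii). So (i) and (ii) are interchangeable with essentially no computation.

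The remaining work is (ii)$\,\Leftrightarrow\,$(iii). The implication (iii)$\,\Rightarrow\,$(ii) is immediate: specialize $g$ to the group identity $\wx$ and use $\vth_\wx(a)=a$ from \refL{mappings}, so that the clause $f;a=g;a$ becomes $f;a=a$ and the clause $(f;a)\cdot(g;a)=0$ becomes $(f;a)\cdot a=0$. The substance lies in (ii)$\,\Rightarrow\,$(iii). Given arbitrary $f,g\in\G\wx$, the idea is to pull the pair $f;a$, $g;a$ back to a pair of the shape $h;a$, $a$ by applying the left translation $\vth_{g\ssm}$. Using the group law $g\ssm;g=\wx$ and the left-action identities of \refL{mappings}, one computes $\vth_{g\ssm}(g;a)=\vth_{g\ssm;g}(a)=\vth_\wx(a)=a$ and $\vth_{g\ssm}(f;a)=\vth_{g\ssm;f}(a)=h;a$, where $h=g\ssm;f\in\G\wx$. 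Since $\vth_{g\ssm}$ is a Boolean automorphism of $A(\xoy)$ by \refL{auto}, it is injective and preserves both meets and the zero element; therefore $f;a=g;a$ iff $h;a=a$, and $(f;a)\cdot(g;a)=0$ iff $(h;a)\cdot a=0$. Applying hypothesis (ii) to the single element $h$ then delivers exactly the dichotomy of (iii).

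I expect the translation step in (ii)$\,\Rightarrow\,$(iii) to be the only genuine obstacle: one must be sure that $\vth_{g\ssm}$ transports both clauses of (iii) faithfully at once, sending equality to equality and disjointness to disjointness. This is precisely what \refL{auto} guarantees, since it hands us an honest Boolean automorphism rather than a mere bijection, so the two clauses are handled simultaneously; everything else is bookkeeping with \refC{lrreg} and \refL{leftcos}.
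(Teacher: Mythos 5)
Your proof is correct, and its overall architecture matches the paper's: you identify (i) with (ii) through the equality $X_a=\L_a$ via Corollary~\ref{C:lrreg} and Lemma~\ref{L:leftcos}, you get (iii)$\Rightarrow$(ii) by specializing $g$ to the group identity $\wx$, and you reduce (ii)$\Rightarrow$(iii) to the single element $g\ssm;f$. The one genuine difference is how the two clauses of (iii) are transported to the corresponding clauses about $g\ssm;f$. The paper splits this into two separate justifications: the equality clause $f;a=g;a\Leftrightarrow g\ssm;f;a=a$ is handled with the group structure and Corollary~\ref{C:stab}, while the disjointness clause $(g\ssm;f;a)\cdot a=0\Leftrightarrow(f;a)\cdot(g;a)=0$ is obtained by a direct relation-algebraic computation with the cycle laws (Lemma~\ref{L:laws.1}(iv)). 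You instead invoke Lemma~\ref{L:auto} once: since $\vth_{g\ssm}$ is a Boolean automorphism of $A(\xoy)$, it is injective, preserves meets, and reflects $0$, so it carries equality to equality and disjointness to disjointness simultaneously. Your route is slightly more uniform and conceptual, trading an explicit cycle-law computation for the automorphism machinery already established; the paper's route is more elementary in that it stays closer to the axioms and does not need the full strength of Lemma~\ref{L:auto} for this step. Both are sound, and the bookkeeping in your (i)$\Leftrightarrow$(ii) argument is exactly the paper's argument run in both directions at once.
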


\begin{proof} 
Let $f$ and $g$ be elements of $\G x$, and  $a$  a non-zero element below
$\xoy$.  To establish the implication from (i) to (ii), assume  that
$a$ is left-regular, and observe that
\begin{equation*}\tag{1}\labelp{Eq:equiv.1}
  \L_\al = X_\al\comma
\end{equation*}
  by \refC{lrreg}.  If
$f$ is in $
\L_\al$\comma then
$f;a=a$, by definition of the left stabilizer, and if
$f$ is not in $\L_\al$\comma then
$f$ is not in $ X_\al$\comma by \refEq{equiv.1}, and consequently $(f;a)\cdot  a=0$, by
\refL{leftcos}(i).  Thus, (ii) holds.

To derive (iii) from (ii), observe first that
\begin{equation*}\tag{2}\labelp{Eq:equiv.2}
 f;a= g;a \qquad\text{if and only if}\qquad g\ssm ;f; a=a\per
\end{equation*}  Indeed, if
$f;a=g;a$\comma then
\[g\ssm;f;a=g\ssm;g;a=x;a=a\comma\]
by Lemmas \ref{L:nonzerofunct} and \ref{L:domain}(iii).  On the other hand, if $g\ssm;f;a=a$\comma then $g\ssm;f$
is in the left stabilizer $\h a$ of $a$,   so that $f$ and $g$ must belong to the same left coset of   $\cs Ha$.  Use \refC{stab} to conclude
that $f;a=g;a$.
From \refEq{equiv.2}, (ii) applied to the element $g\ssm;f$, and the cycle laws, it follows that
\begin{align*}
 f;a\not = g;a \qquad &\text{if and only if}\qquad g\ssm ;f; a\not =a\comma\\
&\text{if and only if}\qquad (g\ssm;f; a)\cdot a=0\comma\\
&\text{if and only if}\qquad (g;a)\cdot (f;a)=0\per
\end{align*}

The implication from (iii) to (ii) is trivial: just take $g$ to
be the element $\wx$\comma and use \refL{domain}(iii).

Finally, to derive (i) from (ii), assume  that (ii) holds. Certainly, $\L_\al$ is included in the set $ X_\al$\comma by
\refL{leftcos}(ii). For the reverse inclusion, consider an element $f$ in $\cs Xa$. Use  \refL{leftcos}(i) to see that $(f;a)\cdot a\not=0$,
and then invoke (ii) to obtain $f;a=a$. This implies that $f$ is in the left stabilizer $\cs Ha$, so  $\cs Xa$ is included in $\cs Ha$. Thus,
 \refEq{equiv.1} holds, so $a$ is left-regular, by \refC{lrreg}.
\end{proof}

The next corollary implies that in a measurable relation algebra, atoms are always regular elements. This will play a very important
role in the proof of the representation theorem for measurable relation algebras.
\begin{cor} \labelp{C:atoms}
Let $\wx $ and $\wy$ be measurable atoms\po Every atom below $\xoy$ is
regular\po
\end{cor}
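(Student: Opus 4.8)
The plan is to show that an atom $a\le\xoy$ satisfies both the left- and the right-regularity criteria, using the translation-free reformulation of regularity in \lmref{equiv} together with the fact that left and right translations are Boolean automorphisms and therefore carry atoms to atoms.

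For left-regularity I would appeal to the equivalence of (i) and (ii) in \lmref{equiv}, which reduces the task to verifying condition (ii): for every $f$ in $\G\wx$, either $f;a=a$ or $(f;a)\cdot a=0$. The crucial observation is that, by \lmref{auto}, the translation map $\vth_f$ is an automorphism of the relativized Boolean algebra on $A(\xoy)$, so $f;a=\vth_f(a)$ is again an atom below $\xoy$ whenever $a$ is. Since any two atoms of a Boolean algebra are either equal or have product $0$, the pair $f;a$ and $a$ automatically satisfies the dichotomy demanded in (ii). Hence $a$ is left-regular. One small point must be checked, namely that $a$, being an atom of $\f A$ below $\xoy$, is also an atom of the relativized Boolean algebra; but this is exactly the remark recorded just before \lmref{auto}, so (ii) genuinely applies.

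For right-regularity I would argue dually. By the right-hand analogue of \lmref{auto} noted in the text---that right translation $\psi_g$ by an element $g$ of $\G\wy$ is a Boolean automorphism of $A(\xoy)$---the element $a;g$ is an atom below $\xoy$ for each such $g$. Invoking the right-regular version of \lmref{equiv}, it suffices to know that for each $g$ either $a;g=a$ or $(a;g)\cdot a=0$, and this is once more just the atom dichotomy. Thus $a$ is right-regular, and being both left- and right-regular, it is regular by \dfref{regular}.

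I do not anticipate any genuine obstacle here: the corollary follows almost immediately once one recognizes that the translations are Boolean automorphisms, hence atom-preserving, and that the characterizations (ii) of left- and right-regularity are tailor-made for the elementary fact that distinct atoms are disjoint. The only care needed is the bookkeeping distinction between atoms of $\f A$ and atoms of the relativization, which the excerpt has already resolved.
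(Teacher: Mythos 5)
Your proof is correct and follows exactly the paper's own argument: apply \lmref{L:auto} (and its right-hand analogue) to see that each translation of an atom is again an atom, invoke the atom dichotomy to get condition (ii) of \lmref{L:equiv} (and its right-regular version), and conclude regularity. The extra care you take about atoms of $\f A$ versus atoms of the relativization is a point the paper handles in the remarks preceding \lmref{L:auto}, as you note.
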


\begin{proof} 
Let $a$ be an atom below $\xoy$. For each $f$ in $G_\wx$  the left translation $f;a$
is also an atom by \refL{auto}, so $f;a=a$ or
$(f;a)\cdot a=0$. Apply \refL{equiv} to conclude that $a$ is left-regular. A dual argument,
involving the version of \refL{equiv} that applies to right-regular elements, shows that $a$ is right-regular. Consequently, $a$ is regular.
\end{proof}

\begin{lm}\labelp{L:lessthan}  Let  $x$ and $y$ be measurable atoms\comma and $a$ and $b$ left-regular
elements below $\xoy$\po\ If $a\le b$\co then $\L_\al\seq \L_\bl$\po
\end{lm}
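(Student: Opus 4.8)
The plan is to reduce everything to the characterization of left-regularity furnished by \refL{equiv}, applied not to $\al$ but to $\bl$. Since $\bl$ is left-regular, clause (ii) of that lemma holds for $\bl$: for every $f$ in $\G x$, either $f;\bl=\bl$ or $(f;\bl)\cdot \bl=0$. Hence, to show that a given $f\in\L_\al$ lies in $\L_\bl$, i.e.\ that $f;\bl=\bl$, it suffices to exclude the second alternative by proving $(f;\bl)\cdot \bl\neq 0$.

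So first I would fix an arbitrary $f$ in $\L_\al$, which by \refD{stabnot} means $f;\al=\al$. The hypothesis $\al\le \bl$ together with monotony (\refL{laws.1}(iii)) gives $\al=f;\al\le f;\bl$. Combining this with $\al\le \bl$ yields $\al\le (f;\bl)\cdot \bl$. Because $\al$ is left-regular it is non-zero, by \refL{regnonzero}, and therefore $(f;\bl)\cdot \bl\neq 0$.

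Then I would invoke the dichotomy for $\bl$ coming from \refL{equiv}(ii): the alternative $(f;\bl)\cdot \bl=0$ having just been ruled out, the only possibility left is $f;\bl=\bl$, so that $f\in\L_\bl$. As $f$ was an arbitrary element of $\L_\al$, this establishes $\L_\al\seq\L_\bl$.

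I do not expect a genuine obstacle here once \refL{equiv} is in hand; the one point requiring care is to apply the ``fix-or-disjoint'' dichotomy to $\bl$ rather than to $\al$. It is worth noting that the left-regularity of $\al$ enters only through \refL{regnonzero}, to guarantee $\al\neq 0$; the substantive hypothesis driving the argument is the left-regularity of $\bl$.
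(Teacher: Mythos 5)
Your argument is correct, and it takes a genuinely different route from the paper's. The paper works globally with the atom sets $\cs Xa$ and $\cs Xb$: left-regularity together with \refC{lrreg} gives $\tsum\L_\al=a;a\ssm$ and $\tsum\L_\bl=b;b\ssm$, monotony gives $a;a\ssm\le b;b\ssm$, and the inclusion $\L_\al\seq\L_\bl$ then follows because both stabilizers are sets of atoms. You instead argue elementwise: for $f\in\L_\al$ you observe $a=f;a\le f;b$ and $a\le b$, hence $(f;b)\cdot b\ge a>0$ (using \refL{regnonzero} to get $a\neq 0$), and the fix-or-disjoint dichotomy of \refL{equiv}(ii), applied to the left-regular element $b$, forces $f;b=b$. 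Both proofs are short and rest on previously established characterizations of left-regularity, so neither is circular; yours has the small advantage, which you note yourself, that the left-regularity of $a$ enters only to guarantee $a\neq 0$, so your argument in fact proves the slightly stronger statement that $\L_\al\seq\L_\bl$ whenever $0<a\le b$ and $b$ is left-regular. The one point worth making explicit is that \refL{equiv} is stated for non-zero elements, so you should record that $b\neq 0$ (by \refL{regnonzero}, or simply because $0<a\le b$) before invoking its clause (ii) for $b$.
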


\begin{proof}
If $a\le b$, then
\begin{equation}\tag{1}\labelp{Eq:lessthan.1}
\ssum \L_\al  = \ssum X_\al
= a;a\ssm
\leq b;b\ssm
= \ssum X_\bl
= \ssum \L_\bl\comma
\end{equation}
by \refC{lrreg} and the assumption that $a$ is left-regular, the definition of $\cs Xa$, the assumption that $a\le b$ and monotony,
the definition of $\cs Xb$, and \refC{lrreg} and the assumption that $b$ is left-regular.
The desired inclusion   follows from \refEq{lessthan.1} and the fact that $\L_\al$ and $\L_\bl$ are sets of atoms.
\end{proof}

   Suppose  $x$ and $y$ are
measurable atoms and  $0<a\le\xoy$.   Fix a left coset system in $\cs Gx$ of the left stabilizer  $\cs Ha$, and denote it by
$\langle \L_{a,\x} : \x<\kp_\al\rangle$ (where $\cs \kappa a$ is, say, an ordinal number that is defined to coincide with the set
of its predecessors).   When no confusion will arise,  we shall
drop the reference to $a$ and write simply $\langle \L_\x : \x <\kp\rangle$.
Similarly, fix a right coset system in $\cs Gy$ of the right stabilizer  $\R_\al$, and denote it by
$\langle \R_{a,\eta} : \eta <\la_\al \rangle$ or simply by $\langle
\R_\eta : \eta<\la\rangle$.  The next lemma  uses the notation $\cs H\xi;a$ that was introduced after \refC{stab} to denote the element $f;a$
for $f$ in $\cs H\xi$.
\begin{lm}[First Partition Lemma] \labelp{L:PartitionLemma}
 Let $\wx$ and $\wy$ be measurable atoms\po\ If $a\le \xoy$
is  left-regular\comma
then $\langle
\L_\x;a : \x <\kp\rangle$ forms a partition of $\xoy$\po
\end{lm}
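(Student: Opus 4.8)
The plan is to check the three requirements for $\langle\L_\x;a:\x<\kp\rangle$ to be a partition of $\xoy$: each block $\L_\x;a$ is non-zero, distinct blocks are disjoint, and the blocks sum to $\xoy$. Throughout I use that the cosets $\langle\L_\x:\x<\kp\rangle$ exhaust $\G x$ and that, by \refC{stab}, the value $f;a$ depends only on the coset of $f$; hence the blocks $\L_\x;a$ are precisely the distinct values of the left translation $f;a$ as $f$ ranges over $\G x$. Non-zeroness is immediate: $a\neq 0$ because $a$ is left-regular (\refL{regnonzero}), and each $\vth_f$ is a permutation of $A(\xoy)$ (\refL{mappings}) with $\vth_f(0)=f;0=0$, so $f;a=\vth_f(a)\neq 0$.

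The crux is disjointness, and it is here that left-regularity is genuinely used. Let $\x\neq\eta$ and choose $f\in\L_\x$ and $g\in\L_\eta$. Since $f$ and $g$ lie in different cosets of the stabilizer $\L_a$, \refC{stab} gives $f;a\neq g;a$, so the two blocks are distinct; and since $a$ is left-regular, \refL{equiv} (in the form of its condition (iii)) forces $(f;a)\cdot(g;a)=0$. Thus $\L_\x;a$ and $\L_\eta;a$ are disjoint, as required.

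For the covering property, note that every $f;a$ with $f\in\G x$ is one of the blocks, so $\tsum\{\L_\x;a:\x<\kp\}=\tsum\{f;a:f\in\G x\}$. By complete distributivity this equals $(\tsum\G x);a$, which is $(\xox);a$ by \refC{sumofGx}. It remains to simplify $(\xox);a$ to $\xoy$ using the domain and range laws. Since $a$ is non-zero and below $\xoy$, its domain is $x$ and its range is $y$ (\refL{domain}), so $x;a=a$ by \refL{domain}(iii) and $1;a=1;y$ by \refL{domain}(ii). Associativity then yields $(\xox);a=x;1;(x;a)=x;1;a=x;(1;a)=x;(1;y)=\xoy$, completing the verification.

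The only step with real content is the disjointness clause, since it is the single place the left-regularity hypothesis is invoked (through the equivalence in \refL{equiv}); non-zeroness follows from the group action being by permutations, and the covering property is pure coset bookkeeping together with the completely distributive law and the domain/range arithmetic already in hand.
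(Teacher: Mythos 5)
Your proof is correct and follows essentially the same route as the paper's: distinctness of the blocks via \refC{stab}, disjointness via \lmref{L:equiv}(iii), non-zeroness from \refL{regnonzero} together with the fact that left translation is a bijection fixing $0$, and the covering identity $\tsum\{\L_\x;a:\x<\kp\}=(\xox);a=\xoy$ via \refC{sumofGx}, complete distributivity, and \refL{domain}(ii),(iii). The only cosmetic differences are that you cite \refL{mappings} where the paper cites \refL{auto} for non-zeroness, and you run the domain/range computation in the opposite direction.
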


\begin{proof} 
 It must be shown that
the
elements $\L_\x;a$ are non-zero\co pairwise disjoint\co and sum
to $\xoy$.  They  are pairwise distinct by \refC{stab}, and therefore
 pairwise disjoint by \lmref{L:equiv}(iii).
 The left-regular element $a$ is non-zero, by   \refL{regnonzero}, so its translation $\L_\x;a$ is non-zero by  \refL{auto}.
Finally,
\begin{multline*}
\xoy = \wx;1;a
= \xox;a
= (\ssum G_\wx); a\\
= \ssum \{ h;a : h\in G_\wx\}
= \ssum \{ \L_\x;a : \x <\kp\}\comma
\end{multline*}
by Lemmas \ref{L:domain}(ii),(iii),
\refC{sumofGx}, and complete distributivity.
\end{proof}

 \begin{cor}\labelp{C:equalmult}  Let  $\wx$ and $\wy$ be measurable
atoms\comma and
$a\le \xoy$  a left-regular element\per If $X$ and $Y$ are unions of left cosets of $\L_\al$\comma then
\begin{alignat*}{3}\tsum X;a &\le \ssum Y;a&\qquad&\text{if and only if}&\qquad X&\seq Y\comma\\
\intertext{and consequently\comma}\tsum X;a &= \ssum Y;a&\qquad&\text{if and only if}&\qquad X&= Y\per
\end{alignat*}
\end{cor}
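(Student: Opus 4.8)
The plan is to reduce everything to the First Partition Lemma (\refL{PartitionLemma}), together with two of its consequences for the translated cosets: that each block $\L_\x;a$ is non-zero, and that blocks with distinct indices are disjoint. First I would fix the left coset system $\langle\L_\x:\x<\kp\rangle$ of $\L_\al$ in $\G x$ used in \refL{PartitionLemma}, and write $X=\bigcup_{\x\in I}\L_\x$ and $Y=\bigcup_{\x\in J}\L_\x$ for the index sets $I,J\seq\kp$ determined by $X$ and $Y$; this is possible precisely because $X$ and $Y$ are unions of cosets. Since, by \refC{stab} and the notation introduced just after it, every element of a single coset $\L_\x$ produces the same translation $\L_\x;a$, we have $\tsum X;a=\tsum\{\L_\x;a:\x\in I\}$ and $\tsum Y;a=\tsum\{\L_\x;a:\x\in J\}$. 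Finally, since the cosets partition $\G x$, the set inclusion $X\seq Y$ is equivalent to $I\seq J$.

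For the first equivalence, the implication from $X\seq Y$ to $\tsum X;a\le\tsum Y;a$ is immediate, since then $\{f;a:f\in X\}\seq\{f;a:f\in Y\}$ and the sum of a subset lies below the sum of the whole set. For the reverse implication I would assume $\tsum X;a\le\tsum Y;a$ and take an arbitrary $\x_0\in I$. Its block $\L_{\x_0};a$ is non-zero and satisfies $\L_{\x_0};a\le\tsum X;a\le\tsum Y;a$. Because \refL{PartitionLemma} shows that the family $\langle\L_\x;a:\x<\kp\rangle$ partitions $\xoy$, blocks with distinct indices are disjoint; hence, if $\x_0$ were not in $J$, then, as multiplication distributes over arbitrary sums in the complete Boolean part of $\f A$,
\[\L_{\x_0};a=(\L_{\x_0};a)\cdot(\tsum Y;a)=\tsum\{(\L_{\x_0};a)\cdot(\L_\x;a):\x\in J\}=0,\]
contradicting $\L_{\x_0};a\neq 0$. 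Thus $\x_0\in J$, so $I\seq J$, i.e.\ $X\seq Y$.

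The second equivalence then follows by antisymmetry: $\tsum X;a=\tsum Y;a$ holds exactly when $\tsum X;a\le\tsum Y;a$ and $\tsum Y;a\le\tsum X;a$ both hold, which by the first equivalence is equivalent to $X\seq Y$ and $Y\seq X$, that is, to $X=Y$. I do not expect a serious obstacle here: the one point requiring care is that the passage from a union of cosets to its translated sum be order-reflecting, and this rests entirely on the non-vanishing and disjointness of the blocks supplied by \refL{PartitionLemma} (with the well-definedness of $\L_\x;a$ coming from \refC{stab}). It is worth noting that the left-regularity of $a$ enters only in the reverse implication, through its role as the hypothesis of the Partition Lemma.
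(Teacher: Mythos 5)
Your proof is correct and follows essentially the same route as the paper: both arguments reduce the reverse implication to the First Partition Lemma, using the non-vanishing and pairwise disjointness of the translations $\L_\x;a$ to show that any coset of $\L_\al$ contained in $X$ must also lie in $Y$. The only cosmetic difference is that you reach the conclusion by a contradiction via complete distributivity of Boolean meet over the sum, whereas the paper argues directly that the nonzero block $\L_\xi;a$ must coincide with some block $\L_\eta;a$ arising from $Y$; the substance is identical.
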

\begin{proof}  The proof of the implication  from right to left in the first assertion is   trivial.  To prove the reverse
implication, assume that
\begin{equation*}\tag{1}\labelp{Eq:equalmult.1}
  \tsum X;a\le\tsum Y;a\comma
\end{equation*}
and  consider a left coset $\L_\xi$ that is included in
$X$.  Obviously,
\begin{equation*}\L_\xi;a\le \ssum X;a \le \ssum Y;a\comma
\end{equation*} by \refEq{equalmult.1} and the fact that
the element $\cs H\xi;a$ belongs to the set $X;a$.
Two left translations of $a$ by cosets of $\cs Ha$ are either equal or  disjoint, by \refL{PartitionLemma},
so there must be a left coset $\L_\eta$ included in $Y$ such that
$\L_\xi;a=\L_\eta;a$.  Distinct left cosets of $\L_\al$ give rise to disjoint left
translations of $a$, again by \refL{PartitionLemma}, so $\xi=\eta$.  Thus, every  left coset of $\cs Ha$  that is included in $X$ is also included in  $Y$, and therefore $X$ must be  included in  $ Y$.  This proves the first assertion of the corollary.  The second  is an immediate consequence of the first.
\end{proof}

A sense of the importance of    Partition \refL{PartitionLemma} can be gained from the following
consequence.

\begin{lm}[Atomic Partition Lemma]\labelp{L:sumatom} Let $\wx$ and
$\wy$ be measurable atoms\po\ If $a\le \xoy$  is an atom\comma
  then  $\langle \L_\x;a :\x<\kp\rangle$ is
a listing of the distinct atoms below $\xoy$ and these atoms sum to $\xoy$\po
Consequently\co every element below $\xoy$ is the sum  of a unique subset of
these atoms\po
\end{lm}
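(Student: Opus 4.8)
The plan is to derive the entire statement from the First Partition Lemma, \refL{PartitionLemma}, together with the automorphism property of left translation. First I would note that an atom $a\le\xoy$ is in particular left-regular, by \refC{atoms}. This makes \refL{PartitionLemma} applicable, and it yields that $\langle\L_\xi;a:\xi<\kp\rangle$ is a partition of $\xoy$: the translations $\L_\xi;a$ are non-zero, pairwise disjoint, and sum to $\xoy$. They are moreover pairwise distinct, since distinct cosets of $\L_a$ give distinct translations of $a$ by \refC{stab}.

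The next step is to upgrade ``non-zero'' to ``atom''. Each block $\L_\xi;a$ is of the form $f;a$ for some $f$ in $\G x$, and because $a$ is an atom below $\xoy$, \refL{auto} guarantees that $f;a$ is again an atom. Hence the blocks of the partition are in fact distinct atoms that sum to $\xoy$, which establishes the first half of the lemma except for the claim that the list is \emph{complete}, i.e.\ that no other atoms lie below $\xoy$.

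To see exhaustiveness I would take an arbitrary atom $b\le\xoy$ and write $b=b\cdot\xoy=\tsum_{\xi<\kp}\bigl(b\cdot(\L_\xi;a)\bigr)$, using that the blocks sum to $\xoy$ and Boolean distributivity. Since $b$ is an atom, some summand $b\cdot(\L_\xi;a)$ is non-zero; as both $b$ and $\L_\xi;a$ are atoms, a non-zero meet forces $b=\L_\xi;a$. Thus every atom below $\xoy$ already appears in the list, so $\langle\L_\xi;a:\xi<\kp\rangle$ enumerates precisely the atoms below $\xoy$.

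The concluding ``consequently'' clause is then the standard fact about complete atomic Boolean algebras applied to the relativization $A(\xoy)$, whose atoms are exactly the $\L_\xi;a$: because $\f A$ is assumed complete, every element below $\xoy$ equals the sum of the atoms beneath it, and the representing set of atoms is uniquely determined by the element (distinct sets of atoms have distinct sums). I do not expect any genuine obstacle here. The First Partition Lemma has already done the substantive work of producing the partition; the only additional ingredients are the automorphism property of \refL{auto}, which promotes the blocks to atoms, and routine atomic Boolean reasoning for completeness and uniqueness of the representation.
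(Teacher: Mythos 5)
Your proposal is correct and follows essentially the same route as the paper: apply the First Partition Lemma (via \refC{atoms}) to get the partition, use \refL{auto} to see the blocks are atoms, and finish by Boolean algebra in the relativization $A(\xoy)$. Your explicit exhaustiveness argument (meeting an arbitrary atom $b\le\xoy$ against the partition) is a detail the paper leaves implicit, but it is the same underlying reasoning.
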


\begin{proof} An atom $a$   below $\xoy$  is a regular element, by
\refC{atoms}, so the elements in the system
\begin{equation*}\tag{1}\labelp{Eq:sumatom.1}
  \langle \L_\x;a:\x\le\kp\rangle
\end{equation*}
    form a
partition of $\xoy$, by Partition \refL{PartitionLemma}. In particular, they are mutually disjoint and sum to
the unit $\xoy$ of the relativized Boolean algebra $A(\xoy)$. Moreover, they are all atoms, by \refL{auto}.  It follows by Boolean algebra that the
relativized Boolean algebra $A(\xoy)$ is atomic, and therefore each of its elements is the sum of a unique set of atoms from \refEq{sumatom.1}.
\end{proof}

The preceding lemma says that if there is an atom $a$ below $\xoy$, then every left
translation of $a$ is again an atom and these atoms partition $\xoy$.  The
same is of course true for the right translations of $a$.

Suppose
$x$ and $y$ are measurable atoms, and
$a$ and $b$  left-regular elements below $\xoy$ with
$a\le b$.  By \refL{lessthan}, the left stabilizer   $\cs Ha$ is a
subgroup of the left stabilizer   $\cs Hb$.  Let
\[\langle \lax:\xi<\ka\rangle\qquad\text{and}\qquad \langle \lbe:\eta<\la\rangle\] be  left coset systems for $\L_\al$   and
 $\L_\bl$ respectively in $\G x$.  As is well known from group theory, there must be  a partition $\langle
\Gamma_\eta:\eta<\la\rangle$ of the index set $\{\xi:\xi<\ka\}$ such that
\begin{equation*}\lbe = \tbigcup\{\lax: \xi\in\Gamma_{\eta} \}
\end{equation*}
for each $\eta<\lambda$.  The next lemma, a generalization of the First Partition Lemma, refers to these assumptions.

\begin{lm}[Second Partition Lemma]\labelp{L:GeneralizedPartitionLemma}  Let $x$ and $y$ be
measurable atoms\comma and
$a$ and $b$  left-regular elements below $\xoy$\po  If $a\le b$\co
then
$\langle \lax; a : \xi\in\Gamma_\eta\rangle$ is a
partition of $\lbe;b$\co  and in particular\comma
\[\lbe;b= \ssum
\lbe;a=\tsum\{\lax;a:\xi\in\cs\Gamma\eta\}\] for each
$\eta<\la$\po
\end{lm}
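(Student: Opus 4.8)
The plan is to establish the displayed equality $\lbe;b=\tsum\{\lax;a:\xi\in\cs\Gamma\eta\}$ directly for each fixed $\eta<\la$, and then to read off the partition claim and the remaining equality $\lbe;b=\ssum\lbe;a$ as easy consequences. The engine of the argument is the interaction of the First Partition Lemma applied separately to $a$ and to $b$: by \refL{PartitionLemma}, the system $\langle\lax;a:\xi<\kp\rangle$ partitions $\xoy$, and likewise $\langle\lbe;b:\eta<\la\rangle$ partitions $\xoy$. The task is to see how a single $b$-block $\lbe;b$ decomposes into $a$-blocks.

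First I would fix $\eta<\la$ and show that every $a$-block $\lax;a$ is either below $\lbe;b$ or disjoint from it. Choose $f\in\lax$, so that $\lax;a=f;a$. Since $a\le b$, monotony (\refL{laws.1}(iii)) gives $f;a\le f;b$, and $f;b$ is precisely the $b$-block indexed by the left coset of $\cs Hb$ that contains $f$. If $f\in\lbe$, then $f;b=\lbe;b$ by \refC{stab}, so $\lax;a\le\lbe;b$; if $f\notin\lbe$, then $f;b$ is a $b$-block distinct from $\lbe;b$, hence disjoint from it by \refL{PartitionLemma}, and therefore $\lax;a\le f;b$ is disjoint from $\lbe;b$. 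The $a$-blocks that fall below $\lbe;b$ are thus exactly those with $\lax\seq\lbe$, that is, those indexed by $\xi\in\cs\Gamma\eta$. Because the full family $\langle\lax;a:\xi<\kp\rangle$ partitions $\xoy$ and $\lbe;b\le\xoy$, meeting this partition with $\lbe;b$ yields $\lbe;b=\tsum\{\lax;a:\xi\in\cs\Gamma\eta\}$, by Boolean algebra.

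The partition claim then follows at once: the blocks $\lax;a$ with $\xi\in\cs\Gamma\eta$ are pairwise disjoint, being a sub-family of the $a$-partition of \refL{PartitionLemma}; they are non-zero, since $a\ne 0$ by \refL{regnonzero} and each $\vth_f$ is a permutation of $A(\xoy)$ by \refL{mappings}, so carries the non-zero $a$ to a non-zero element; and they sum to $\lbe;b$ by the preceding paragraph. Finally, $\ssum\lbe;a=\lbe;b$ is obtained by grouping the elements $f\in\lbe$ into their cosets of $\cs Ha$: within a single coset $\lax$ all the translations $f;a$ coincide with $\lax;a$, and $\lbe$ is the disjoint union of the cosets $\lax$ with $\xi\in\cs\Gamma\eta$, whence $\ssum\{f;a:f\in\lbe\}=\tsum\{\lax;a:\xi\in\cs\Gamma\eta\}$. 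I expect the dichotomy in the second paragraph---that each $a$-block is below $\lbe;b$ or disjoint from it---to be the crux, since it is exactly the point where the two partition lemmas must be combined through the monotony of translation; once it is in place, the rest is routine Boolean bookkeeping.
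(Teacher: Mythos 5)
Your proof is correct and follows essentially the same route as the paper's: monotony pushes each $a$-block $\lax;a$ into the $b$-block containing the coset $\lax$, the First Partition Lemma applied to $b$ gives disjointness from the other $b$-blocks, and intersecting the $a$-partition of $\xoy$ with $\lbe;b$ yields the displayed sum. The dichotomy you isolate as the crux is exactly the computation the paper performs in its equations (1) and (2).
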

\begin{proof} 
By assumption, $a\le b$.  Also,  for each $\xi$ in $\Gamma_\eta$, the left coset $\lax$ is a subset of the left coset $\lbe$, by the remarks
preceding the lemma\per  Use monotony and \refC{stab} to obtain
\begin{equation*}
\lax;a\le \lax;b=\lbe;b\per\tag{1}\labelp{Eq:P20/1}\end{equation*}
 The system $\langle \lbe;b :\eta<\la\rangle$
is a partition of $\xoy$\comma by    Partition \refL{PartitionLemma}, so the elements
$\L_{b,\zeta};b$ and $\lbe;b$
 are disjoint for indices $\zeta, \eta<\lambda$ with $\zeta\not=\eta$.
 Consequently, for $\xi$ in $\cs\Gamma\zeta$,
\begin{equation*}
(\lax;a)\cdot(\lbe;b)\le (\lax;b)\cdot(\lbe;b) = (\L_{b,\zeta};b)\cdot  ( \lbe;b)=0\comma
\tag{2}\labelp{Eq:P20/2}\end{equation*} by \refEq{P20/1} (with $\zeta$ in place of $\eta$) and Boolean algebra.
 The system
\begin{equation*}\tag{3}\labelp{Eq:P20/3}
  \langle
\lax;a:\x<\ka\rangle
\end{equation*}
 is also a partition of
$\xoy$, by    \refL{PartitionLemma}, so the elements of this system are non-zero, mutually disjoint, and
\[
\ssum\{\lax;a:\xi<\ka\} = \xoy\per
\]
 Multiply both sides of this last equation by $\lbe;b$, and use \refEq{P20/1},
\refEq{P20/2}, and Boolean algebra to obtain
\begin{equation*}\tag{4}\labelp{Eq:P20/4}
\tsum\{\lax;a:\xi\in\Gamma_\eta\}= \lbe;b\per
\end{equation*}
Conclusion: $\langle \lax:\xi\in\Gamma_\eta\rangle$ is a partition of $\lbe;b$.

The second
assertion of the lemma is an almost immediate consequence of the first:
\begin{equation*}\lbe;b =
\tsum\{\lax;a:\xi\in\Gamma_\eta\}=\tsum\bigl(\tbigcup\{\lax:
\xi\in\Gamma_\eta\}\bigr);a = \tsum \lbe;a\comma
\end{equation*} by \refEq{P20/4},   complete distributivity, and the remarks preceding the lemma.
\end{proof}

Non-zero products of regular elements play an important role in the analysis of the
behavior of regular elements.

\begin{lm}[First  Product Lemma] \labelp{L:a-b.leftregular} Let $\wx$ and $\wy$ be measurable
atoms\co and   $a$ and $b$  left-regular elements below $x;1;y$\po
If $a\cdot b\not=0$\co then
$a\cdot  b$ is left-regular and $\L_{a\cdot  b}=
\L_\al\cap \L_\bl$\po
\end{lm}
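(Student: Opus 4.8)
The plan is to establish left-regularity of $a\cdot b$ via the characterization in \refL{equiv}(ii), which says that $c\le\xoy$ is left-regular exactly when each $f\in\cs Gx$ satisfies either $f;c=c$ or $(f;c)\cdot c=0$. The two halves of the statement are naturally intertwined, so I would aim to prove the stabilizer identity $\L_{a\cdot b}=\L_\al\cap\L_\bl$ first and then read off left-regularity, or to run both in parallel. The key observation that makes everything work is that both $a$ and $b$ are functions (or at least behave multiplicatively under left translation): since $a$ and $b$ lie below $\xoy$ and $f$ lies in $\cs Gx$, the distributive law for functions \refL{funclaws}(i) gives
\[
f;(a\cdot b) = (f;a)\cdot (f;b),
\]
and this single identity is the engine of the whole proof.

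\textbf{First} I would verify the stabilizer inclusion $\L_\al\cap\L_\bl\seq \L_{a\cdot b}$, which is the easy direction: if $f$ fixes both $a$ and $b$, then $f;(a\cdot b)=(f;a)\cdot(f;b)=a\cdot b$ by the displayed identity, so $f\in\L_{a\cdot b}$. \textbf{For the reverse inclusion}, suppose $f\in\L_{a\cdot b}$, so $(f;a)\cdot(f;b)=a\cdot b\neq 0$. Then $(f;a)\cdot a\supseteq (f;a)\cdot(f;b)\cdot a$; more usefully, since $a\cdot b\neq 0$ is below both $a$ and $f;a$, the product $(f;a)\cdot a$ is non-zero. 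By left-regularity of $a$ and \refL{equiv}(ii) this forces $f;a=a$, i.e. $f\in\L_\al$; the symmetric argument with $b$ gives $f\in\L_\bl$, so $f\in\L_\al\cap\L_\bl$. This establishes $\L_{a\cdot b}=\L_\al\cap\L_\bl$.

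\textbf{Then} I would deduce left-regularity of $a\cdot b$ using \refL{equiv}(ii): take an arbitrary $f\in\cs Gx$ and apply the distributive law to get $f;(a\cdot b)=(f;a)\cdot(f;b)$. By left-regularity of $a$, either $f;a=a$ or $(f;a)\cdot a=0$, and similarly for $b$. If $f;a=a$ and $f;b=b$, then $f;(a\cdot b)=a\cdot b$. Otherwise at least one of $(f;a)\cdot a=0$ or $(f;b)\cdot b=0$ holds; in either case $(f;(a\cdot b))\cdot(a\cdot b)=(f;a)\cdot(f;b)\cdot a\cdot b=0$, since the vanishing of one factor kills the whole product. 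Thus every $f\in\cs Gx$ satisfies the dichotomy of \refL{equiv}(ii), so $a\cdot b$ is left-regular (it is non-zero by hypothesis, so the lemma applies).

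\textbf{The main obstacle} I anticipate is purely bookkeeping: ensuring that the ``otherwise'' case in the regularity argument is handled cleanly, since one must check that the partial vanishing $(f;a)\cdot a=0$ genuinely propagates to $(f;(a\cdot b))\cdot(a\cdot b)=0$ rather than merely to a disjointness that could be spoiled by the other factor. This is immediate once one writes the product out and uses commutativity and associativity of Boolean meet together with $f;(a\cdot b)=(f;a)\cdot(f;b)$, but it is the one spot where the distributive law for functions \refL{funclaws}(i) is genuinely indispensable and where a careless bracketing could go wrong. Everything else is a direct application of \refL{equiv} and the stabilizer definitions.
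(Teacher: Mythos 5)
Your proof is correct, but it takes a somewhat different route from the paper's. The paper works with the atom-sets $X_c$ (the unique set of atoms summing to $c;c\ssm$): it first computes
\[(a\cdot b);(a\cdot b)\ssm=(a\cdot b);(a\ssm\cdot b\ssm)\le (a;a\ssm)\cdot(b;b\ssm)\]
to get $X_{a\cdot b}\seq X_\al\cap X_\bl$, then closes the chain $\L_{a\cdot b}\seq X_{a\cdot b}\seq X_\al\cap X_\bl=\L_\al\cap\L_\bl\seq\L_{a\cdot b}$ using \refL{leftcos}(ii), the hypothesis of left-regularity via \refC{lrreg}, and the same distributive-law computation you use for the easy inclusion; left-regularity of $a\cdot b$ then drops out of the resulting equality $\L_{a\cdot b}=X_{a\cdot b}$ and \refC{lrreg}. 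You instead bypass the converse computation entirely and argue element-by-element through the dichotomy of \refL{equiv}(ii): your reverse inclusion $\L_{a\cdot b}\seq\L_\al\cap\L_\bl$ uses the observation that $0\neq a\cdot b\le (f;a)\cdot a$ forces $f;a=a$, and your regularity argument is a short case analysis verifying the dichotomy for $a\cdot b$. Both arguments rest on the distributive law for functions, \refL{funclaws}(i), and both correctly note that $a\cdot b\neq 0$ is needed (you to invoke \refL{equiv}, the paper to invoke \refL{leftcos}(ii)). The paper's version is more compact because the three inclusions collapse into one chain; yours is more elementary in that it never touches converses or the sets $X_c$ directly, at the cost of the explicit case split, which you do handle completely.
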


\begin{proof} 
Observe that
 \begin{equation*} 
   (a\cdot b); (a\cdot b)\ssm= (a\cdot b); (a\ssm\cdot b\ssm)\le
(a;a\ssm)\cdot (b;b\ssm)\comma
 \end{equation*}
 by \refL{laws}(ii) and monotony, so
\begin{equation*} \tag{1} \labelp{Eq:P16/1}
   \ssum X_{a\cdot b}\seq (\ssum X_\al)\cdot (\ssum X_\bl)\comma
 \end{equation*}
by the  definitions of the sets $X_{a\cdot b}$\comma $X_\al$\comma and $X_\bl$\per
Since these are all sets of atoms, it follows from \refEq{P16/1} by Boolean algebra that
\begin{equation*} 
X_{a\cdot b}\seq X_\al \cap X_\bl\per \tag*{(2)} \labelp{Eq:P16/3}
\end{equation*}
Use \refL{leftcos}(ii) (with $a\cdot b$ in place of $a$), the assumption that $a\cdot b\neq 0$, \ref{Eq:P16/3},  the assumed left-regularity of $a$ and $b$, and \refC{lrreg} to obtain
\begin{equation*} \tag{3}\labelp{Eq:P160/4}
\L_{a\cdot b} \seq X_{a\cdot b}
 \seq X_\al\cap X_\bl
= \L_\al\cap \L_\bl\per
\end{equation*}
On the other hand, if $f$ is in $ \L_\al\cap \L_\bl$\comma then $f$ is also in $
\L_{a\cdot b}$\comma because
\[
f;(a\cdot b) = (f;a)\cdot (f;b)= a\cdot b\comma
\]
by the distributive law for functions.    Consequently,
\begin{equation*}\tag{4}\labelp{Eq:P160/5}
  \cs Ha\cap\cs Hb\seq \cs H{a\cdot b}\per
\end{equation*}
 Combine \refEq{P160/4} with \refEq{P160/5} to arrive at
 \[\cs H{a\cdot b}=\cs X{a\cdot b}=\cs Ha\cap\cs Hb\per
 \] The left-regularity of the product $a\cdot b$ is an immediate consequence of the first
 of these equalities and \refC{lrreg} (with $a\cdot b$ in place of $a$).
\end{proof}

\begin{lm}\labelp{L:char.less}Let $\wx$ and $\wy$ be measurable atoms\comma  and  $a$ and $b$
 left-regular elements below $x;1;y$\per  If  $a\cdot b\neq 0$\comma then
 \begin{alignat*}{3}
  \L_\al&\seq \L_\bl &\qquad&\text{if and only if}&\qquad a&\le b\comma \\
  \intertext{and consequently}
   \L_\al&= \L_\bl &\qquad&\text{if and only if}&\qquad a&=b\per
   \end{alignat*}
\end{lm}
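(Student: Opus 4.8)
The plan is to treat the two directions very asymmetrically. The right-to-left implication $a\le b\Rightarrow\L_a\subseteq\L_b$ is exactly the content of \lmref{L:lessthan} (applied to the left-regular elements $a$ and $b$), so it requires no new work and may be quoted directly. All the effort goes into the left-to-right implication, and this is where the hypothesis $a\cdot b\neq 0$ is genuinely used.

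For the forward implication I would assume $\L_a\subseteq\L_b$ and pass to the meet $c=a\cdot b$. Since $a\cdot b\neq 0$, the First Product Lemma (\lmref{L:a-b.leftregular}) applies and yields that $c$ is itself left-regular with $\L_c=\L_a\cap\L_b$; using $\L_a\subseteq\L_b$ this collapses to $\L_c=\L_a$. Because $c=a\cdot b\le a$ by Boolean algebra, the whole problem now reduces to a single clean statement: if $c$ and $a$ are left-regular elements below $x;1;y$ with $c\le a$ and $\L_c=\L_a$, then $c=a$. Once this is established, $a\cdot b=c=a$ gives $a\le b$, as desired.

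To prove the reduced statement, the idea is to compare the two coset-partitions of $x;1;y$ induced by $a$ and by $c$. Since $\L_c=\L_a$, I would choose a single left coset system $\langle \L_{\xi}:\xi<\kappa\rangle$ of this common group in $G_x$ and use it simultaneously as the coset system for $c$ and for $a$. Applying the Second Partition Lemma (\lmref{L:GeneralizedPartitionLemma}) with $c$ in the role of the smaller element and $a$ in the role of the larger, the group-theoretic partition $\langle\Gamma_\eta\rangle$ of the index set is forced to be the trivial one (each $\Gamma_\eta=\{\eta\}$), precisely because the two stabilizers coincide. The lemma then gives $\L_{a,\eta};a=\L_{c,\eta};c$ for every coset index $\eta$; specializing to the index of the identity coset, for which $\L_a;a=a$ and $\L_c;c=c$ (elements of $\L_a=\L_c$ stabilize $a$ and $c$ respectively), yields $a=c$. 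I expect this representation-and-specialization step to be the crux of the argument: the whole difficulty is converting the purely set-theoretic equality $\L_c=\L_a$ into the relation-algebraic equality $c=a$, and the mechanism that accomplishes this is exactly the identity-coset instance of the Second Partition Lemma, which recovers a left-regular element as the identity block of its own coset-partition.

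Finally, the ``consequently'' clause follows by symmetry and costs nothing extra. The standing hypothesis $a\cdot b\neq 0$ is symmetric in $a$ and $b$, so the first equivalence may be invoked both as stated and with the roles of $a$ and $b$ interchanged. Hence $\L_a=\L_b$ gives simultaneously $\L_a\subseteq\L_b$ and $\L_b\subseteq\L_a$, whence $a\le b$ and $b\le a$, so $a=b$; the reverse implication $a=b\Rightarrow\L_a=\L_b$ is immediate from the definition of the stabilizer.
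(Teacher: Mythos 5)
Your proposal is correct and follows essentially the same route as the paper: both directions use \lmref{L:lessthan} for right-to-left, and the forward direction is in each case reduced, via the First Product Lemma (\lmref{L:a-b.leftregular}) and the hypothesis $a\cdot b\neq 0$, to the fact that $\L_{a\cdot b}=\L_a$ forces $a\cdot b=a$. The only cosmetic difference is that you delegate the final partition comparison to the Second Partition Lemma (\lmref{L:GeneralizedPartitionLemma}) with the trivial partition of the index set, whereas the paper compares two instances of the First Partition Lemma (\lmref{L:PartitionLemma}) directly and finishes with the distributive law for functions; both mechanisms give $f_0;(a\cdot b)=f_0;a$ at the identity coset, hence $a\le b$.
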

\begin{proof}  Assume that $a\cdot b\neq 0$.  If $a\le b$, then $\cs Ha$ is included in $\cs Hb$, by  \refL{lessthan}.  To establish
 the reverse implication, suppose that $\L_\al\seq \L_\bl$\per  Use this assumption, the assumption that $a\cdot b\neq 0$, and Product \refL{a-b.leftregular} to
  see that   $a\cdot b$ is a left-regular element, and that
\begin{equation*} 
\L_{a\cdot b}=\L_\al\cap\L_\bl= \L_\al\per \tag*{(1)} \labelp{Eq:P16/4}
\end{equation*}
Let $\langle f_\xi:\xi<\ka\rangle$ be a system of representatives for the left
cosets of $\L_\al$ in $\G x$\per   Partition \refL{PartitionLemma} (with $\cs f\xi$ in placed of $\cs H\xi$) says that
\begin{equation*}\tag{2}\labelp{Eq:char.less.2}
  \langle f_\x ;a :\x <\kp\rangle
\end{equation*}
 is a partition of $
\xoy.$
Now  $\langle f_\x : \x <\kp\rangle$ is also a   system
of
representatives for  the left cosets of $\L_{a\cdot b}$\comma by \ref{Eq:P16/4}, so
\begin{equation*}\tag{3}\labelp{Eq:char.less.3}
  \langle f_\x ; (a\cdot b):\x<\kp\rangle
\end{equation*}
is  a partition of
$\xoy$, by \refL{PartitionLemma} and the left-regularity of   $a\cdot b$.  Since
\begin{equation*}
f_\x ; (a\cdot b)\le f_\x ;a\tag*{(4)} \labelp{Eq:P16/5}
\end{equation*} for each $\xi$,
by monotony,   the   partitions in \refEq{char.less.2} and \refEq{char.less.3} must be equal.  Therefore, equality must actually hold in \ref{Eq:P16/5}.
 Use this observation and the distributive law for  functions
to obtain
\[
f_\x; a = f_\x; (a\cdot b)= (f_\x ;a) \cdot (f_\x ;b)\per
\] It follows that
\[f_\x;a\le f_\x;b\per\] Take
$\x=0$ in this inequality, and use \refL{domain}(iii), together with the convention that
$f_0=\wx$, to arrive at
$a\le b$\per  This completes the proof of the first equivalence in the lemma.

The second equivalence  is an immediate consequence of the first.
\end{proof}

The preceding lemma leads naturally to the question,  for two left-regular elements $a$ and $b$
below $\xoy$,  when is the  product $a\cdot b$ non-zero? A necessary and sufficient condition
for this to happen is given below in the Second  Product Lemma.

The next lemma says  that any translation, left or right, of a
left-regular element $a$ is again left-regular, and the left stabilizer of such a
translation can be computed from the left stabilizer of $a$.

\smallskip
\begin{lm}[First Translation Lemma] \labelp{L:left.reg}
Let $\wx$ and $\wy$ be measurable atoms\co and
$a\le\xoy$ a left-regular element\per
\begin{enumerate}
\item[\opar i\cpar] For every $f$ in $G_\wx$\comma the left translation $f;a$ is
left-regular\comma and its left stabilizer is \[\L_{f;a}= f;\L_\al;f\ssm\per\] If $\L_\al$ is a
normal subgroup of $\cs Gx$\comma then $\L_{f;a}=\L_\al$\po
\item[\opar ii\cpar] For every element  $g$ in $G_\wy$\comma the right translation
$a;g$ is left-regular and its left stabilizer is $\L_{a;g}= \L_\al$\po
\end{enumerate}
\end{lm}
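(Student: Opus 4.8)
The plan is to handle both translations with a single two-step template, exactly mirroring the definition of left-regularity. For each translation I would first compute the ``square'' $(f;a);(f;a)\ssm$ (respectively $(a;g);(a;g)\ssm$) in order to pin down the set $X_{f;a}$ (respectively $X_{a;g}$) of atoms of $\G x$ summing to it, and then compute the left stabilizer $\L_{f;a}$ (respectively $\L_{a;g}$) directly from its defining equation. Left-regularity then drops out of \refC{lrreg}, which asserts that an element below $\xox$-type squares is left-regular precisely when its $X$-set and its $\L$-set coincide.

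For part (i), I would apply the involution laws (R7) and (R6) to write $(f;a);(f;a)\ssm = f;(a;a\ssm);f\ssm$, then use the left-regularity of $a$ to replace $a;a\ssm$ by $\tsum\L_a$, and finally invoke complete distributivity to get $(f;a);(f;a)\ssm=\tsum\{f;h;f\ssm:h\in\L_a\}$. Since $\G x$ is closed under $\,;\,$ and $\ssm$ (\refL{nonzerofunct}), each $f;h;f\ssm$ lies in $\G x$, hence is an atom below $\xox$, so this set is exactly the conjugate subgroup $f;\L_a;f\ssm$; because sums of sets of atoms determine those sets uniquely (the atomicity established in \refL{unique.set}), I conclude $X_{f;a}=f;\L_a;f\ssm$. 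Independently, $g\in\L_{f;a}$ says $g;(f;a)=f;a$; translating on the left by $f\ssm$ and using the group-action identities of \refL{mappings} together with $f\ssm;f=x$ (\refL{nonzerofunct}) and $\vth_x(a)=a$ converts this into $f\ssm;g;f\in\L_a$, i.e.\ $g\in f;\L_a;f\ssm$. Thus $X_{f;a}=\L_{f;a}=f;\L_a;f\ssm$, and \refC{lrreg} delivers left-regularity. The normality clause is then immediate, since a normal $\L_a$ satisfies $f;\L_a;f\ssm=\L_a$.

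Part (ii) is easier, because the right factor collapses. From $(a;g);(a;g)\ssm=a;(g;g\ssm);a\ssm$, the fact that $g;g\ssm=y$ is the identity of $\G y$ (\refL{nonzerofunct}), and $a;y=a$ (\refL{domain}(iii)), I obtain $(a;g);(a;g)\ssm=a;a\ssm=\tsum\L_a$, so $X_{a;g}=X_a=\L_a$. For the stabilizer, $h\in\L_{a;g}$ means $h;a;g=a;g$; right-translating by $g\ssm$ and again using $g;g\ssm=y$ and $a;y=a$ reduces this to $h;a=a$, i.e.\ $h\in\L_a$. Hence $X_{a;g}=\L_{a;g}=\L_a$, and \refC{lrreg} finishes the argument.

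The only genuinely new content, and the single place where care is required, is the group-theoretic bookkeeping in part (i): recognizing $\{f;h;f\ssm:h\in\L_a\}$ as the conjugate subgroup $f;\L_a;f\ssm$, and justifying the passage from equality of sums to equality of the underlying sets of atoms by the uniqueness already recorded in \refL{unique.set}. Everything else is a routine deployment of the involution laws, the group structure of $\G x$ and $\G y$, and the left-action identities of \refL{mappings}, so I expect no further obstacles.
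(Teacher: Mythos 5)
Your proposal is correct and follows essentially the same route as the paper: compute $(f;a);(f;a)\ssm$ and $(a;g);(a;g)\ssm$ to identify $X_{f;a}=f;\L_a;f\ssm$ and $X_{a;g}=\L_a$, compute the stabilizers directly from their defining equations, and conclude left-regularity from \refC{lrreg} together with the fact that sums of sets of atoms determine those sets. The only difference is cosmetic: you spell out the conjugation argument for $\L_{f;a}=f;\L_a;f\ssm$ that the paper dismisses as ``easy to check'' via \refL{mappings} and group theory.
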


\begin{proof} 
Consider an  element  $f$   in $ G_\wx$\per Use the definition of the set $X_{f;a}$, the second involution law and the associative law for relative multiplication,
the definition of the set $\cs Xa$, the assumed left-regularity of the element $a$, together with \refC{lrreg}, and   complete distributivity  to obtain
\begin{multline*}\tag{1}\labelp{Eq:left.reg.1}
\ssum  X_{f;a} = (f;a); (f;a)\ssm
 = f;a;a\ssm;f\ssm
 = f;(\ssum  X_\al);f\ssm\\ = f; (\ssum  \L_\al);f
 = \ssum  (f;\L_\al;f\ssm)\per
\end{multline*}
It is easy to check, and it follows from \refL{mappings} and group theory, that the left stabilizer of the left translation $f;a$ is the subgroup
\begin{equation*}\L_{f;a}=f;\L_\al;f\ssm\per\tag*{(2)}\labelp{Eq:A1/14}
\end{equation*} Combine \refEq{left.reg.1} with \ref{Eq:A1/14} to arrive at
\[\ssum  X_{f;a}= \ssum \L_{f;a}\per\]
Since $X_{f;a}$ and $\L_{f;a}$ are both sets of atoms,   the preceding equation implies that the two sets must be  equal. Use \refC{lrreg} (with $f;a$ in place of $a$) to conclude that $f;a$ is left-regular.
 If, in addition, $\L_\al$ is a normal subgroup of $\cs Gx$, then this subgroup
must coincides with $f;\L_\al;f\ssm$, and therefore also with $ \L_{f;a}$\comma by \ref{Eq:A1/14}.
This proves (i).

To prove (ii), assume that  $g $ is in $ G_\wy$\per Use the definition of the set $X_{a;g}$, the second involution law and the associative law for relative
multiplication, \refL{nonzerofunct} (with $y$ in place of $x$), \refL{domain}(iii),    and the  left-regularity of $a$ to arrive at
\begin{equation*}
\ssum  X_{a;g} = (a;g);(a;g)\ssm
=a;g;g\ssm;a\ssm
= a;\wy;a\ssm
  = a;a\ssm
 = \ssum  \L_\al\per
\end{equation*}
Since $\cs X{a;g}$ and $\cs Ha$ are sets of atoms, we may conclude  from this computation that
\begin{equation*}\tag{3}\labelp{Eq:left.reg.3}
  X_{a;g}= \L_\al\per
\end{equation*}
 Use Lemmas \ref{L:nonzerofunct} and \ref{L:domain}(iii) to obtain
\begin{align*}
f;a;g= a;g
&\qquad\text{if and only if}\qquad
f;a;g;g\ssm= a;g;g\ssm\comma\\
&\qquad\text{if and only if}\qquad
f;a;y= a;y\comma\\
&\qquad\text{if and only if}\qquad
f;a=a\per
\end{align*}
These equivalences show that
$f$ belongs to the left stabilizer $ \L_{a;g}$ if and only if it belongs to the left stabilizer $\L_\al$\comma so that
\begin{equation*}\tag{4}\labelp{Eq:left.reg.4}
 \L_{a;g}= \L_\al\per
\end{equation*}   Combine \refEq{left.reg.3} amd \refEq{left.reg.4} to arrive at
\[ X_{a;g}=\L_{a;g}=\cs Ha\comma\] and use these equalities together with \refC{lrreg} (with $a;g$ in place of $a$) to conclude that $a;g$ is left-regular
and its left stabilizer is $\cs Ha$.
\end{proof}

The following corollary is a very important   consequence of    Atomic Partition \refL{sumatom}  and Translation \refL{left.reg}.

\begin{cor} \labelp{C:a-b.atoms}
Let $\wx$ and $\wy$ be measurable atoms.  If  $a\le \xoy$ is an atom\comma then
its left and right stabilizers $\L_\al$  and $\R_\al$ are normal
subgroups of   $G_\wx$ and $G_\wy$ respectively\po If $b\le \xoy$ is also an atom\comma
then $\L_\bl=\L_\al$ and $\R_\bl=\R_\al$\po
\end{cor}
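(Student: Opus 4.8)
The plan is to play the left-hand and right-hand versions of the Atomic Partition \refL{sumatom} off against the two parts of the First Translation \refL{left.reg}. The governing observation is that the fixed atom $a$ generates \emph{every} atom below $\xoy$ in two different ways: by \refL{sumatom} each such atom is a left translation $f;a$ for some $f\in G_\wx$, while by the dual (right-hand) version of that lemma each such atom is also a right translation $a;g$ for some $g\in G_\wy$. The crucial point is that the two parts of the Translation Lemma treat the left stabilizer $\L_\al$ very differently: part (ii) says that right translation leaves it \emph{unchanged}, $\L_{a;g}=\L_\al$, whereas part (i) only says that left translation \emph{conjugates} it, $\L_{f;a}=f;\L_\al;f\ssm$. (Since $a$ is an atom, it is regular by \refC{atoms}, so the Translation Lemma applies to it throughout.)

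First I would show that all atoms below $\xoy$ share the single left stabilizer $\L_\al$. Let $b\le\xoy$ be any atom. By the right-hand version of \refL{sumatom} applied to $a$, there is a $g\in G_\wy$ with $b=a;g$, and hence \refL{left.reg}(ii) gives $\L_\bl=\L_{a;g}=\L_\al$. This already disposes of the assertion $\L_\bl=\L_\al$ of the corollary. The essential point here is to resist representing $b$ instead as a \emph{left} translation $f;a$ and appealing to part (i): that route would yield only the conjugate $\L_\bl=f;\L_\al;f\ssm$ and would thus presuppose the very normality we are trying to prove. Using the right-translation representation is precisely what breaks this circularity.

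Normality of $\L_\al$ then falls out by feeding this invariance back into part (i). For each $f\in G_\wx$ the left translation $f;a$ is again an atom below $\xoy$, by \refL{auto}. Applying the invariance just established (with $f;a$ in the role of $b$) gives $\L_{f;a}=\L_\al$, while \refL{left.reg}(i) gives $\L_{f;a}=f;\L_\al;f\ssm$. Comparing the two yields $f;\L_\al;f\ssm=\L_\al$ for every $f\in G_\wx$, which is exactly the statement that $\L_\al$ is a normal subgroup of $G_\wx$.

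The corresponding assertions for the right stabilizer $\R_\al$ follow by the dual argument. Every atom $b\le\xoy$ is a left translation $f;a$ by \refL{sumatom}, so the right-regular version of \refL{left.reg}(ii) gives $\R_\bl=\R_{f;a}=\R_\al$, yielding $\R_\bl=\R_\al$; and taking $b=a;g$, which is again an atom, together with the dual conjugation formula $\R_{a;g}=g\ssm;\R_\al;g$ from the right-regular version of \refL{left.reg}(i), gives $g\ssm;\R_\al;g=\R_\al$ for every $g\in G_\wy$, i.e.\ the normality of $\R_\al$ in $G_\wy$. I expect the only genuine subtlety to be the one already flagged: one must select the stabilizer-\emph{preserving} (rather than stabilizer-conjugating) translation representation of an arbitrary atom in order not to assume the conclusion. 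Once that choice is made correctly, every remaining step is a direct citation of the two lemmas named above.
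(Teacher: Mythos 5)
Your proof is correct and follows essentially the same route as the paper: first represent an arbitrary atom $b$ as a right translation $a;g$ to get $\L_\bl=\L_\al$ from Translation Lemma (ii), then feed the left translations $f;a$ back into part (i) to extract normality, and dualize for $\R_\al$. The remark about why the right-translation representation must come first (to avoid circularity) is a correct and worthwhile observation, though the paper leaves it implicit.
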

\begin{proof} 
Let $a $ and $b$ be arbitrary atoms below $\xoy$.  The  version
of \refL{sumatom} for right-regular elements says that the right translations  of $a$ constitute all of the atoms below
$\xoy$.  In particular, there must be an element $g$ in $\G y$ such that
$b= a;g$.  Use this equality and part (ii) of  \refL{left.reg} to
obtain
\begin{equation*}\L_\bl=\L_{a;g}=\L_\al\per \tag*{(1)} \labelp{Eq:P15/2}
\end{equation*}

For any element  $f$ in $G_\wx$\comma the left translation $f;a$ is an atom below $\xoy$, by
   \refL{sumatom}.  Take this element for $b$ in \ref{Eq:P15/2}, and use part (i) of  \refL{left.reg}
  to arrive at
  \begin{equation*}\tag{2}\labelp{Eq:a-b.atoms.2}
    \L_\al=\L_{f;a}= f;\L_\al;f\ssm\per
  \end{equation*}
The equality of the first and last terms in \refEq{a-b.atoms.2} for every $f$ in $\cs Gx$ implies that the subgroup $\cs Ha$ is normal in $\cs Gx$,
and \ref{Eq:P15/2} implies that every atom $b\le \xoy$ has the same left stabilizer as the atom $a$.
A dual argument yields the corresponding result for the right stabilizer $\cs Ka$\per\end{proof}

If a left-regular element $a\le\xoy$ has a
normal left stabilizer, then part (i) of Translation \refL{left.reg} implies that any left translation of $\al$
is a left-regular element with the  same left stabilizer as $a$.  The next lemma implies that any  left-regular element
below $\xoy$ with the same left stabilizer as   $a$ must in fact be
a left translation of $a$.  Thus, the left translations of $\al$ are
precisely the left-regular elements below $\xoy$ with the same left stabilizer as $a$.   In
fact, this property characterizes  left-regular elements with normal
left-stabilizers.

\begin{lm}[Second Translation Lemma] \labelp{L:assertioneq}
 Let $\wx$ and $\wy$ be measurable
atoms\per For every left-regular element   $a\le
\xoy$\comma the following assertions are
equivalent\per
\begin{itemize}
\item[\opar i\cpar] $\L_\al$ is a normal subgroup of $\cs Gx$\per
\item[\opar ii\cpar] For every left-regular element $b\le \xoy$\comma we have $\L_\bl\seq\L_\al$
if and only if
$b$ is below some left translation of
$a$\per
\item[\opar iii\cpar] For every left-regular element  $b\le \xoy$\comma  we have  $\L_\al\seq\L_\bl$
if and only if
$b$ is above some left translation of
$a$\per
\item[\opar iv\cpar]  For every left-regular element  $b\le \xoy$\comma we have  $\L_\bl=\L_\al$ if and only if $b$ is equal to some
left translation of $a$\per
\end{itemize}
\end{lm}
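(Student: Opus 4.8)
The plan is to establish the four-way equivalence by proving that (i) implies each of (ii), (iii), and (iv), and conversely that each of (ii), (iii), (iv) implies (i). Two earlier results carry essentially the whole argument. The first is \refL{char.less}: for left-regular elements $u,v\le\xoy$ whose product is nonzero, $\L_u\seq\L_v$ holds exactly when $u\le v$, and hence $\L_u=\L_v$ exactly when $u=v$. The second is \refL{left.reg}(i), by which every left translation $f;a$ is again left-regular with $\L_{f;a}=f;\L_a;f\ssm$; in particular, normality of $\L_a$ is the same as the assertion that $\L_{f;a}=\L_a$ for all $f\in\G x$.

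I would prove the three forward implications at once from a single setup. Assume (i). Then \refL{left.reg}(i) gives $\L_{f;a}=\L_a$ for every $f\in\G x$, and \refL{PartitionLemma} shows that the translations $f;a$ form a partition of $\xoy$. Fix an arbitrary left-regular $b\le\xoy$. Since $b$ is nonzero by \refL{regnonzero} and the blocks sum to $\xoy$, the element $b$ must overlap some block: $b\cdot(f;a)\neq 0$ for some $f$. Because both $b$ and $f;a$ are left-regular with nonzero product, \refL{char.less} now applies to this pair, and since $\L_{f;a}=\L_a$ it converts stabilizer relations into order relations: $\L_b\seq\L_a$ yields $b\le f;a$ (the nontrivial half of (ii)), $\L_a\seq\L_b$ yields $f;a\le b$ (that of (iii)), and $\L_b=\L_a$ yields $b=f;a$ (that of (iv)). The reverse halves are immediate from \refL{lessthan} together with $\L_{f;a}=\L_a$: $b\le f;a$ forces $\L_b\seq\L_a$, $f;a\le b$ forces $\L_a\seq\L_b$, and the equality case follows by combining the two (or directly from \refL{left.reg}(i) applied to $b=f;a$).

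For the converses I would specialize to translations of $a$ itself. Given any $f\in\G x$, set $b=f;a$; this element is left-regular by \refL{left.reg}(i), and it is trivially below, above, and equal to a left translation of $a$. Feeding $b=f;a$ into the elementary half of each biconditional produces, for every $f$, a containment between $\L_{f;a}=f;\L_a;f\ssm$ and $\L_a$: hypothesis (iv) gives $f;\L_a;f\ssm=\L_a$ outright, hence normality; hypothesis (ii) gives $f;\L_a;f\ssm\seq\L_a$; and hypothesis (iii) gives $\L_a\seq f;\L_a;f\ssm$. In the last two cases I would finish with the standard observation that a one-sided conjugation inclusion valid for all group elements already forces normality: reapplying the inclusion with $f\ssm$ in place of $f$ and combining the two upgrades it to the equality $f;\L_a;f\ssm=\L_a$.

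The one genuinely delicate point is the passage from stabilizers to order in the forward direction, since \refL{char.less} is available only after a nonzero product has been produced. The device that produces it is \refL{PartitionLemma} combined with the nonvanishing of the left-regular $b$ from \refL{regnonzero}: these guarantee that $b$ overlaps at least one translation block $f;a$, and that block is exactly the element to which \refL{char.less} is applied. A secondary subtlety is that $\G x$ may be infinite, so in the converses one cannot deduce normality from a one-sided conjugation inclusion by a cardinality count; the inclusion must be invoked a second time, with $f\ssm$, to obtain equality.
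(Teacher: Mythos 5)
Your proof is correct and follows essentially the same route as the paper: the forward direction uses the Partition Lemma plus \refL{regnonzero} to produce a block $f;a$ with $b\cdot(f;a)\neq 0$ and then applies \refL{char.less} with $\L_{f;a}=\L_\al$, and the converses specialize to $b=f;a$ and read off the conjugation relation from \refL{left.reg}(i). Your explicit remark that a one-sided inclusion $f;\L_\al;f\ssm\seq\L_\al$ for all $f$ must be upgraded to equality by reapplying it with $f\ssm$ (rather than by a cardinality count) is exactly the group-theoretic fact the paper invokes implicitly.
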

\begin{proof} 
Assume  $a\le \xoy$ is  left-regular. For each
element $f$ in $\G x$\comma the left trans\-la\-tion $f;a$ is left-regular\comma
and
\begin{equation*} 
\L_{f;a}= f;\L_\al;f\ssm\comma \tag*{(1)} \labelp{Eq:P17/3}
\end{equation*}
by part (i) of  Translation \refL{left.reg}.

To establish the implication
from (i) to each of (ii), (iii), and (iv), assume that the left stabilizer
$\L_\al$ is a normal subgroup, and use \ref{Eq:P17/3} to obtain
\begin{equation*} 
\L_{f;a}= \L_\al\per \tag*{(2)} \labelp{Eq:P17/4}
\end{equation*}  Consider any left-regular element $b\le\xoy$\per    Partition \refL{PartitionLemma} implies  that
\begin{equation*}
\xoy=
\ssum \{ f;a: f\in G_x\}\comma
\end{equation*}
and   $b$ is non-zero, by \refL{regnonzero},
so
\begin{equation*} 
b\cdot (f;a)\not= 0 \tag*{(3)} \labelp{Eq:P17/1}
\end{equation*}
 for some $f$ in $ G_x$\per
Use \ref{Eq:P17/4}, and then use \refL{char.less} (with $b$ and $f;a$ in place of $a$ and $b$ respectively) and  \ref{Eq:P17/1}, to arrive at
\begin{alignat*}{3}
\L_\bl &\seq \L_\al& \qquad&\text{if and only if}&\qquad \L_\bl&\seq \L_{f;a}\comma\\
&&\qquad&\text{if and only if}&\qquad  b&\leq f;a\per\\
\intertext{A similar argument yields}
\L_\al &\seq \L_\bl& \qquad&\text{if and only if}&\qquad \L_{f;a}&\seq \L_\bl\comma\\
&&\qquad&\text{if and only if}&\qquad f;a&\leq  b\per\\
\intertext{Combine  these equivalences to conclude that}
\L_\bl&=\L_\al&\qquad&\text{if and only if}&\qquad
f;a&=b\per
\end{alignat*}

\def\f{g}
\def\g{f}
To establish the  implication from (ii) to (i), assume that (ii) holds,  and consider an arbitrary element $\g$    in $ G_\wx$\per The element $b=\g;a$
is left-regular, by the initial observation of this proof, and  obviously $b\le \g;a$, so (ii) implies that
\begin{equation*}\tag{4}\labelp{Eq:assertioneq.4}
\L_\bl\seq\L_\al\per
\end{equation*}
Use \ref{Eq:P17/3}, the choice of $b$, and \refEq{assertioneq.4} to see that
\begin{equation*}\tag{5}\labelp{Eq:assertion.5}
  \g;\L_\al;\g\ssm=\cs H{f;a}=\cs Hb\seq
\L_\al\per
\end{equation*}
 The inclusion of the left side of \refEq{assertion.5} in the right side holds for all $\g$ in $\cs Gx$, so  $\L_\al$
must be a normal subgroup of $\cs Gx$.

The proof of the implication from (iii) to
(i) is similar to the preceding argument, but uses the fact that the subgroup $\LS\al$ is normal just
in case \[\lal\seq f;\lal;f\ssm\] for every element $f$ in $\G\wx$\per
The  implication from
(iv) to (i) is a consequence of the implication from (ii) to
(i).
\end{proof}

Product \refL{a-b.leftregular} has as a hypothesis that the
product of the two left-regular elements $a$ and $b$ be
non-zero.  The next lemma gives necessary and sufficient conditions for this hypothesis to
be satisfied, under the additional assumption that the left stabilizers are normal. It also characterizes the product
subgroup $\cs Ha;\cs Hb$ as the left stabilizer of a specific element.  Recall from \refL{a-b.leftregular} that
\[
\cs H{a\cdot b}=\cs Ha\cap\cs Hb\comma\] so that the coset system for $\cs H{a\cdot b}$ coincides with the coset system for $\cs Ha\cap\cs Hb$, which is
\[\langle\lax\cap\lbe:\xi<\kappa\text{ and }\eta<\lambda\rangle\comma\]
where
\[\langle\lax:\xi<\kappa\rangle\qquad\text{and}\qquad\langle \lbe:\eta<\lambda\rangle \] are respectively coset systems for $\h a$ and $\h b$ in $\h a;\h b$.

\begin{lm}[Second  Product Lemma] \labelp{L:prodlemma}  Let $\wx $ and $\wy$ be measurable
atoms\comma and  $a$ and $b$
 left-regular elements below $x;1;y$ with normal stabilizers $\lal$ and $\lbl$\po
\begin{enumerate}
\item[\opar i\cpar] $a\cdot b\neq 0$  if and only if $a;a\ssm;b=b;b\ssm;a$\po
\item[\opar ii\cpar]  If $a\cdot b\neq 0$\co then the product subgroup $\cs Ha;\cs Hb$ is the left stabilizer of the element
$b;b\ssm;a$\comma and the system of left translations
\[\langle(\lax\cap\lbe);(a\cdot b):\xi<\kappa\text{ and }\eta<\lambda\rangle\] is a partition of $b;b\ssm;a$\comma
where
$\langle\lax:\x<\ka\rangle$ and  $\langle\lbe:\eta<\la\rangle$ are cosets systems for
$\lal$ and $\lbl$ in $\lal;\lbl$\po Different left translations of
$a\cdot b$ coincide with the different products of the left translations of $a$ and
$b$ in the sense that
\[(\lax\cap\lbe);(a\cdot b)=(\lax;a)\cdot (\lbe;b)\] for every $\xi<\kappa$ and $\eta<\lambda$\per
\end{enumerate}
\end{lm}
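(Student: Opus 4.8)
The plan is to write $c=a\cdot b$ and $N=\L_a;\L_b$, and to exploit that, since $\L_a$ and $\L_b$ are normal subgroups of $G_\wx$, the complex product $N=\L_a;\L_b=\L_b;\L_a$ is again a subgroup. The main computational device throughout will be the translation back and forth between relative products and group-theoretic sums provided by $a;a\ssm=\tsum\L_a$ and $b;b\ssm=\tsum\L_b$ (left-regularity together with \refC{lrreg}) and by complete distributivity; in particular $\tsum N=(\tsum\L_a);(\tsum\L_b)=a;a\ssm;b;b\ssm$. I will also use the absorption identity $a;a\ssm;a=a$ (which holds because $a;a\ssm;a=(\tsum\L_a);a=\tsum\{f;a:f\in\L_a\}=a$), its converse $a\ssm;a;a\ssm=a\ssm$, and, when $c\neq 0$, the identities $a=a;a\ssm;c$ and $b=b;b\ssm;c$. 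The latter two follow from Second Partition \refL{GeneralizedPartitionLemma} applied to $c\le a$ and $c\le b$, using that $\L_c=\L_a\cap\L_b$ is contained in $\L_a$ and in $\L_b$ by First Product \refL{a-b.leftregular}.

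To prove (i), first suppose $c=a\cdot b\neq 0$. Substituting the absorption identities gives $a;a\ssm;b=a;a\ssm;b;b\ssm;c$ and $b;b\ssm;a=b;b\ssm;a;a\ssm;c$; since $\L_a;\L_b=\L_b;\L_a$ as sets by normality, complete distributivity yields $a;a\ssm;b;b\ssm=\tsum N=b;b\ssm;a;a\ssm$, and postmultiplying by $c$ gives $a;a\ssm;b=b;b\ssm;a$. For the converse, assume $a;a\ssm;b=b;b\ssm;a$ and premultiply by $a\ssm$. Using $a\ssm;a;a\ssm=a\ssm$, the left side collapses to $a\ssm;b$, so $a\ssm;b=a\ssm;b;b\ssm;a=(a\ssm;b);(a\ssm;b)\ssm$. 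Now $u=a\ssm;b$ is non-zero and lies below $\yoy$, by \refL{domain}(iv) and \refL{square}, so its domain $y$ satisfies $y\le u;u\ssm=u$; that is, $y\le a\ssm;b$, which by the cycle laws is equivalent to $a\cdot b\neq 0$.

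For (ii), I first identify the left stabilizer of $d=b;b\ssm;a$. A direct computation gives $d;d\ssm=b;b\ssm;a;a\ssm;b;b\ssm=\tsum(\L_b;\L_a;\L_b)=\tsum N$, so $X_d=N$. On the other hand $\L_b\subseteq\L_d$ because $f;b=b$ for $f\in\L_b$, and $\L_a\subseteq\L_d$ because, for $g\in\L_a$ and $h\in\L_b$, normality gives $g;h=h;(h\ssm;g;h)$ with $h\ssm;g;h\in\L_a$, whence $g;(h;a)=h;a$ and so $g;d=d$; combined with $\L_d\subseteq X_d$ (\refL{leftcos}(ii)) this forces $\L_d=N$, so $d$ is left-regular with left stabilizer $N=\L_a;\L_b$, as claimed. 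For the partition statement, choosing a common representative $k\in\lax\cap\lbe$ and applying the distributive law for functions \refL{funclaws}(i) gives $(\lax\cap\lbe);c=k;c=(k;a)\cdot(k;b)=(\lax;a)\cdot(\lbe;b)$. Because everything takes place inside $N=\L_a;\L_b$, every coset $\lax$ meets every coset $\lbe$ in a single coset of $\L_c=\L_a\cap\L_b$; and since $[N:\L_a]\,[N:\L_b]=[N:\L_c]$ by the second isomorphism theorem, the assignment $(\xi,\eta)\mapsto\lax\cap\lbe$ is a bijection onto the cosets of $\L_c$ in $N$. Thus the listed system is exactly the family of translations of $c$ by the cosets of $\L_c$ contained in $N$, and by First Partition \refL{PartitionLemma} these are non-zero, pairwise disjoint, and sum to $(\tsum N);c=a;a\ssm;b;b\ssm;c=a;a\ssm;b=b;b\ssm;a$, where the last two equalities use $b;b\ssm;c=b$ and part (i).

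The routine part is the translation of relative products into sums over subgroups of $G_\wx$, which the absorption identities make almost mechanical. The delicate step is the combinatorics in (ii): verifying that inside $N$ every pair of cosets meets and that $(\xi,\eta)\mapsto\lax\cap\lbe$ is a bijection onto the cosets of $\L_a\cap\L_b$. This is precisely where normality of $\L_a$ and $\L_b$ is indispensable, since it is what makes $N$ a subgroup and validates the counting $[N:\L_a]\,[N:\L_b]=[N:\L_a\cap\L_b]$. I expect the single point requiring the most care to be matching the resulting translations with the block $b;b\ssm;a$, which is where part (i) and the identity $b;b\ssm;c=b$ are brought to bear.
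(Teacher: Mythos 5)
Your argument is correct, but it diverges from the paper's proof at nearly every step, generally in the direction of being shorter. The paper proves the forward half of (i) by establishing the two inequalities $b;b\ssm;a\le a;a\ssm;b$ and its mirror image separately; you instead substitute the absorption identities $a=a;a\ssm;(a\cdot b)$ and $b=b;b\ssm;(a\cdot b)$ (correctly obtained from Partition \refL{GeneralizedPartitionLemma}) to collapse both sides to $\bigl(\tsum \cs Ha;\cs Hb\bigr);(a\cdot b)$. For the converse half the paper runs a long argument --- partitioning $b;b\ssm;a$ by translations of $a$, locating a translation $\bar a$ that meets $b$, applying Product \refL{a-b.leftregular} to $\bar a\cdot b$, and translating back by an inverse coset --- where you simply premultiply by $a\ssm$ to get $u=u;u\ssm$ for $u=a\ssm;b$, so that $y\le u$ by \refL{domain}(i), and the cycle laws convert this into $a\cdot b\neq 0$; this is a genuinely slicker route. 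In (ii) the paper identifies $\cs H{b;b\ssm;a}$ by two element-wise stabilizer computations, whereas you compute $X_{b;b\ssm;a}=\cs Ha;\cs Hb$ and sandwich the stabilizer between this product subgroup and $X_{b;b\ssm;a}$, which in addition exhibits $b;b\ssm;a$ as left-regular; and for the final identity the paper proves only $(\lax\cap\lbe);(a\cdot b)\le(\lax;a)\cdot (\lbe;b)$ and then argues that two partitions of the same element must coincide, while your one-line use of the distributive law for functions gives equality outright. The only extra input you need is the group-theoretic fact that in $\cs Ha;\cs Hb$ every coset of $\cs Ha$ meets every coset of $\cs Hb$ in a single coset of $\cs Ha\cap\cs Hb$ --- but the paper invokes essentially this same fact (that the pairwise intersections form a coset system) without proof, so nothing is lost; what the paper's longer route buys is that it leans only on the Partition and Product Lemmas as black boxes rather than on this coset combinatorics.
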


\begin{proof} 
 To prove (i), assume first   that $a\cdot b\neq 0$.  The product $a\cdot b$
is then a left regular element, by  Product \refL{a-b.leftregular}.  Consequently,
\begin{equation*}\tag{1}\labelp{Eq:prodlemma.1}
\al;\al\ssm;(a\cdot b)=(\tsum\lal);(a\cdot b)=\tsum\lal;(a\cdot b) = \lal;a=a
\end{equation*}
by the assumed left-regularity of $\al$,   complete distributivity,
  the final assertion of Partition \refL{GeneralizedPartitionLemma} (with $a\cdot b$ and $a$ in place of $a$ and $b$ respectively,   and $\lal$ in place of  $\lbe$), and the
  fact that $\cs Ha$ is the stabilizer of $a$. Use \refEq{prodlemma.1} and monotony to get
\[a\le \al;\al\ssm;b\per\]
  Form the relative product of both sides of this inequality on the left with $\bl;\bl\ssm$, and then use monotony, the definition of a regular element and the assumed left-regularity of $a$
  and $b$,   complete distributivity, the assumption that $\cs Ha$ is normal, the definition of $\cs Hb$
   as the stabilizer of $b$, complete distributivity, and the left-regularity of $a$, to arrive at
\begin{multline*}
\bl;\bl\ssm;a\le\bl;\bl\ssm;\al;\al\ssm;b =(\tsum\lbl);(\tsum\lal);b
=\tsum\lbl;\lal;b\\=\tsum\lal;\lbl;b=\tsum\lal;b=(\tsum\lal);b=\al;\al\ssm;b\per
\end{multline*}
A symmetric argument yields the  reverse inequality.  This establishes the
implication from left to right in part (i).

To establish the reverse implication, assume that
\begin{equation*}\al;\al\ssm;b=\bl;\bl\ssm;a\per\tag{2} \labelp{Eq:PL/1}
\end{equation*}
Use the definition of a left-regular element and the assumed left-regularity of $\bl$,  complete distributivity, the definition of $\cs Ha$ as the stabilizer of $a$,  and the
assumption that
this stabilizer is a normal subgroup to get
\begin{equation*}\bba=(\tsum \lbl);\al=\tsum \lbl;\al = \tsum \lbl;\lal;\al =\tsum \lal;\lbl;\al\per
\tag{3} \labelp{Eq:PL/2}
\end{equation*}
The product subgroup $\lal;\lbl$ is the union of the cosets  $\lax$ of
$\lal$ in $\cs Ha;\cs Hb$\comma by assumption, so
\begin{equation*}\tag{4}\labelp{Eq:prodlemma.4}
  \tsum\lal;\lbl;\al=\tsum (\tbigcup_\xi\lax);a=\tsum_\xi\lax;a\comma
\end{equation*}
by complete distributivity.  Also, the elements $\lax;\al$ are  non-zero and pairwise
disjoint, by   Partition \refL{PartitionLemma}. Combine this observation with \refEq{PL/2} and \refEq{prodlemma.4} to see that
\begin{gather*}
   \langle \lax;a:\x<\ka\rangle\tag{5}\labelp{Eq:prodlemma.5}\\
 \intertext{is a partition of  $
\bba$\per
A similar argument shows that}
\langle \lbe;b:\eta<\la\rangle\tag{6}\labelp{Eq:prodlemma.6}
\end{gather*}   is a partition of   $
\aab$.

Use \refEq{PL/1}, \refL{domain}(i), monotony, \refL{domain}(iii),   the
left-regularity of
$b$, and \refL{regnonzero} to obtain
\begin{equation*}\tag{7}\labelp{Eq:prodlemma.07}
  b;b\ssm;a=\aab\ge x;b=b>0\per
\end{equation*}
Since \refEq{prodlemma.5} is a partition of $b;b\ssm;a$, it follows from \refEq{prodlemma.07} that there must be an index $\gm<\ka$ such that
\[(\L_{a,\gm};\al)\cdot b \neq 0\per\]
Put $\bar{a}=\L_{a,\gm};a$ and write the preceding inequality as
\begin{equation*}\bar{a}\cdot \bl\neq 0\per
\tag{8} \labelp{Eq:PL/6}
\end{equation*}
The element $\bar{a}$ is, by definition, a left translation of the left-regular element $a$ with a normal
left stabilizer,  so $\bar{a}$ is itself left-regular with the same normal
left stabilizer
$\lal$\comma by  part (i) of Translation \refL{left.reg}.  In view of \refEq{PL/6},
 Product \refL{a-b.leftregular} may be applied (with $\bar{a}$ in place of $a$) to
conclude that
$\bar{a}\cdot b$ is left-regular with left stabilizer $\lal\cap\lbl$\per

As is well known from group theory, the normal subgroup $\lal\cap\lbl$ has the  coset system
\begin{equation*}\tag{9}\labelp{Eq:prodlemma.09}
  \langle\lax\cap\lbe:\x<\ka\text{ and } \eta<\la\rangle
\end{equation*}
in $\lal;\lbl$.  Every left translation of a left-regular element
by a left coset of its left stabilizer is again left-regular, by part (i) of Translation \refL{left.reg}.  In
particular, each left translation
\begin{equation*}(\lax\cap\lbe);(\bar{a}\cdot \bl)
\end{equation*} of $\bar{a}\cdot b$
is left-regular, and therefore non-zero, by  \refL{regnonzero}.  Choose $\x$    so
that
$\lax$ is the coset inverse of $\L_{a,\gm} $ in the quotient group $\cs Gx/\cs Ha$\comma   and use monotony, the definition of $\bar{a}$, the inverse property from group theory, and the definition of $\cs Ha$ as the left stabilizer of $a$
to obtain
\begin{equation*}\tag{10}\labelp{Eq:prodlemma.9}(\lax\cap\lbe);(\bar{a}\cdot \bl)\le \lax;\bar{a} = \lax;\L_{a,\gm};\al =
\lal;\al = \al\per
\end{equation*}
  Similarly, take  $\eta=0$, so that $\lbe$
coincides with the identity coset $\lbl$\comma and use monotony and the definition of $\cs Hb$ to obtain
\begin{equation*}\tag{11}\labelp{Eq:prodlemma.101}(\lax\cap\lbe);(\bar{a}\cdot \bl)\le \lbe;\bl =   \lbl;\bl
=\bl\per
\end{equation*}
Form the products of the left and right sides of  \refEq{prodlemma.9} and \refEq{prodlemma.101},
and use Boolean algebra
 to arrive at
\[0<(\lax\cap\lbe);(\bar{a}\cdot \bl)\le a\cdot b\per\]
Conclusion:  $\al\cdot\bl\neq 0$, as was to be shown.

To prove (ii), assume that $\al\cdot\bl\ne 0$. It follows from part (i) of the lemma that \refEq{PL/1} also holds, so it makes sense to write
 \begin{equation*}\tag{12}\labelp{Eq:prodlemma.012}
   c=b;b\ssm;a=a;a\ssm;b\per
\end{equation*} The first task is to check that the product subgroup
 $\cs Ha;\cs Hb$ coincides with the left stabilizer $\cs Hc$.
  Consider   an element $h$ in $\cs Gx$.  If $h$ is in the   product subgroup, then there must be elements $f$ in $\cs Ha$ and $g$ in $\cs Hb$ such that $h=f;g$, by the definition of the product subgroup. Consequently,
 \begin{multline*}\tag{13}\labelp{Eq:prodlemma.10}
  h;c= h;b;b\ssm;a=f;g;b;b\ssm;a=f;b;b\ssm;a\\=f;a;a\ssm;b=a;a\ssm;b=b;b\ssm;a=c\comma
 \end{multline*} by  \refEq{prodlemma.012}, the assumptions on $h$, the assumption that $g$ is in the left stabilizer of $b$, part (i) of the lemma, which implies that \refEq{PL/1} holds, the assumption that
 $f$ is in the left stabilizer of $a$, and  \refEq{PL/1} again. It follows from \refEq{prodlemma.10} that $h$ belongs to the left stabilizer $\cs Hc$.

 On the other hand, if $h$ belongs to $\cs Hc$\comma then
 \begin{equation*}\tag{14}\labelp{Eq:prodlemma.014}
   h;b;b\ssm;a=h;c=c=b;b\ssm;a\comma
 \end{equation*}
  by \refEq{prodlemma.012} and the definition of $\cs Hc$. Form the relative product of the left and right sides of \refEq{prodlemma.014}, on the right, with $a\ssm$ to obtain
 \begin{equation*}\tag{15}\labelp{Eq:prodlemma.11}
   h;b;b\ssm;a;a\ssm=b;b\ssm;a;a\ssm\per
 \end{equation*}
 Use complete distributivity, the left-regularity of $a$ and $b$, \refEq{prodlemma.11}, the left-regularity of $a$ and $b$ again,  and complete distributivity to get
 \begin{multline*}\tag{16}\labelp{Eq:prodlemma.12}
   \tsum \cs Hb;\cs Ha=(\tsum\cs Hb);(\tsum \cs Ha)=b;b\ssm;a;a\ssm=h;b;b\ssm;a;a\ssm\\=h;(\tsum\cs Hb);(\tsum\cs Ha)=h;(\tsum\cs Hb;\cs Ha)=\tsum h;\cs Hb;\cs Ha\per
 \end{multline*} The sets involved in the first and last sums of \refEq{prodlemma.12} are sets of atoms, so the two sets must be equal.  Combine this with the assumption that $\cs Ha$ is normal to arrive at \[\cs Ha;\cs Hb=\cs Hb;\cs Ha=h;\cs Hb;\cs Ha=h;\cs Ha;\cs Hb\per\] Thus, the left coset of the product subgroup $\cs Ha;\cs Hb$ determined by each element  $h$
 belonging to the left stabilizer  $\cs Hc$ coincides with    $\cs Ha;\cs Hb$, so each such  $h$ belongs to the product subgroup.  Conclusion:
  \begin{equation*}\tag{17}\labelp{Eq:prodlemma.17}
    \cs Hc=\cs Ha;\cs Hb\per
\end{equation*}

 Product \refL{a-b.leftregular}, and the assumption that $a$ and $b$ are left-regular elements with   $a\cdot b\neq 0$ imply that  $\al\cdot\bl$ is a left-regular element with
  left stabilizer $\lal\cap\lbl$\per  Recall that \refEq{prodlemma.09} is a coset system for this left stabilizer in $\cs Ha;\cs Hb$.  Consequently,  $\cs Hc$ is the union of the  cosets $ \lax\cap\lbe$ for $\xi<\ka$ and $\eta<\la$, by   \refEq{prodlemma.17}.  The assumption that $a\cdot b\neq 0$, together with Boolean algebra, \refEq{prodlemma.07}, and \refEq{prodlemma.012}, implies that
  \[0<a\cdot b\le b\le c\comma\] In view of these observations,
    Partition \refL{GeneralizedPartitionLemma} may be applied (with $a\cdot b$ and $c$ in place of $a$ and $b$ respectively, and with $\lax\cap\lbe$ and  $\cs Hc$ in place of $\lax$ and $\lbe$ respectively)   to conclude that
\begin{equation*}\tag{18}\labelp{Eq:prodlemma.018}
  \langle (\lax\cap\lbe);(\al\cdot\bl):\xi<\kappa\text{ and }\eta<\la\rangle
\end{equation*}
is a partition of $\cs Hc;c$, and therefore a partition of $c$, by the definition of $\cs Hc$ as the left stabilizer of $c$.  In particular, the elements in this system are
 non-zero and pairwise disjoint.

Monotony   implies that
\[(\lax\cap\lbe);(\al\cdot\bl)\le\lax;a\quad\text{and}\quad(\lax\cap\lbe);(\al\cdot
\bl)\le\lbe;b\comma\] and therefore
\begin{equation*}(\lax\cap\lbe);(\al\cdot \bl)\le(\lax;a)\cdot(\lbe;b)\comma
\tag{19}\labelp{Eq:PL/9}
\end{equation*} by Boolean algebra.  The elements on the left side of this inequality are left-regular and therefore non-zero,
so the products on the right must also be  non-zero.
It has already been shown that, on the basis of \refEq{PL/1}, the systems of left translations in \refEq{prodlemma.5} and in \refEq{prodlemma.6} are both
partitions of $c$. Combine this with the preceding observation, and use Boolean algebra, to conclude that the system
\begin{equation*}\tag{20}\labelp{Eq:prodlemma.19}
  \langle (\lax;a)\cdot(\lbe;b):\x<\ka\text{ and }\eta<\la\rangle
\end{equation*} is also a partition of $c$.  Summarizing,  \refEq{prodlemma.018} and \refEq{prodlemma.19}  are both partitions of $c$.  The inequality
in \refEq{PL/9} therefore implies that the two partitions  must coincide, so that equality holds in \refEq{PL/9}.  This completes the proof of (ii).
\end{proof}

\section{Regular elements with normal stabilizers}\labelp{S:sec5}

  Suppose  $x$ and $y$ are measurable atoms, and  $a\le \xoy$  a regular element with
normal stabilizers.
The assumption on the stabilizers  implies, in particular, that   it is possible to form the quotient groups $\G x/\L_\al$ and $\G y/\R_\al$\per  It turns out
that the element $a$ induces in a canonical fashion an isomorphism between these two quotients.  To prove
this, we begin with a lemma.

\begin{lm} \labelp{L:withnormalstab}
Let $\wx$ and $\wy$ be measurable atoms\po  If an element $a\le \xoy$ is regular  with normal
stabilizers\co then for every $f$ in $ G_\wx$ and every
$g $ in $ G_\wy$\comma \[
(f;a)\cdot (a;g)\not= 0
\qquad\text{if and only if}\qquad
f;a=a;g\per
\]
\end{lm}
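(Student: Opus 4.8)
The plan is to reduce the statement to \refL{char.less}, which says that two left-regular elements below $\xoy$ with a non-zero product are equal precisely when their left stabilizers coincide. The implication from right to left is immediate: if $f;a=a;g$, then $(f;a)\cdot(a;g)=f;a$, and this element is non-zero because $a$ is non-zero (by \refL{regnonzero}, since $a$ is regular) and left translation by $f$ is a permutation of $A(\xoy)$ fixing $0$ (by Lemmas \ref{L:mappings} and \ref{L:auto}). So the real content is the implication from left to right.

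For that direction, I would first observe that both $f;a$ and $a;g$ are left-regular elements below $\xoy$ whose left stabilizers are equal to $\L_a$. The key point is that the left translation $f;a$ is left-regular with left stabilizer $f;\L_a;f\ssm$ by part (i) of Translation \refL{left.reg}, and here the hypothesis that $\L_a$ is a \emph{normal} subgroup of $\G x$ forces $f;\L_a;f\ssm=\L_a$, so that $\L_{f;a}=\L_a$. Dually, the right translation $a;g$ is left-regular with left stabilizer exactly $\L_a$ by part (ii) of the same lemma (this part requires no normality). Hence $\L_{f;a}=\L_{a;g}=\L_a$.

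With this in hand, the proof finishes quickly. Assuming $(f;a)\cdot(a;g)\neq 0$, I would apply \refL{char.less} with $f;a$ and $a;g$ in place of $a$ and $b$: both are left-regular below $\xoy$, their product is non-zero, and their left stabilizers are equal, so the lemma yields $f;a=a;g$, as required.

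I expect the only real subtlety to be the bookkeeping about which stabilizer each translation carries. The normality hypothesis on $\L_a$ is used exactly once---to collapse the conjugate $f;\L_a;f\ssm$ back to $\L_a$---and without it the two translations would in general have distinct (conjugate) left stabilizers, so \refL{char.less} could not be invoked directly. Everything else is a routine appeal to the Translation Lemma together with the principle that equal left stabilizers and a non-zero product force equality of left-regular elements.
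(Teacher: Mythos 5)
Your proof is correct, but it takes a genuinely different route from the paper's. Both arguments begin identically: the right-to-left direction is immediate, and for the converse both use the First Translation Lemma (\refL{L:left.reg}) together with normality to see that $f;a$ and $a;g$ are left-regular with $\L_{f;a}=f;\L_a;f\ssm=\L_a=\L_{a;g}$. From there the paper works with the product $c=(f;a)\cdot(a;g)$: it shows $c$ is regular with the same left and right stabilizers as $a$, invokes the implication from (i) to (iv) of the Second Translation Lemma (\refL{L:assertioneq}) to write $c=h;a$ for some $h$ in $\cs Gx$, uses the fact that distinct left translations of $a$ are equal or disjoint to get $c=f;a$ and hence $f;a\le a;g$, and then runs a dual right-regular argument for the reverse inequality. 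You instead apply \refL{L:char.less} directly to the two translations: they are left-regular, lie below $\xoy$, have non-zero product, and have equal left stabilizers, so they are equal. Your route is shorter, bypassing both the Second Translation Lemma and the dual right-regularity argument; indeed it uses only the left-regularity of $a$ and the normality of $\L_a$, so it establishes the conclusion under formally weaker hypotheses than the paper needs for its own argument. The one genuine subtlety is exactly the one you flag: normality is what collapses the conjugate $f;\L_a;f\ssm$ back to $\L_a$ and makes \refL{L:char.less} applicable.
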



\begin{proof} 
The implication from right to left is obvious, since translations of regular elements are
regular, by Translation \refL{left.reg}, and regular elements are never zero, by \refL{regnonzero}.  To derive the reverse implication,
assume that the hypotheses of the  lemma are satisfied, and
suppose that
\begin{equation*} 
(f;a)\cdot (a;g)\not=0\per
\end{equation*}
Both $f;a$ and $a;g$ are  regular elements below $\xoy$, by  Translation \refL{left.reg} and its right-regular version, so
the product
\begin{equation*}\tag{2}\labelp{Eq:withnormalstab.2}
  c=(f;a)\cdot (a;g)
\end{equation*}
  must be a regular element below $\xoy$, by
 the  Product \refL{a-b.leftregular} and its right-regular version.  Moreover, this product has the same left and right stabilizers as
$a$, because
\begin{equation*}\tag{3}\labelp{Eq:withnormalstab.3}
\cs Hc= \L_{f;a}\cap \L_{a;g}
= (f;\L_\al; f\ssm)\cap \L_\al
= \L_\al\comma
\end{equation*}
by  \refEq{withnormalstab.2}, Product \refL{a-b.leftregular}, Translation \refL{left.reg}, and the assumption that $\L_\al$ is normal, and similarly,
\begin{equation*}
\cs Kc= \R_{f;a}\cap \R_{a;g}
= \R_\al \cap (g\ssm; \R_\al;g)
= \R_\al\comma
\end{equation*}
by the right-regular versions of Lemmas \ref{L:a-b.leftregular} and  \ref{L:left.reg}.

Use the assumption that $\cs Ha$ is normal,  the regularity of $c$, \refEq{withnormalstab.3}, and the implication from (i) to (iv) in  Translation \refL{assertioneq} (with $c$ in place of $b$) to get that $c$ is a left translation of $a$, in symbols,
\begin{equation*}
c=(f;a)\cdot (a;g)= h;a\tag*{(4)} \labelp{Eq:P16/10}
\end{equation*}
for some $h$ in $G_\wx$\per
In particular, $h;a\le f;a$.  Any two left translations of $a$ are equal or disjoint, by
\refL{equiv}(iii), so
$h;a=f;a$. It follows from this equation, \ref{Eq:P16/10}, and Boolean algebra  that
$
f;a\le a;g\per
$
A dual argument using
right-regularity establishes the reverse inequality.
\end{proof}

\begin{cor} \labelp{C:eqtrans}
Let $\wx$ and $\wy$ be measurable atoms\per If  $a\le \xoy$ is a regular element
with normal stabilizers\comma then every right translation of $a$ is also a left
translation\co and conversely\po
\end{cor}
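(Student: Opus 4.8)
The plan is to read this corollary off from \refL{withnormalstab} together with the two partition lemmas. The content of \refL{withnormalstab} is that, for a regular element $a$ with normal stabilizers, a left translation $f;a$ and a right translation $a;g$ coincide as soon as they merely overlap. So the entire task reduces to producing, for a translation of one kind, an overlapping translation of the other kind, and that is exactly what a partition of $\xoy$ into translations supplies.

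First I would show that every right translation of $a$ is a left translation. Fix $g$ in $\G y$ and consider $a;g$. Since $a$ is regular, it is in particular left-regular, so \refL{PartitionLemma} tells us that the left translations $\langle \L_\xi;a:\xi<\kappa\rangle$ form a partition of $\xoy$. On the other hand, $a;g\le\xoy$, and $a;g$ is left-regular by \refL{left.reg}(ii), hence non-zero by \refL{regnonzero}. A non-zero element of $\xoy$ must meet at least one block of the partition, so $(f;a)\cdot(a;g)\neq 0$ for some $f$ in $\G x$. Then \refL{withnormalstab} applies and yields $f;a=a;g$, exhibiting the right translation $a;g$ as a left translation.

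The converse --- that every left translation of $a$ is a right translation --- is the exact dual. Fix $f$ in $\G x$ and consider $f;a$. Because $a$ is right-regular, the right-regular version of \refL{PartitionLemma} shows that the right translations $\langle a;\R_\eta:\eta<\lambda\rangle$ partition $\xoy$. The element $f;a$ is a non-zero member of $\xoy$, by \refL{left.reg}(i) and \refL{regnonzero} (alternatively, because left translation is an automorphism of the relativized Boolean algebra, by \refL{auto}, and $a\neq 0$). Hence it overlaps some block $a;g$, giving $(f;a)\cdot(a;g)\neq 0$, and \refL{withnormalstab} again delivers $f;a=a;g$.

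I do not anticipate a genuine obstacle, since all the real work has already been absorbed into \refL{withnormalstab}. The only points requiring care are bookkeeping: confirming that each translation in play is non-zero, so that ``meeting a block'' is automatic for an element of $\xoy$, and keeping track that the regularity of $a$ and the normality of its stabilizers are inherited by the translations, so that the hypotheses of \refL{withnormalstab} and of the partition lemmas remain in force throughout.
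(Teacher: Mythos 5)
Your proposal is correct and follows essentially the same route as the paper: both arguments observe that the translations of $a$ are non-zero regular elements, use the First Partition Lemma (and its right-regular version for the converse direction) to find an overlapping translation of the other kind, and then invoke Lemma \ref{L:withnormalstab} to upgrade the overlap to equality. The bookkeeping points you flag (non-zeroness of translations, inheritance of regularity and normality) are exactly the ones the paper handles.
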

\begin{proof}If   an element $a$ satisfies the hypotheses of the corollary,
then $a$ and all of its translations are regular elements below $\xoy$, by Translation \refL{left.reg} and its right-regular version,
and in particular they are not $0$, by
\refL{regnonzero}.  Let
$g$ be any element in
$\G\wy$\per  Since the left translations  $f;a$, for $f$ in $\G\wx$\comma sum
to $\xoy$, by   Partition \refL{PartitionLemma}, there must be an element $f$ in $\G\wx$ such that
\[(f;a)\cdot (a;g)\neq 0\per
\] Consequently,
\[(f;a)= (a;g)\comma
\]
by \refL{withnormalstab}.  In other words, the right translation $a;g$ can be written
as a left  translation $f;a$.  The converse is proved in a similar way.
\end{proof}

We turn now to the task of constructing a   group triple in an atomic, measurable relation algebra, and verifying the  semi-frame conditions for this triple.   The definitions of these notions, which are from \cite{andgiv1}, are not needed in this section, but they are needed in the next section. We give them here in order to motivate the  discussion.  The reader may choose to ignore them for now and  refer back to them at the appropriate moment in the next section.  A \emph{group triple}
\[\mc F=\trip G \vph
C\] consists of a system
\[G=\langle
\G x:x\in I\,\rangle\]  of disjoint groups, a system
\[\varphi=\langle\vph_{xy}:\pair x y\in \mc E\,\rangle\]  of associated
quotient isomorphisms, with $\vph_{xy}$ mapping a quotient group $\cs Gx/\cs H{xy}$ to a quotient group $\cs Gy/\cs K{xy}$ for each pair $\pair xy$ in
a fixed equivalence relation $\mc E$ on the group index set $I$, and finally
a system
\[C=\langle \cc x y z:\trip x y z\in\ez 3\rangle
\] of associated cosets, with   $\cc x y z$  a coset of the normal subgroup
$\hh$ in $\G\wx$ for each triple $\trip xy z$ in the set $\ez 3$ of triples $\trip xyz$ such that $\pair xy$ and $\pair yz$ are both in $\mc E$.
\begin{df}\labelp{D:semiframedef} A group triple
\[\mc F=\trip G \vph C
\]
is a (\textit{coset}) \textit{semi-frame} if  the following
\textit{semi-frame conditions} are satisfied\per
\begin{enumerate}
\item[(i)]
$\vphi {xx}$ is the identity automorphism of $\G x/\{\e x\}$ for
all $x$ in $I$\per
\item [(ii)]
$\vphi \yx=\vphi \xy\mo$ whenever $ \pair x y$ is in $\mc E$\per
\item[(iii)]  $\vphi \xy[\h
\xy\scir\h \xz]=\k \xy\scir\h \yz$ whenever $\trip x y z$ is in
$\ez 3$\per
\item[(iv)] $\hvphs \xy\rp\hvphs \yz=\tau\rp \hvphs
\xz$ whenever $\trip x y z$ is in $\ez 3$\comma
\end{enumerate}
where  $\hvphs \xy$\comma $\hvphs \yz$\comma and $\hvphs
\xz$ are the quotient isomorphisms induced on \[\cs Gx/(\cs H\xy\scir\cs H\xz)\comma \qquad \cs Gy/(\cs K\xy\scir\cs H\yz)\comma\cs Gx/(\cs H\xy\scir\cs H\xz)\] by $\vphi\xy$\comma $\vphi\yz$\comma  $\vphi\xz$ respectively\comma and $\tau$ is the inner automorphism of $\cs Gx/(\cs H\xy\scir\cs H\xz)$ determined by the coset $\cc x
y z$\per\qed\end{df}

  The global assumption of measurability is not needed in order to accomplish most of the task of constructing a group triple and verifying the semi-frame conditions, so for now we continue  with the local assumptions that $x$ and $y$ are measurable atoms, and $a\le \xoy$ is a  regular element with normal stabilizers.

\begin{theorem}[First Isomorphism Theorem] \labelp{T:isom}
Let   $\wx$ and $\wy$ be measurable atoms\per If   $a\le \xoy$ is  a regular element
with normal stabilizers\comma and in particular\comma if $a$ is an atom\comma then the quotient groups $\gxla$ and $\gyra$ are isomorphic\po In fact\co if
\[\langle \L_\x :
\x<\kp\rangle\qquad\text{and}\qquad
\langle \R_\eta : \eta<\la\rangle\] are  coset
systems of $\L_\al$ in $\cs Gx$ and $\R_\al$ in $\cs Gy$ respectively\co then there is a unique bijection
$\varphi$ from $\kp$ to $\la$ such that
\[
\L_\x;a=a;\R_{\varphi(\x)}\comma \]
and the mapping $\L_\x\longmapsto \R_{\varphi(\x)}$ is the desired
isomorphism\po
\end{theorem}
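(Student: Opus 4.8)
The plan is to build the isomorphism directly from the pairing of left and right translations supplied by \refC{eqtrans}, and then to transport the quotient-group operation across that pairing using the associative law for relative multiplication. Throughout I use the convention introduced after \refC{stab} that $\L_\x;a$ and $a;\R_\eta$ denote the common translation of $a$ by the members of the respective cosets.

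First I would define $\varphi$. By Partition \refL{PartitionLemma} and its right-hand version, both systems $\langle \L_\x;a:\x<\kp\rangle$ and $\langle a;\R_\eta:\eta<\la\rangle$ are partitions of $\xoy$ into non-zero, pairwise disjoint blocks. By \refC{eqtrans}, each left translation $\L_\x;a$ equals some right translation $a;\R_\eta$; since distinct right cosets yield disjoint right translations, this $\eta$ is unique, and I set $\varphi(\x)=\eta$. This gives a well-defined map with $\L_\x;a=a;\R_{\varphi(\x)}$. The converse half of \refC{eqtrans} shows that every block $a;\R_\eta$ is also a left translation, so $\varphi$ is onto; and $\varphi(\x)=\varphi(\x')$ forces $\L_\x;a=\L_{\x'};a$, hence $\x=\x'$ because distinct left cosets give disjoint left translations. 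Thus $\varphi$ is a bijection, and it is the only map satisfying the displayed equation, since $\varphi(\x)$ is forced to be the unique index whose right translation equals $\L_\x;a$.

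Next I would verify the homomorphism property. Pick representatives $f_\x\in\L_\x$ and $g_\eta\in\R_\eta$; the defining equation reads $f_\x;a=a;g_{\varphi(\x)}$, independently of the representative by \refC{stab} and its dual. Letting $\L_\zeta$ be the coset of $\gxla$ containing the product $f_\x;f_{\x'}$, the associative law gives
\[ f_\x;f_{\x'};a = f_\x;(a;g_{\varphi(\x')}) = (f_\x;a);g_{\varphi(\x')} = a;g_{\varphi(\x)};g_{\varphi(\x')}, \]
while at the same time $f_\x;f_{\x'};a=\L_\zeta;a=a;\R_{\varphi(\zeta)}=a;g_{\varphi(\zeta)}$. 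Hence $a;(g_{\varphi(\x)};g_{\varphi(\x')})=a;g_{\varphi(\zeta)}$, so by the right-hand version of \refC{stab} the product $g_{\varphi(\x)};g_{\varphi(\x')}$ lies in the right coset $\R_{\varphi(\zeta)}$. This says precisely that $\L_\x\mapsto\R_{\varphi(\x)}$ sends the quotient product $\L_\x\scir\L_{\x'}=\L_\zeta$ to $\R_{\varphi(\x)}\scir\R_{\varphi(\x')}=\R_{\varphi(\zeta)}$, so the map is a group homomorphism; being bijective by the first part, it is the desired isomorphism between $\gxla$ and $\gyra$.

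The substantive content has already been isolated in \refC{eqtrans}, which rests on \refL{withnormalstab} and on normality of the stabilizers, so what remains is essentially bookkeeping. The one step I expect to need care is confirming that $f_\x;a=a;g_{\varphi(\x)}$ holds for \emph{every} choice of representatives, not just for a single fixed one; this is exactly what \refC{stab} and its right-hand dual guarantee, since they show that $\L_\x;a$ and $a;\R_\eta$ depend only on the cosets and not on the representing functions.
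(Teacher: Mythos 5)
Your proposal is correct and follows essentially the same route as the paper: the map $\varphi$ is defined via \refC{eqtrans} together with the disjointness of translations from the Partition Lemma, and the homomorphism property is verified by the same associativity computation $\L_\x;\L_{\x'};a=a;\R_{\varphi(\x)};\R_{\varphi(\x')}$. The only cosmetic difference is that you argue injectivity and surjectivity of $\varphi$ directly, whereas the paper constructs an explicit two-sided inverse $\psi$; both rest on the same facts.
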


\begin{proof} 
Consider a regular element $a\le \xoy$    with normal stabilizers.  The left
and   right translations of $a$ are just the elements in the systems
 \[\langle \L_\x;a:\x<\kp\rangle\qquad\text{and}\qquad\langle
a;\R_\eta:\eta<\lmbd\rangle \] respectively.  For any fixed $\x<\kp$,
there is a $\x'<\lmbd$ such that
\begin{equation*}
\L_\x;a= a;\R_{\x'}\comma\tag*{(1)} \labelp{Eq:P16/11}
\end{equation*}
by \refC{eqtrans}.  If $\x''<\lmbd$ is any other index such that \[
\L_\x;a= a;\R_{\x''}\comma
\]
then  \[a;\R_{\x'}=a;\R_{\x''}\comma \] and therefore $\xi'=\xi''$,
because distinct  cosets of $\R_\al$ lead to disjoint right translations of $a$, by the right-regular version of
\refL{equiv}(iii).  Thus, there is a unique $\xi'<\lmbd$ such that \ref{Eq:P16/11} holds. Define
$\varphi(\x)$ to be
$\x'$.  The preceding remarks imply that $\varphi$ is a well-defined mapping from $\kp$ into
$\lmbd$, and
\begin{equation*} 
\L_\x;a=a;\R_{\varphi(\x)}  \tag*{(2)} \labelp{Eq:P19/2}
\end{equation*}
for each $\xi<\ka$, by \ref{Eq:P16/11} and the definition of $\vp$.

In a completely analogous fashion, define a mapping $\ps$ from
$\lmbd$ into $\kp$ such that
\begin{equation*}\tag{3}\labelp{Eq:isom.1}
  a;\R_\eta= \L_{\ps(\eta)};a
\end{equation*}
 for each $
\eta <\lmbd$.
In particular,
\begin{equation*}\tag{4}\labelp{Eq:isom.4}
\L_\x;a = a;\R_{\varphi(\x)}= \L_{\ps(\varphi(\x))};a\comma
\end{equation*}
  by \ref{Eq:P19/2} and \refEq{isom.1} (with $\vp(\xi)$ in place of $\eta$).
Distinct  cosets of $\L_\al$ lead to disjoint left translations of $a$, by Partiton \refL{PartitionLemma}, so
$\ps(\varphi(\x))=\x$ for each
$\x <\kp$, by \refEq{isom.4}. A symmetric argument shows that $\varphi(\ps(\eta))=\eta$
for each $\eta<\lmbd$.
It follows that $\varphi$ must a bijection from $\kp$ to $\lmbd$, with $\ps$ as its
inverse. Consequently, the correspondence
\begin{equation*} 
\L_\x \longmapsto \R_{\varphi(\x)} \tag*{(5)} \labelp{Eq:P19/3}
\end{equation*}
is a bijection from $\gxla$ to $\gyra$\per

Suppose that
\begin{equation*} 
\L_\x;\L_\eta=\L_\mu\per \tag*{(6)} \labelp{Eq:P19/4}
\end{equation*}
Use \ref{Eq:P19/2}, \ref{Eq:P19/4}, and two more applications of \ref{Eq:P19/2} to arrive at
\begin{equation*}\tag{7}\labelp{Eq:isom.6}
a;\R_{\varphi(\mu)}  = \L_\mu;a
= \L_\x;\L_\eta;a
= \L_\x;a ;\R_{\varphi(\eta)}
= a;\R_{\varphi(\x)}; \R_{\varphi(\eta)}\per
\end{equation*}
Distinct cosets of $\R_\al$ lead to distinct right
translations of $a$, by the right-regular version of \refL{PartitionLemma}, so   \refEq{isom.6} implies that
\[
\R_{\varphi(\x)}; \R_{\varphi(\eta)}= \R_{\varphi(\mu)}\comma
\]
This argument shows that  the mapping \ref{Eq:P19/3} preserves the quotient group composition operation of forming the relative product of two cosets.

In groups, the identity element and the   operation of forming inverses are both definable in terms of
the group composition operation, so any bijection that preserves composition is automatically a group isomorphism.  In particular, it follows from the preceding observations that
the mapping  \ref{Eq:P19/3} must be  an isomorphism between the quotient groups.
\end{proof}

\begin{con} \labelp{Co:canonical}
It is clear from the First Isomorphism Theorem that we may assume
$\kp=\la$, and we may reindex the coset system $\langle
\R_\x:\x<\kp\rangle$ so that $\varphi(\x)=\x$\comma that is to say, so that
\begin{equation*}\tag{1}\labelp{Eq:canonical.1}
  \L_\x\longmapsto \R_\x
\end{equation*}
is the canonical isomorphism from $\gxla$ to $\gyra$
determined by the equation
\begin{equation*} 
\L_\x;a =a;\R_\x\per \tag*{(2)} \labelp{Eq:conv2/1}
\end{equation*}
Furthermore, it may be supposed that $\L_0=\L_\al$\comma and consequently that $\R_0=\R_\al$   (where $0$ denotes an index zero, and not the  zero element in the relation algebra $\f A$ that has been fixed
throughout the  discussion.)  In the subsequent development,  these
notational conventions shall be adopted. When an explicit reference to $a$ is required, we   write $\L_{a,\x}$ and
$\R_{a,\x}$ for $\L_\x$ and $\R_\x$ respectively, and we  denote the canonical isomorphism \refEq{canonical.1} by
$\varphi_\al$, and call it the   \emph{isomorphism determined by}~$a$.\qedcon
\end{con}

The particular isomorphism described in Isomorphism \refT{isom} is determined by the
  element $a$.  Different translations
of $a$ may determine different isomorphisms. How are these various isomorphisms
 related to one other?
Assume  that $b$ is, for example, a left translation of $a$, say $b=\cs H\eta;a$.  In this case, $b$ is also a regular element with the same normal stabilizer
as $a$, by Translation \refL{left.reg} and its right-regular version.  Moreover, if
 $\varphi_\al$ maps $\L_\x$ to $\R_\x$ for each $\x<\kp$, as in \conref{Co:canonical}, or  in
different words, if   equation   \ref{Eq:conv2/1} of the convention holds
for each $\xi<\ka$, then
\begin{equation*}
b=\L_\eta;a=a;\R_\eta\comma\tag*{(3)} \labelp{Eq:auto3/2}
\end{equation*}
and therefore
\begin{multline*}\tag{4}\labelp{Eq:chariso.3}
\L_\x;b = \L_\x;\L_\eta;a= \L_\x;a;\R_\eta=a; \R_\x;\R_\eta\\=a; \R_\eta;\R\ssm_\eta;\R_\x;\R_\eta=b;\R\ssm_\eta;\R_\x;\R_\eta\comma
\end{multline*}
by \ref{Eq:auto3/2} and \ref{Eq:conv2/1}.
The fourth equality uses quotient group identity and inverse properties:
\begin{equation*}\k\xi=K_\al;\k\xi= \R_\eta;\R\ssm_\eta;\R_\x\per
\end{equation*}
It   follows from \refEq{chariso.3} and the definition of the isomorphism determined by $b$
 that
\begin{equation*}\tag{5}\labelp{Eq:auto3/3}
\varphi_\bl(\L_\x)=\R\ssm_\eta;\R_\x;\R_\eta
\end{equation*}
for each $\x$.

If $\tau$ is the inner automorphism of $\G\wx/\h\al$ determined by
\begin{equation*}\tag{6}\labelp{Eq:auto3/4}
\tau(\h\x)=\h\eta\ssm;\h\x;\h\eta
\end{equation*}
for each $\x$, then
\begin{equation*}\tag{7}\labelp{Eq:auto3/5}
\vphi\bl=\tau\rp\vphi\al\per
\end{equation*}
The verification of \refEq{auto3/5} is an easy computation:
\begin{multline*}
(\tau\rp\vphi\al)(\h\x)=\vphi\al(\tau(\h\x))=\vphi\al(\h\eta\ssm;\h\x;\h\eta)=\vphi\al(\h\eta)\ssm;\vphi\al(\h\x);\vphi\al(\h\eta)\\
=\k\eta\ssm;\k\x;\k\eta=\vphi b(\cs H\xi)\comma
\end{multline*}
by the definition of
relational composition, \refEq{auto3/4}, the isomorphism properties of $\vphi\al$\comma the  definition of $\vphi\al$\comma and \refEq{auto3/3}\per
The
isomorphism $\varphi_\bl$   agrees with the isomorphism $\varphi_\al$ on a coset $\h\x$
just in case the images of $\h\xi$ under these two mappings are equal, that is to say, just in case
\begin{equation*}\R_\x =\R\ssm_\eta;\R_\x;\R_\eta\comma
\end{equation*}
or, equivalently, just in case
\begin{equation*}\L_\x =\L\ssm_\eta;\L_\x;\L_\eta\per
\end{equation*}
This is the same as saying that $\L_\eta$ commutes with $\h\x$\per

The preceding argument proves the following theorem.

\begin{theorem} [Second Isomorphism Theorem]\labelp{T:chariso}
Let  $\wx$ and $\wy$ be measurable atoms\comma and    $a\le \xoy$   a regular element with
normal stabilizers\po
  If $b$ is the left translation of
$a$ by a coset
$\L_\eta$ of $\h a$\comma  then $\varphi_\bl$ is just the relational composition
\[\vphi b = \tau\rp\vphi a\comma
\] where $\tau$ is
 the inner automorphism  of  $\gxla$
determined by
\[\tau(\L_\x)=\L\ssm_\eta;\L_\x;\L_\eta\per \]
 The isomorphism
$\varphi_\bl$ agrees with $\varphi_\al$ on precisely those cosets of $\h\al$ that commute
with $\h\eta$\po
\end{theorem}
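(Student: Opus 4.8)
The argument is essentially assembled in the paragraphs preceding the statement, so the plan is to organize those observations into a proof. The first step is to secure that $\varphi_b$ is even defined on the same pair of quotient groups as $\varphi_a$. Since $b=\L_\eta;a$ is a left translation of the regular element $a$ by a coset of its (normal) left stabilizer, the First Translation \lmref{L:left.reg} and its right-regular version guarantee that $b$ is again regular with the very same normal left and right stabilizers $\L_a$ and $\R_a$. Hence \conref{Co:canonical} applies verbatim to $b$ as well as to $a$, with a common index set, so that $\varphi_a$ and $\varphi_b$ are both isomorphisms from $\gxla$ to $\gyra$, characterized respectively by $\L_\x;a=a;\R_\x$ and $\L_\x;b=b;\R_{\varphi_b(\x)}$.

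The heart of the proof is a direct computation that reads off $\varphi_b$ from the canonical data for $a$. Starting from $b=\L_\eta;a=a;\R_\eta$ and applying the defining relation $\L_\x;a=a;\R_\x$ twice, I would compute
\[\L_\x;b=\L_\x;\L_\eta;a=\L_\x;a;\R_\eta=a;\R_\x;\R_\eta.\]
The one not-quite-mechanical move is to reinsert the identity coset $\R_a=\R_\eta;\R\ssm_\eta$ in order to factor $b=a;\R_\eta$ back out on the left:
\[a;\R_\x;\R_\eta=a;\R_\eta;\R\ssm_\eta;\R_\x;\R_\eta=b;(\R\ssm_\eta;\R_\x;\R_\eta).\]
Comparing this with the characterizing relation $\L_\x;b=b;\R_{\varphi_b(\x)}$, and invoking the fact that distinct cosets of $\R_a$ yield disjoint right translations of $b$ (the right-regular version of \lmref{L:equiv}(iii)), yields $\varphi_b(\L_\x)=\R\ssm_\eta;\R_\x;\R_\eta$ for every $\x$.

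It then remains to recognize this formula as $\tau\rp\varphi_a$. I would define $\tau$ to be conjugation by the coset $\L_\eta$ in the quotient group $\gxla$, namely $\tau(\L_\x)=\L\ssm_\eta;\L_\x;\L_\eta$; this is an inner automorphism of $\gxla$ because $\L_\eta$ is an element of that quotient. Using the composition convention under which $(\tau\rp\varphi_a)(\L_\x)=\varphi_a(\tau(\L_\x))$, together with the facts that $\varphi_a$ is a homomorphism and that $\varphi_a(\L_\zeta)=\R_\zeta$, I would verify
\[(\tau\rp\varphi_a)(\L_\x)=\varphi_a(\L\ssm_\eta;\L_\x;\L_\eta)=\R\ssm_\eta;\R_\x;\R_\eta=\varphi_b(\L_\x),\]
establishing $\varphi_b=\tau\rp\varphi_a$. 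For the agreement assertion, $\varphi_b$ and $\varphi_a$ coincide on $\L_\x$ exactly when $\R_\x=\R\ssm_\eta;\R_\x;\R_\eta$, that is, when $\R_\x$ commutes with $\R_\eta$; transporting this equivalence back along the isomorphism $\varphi_a$ shows it is the same as $\L_\x$ commuting with $\L_\eta$, as claimed.

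I do not anticipate a serious obstacle. Once the Translation Lemma secures that $b$ is regular with the same normal stabilizers, so that $\varphi_b$ lives on the same quotient and all the coset arithmetic takes place inside a single group, the remainder is bookkeeping. The only points demanding care are keeping the left-to-right composition convention straight and remembering that the products $\R\ssm_\eta;\R_\x;\R_\eta$ are coset products in the quotient group, legitimated by the normality of the stabilizers.
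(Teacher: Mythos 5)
Your proposal is correct and follows essentially the same route as the paper: the paper's argument (given in the paragraphs preceding the theorem statement) likewise uses the Translation Lemma to see that $b$ has the same normal stabilizers as $a$, performs the identical computation $\L_\x;b=a;\R_\x;\R_\eta=b;\R\ssm_\eta;\R_\x;\R_\eta$ by reinserting $\R_\eta;\R\ssm_\eta$, reads off $\vphi b(\L_\x)=\R\ssm_\eta;\R_\x;\R_\eta$, and verifies $\vphi b=\tau\rp\vphi a$ and the commutation criterion exactly as you do. No gaps.
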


The theorem allows us to characterize when the isomorphisms determined by a left translation $\bl$
of a regular element $\al$ coincides with the isomorphism $\vphi\al$\per

\begin{cor}
\labelp{C:charagree}
 Under the hypotheses of the First Isomorphism
Theorem\co
  if $b$ is the left translation of
$a$ by a coset
$\L_\eta$ of $\h a$\comma  then \[\varphi_\bl=\vphi\al\] just in case $\h\eta$ is
in the center of $\gxla$\per
\end{cor}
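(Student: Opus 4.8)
The plan is to read this corollary off directly from the Second Isomorphism Theorem, which already isolates exactly the cosets on which $\varphi_\bl$ and $\varphi_\al$ agree. According to that theorem, for the left translation $b=\L_\eta;a$ we have $\varphi_\bl(\L_\x)=\varphi_\al(\L_\x)$ precisely for those cosets $\L_\x$ of $\h a$ that commute with $\L_\eta$ in the quotient group $\gxla$. The corollary is then just a matter of translating ``agreement on every coset'' into ``equality of the two maps'' and ``commuting with every coset'' into ``membership in the center.''

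First I would note that two isomorphisms from $\gxla$ to $\gyra$ coincide as functions if and only if they take the same value on each coset $\L_\x$ of $\h a$. Invoking the characterization supplied by the Second Isomorphism Theorem, it follows at once that $\varphi_\bl=\varphi_\al$ holds if and only if every coset $\L_\x$ commutes with $\L_\eta$.

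Next I would recall that, by the definition of the center of a group, an element lies in the center exactly when it commutes with every element of the group. Since the cosets $\L_\x$ exhaust $\gxla$, the condition that each $\L_\x$ commute with $\L_\eta$ is nothing other than the statement that $\L_\eta$ belongs to the center of $\gxla$. Chaining the two equivalences yields $\varphi_\bl=\varphi_\al$ if and only if $\L_\eta$ is central, as desired.

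There is essentially no obstacle here: the entire arithmetic and group-theoretic content has already been absorbed into the Second Isomorphism Theorem, and this corollary records only the elementary facts that a homomorphism is determined by its values and that centrality means universal commutativity. The one point I would state carefully is that the cosets $\L_\x$ genuinely range over the whole quotient group $\gxla$, so that ``$\L_\x$ commutes with $\L_\eta$ for every $\x$'' is the full centrality condition and not merely commutativity with some proper subset.
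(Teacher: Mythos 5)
Your proposal is correct and follows exactly the route the paper intends: the corollary is stated without proof precisely because it is the immediate specialization of the final assertion of the Second Isomorphism Theorem, obtained by observing that agreement on every coset is the same as equality of the maps and that commuting with every coset is the definition of centrality. Nothing further is needed.
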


\comment{
This corollary allows one to count the number of distinct isomorphism
determined by translations of a regular element.  Recall that the \textit{center} of $\cs Gx/\cs Ha$, that is to say, the
collection $Z$ of cosets of $\h a$ that commute with every coset of $\cs Ha$, is a subgroup of
$\G\wx/\h a$\per  The
\textit{index} of the center is the number of distinct cosets that the subgroup $Z$  has in $\G\wx/\h a$.
Put another way, it is the cardinality of the quotient group of $\G\wx$ modulo the normal
subgroup
$\bigcup Z$, by the Third Isomorphism Theorem from group theory.  In more detail, $Z=(\tbigcup Z)/\cs Ha$, and the
Third Isomorphism Theorem implies that $(\cs Gx/\cs Ha)/((\tbigcup Z)/\cs Ha)$ is isomorphic to $\cs Gx/(\tbigcup Z)$.

\begin{cor}\labelp{C:oneisom1} Under the hypotheses of the First Isomorphism
Theorem\co   the number of distinct
 isomorphisms from $\gxla$ to $\gyra$ that are determined by  translations of $a$ coincides with
the index of the center of $\gxla$\per
\end{cor}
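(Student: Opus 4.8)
The plan is to set up a bijection between the distinct isomorphisms arising from translations of $a$ and the inner automorphisms of the quotient group $\gxla$, and then to count the latter by the standard identification of the inner automorphism group of a group with the quotient of that group by its center.

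First I would reduce to left translations. Since $a$ is regular with normal stabilizers, \corref{C:eqtrans} shows that every right translation of $a$ is also a left translation, so the translations of $a$ are exactly the elements $\L_\eta;a$, where $\L_\eta$ ranges over the cosets of $\L_\al$ in $\cs Gx$. Each such translation $b=\L_\eta;a$ is again regular with the same normal stabilizers as $a$, by Translation \lmref{L:left.reg} and its right-regular version, so the First Isomorphism Theorem furnishes an isomorphism $\vphi b$ from $\gxla$ to $\gyra$; and by the Second Isomorphism Theorem (\thref{T:chariso}),
\[\vphi b=\tau_\eta\rp\vphi a\comma\]
where $\tau_\eta$ is the inner automorphism of $\gxla$ given by $\tau_\eta(\L_\x)=\L\ssm_\eta;\L_\x;\L_\eta$.

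The key step is to show that two translations $b=\L_\eta;a$ and $b'=\L_{\eta'};a$ determine the same isomorphism precisely when they determine the same inner automorphism. Indeed, $\vphi b=\vphi{b'}$ reads $\tau_\eta\rp\vphi a=\tau_{\eta'}\rp\vphi a$, and since $\vphi a$ is a bijection it may be cancelled, yielding $\tau_\eta=\tau_{\eta'}$; the reverse implication is immediate. Hence the assignment $\vphi b\longmapsto\tau_\eta$ is a well-defined bijection from the set of distinct isomorphisms determined by translations of $a$ onto the set of inner automorphisms of $\gxla$. It then remains to count the inner automorphisms: the correspondence $\L_\eta\longmapsto\tau_\eta$ is the canonical conjugation map of $\gxla$ onto its group of inner automorphisms, whose fibres are exactly the cosets of the center $Z$ of $\gxla$ (this is, in the special case $b'=a$, the content of \corref{C:charagree}, which identifies the cosets $\L_\eta$ fixing $\vphi a$ with the central ones). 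Consequently the number of distinct $\tau_\eta$ equals the index $[\gxla:Z]$ of the center, and combining this with the bijection above gives that the number of distinct isomorphisms determined by translations of $a$ equals this same index, as desired.

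The only delicate point is the cancellation step $\tau_\eta\rp\vphi a=\tau_{\eta'}\rp\vphi a\Rightarrow\tau_\eta=\tau_{\eta'}$, which could in principle fail if the two composites merely agreed on the image of $\vphi a$. But since $(\tau_\eta\rp\vphi a)(\L_\x)=\vphi a(\tau_\eta(\L_\x))$ and $\vphi a$ is injective, agreement of the composites on every $\L_\x$ forces $\tau_\eta(\L_\x)=\tau_{\eta'}(\L_\x)$ for all $\x$, hence $\tau_\eta=\tau_{\eta'}$. Everything else is either already established (\thref{T:chariso}, \corref{C:charagree}, \corref{C:eqtrans}, Translation \lmref{L:left.reg}) or the textbook isomorphism $\mathrm{Inn}(Q)\cong Q/Z(Q)$, whence $|\mathrm{Inn}(\gxla)|=[\gxla:Z]$.
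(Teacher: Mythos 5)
Your proof is correct and follows essentially the same route as the paper's: both reduce to left translations via \corref{C:eqtrans}, apply Isomorphism \thref{T:chariso} to write each $\vphi b$ as $\tau\rp\vphi a$, and end by identifying distinct isomorphisms with cosets of the center of $\gxla$. Your factoring through $\mathrm{Inn}(\gxla)$ together with the cancellation of the bijection $\vphi a$ is just a repackaging of the paper's direct comparison of two translations, where \corref{C:charagree} is applied to the coset $\h\rho\ssm;\h\eta$ to show that $\vphi b=\vphi c$ exactly when the translating cosets lie in the same coset of the center.
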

\begin{proof} Let $a\le \xoy$ be a regular element with normal stabilizers. Every trans\-la\-tion of $a$ may be written
as a left translation of $a$, by \refC{eqtrans}, so we may restrict our attention to left translations of $a$. Let   $b $  and
$c$ be such translations,  say by  cosets $
\L_\eta$\comma and    $\h\rho$ respectively\per   Isomorphism \refT{chariso} implies that
\begin{equation*}\vphi b = \tau\rp\vphi
a\qquad\text{and}\qquad\vphi c = \sigma\rp\vphi
a\comma\tag{1}\labelp{Eq:oneisom1.1}\end{equation*} where $\tau$ and $\sigma$ are the
inner automorphisms of $\G\wx/\L_a$ determined by $\L_\eta$ and $\h\rho$ respectively.
From \refEq{oneisom1.1},  it follows that
\begin{alignat*}{2}\tag{2}\labelp{Eq:oneisom1.2}
\vphi b=\vphi c\qquad&\text{if and only if}&\qquad\tau\rp\vphi a&=\sigma\rp\vphi
a\comma\\ &\text{if and only if}&\qquad\sigma\inv\rp\tau\rp\vphi a&=\vphi a\per
\end{alignat*}
The mapping $\sigma\inv\rp\tau$ is the inner automorphism of $\gxla$ determined by
the coset $\h\rho\ssm;\h\eta$\per Consequently, the final
equation in \refEq{oneisom1.2}  holds just in case
\[\vphi a ((\cs H\rho\ssm ;\cs H\eta)\ssm;\cs H\x;(\cs H\rho\ssm;\cs H\eta))=\vphi a(\cs H\xi)\] for all $\xi$, or equivalently, since $\vphi a$ is a bijection,
just in case
\[(\cs H\rho\ssm ;\cs H\eta)\ssm;\cs H\x;(\cs H\rho\ssm;\cs H\eta)=\cs H\xi \] for all $\xi$.  This last equation is equivalent to saying
that the coset $\h\rho\ssm;\h\eta$ belongs to the
center $Z$ of $\gxla$\comma or, what amounts to the same thing, that
\begin{equation*}\tag{3}\labelp{Eq:oneisom1.3}
\h\rho\ssm;\h\eta;Z=
Z\comma
\end{equation*}
by \refC{charagree}.
Of course, \refEq{oneisom1.3} can be rewritten as
\begin{equation*}\tag{4}\labelp{Eq:oneisom1.4}\h\eta;Z=\h\rh;
Z\per
\end{equation*}
But \refEq{oneisom1.4} just means that the cosets $\h\eta$ and $\h\rho$ of $\h \al$ in $\cs Gx$ belong to
the same coset of $Z$ in $\cs Gx/\cs Ha$.  Combine these observations to conclude that the isomorphisms
$\vphi b$ and $\vphi c$ coincide just in case   $\h\eta$ and $\h\rho$ belong to the same coset of the center $Z$, so the number of different isomorphisms
induced by translations
is the same as the number of different cosets of
$Z$. This is just the definition of the index of $Z$.
 \end{proof}

Another consequence of Isomorphism \refT{chariso} is a characterization of the cosets
on which  \textit{all} of the mappings $\vphi b$ agree, where $b$  ranges over the
translations of $\al$.

\begin{cor} \labelp{C:allagree}
 Under the hypotheses of the First Isomorphism
Theorem\comma
  the  cosets of $\h\al$ on which all isomorphisms induced by  translations of $\al$
agree  are precisely the cosets in the center of $\gxla$\po
\end{cor}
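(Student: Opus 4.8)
The plan is to read this off directly from the Second Isomorphism Theorem, \refT{T:chariso}, together with \refC{C:eqtrans}. First I would reduce the family of translations under consideration to left translations alone: by \refC{C:eqtrans}, every right translation of $\al$ is also a left translation (and conversely), so as $b$ ranges over all translations of $\al$, the induced isomorphisms $\vphi\bl$ are exactly those arising from left translations $b=\L_\eta;\al$, as $\h\eta$ ranges over the cosets of $\h\al$ in $\cs Gx$. This lets me work entirely with the explicit description of $\vphi\bl$ supplied by \refT{T:chariso}.

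The key observation is that $\al$ itself is the left translation of $\al$ by the identity coset $\L_\al$, so $\vphi\al$ is one of the isomorphisms $\vphi\bl$ in this family. Consequently, for a fixed coset $\h\xi$, the assertion that \emph{all} the isomorphisms $\vphi\bl$ agree on $\h\xi$ is equivalent to the assertion that each $\vphi\bl$ agrees with the distinguished member $\vphi\al$ on $\h\xi$. This equivalence is what turns the ``all agree'' formulation of the corollary into a statement about agreement with a single fixed isomorphism, to which \refT{T:chariso} applies directly.

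Now I would invoke the second assertion of \refT{T:chariso}: for $b=\L_\eta;\al$, the isomorphism $\vphi\bl$ agrees with $\vphi\al$ on $\h\xi$ precisely when $\h\xi$ commutes with $\h\eta$. Combining this with the previous paragraph, all the $\vphi\bl$ agree on $\h\xi$ if and only if $\h\xi$ commutes with $\h\eta$ for every coset $\h\eta$ of $\h\al$ in $\cs Gx$; and this is exactly the defining condition for $\h\xi$ to lie in the center of $\gxla$. This yields the desired characterization.

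There is essentially no hard obstacle here once \refT{T:chariso} and \refC{C:eqtrans} are in hand; the statement is a clean logical repackaging of the ``agreement'' clause of the Second Isomorphism Theorem. The only point requiring a moment's care is the quantifier manipulation---turning ``all $\vphi\bl$ agree on $\h\xi$'' into ``each $\vphi\bl$ agrees with the fixed isomorphism $\vphi\al$ on $\h\xi$''---which is justified precisely because $\vphi\al$ itself belongs to the family, being the translation by the identity coset $\L_\al$. Everything else is the definition of the center.
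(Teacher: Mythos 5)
Your proposal is correct and follows essentially the same route as the paper's own argument: reduce to left translations via \refC{eqtrans}, then apply the agreement clause of the Second Isomorphism \refT{chariso} and quantify over all cosets $\h\eta$ of $\h\al$. The only cosmetic difference is that the paper re-derives that agreement clause inside the proof (computing with the inner automorphism $\tau$ and using the injectivity of $\vphi\al$) rather than citing the final sentence of \refT{chariso} directly, and it uses implicitly the same observation you make explicit, namely that $\vphi\al$ itself belongs to the family of isomorphisms under consideration.
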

\begin{proof} An element $\bl$ is a translation of $\al$ just in case it is a left translation
of $\al$, by \refC{eqtrans}.  Consequently, it may be assumed that
$b=\h\eta;a$ for some coset $\h\eta$ of $\h a$\per  Let $\tau$ be the inner automorphism
of $\G\wx/\h a$ determined by $\h\eta$\comma so that
\begin{equation*}\tag{1}\labelp{Eq:allagree1}
\vphi b=\tau\rp\vphi a\comma
\end{equation*}
 by   Isomorphism \refT{chariso}.
Every inner automorphism of a group leaves the elements of the center fixed, so $\vphi b$ agrees with  $\vphi a$ on
all cosets that belong to the center of $\cs Gz/\cs Ha$, by \refEq{allagree1}.

 Suppose now that all  isomorphisms determined by left translations  of
$\al$ agree on a given coset $\h\x$ of $\cs Ha$\comma with the goal of showing that $\cs H\x$ must belong to the center of $\cs Gx/\cs Ha$\per
Let  $\h\eta$ be any coset of $\h\al$\comma and write $\bl=\h\eta;\al$\per
The equation \refEq{allagree1} must hold, by Isomorphism \refT{chariso}, and the isomorphism
$\vphi\bl$ is assumed to agree with $\vphi\al$  on $\h\xi$\comma  so
\begin{equation*}\tag{2}\labelp{Eq:allagree.1}
  \vphi a(\h\eta\ssm;\h\x;\h\eta)=\vphi a(\tau(\h\x))=\vphi b (\h\x)=\vphi a(\h\xi)\comma
\end{equation*}
by \refEq{allagree1}, the definition of $\tau$, and the assumption on $\vphi b$.  Since $\vphi a$ is a bijection, it follows from \refEq{allagree.1} that
\[\h\eta\ssm;\h\x;\h\eta=\h\xi\per
\]
This equation, of course, implies that the
the cosets
$\h\x$ and
$\h\eta$  commute.  The   coset $\cs H\eta$ is assumed to be arbitrary, so  $\cs H\x$ commutes with every coset of $\cs Ha$, and therefore
$\h\xi$   belongs to the center of $\gxla$\per
\end{proof}

The difference between \refC{charagree} and \refC{allagree} is that the first corollary characterizes
those translations $b$ of $a$ for which the isomorphisms $\vphi b$ and $\vphi a$ are identical, while the second
characterizes the cosets on which all of isomorphisms $\vphi b$ determined by translations $b$ of $a$ assume
  the same value,  irrespective of whether these isomorphisms coincide with $\vphi a$ or not.

In case the
quotient group $\cs Gx/\cs Ha$ is abelian, every coset of
$\L_\al$ belongs to the center, so all of the isomorphisms determined by   translations of a
regular element $a$ coincide.}

\begin{cor}\labelp{C:oneisom2} Under the hypotheses of the First Isomorphism
Theorem\co  if
$\gxla$ is abelian\co then $\varphi_\bl=\varphi_\al$ for every translation $b$ of $a$\po
\end{cor}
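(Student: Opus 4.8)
The plan is to deduce this corollary directly from the Second Isomorphism Theorem and, more conveniently, from its consequence \refC{charagree}, together with the elementary observation that an abelian group is its own center. First I would reduce to left translations: by \refC{eqtrans}, under the standing hypotheses (\,$a\le\xoy$ regular with normal stabilizers) every right translation of $a$ is also a left translation, and conversely. Hence an arbitrary translation $b$ of $a$ may be written as $b=\L_\eta;a$ for some coset $\L_\eta$ of $\h\al$ in $\G x$, which is exactly the form to which \refC{charagree} applies.

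The key steps, in order, are then: (1) rewrite the given translation $b$ as a left translation $\L_\eta;a$ using \refC{eqtrans}; (2) invoke the abelian hypothesis to conclude that $\L_\eta$ lies in the center of $\gxla$, since in an abelian group every element is central---this is the only point at which commutativity of $\gxla$ is used; and (3) apply \refC{charagree}, which asserts that for a left translation $b=\L_\eta;a$ one has $\varphi_\bl=\varphi_\al$ precisely when $\L_\eta$ belongs to the center of $\gxla$. As the center is all of $\gxla$, the condition is satisfied for every choice of $\eta$, giving $\varphi_\bl=\varphi_\al$ for every translation $b$ of $a$.

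I do not expect any genuine obstacle here: the statement is an immediate specialization of \refC{charagree} once one notes that the center of an abelian group is the whole group. The only mild care needed is the passage from an arbitrary translation to a left translation via \refC{eqtrans}, so that \refC{charagree}---which is phrased only for left translations---is applicable. If one preferred to bypass \refC{charagree} and argue from Second Isomorphism \refT{chariso} directly, the same conclusion would follow by observing that the inner automorphism $\tau$ of $\gxla$ determined by a central coset $\L_\eta$ is the identity map, whence $\varphi_\bl=\tau\rp\varphi_\al=\varphi_\al$.
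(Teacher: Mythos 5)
Your proposal is correct and follows essentially the same route the paper intends: reduce an arbitrary translation to a left translation via \refC{eqtrans}, observe that in an abelian quotient group every coset is central, and conclude from \refC{charagree} (equivalently, from the Second Isomorphism Theorem, since the inner automorphism determined by a central coset is the identity). Nothing further is needed.
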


We now explore some of the important consequences of the   Isomorphism Theorems.
 Let $\wx$ and $\wy$ be measurable atoms.
A regular element
 $\al\le \xoy$ with normal stabilizers determines an isomorphism
$\vphs a$ from $\gxla$ to  $\gyra$, by Isomorphism \refT{isom}\per  Another regular element
$\bl\le\xoy$ with normal stabilizers (with $b$ not necessarily a translation of $a$) determines  an isomorphism
$\vphs b$ from $\gxlb$ to  $\gyrb$\per
What is the relationship between these two isomorphisms when
$\al\le\bl$?
To answer this question, we  use the notation and conventions
discussed in
\refCo{canonical}.  In particular,\begin{align*}
                                  \langle\lax:\x<\ka\rangle\qquad&\text{and}
\qquad \langle\lbe:\eta<\la\rangle \\
\intertext{are assumed to be coset systems for $\lal$ and $\lbl$ respectively in $\G x$\comma
and}
\langle\rax:\x<\ka\rangle\qquad&\text{and}
\qquad \langle\rbe:\eta<\la\rangle
                                  \end{align*}
are assumed to be the corresponding coset systems for $\rar$ and $\rbr$ respectively in $\G y$\per

If $\al\le\bl$,  then \begin{align*}\lal\seq\lbl\qquad &\text{and}\qquad \rar\seq\rbr\comma\tag{1}\labelp{Eq:th.what.1}\\
\intertext{by
\refL{lessthan} and its right-regular version, so there are partitions
$\langle\Gamma_\eta:\eta<\la\rangle$ and $\langle\Delta_\eta:\eta<\la\rangle$
of $\ka$ such that}
\lbe=\tbigcup\{\lax:\x\in
\Gamma_\eta\}\qquad&\text{and}\qquad\rbe=\tbigcup\{\rax:\x\in
\Delta_\eta\}\tag{2}\labelp{Eq:RT/1}\\
\intertext{for each $\eta<\la$.  Apply  Partition \refL{GeneralizedPartitionLemma} and its right-regular version to see that}
\langle \lax;\al:\xi\in\Gamma_\eta\rangle\qquad&\text{and}\qquad \langle \al;\rax: \x\in\Delta_\eta\rangle\tag{3}\labelp{Eq:th.what.2}\\
\intertext{partition
$\lbe;\bl$ and
$\bl;\rbe$ respectively.  For each $\eta<\la$ and  $\x<\ka$,}
\lbe;\bl=\bl;\rbe\qquad&\text{and}\qquad\lax;\al=\al;\rax\comma\tag{4}\labelp{Eq:th.what.3}
\end{align*} by \refCo{canonical}. Use   the partition properties of \refEq{th.what.2}, together with \refEq{th.what.3}, to obtain
\[\bl;\rbe=\lbe;\bl=\tsum\{\lax;\al:\xi\in\Gamma_\eta\}=\tsum\{\al;\rax: \x\in\Gamma_\eta\}\per\]
It  follows  from this computation that the
translations $\al;\rax$   with
indices $\x$ in $\Gamma_\eta$  partition
$\bl;\rbe$\per Compare this with the second part of \refEq{th.what.2}, and use the fact that the right translations of $\al$ by distinct cosets of $\rar$ are pairwise disjoint,
by the right-regular version of  Partition \refL{PartitionLemma}, to conclude that  $\Gamma_\eta=\Delta_\eta$.
Consequently,
\begin{equation*}\rbe=\tbigcup\{\rax:\x\in
\Gamma_\eta\}\comma\tag{5}\labelp{Eq:RT/5}
\end{equation*} by \refEq{RT/1}.

In view of \refEq{th.what.1} and \refEq{RT/1},   the isomorphism $\vphs a$ from $\gxla$ to $\cs Gy/\cs Ka$ induces
in a natural way an  isomorphism $\hat\vp_a$ from $\gxlb$ to $\gyrb$ that is defined by
\[\hvphs a(\lbe)=\tbigcup\{\vphs a(\lax):\x\in
\Gamma_\eta\}\] for each $\eta$.
It is easy to check that this induced isomorphism
  coincides with $\vphs b$, because\begin{equation*}\hvphs a(\lbe)=\tbigcup\{\vphs
a(\lax):\x\in\Gamma_\eta\}=\tbigcup\{\rax:\x\in\Gamma_\eta\}=\rbe =\vphs b(\lbe)\comma
\end{equation*}
by the definition of $\hvphs a$, the definition of $\vphs a$ and \refCo{canonical},  \refEq{RT/5}, and the definition of $\vphs b$ and \refCo{canonical} (with $b$  in place of $a$)\per
The following theorem has been proved.

\begin{theorem}[Refinement Theorem] \labelp{T:th.what}
Let   $\wx$ and $\wy$ be measurable atoms\comma and   $\al$ and $\bl$ regular elements below $\xoy$
with normal stabilizers\po  If $\al\le\bl$\co then $\cs Ha$ and $\cs Ka$ are subgroups of $\cs Hb$ and $\cs Kb$ respectively\comma and the isomorphism $\vphi a$ from $\cs Gx/\cs Ha$ to $\cs Gy/\cs Ka$ induces an isomorphism
$\hvphs a$ from $\gxlb$ to $\gyrb$\comma that  coincides with $\vphs b$\po
\end{theorem}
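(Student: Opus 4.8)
The plan is to establish the three assertions in turn: the subgroup inclusions, the existence of the induced isomorphism $\hvphs a$, and its coincidence with $\vphs b$. Since $a\le b$ and both elements are left-regular, \refL{lessthan} immediately yields $\cs Ha\seq\cs Hb$, and its right-regular version yields $\cs Ka\seq\cs Kb$. Because all four stabilizers are normal subgroups, I can fix coset systems $\langle\lax:\x<\ka\rangle$ and $\langle\lbe:\eta<\la\rangle$ for $\cs Ha$ and $\cs Hb$ in $\cs Gx$, together with the corresponding systems $\langle\rax:\x<\ka\rangle$ and $\langle\rbe:\eta<\la\rangle$ for $\cs Ka$ and $\cs Kb$ in $\cs Gy$. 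Standard group theory then supplies partitions $\langle\Gamma_\eta:\eta<\la\rangle$ and $\langle\Delta_\eta:\eta<\la\rangle$ of $\ka$ with $\lbe=\tbigcup\{\lax:\x\in\Gamma_\eta\}$ and $\rbe=\tbigcup\{\rax:\x\in\Delta_\eta\}$.

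The crux of the argument---and the step I expect to be the main obstacle---is to show that these two partitions agree, namely $\Gamma_\eta=\Delta_\eta$ for every $\eta$; this identification is exactly what is needed for the union of the blocks $\rax$ over $\Gamma_\eta$ to be a single $\cs Kb$-coset. By Partition \refL{GeneralizedPartitionLemma} and its right-regular version, the families $\langle\lax;a:\x\in\Gamma_\eta\rangle$ and $\langle a;\rax:\x\in\Delta_\eta\rangle$ partition $\lbe;b$ and $b;\rbe$ respectively. The conventions fixed in \refCo{canonical} supply the two key equalities $\lbe;b=b;\rbe$ and $\lax;a=a;\rax$, which tie the left-hand and right-hand pictures together. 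Substituting the second equality into the first partition shows that the right translations $a;\rax$ with $\x\in\Gamma_\eta$ also partition $b;\rbe$. Comparing this with the $\Delta$-partition of the same element, and invoking the right-regular version of Partition \refL{PartitionLemma} (so that right translations of $a$ by distinct cosets of $\cs Ka$ are pairwise disjoint), forces $\Gamma_\eta=\Delta_\eta$; hence $\rbe=\tbigcup\{\rax:\x\in\Gamma_\eta\}$ as well.

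With the partitions identified, the induced map can be defined block by block. I would set
\[
\hvphs a(\lbe)=\tbigcup\{\vphs a(\lax):\x\in\Gamma_\eta\}
\]
for each $\eta$, which is meaningful because $\vphs a$ sends $\lax$ to $\rax$ by \refCo{canonical}, so the right-hand side is the union of the cosets $\rax$ with $\x\in\Gamma_\eta$. By the previous paragraph that union equals $\rbe$, and the definition of $\vphs b$ (the instance of \refCo{canonical} with $b$ in place of $a$) gives $\vphs b(\lbe)=\rbe$. Hence $\hvphs a(\lbe)=\rbe=\vphs b(\lbe)$ for every $\eta$, so $\hvphs a$ coincides with $\vphs b$. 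That $\hvphs a$ is a genuine group isomorphism then needs no separate verification: it is the map $\vphs b$, which is already an isomorphism of $\gxlb$ onto $\gyrb$ by Isomorphism \refT{isom}.
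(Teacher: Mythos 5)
Your proposal is correct and follows essentially the same route as the paper's own argument: the inclusion of stabilizers via \lmref{L:lessthan}, the identification $\Gamma_\eta=\Delta_\eta$ via the Second Partition Lemma together with the equalities $\lbe;b=b;\rbe$ and $\lax;a=a;\rax$ from \refCo{canonical}, and the blockwise definition of $\hvphs a$ showing it coincides with $\vphs b$.
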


 A measurable atom $x$ is certainly a regular element with normal stabilizers, by Corollaries \ref{C:atoms} and \ref{C:a-b.atoms} (with $y=x$).
By definition, the   left stabilizer $\cs Hx$ consists of those elements $f$ in $\cs Gx$ such that $f;x=x$.  Since $f;x=f$ for all $f$ in $\cs Gx$, by \refL{nonzerofunct}, it follows that \[f\in \cs Hx\qquad\text{if and only if}\qquad f=x\comma\] and therefore $\cs Hx=\{x\}$.  Similarly, $\cs Kx=\{x\}$.  The cosets of these stabilizers are the singletons $\{f\}$ for $f$ in $\cs Gx$.  The quotient isomorphism $\vphs\wx$ maps $\cs Gx/\cs Hx$ to $\cs Gx/\cs Kx$, and takes $\{f\}$ to $\{g\}$ if and only if $f;x=x;g$,  that is to say, if and only if $f=g$, by \refL{nonzerofunct}.  Thus, $\vphs x$ is the identity automorphism of $\cs Gx/\{x\}$. This argument proves the following theorem.

\begin{theorem}[Identity Theorem] \labelp{T:identthm}
  A measurable atom $x$   is a regular element with  left and right stabilizers  both equal to  the trivial normal subgroup $\{\wx\}$\po  The quotient isomorphism $\vphs
\wx$ is the identity automorphism of  $\G \wx/\{\wx\}$\po
\end{theorem}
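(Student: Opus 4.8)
The plan is to establish the three claims of the theorem---that $x$ is regular with normal stabilizers, that $\cs Hx = \cs Kx = \{x\}$, and that $\varphi_x$ is the identity automorphism---in that order, drawing almost entirely on results already proved for atoms.

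First I would note that a measurable atom $x$ is a subidentity atom, hence an atom of $\f A$, and that it lies below its own square: since $x$ is the identity element of the group $G_x$ by \lmref{L:nonzerofunct}, and every member of $G_x$ is a non-zero function below $\xox$, the containment $x \in G_x$ gives $x \le \xox$. Thus $x$ is an atom below $\xox$, so \corref{C:atoms} (taken with $y = x$) shows that $x$ is regular, and \corref{C:a-b.atoms} (again with $y = x$) shows that the left and right stabilizers $\cs Hx$ and $\cs Kx$ are normal subgroups of $G_x$.

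Next I would compute the stabilizers by hand. By \dfref{D:stabnot}, $\cs Hx = \{f \in G_x : f;x = x\}$; but every $f$ in $G_x$ satisfies $f \le \xox$, so $f;x = f$ by \lmref{L:domain}(iii), whence $f;x = x$ holds exactly when $f = x$. This gives $\cs Hx = \{x\}$, and the dual computation, using $x;g = g$ for $g$ in $G_x$ (again \lmref{L:domain}(iii)), gives $\cs Kx = \{x\}$.

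Finally I would pin down $\varphi_x$. Because both stabilizers equal $\{x\}$, their cosets are precisely the singletons $\{f\}$ with $f$ in $G_x$, and both quotients reduce to $G_x/\{x\}$. By \thref{T:isom} together with \conref{Co:canonical}, the isomorphism $\varphi_x$ carries the coset of $f$ to the coset of $g$ exactly when $\{f\};x = x;\{g\}$, that is, when $f;x = x;g$; since $f;x = f$ and $x;g = g$ by \lmref{L:domain}(iii), this says $f = g$, so $\varphi_x$ fixes every coset and is the identity automorphism of $G_x/\{x\}$. I expect no genuine obstacle: once the atom corollaries supply regularity and normality, every remaining step collapses to the identity law $f;x = f = x;f$ inside $G_x$, and the only point deserving a moment's care is verifying that $x$ itself qualifies as an atom below $\xox$ so that those corollaries apply.
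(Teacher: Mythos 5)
Your proof is correct and follows essentially the same route as the paper: regularity and normality of the stabilizers come from Corollaries \ref{C:atoms} and \ref{C:a-b.atoms} with $y=x$, the stabilizers collapse to $\{x\}$ because $f;x=f$ for every $f$ in $G_x$, and the defining equation $f;x=x;g$ then forces $f=g$, so $\varphi_x$ is the identity. The only cosmetic difference is that you justify $f;x=f$ via Lemma~\ref{L:domain}(iii) where the paper cites Lemma~\ref{L:nonzerofunct} ($x$ being the group identity of $G_x$); both are valid, and your explicit check that $x$ is an atom below $x;1;x$ is a welcome extra detail.
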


The next theorem says that if $\al\le \xoy$ is a regular element with normal
stabilizers, then so is
$\al\ssm$, and $\vphs{\al\ssm}$ is just the inverse isomorphism of $\vphs a$\per

\begin{theorem}[Converse Theorem] \labelp{T:convthm}
Let   $\wx$ and $\wy$ be measurable atoms\per If $\al$ is a regular element bekiw $\xoy$    with normal
stabilizers\co and in particular\comma if $a$ is an atom\comma then $\al\ssm$ is a regular element below $\yox$ with normal stabilizers
\[\LS{\al\ssm}=\rar \qquad\text{and}\qquad
\RS{\al\ssm}=\lal\comma\]
and the  quotient isomorphism $\vphs{\al\ssm}=\vph\mo_a$\po
\end{theorem}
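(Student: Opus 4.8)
The plan is to verify the four assertions in the order stated: that $\al\ssm$ lies below $\yox$, that its left and right stabilizers are $\rar$ and $\lal$, that it is regular with normal stabilizers, and finally that $\vphs{\al\ssm}=\vph\mo_a$. The parenthetical clause about atoms requires no separate treatment, since an atom below $\xoy$ is automatically a regular element with normal stabilizers by Corollaries \ref{C:atoms} and \ref{C:a-b.atoms}. The containment $\al\ssm\le\yox$ is immediate from the hypothesis $\al\le\xoy$, monotony, and \refL{square}(iii), which give $\al\ssm\le(\xoy)\ssm=\yox$.

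The first substantive step is the computation of the stabilizers. I would show that an element $f$ of $\cs Gy$ belongs to $\LS{\al\ssm}$ precisely when $f\ssm$ belongs to $\rar$: forming the converse of the equation $f;\al\ssm=\al\ssm$ and using the involution laws (R6) and (R7) transforms it into $\al;f\ssm=\al$. Since $\rar$ is a subgroup of $\cs Gy$, and hence closed under converse, this yields $\LS{\al\ssm}=\rar$, and the dual computation gives $\RS{\al\ssm}=\lal$. Regularity is then cheap via \refC{lrreg}: because $(\al\ssm)\ssm=\al$, the set of atoms below $\al\ssm;(\al\ssm)\ssm=\al\ssm;\al$ is exactly $Y_\al$, which equals $\rar=\LS{\al\ssm}$ by the right-regularity of $\al$; hence $\al\ssm$ is left-regular, and the symmetric argument makes it right-regular. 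Normality of $\LS{\al\ssm}=\rar$ in $\cs Gy$ and of $\RS{\al\ssm}=\lal$ in $\cs Gx$ is inherited directly from the assumed normality of the stabilizers of $\al$.

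For the isomorphism, I would begin with the defining relation $\L_\x;\al=\al;\R_\x$ of \refCo{canonical} and apply converse. Using (R7), the left side becomes $\al\ssm;f\ssm$ for $f\in\L_\x$, which is the right translation $\al\ssm;\L_\x\mo$ by the inverse coset $\L_\x\mo$ of $\lal=\RS{\al\ssm}$; the right side becomes $g\ssm;\al\ssm$ for $g\in\R_\x$, which is the left translation $\R_\x\mo;\al\ssm$ by the inverse coset $\R_\x\mo$ of $\rar=\LS{\al\ssm}$. Thus $\R_\x\mo;\al\ssm=\al\ssm;\L_\x\mo$, which is exactly the equation defining $\vphs{\al\ssm}$, so $\vphs{\al\ssm}(\R_\x\mo)=\L_\x\mo$. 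On the other hand $\vphs a(\L_\x)=\R_\x$ gives $\vph\mo_a(\R_\x)=\L_\x$, and since a group isomorphism commutes with the formation of inverses, $\vph\mo_a(\R_\x\mo)=\L_\x\mo$ as well. As $\x$ ranges over all cosets, $\R_\x\mo$ ranges over all of $\cs Gy/\rar$, so the two maps agree everywhere and $\vphs{\al\ssm}=\vph\mo_a$. The main obstacle lies in this last step's bookkeeping: taking converse replaces each coset by its group-theoretic inverse, and one must check that these inverse cosets are precisely the arguments on which $\vphs{\al\ssm}$ and $\vph\mo_a$ are to be compared, so that the converse of the defining relation for $\al$ lines up exactly with the defining relation for $\al\ssm$.
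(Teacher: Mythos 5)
Your proposal is correct and follows essentially the same route as the paper's proof: the same converse-based computation identifying $\LS{\al\ssm}$ with $\rar$ and $\RS{\al\ssm}$ with $\lal$, the same identification $\cs X{\al\ssm}=\cs Y\al$ combined with \refC{lrreg} to get regularity, and the same device of taking converses of the defining relation $\L_\x;\al=\al;\R_\x$ together with the fact that group isomorphisms commute with inverses to conclude $\vphs{\al\ssm}=\vph\mo_\al$. The coset-inverse bookkeeping you flag at the end is exactly the point the paper handles in its final chain of equivalences.
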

\begin{proof}  Assume the hypotheses of the theorem,   write $b=\al\ssm$, and observe that
 \begin{equation*}\tag{1}\labelp{Eq:convthm.0}
  b=\al\ssm\le (\xoy)\ssm=\yox\comma
 \end{equation*}  by the definition of $b$, the assumption that $a\le \xoy$, monotony, and \refL{square}(iii). For each element $f$ in $\cs Gy$,
\begin{align*}
f\in\LS{b}\qquad&\text{if and only if}\qquad f;b=b\comma\\
&\text{if and only if}\qquad f;\al\ssm=\al\ssm\comma\\
&\text{if and only if}\qquad \al;f\ssm=\al\comma\\
&\text{if and only if}\qquad f\ssm\in\RS{\al}\comma\\
&\text{if and only if}\qquad f\in\RS{\al}\comma
\end{align*}
by the definition
of $\LS{b}$, the definition of $b$,
 the two involution laws (R6) and (R7), the definition of $\cs Ka$, and the fact that $\cs Ka$ is a subgroup of $\cs Gy$ and therefore closed under converse. An analogous argument
 applies to $f$ in $\cs Gx$, with $\cs Kb$ and $\cs Ha$ in place of $\cs Hb$ and $\cs Ka$ respectively, so that
 \begin{equation*}\tag{2}\labelp{Eq:convthm.1}
   \cs Hb=\cs Ka\qquad\text{and}\qquad \cs Kb=\cs Ha\per
 \end{equation*}
Similarly, for each element $f$ in $\cs Gy$,
\begin{align*}
  f\in\cs Xb\qquad&\text{if and only if}\qquad f\le b;b\ssm\comma \\
  &\text{if and only if}\qquad f\le a\ssm;a\comma\\
  &\text{if and only if}\qquad f\in\cs Ya\comma
\end{align*}
by the definitions of the set $\cs Xb$, the element $b$, and the set $\cs Ya$. An analogous argument applies to $f$ in $\cs Gx$, with $\cs Yb$ and $\cs Xa$ in place of $\cs Xb$ and $\cs Ya$ respectively, so that
\begin{align*}
   \cs Xb=\cs Ya\qquad&\text{and}\qquad \cs Yb=\cs Xa\per\tag{3}\labelp{Eq:convthm.2}\\
   \intertext{Combine \refEq{convthm.1} with \refEq{convthm.2}, and use the assumed regularity of $a$, together with  \refC{lrreg}, to arrive at}
   \cs Hb=\cs Ka=\cs Ya=\cs Xb\qquad&\text{and}\qquad \cs Kb=\cs Ha=\cs Xa=\cs Yb\per\tag{4}\labelp{Eq:convthm.3}
 \end{align*} With the help of \refC{lrreg} (with $b$ in place of $a$), conclude from \refEq{convthm.0}, \refEq{convthm.3}, and \refEq{convthm.1} that $b$ is a regular element below $\yox$, with left and right stabilizers as in \refEq{convthm.1}.

To prove the final assertion of the theorem, consider cosets  $\LS \x$ and $\R_\eta$ of   $\lal$  and   $\rar$ respectively\per  The equations in \refEq{convthm.1} imply that  $\LS \x$ and $\R_\eta$ are also cosets of $\cs Kb$ and $\cs Hb$ respectively. Use the definition of $\vphs \al$ in \refCo{canonical}, the involution laws, the definition of $b$, the definition of $\vphs b$, and the isomorphism properties of $\vphs b$ to obtain
\begin{align*}
\vphs\al(\LS\x)=\RS\eta\qquad&\text{if and only if}\qquad\LS\x;\al = \al;\RS\eta\comma\\
&\text{if and only if}\qquad \al\ssm;\L\ssm_\x=\R\ssm_\eta;\al\ssm\comma\\
&\text{if and only if}\qquad b;\L\ssm_\x=\R\ssm_\eta;b\comma\\
&\text{if and only if}\qquad \vphs{b}(\R\ssm_\eta)=\L\ssm_\x\comma\\
&\text{if and only if}\qquad \vphs{b}(\R_\eta)=\L_\x\per
\end{align*}
The equivalence of the first and last equations  implies that the isomorphisms $\vphs{b}$ and $\vphs{\al}$ are inverses of one another\per
\end{proof}

\begin{con}\labelp{Co:conver}
Continue with the assumption that  $a\le\xoy$ is a regular element with normal stabilizers, and write $b=a\ssm$.    Converse \refT{convthm} implies that
\begin{align*}
\LS{\bl}=\rar \qquad&\text{and}\qquad
\RS{\bl}=\lal\per\\
\intertext{ It also implies that if}
                                  \langle\lax:\x<\ka\rangle\qquad&\text{and}
\qquad \langle\rax:\x<\ka\rangle \\
\intertext{are coset systems for  $\lal$ and $\cs K a$  in $\G x$ and $\cs Gy$ respectively such that \[\vphi a(\lax)=\rax\] for each $\xi$, then by putting}
\lbx=\rax\qquad&\text{and}\qquad \rbx=\lax\tag{1}\labelp{Eq:conver.1}\\
\intertext{for each $\xi$, we arrive at cosets systems}
\langle\lbx:\x<\ka\rangle\qquad&\text{and}
\qquad \langle\rbx:\xi<\ka\rangle
                                  \end{align*} for $\cs Hb$ and $\cs Kb$ in $\cs Gy$ and $\cs Gx$ respectively such that
                                  \[\vphi b(\lbx)=\rbx\] for each $\xi$.  In what follows, we shall always assume that the coset systems for
$\cs Hb$ and $\cs Kb$ have been chosen so that \refEq{conver.1} holds.\qedcon
\end{con}

 The next goal is to prove, for the relative product of two regular elements with
normal stabilizers, the analogue of Theorems \ref{T:identthm} and \ref{T:convthm}.
In more detail, suppose that
\[\al\le \xoy\qquad\text{and}\qquad \bl\le \yoz\] are regular elements with normal stabilizers. The right stabilizer $\rar$ of $\al$, and the left stabilizer $\lbl$ of $\bl$, are normal subgroups $\cs Gy$, and therefore so is
the product subgroup
$\rar;\lbl$, which includes both $\cs Ka$ and $\cs Hb$ as subgroups.  The quotient isomorphisms $\vph\inv_\al$ from $\cs Gy/\cs Ka$ to $\cs Gx/\cs Ha$, and   $\vphs \bl$ from $\cs Gy/\cs Hb$ to $\cs Gz/\cs Kb$  induce isomorphisms  $\hat\vph\inv_\al$ and $\hvphs \bl$ on the
quotient group
$\G\wy/(\rar;\lbl)$.   The   goal is to prove that $a;b$ is a regular element below $\xoz$ with normal stabiizers
\[\LS\abp=\hat\vph\inv_\a[\rar;\lbl]\qquad\text{and}\qquad
\RS\abp=\hvphs b[\rar;\lbl]\comma\]
and   the isomorphism $\vphs\abp$ from $\G\wx/\LS\abp$ to $\G\wz/\RS\abp$ coincides with the relational composition $\hvphs a\,\vert\,\hvphs b$ of the two
induced isomorphisms $\hvphs a$ and $\hvphs b$\per

We begin with a lemma.
\begin{lm} \labelp{L:alfabeta}
Let $\wx$\comma $\wy$\comma and $\wz$ be measurable atoms\po  If \[\al\le\xoy\qquad\text{and}\qquad \bl\le\yoz\] are
regular elements with normal stabilizers\comma then for  all elements $f$ in $G_\wx$ and  $h$ in
$ G_\wz$ the following conditions are equivalent\per
\begin{itemize}
\item[\opar i\cpar] $f;a;b=a;b;h$\per
\item[\opar ii\cpar] There are elements $g_1$ and $ g_2$ belonging to the same coset of
$\rar;\lbl$
in $\G\wy$ such that
\[f;a=a;g_1\qquad\text{and}\qquad g_2;b=b;h\per\]
\item[\opar iii\cpar] There is an element $g$ in $ G_\wy$ such that
\[f;a=a;g\qquad\text{and}\qquad g;b=b;h\per\]
\end{itemize}
\end{lm}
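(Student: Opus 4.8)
The plan is to establish the cycle of implications $(iii)\Rightarrow(i)\Rightarrow(ii)\Rightarrow(iii)$, which proves the three conditions mutually equivalent. The implication $(iii)\Rightarrow(i)$ is immediate from the associative law for relative multiplication: if there is a $g$ in $\G\wy$ with $f;a=a;g$ and $g;b=b;h$, then
\[f;a;b=(f;a);b=(a;g);b=a;(g;b)=a;(b;h)=(a;b);h=a;b;h\per\]

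For $(i)\Rightarrow(ii)$, I would first manufacture the two elements $g_1$ and $g_2$. Since $a$ and $b$ are regular with normal stabilizers, \refC{eqtrans} (applied to $a$, and to $b$) guarantees that the left translation $f;a$ is also a right translation, say $f;a=a;g_1$, and that the right translation $b;h$ is also a left translation, say $b;h=g_2;b$. Substituting these into hypothesis $(i)$ and using associativity gives
\[a;g_1;b=(f;a);b=a;b;h=a;(g_2;b)=a;g_2;b\per\]
The crux is then to ``cancel'' $a$ on the left and $b$ on the right. Forming the relative product of both sides on the left with $a\ssm$ and on the right with $b\ssm$, and using the right-regularity of $a$ (so $a\ssm;a=\tsum\rar$) and the left-regularity of $b$ (so $b;b\ssm=\tsum\lbl$) from \dfref{D:regular}, together with complete distributivity, yields
\[\tsum\{k;g_1;l:k\in\rar\text{ and }l\in\lbl\}=\tsum\{k;g_2;l:k\in\rar\text{ and }l\in\lbl\}\per\]
Both sides are sums of sets of atoms, since the elements of $\G\wy$ are atoms by Lemmas~\ref{L:funcatom1} and \ref{L:nonzerofunct}, and each summand set is the double coset $\rar;g_i;\lbl$; hence the two double cosets coincide. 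Because $\rar$ is normal in $\G\wy$, one has $\rar;g_i;\lbl=g_i;(\rar;\lbl)$, so $g_1$ and $g_2$ lie in the same coset of the product subgroup $\rar;\lbl$, which is exactly $(ii)$.

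For $(ii)\Rightarrow(iii)$, I would reduce to a purely group-theoretic intersection fact. Given $g_1$ and $g_2$ in the same coset of $\rar;\lbl$ with $f;a=a;g_1$ and $g_2;b=b;h$, I need a single $g$ lying simultaneously in the right coset $\rar;g_1$ (the set of all $g$ with $a;g=a;g_1$, by the right-hand version of \refC{stab}) and in the left coset $g_2;\lbl$ (the set of all $g$ with $g;b=g_2;b$, by \refC{stab}). Since $g_1$ and $g_2$ lie in the same coset of $\rar;\lbl$, one may write $g_1\ssm;g_2=k_0;l_0$ with $k_0\in\rar$ and $l_0\in\lbl$; then $g:=g_1;k_0=g_2;l_0\ssm$ lies in $\rar;g_1$ (using normality of $\rar$) and in $g_2;\lbl$. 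For this $g$ we obtain $a;g=a;g_1=f;a$ and $g;b=g_2;b=b;h$, which is $(iii)$.

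I expect the main obstacle to be the cancellation step in $(i)\Rightarrow(ii)$. One must verify carefully that sandwiching $a;g_i;b$ with $a\ssm$ on the left and $b\ssm$ on the right produces precisely the sum over the double coset $\rar;g_i;\lbl$; this leans on the regularity identities $a\ssm;a=\tsum\rar$ and $b;b\ssm=\tsum\lbl$, on complete distributivity of relative multiplication, and on the fact that distinct sets of atoms of $\G\wy$ have distinct sums, so that equality of the two sums forces equality of the two double cosets. The normality of $\rar$ (and $\lbl$) is the ingredient that converts each double coset into an ordinary coset of $\rar;\lbl$, tying the combinatorics back to the product subgroup named in the statement.
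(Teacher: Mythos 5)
Your proposal is correct and follows essentially the same route as the paper: produce $g_1$ and $g_2$ via the fact that left translations of $a$ (resp.\ right translations of $b$) are also right (resp.\ left) translations, cancel by sandwiching with $a\ssm$ and $b\ssm$ to equate the double cosets $\rar;g_1;\lbl$ and $\rar;g_2;\lbl$ as sums of atoms, and then for (ii)$\Rightarrow$(iii) pick $g$ in the intersection of $\rar;g_1$ and $g_2;\lbl$ using normality of $\rar$. The only cosmetic difference is that you invoke \refC{eqtrans} where the paper appeals directly to Isomorphism \refT{isom} and \refCo{canonical}, and your choice $g=g_1;k_0=g_2;l_0\ssm$ mirrors the paper's $g=g_a\ssm;g_1=g_2;g_b$.
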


\begin{proof} 
Let $f$ and $h$ be elements of $ G_\wx$ and $G_\wz$ respectively. It is
easy to establish the implication from (iii) to (i): if
\[f;a=a;g\qquad\text{and}\qquad g;b=b;h\comma\]
then
\[
f;a;b=a;g;b= a;b;h\per
\]

Assume now that (i) holds, with the goal of establishing (ii).
The element $f$ is in
$G_\wx$\comma and therefore belongs to some coset $\L_\x$ of $\L_a$\per Apply Isomorphism \refT{isom}  and \refCo{canonical} to write
\begin{equation*}\tag{1}\labelp{Eq:alfabeta.1}
  \L_\x;a=a;\R_\x\per
\end{equation*}
Take
$g_1$ to be any element in $\RS\x$\comma and observe that  the first equation in
(ii)  holds, by \refEq{alfabeta.1} and the remark following \refC{stab}. An analogous
argument produces an element
$g_2$ in
$G_\wy$ such that the second equation in (ii) holds.  It remains to
verify that $g_1$ and $g_2$ belong to the same coset of $\rar;\lbl$.
First of all,
\begin{multline*}\tag{2}\labelp{Eq:alfabeta.2}
a\ssm;f;a;b;b\ssm = a\ssm;a;g_1;b;b\ssm\\ = (\ssum\R_a);g_1 ;
(\ssum\L_b)   = \ssum \R_a;g_1;\L_b\comma
\end{multline*} by the choice
of $g_1$, the assumed regularity of $\al$ and $\bl$,  and complete distributivity.  An analogous argument shows that
\begin{multline*}\tag{3}\labelp{Eq:alfabeta.3} a\ssm;a;b;h;b\ssm= a\ssm;a;g_2;b;b\ssm\\=
(\ssum\R_a);g_2;(\ssum\L_b) = \ssum\R_a;g_2;\L_b\per
\end{multline*}
The assumption in (i) implies that
\begin{equation*}\tag{4}\labelp{Eq:alfabeta.4}
  a\ssm;f;a;b;b\ssm=a\ssm;a;b;h;b\ssm\per
\end{equation*}
\[\]
Combine \refEq{alfabeta.2}--\refEq{alfabeta.4} to arrive at \[\ssum \R_a;g_1;\L_b= \ssum\R_a;g_2;\L_b\per
\]
Sums of sets of atoms can only be equal when the sets themselves are
equal, so the preceding equation implies that
\begin{equation*} 
\R_a;g_1;\L_b= \R_a;g_2;\L_b\per \tag*{(5)} \labelp{Eq:P22/1}
\end{equation*}
Use \ref{Eq:P22/1} and the assumption that the right stabilizer $\R_a$ is a normal subgroup to conclude that
\begin{equation*}\tag{6}\labelp{Eq:alfabeta.6}
  \cs g1;\cs Ka;\cs Hb=\R_a;g_1;\L_b= \R_a;g_2;\L_b=\cs g2;\cs Ka;\cs Hb\per
\end{equation*}
The equality of the first and last terms implies that $\cs g1$ and $\cs g2$ belong to the same coset of $\cs Ka;\cs Hb$.

Turn, finally, to the implication from (ii) to (iii). If
  $g_1$ and $g_2$ are in the same coset of $\rar;\lbl$, then by interchanging  the first two terms, and also the last two terms, in \refEq{alfabeta.6}, one checks that
\ref{Eq:P22/1} holds. The element $g_1$ belongs to the coset
$\R_a;g_1;\L_b$\comma because the identity element $y$ of $\cs Gy$ belongs to both  $\R_a$ and  $\L_b$\comma and
\[g_1 =\ygy\per
\] It follows from \ref{Eq:P22/1} that  $g_1$ belongs to $\R_a;g_2;\L_b$\comma so there must be elements  $g_a$ in $ \R_a$ and
$g_b$ in $\L_b$ such that
\[g_1=g_a;g_2;g_b\per\]
Put
\begin{equation*} 
g=g_a\ssm;g_1=g_2;g_b\per \tag*{(7)} \labelp{Eq:P22/2}
\end{equation*}
Use the assumption in (ii), the assumption that $\cs ga$, and hence also $\cs ga\ssm$, belongs to $\cs Ka$, the definition of $\cs Ka$ as the right stabilizer of $a$, and \ref{Eq:P22/2} to obtain
\begin{equation*}
f;a   = a;g_1
 = a;g_a\ssm;g_1
=a;g\per
\end{equation*}
Similarly, use (ii), the assumption that $\cs gb$ is in $\cs Hb$, the definition of $\cs Hb$ as the left stabilizer of $b$, and \ref{Eq:P22/2} to obtain
\begin{equation*}
b;h   = g_2;b
= g_2;g_b;b
=g;b
\end{equation*}
Thus, (iii) holds.
\end{proof}

   In the statement of the next theorem,
$\vphs a$ is assumed to be an isomorphism from $\cs Gx/\cs Ha$ to $\cs Gy/\cs Ka$ that induces an isomorphism $\hvphs a$ from $\cs Gx/\cs H{a;b}$ to $\cs Gy/(\cs Ka;\cs Hb)$, while $\vphs b$ is an isomorphism from $\cs Gy/\cs Hb$ to $\cs Gz/\cs Kb$   that induces an isomorphism $\hvphs b$ from $\cs Gy/(\cs Ka;\cs Hb)$
to $\cs Gz/\cs K{a;b}$.

\newcommand\abb{\al;\bl;b\ssm}
\renewcommand\bba{\bl;\bl\ssm;\al\ssm}
\newcommand\baa{\bl;\al\ssm;\al}

\begin{theorem}[First Relative Product
Theorem] \labelp{T:else}
Let $\wx$\comma $\wy$\comma and $\wz$ be measurable atoms\per If
\[a\le
\xoy\qquad\text{and}\qquad b\le
\yoz\] are regular elements with normal stabilizers\comma and in particular if they are atoms\comma
then the relative product $\al;\bl$ is a regular  element
 below $\xoz$ with  normal stabilizers
\[\LS\abp=\vph\inv_\al[\rar;\lbl]\qquad\text{and}\qquad\RS\abp
=\vphs \bl[\rar;\lbl]\comma\]
and with quotient isomorphism
\[\vphs\abp=\hvphs\al\,\vert\,\hvphs \bl \po\]
\end{theorem}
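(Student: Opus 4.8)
The plan is to prove the three claims of the theorem in order: that $\abp$ is a regular element below $\xoz$, that its stabilizers are the indicated subgroups, and that $\vphs\abp$ factors as $\hvphs\al\rp\hvphs\bl$. First I would dispose of the preliminaries. Since $a\le\xoy$ and $b\le\yoz$, monotony and \refL{square}(iv) give $\abp\le\xoz$, while \refL{domain}(iv) together with \refL{regnonzero} (which makes $a$ and $b$ non-zero) gives $\abp\neq 0$. Because $\rar$ and $\lbl$ are normal subgroups of $\G\wy$, so is the product subgroup $\rar;\lbl$, and it contains both $\rar$ and $\lbl$; this is exactly what guarantees, by elementary group theory, that $\vphs\al$ and $\vphs\bl$ descend to the induced isomorphisms $\hvphs\al$ from $\G\wx/\LS\abp$ to $\G\wy/(\rar;\lbl)$ and $\hvphs\bl$ from $\G\wy/(\rar;\lbl)$ to $\G\wz/\RS\abp$ that appear in the statement.

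The core is to show $X_{\abp}=\LS\abp=\vph\inv_\al[\rar;\lbl]$ and then apply \refC{lrreg}. For $X_{\abp}$, I would use the left-regularity of $b$ to write $b;b\ssm=\tsum\lbl$ and compute, by complete distributivity,
\[\tsum X_{\abp}=a;b;b\ssm;a\ssm=\tsum\{a;g;a\ssm:g\in\lbl\}.\]
For fixed $g\in\G\wy$ lying in the coset $\R_\eta$ of $\rar$, the canonical equation $\L_\eta;a=a;\R_\eta$ of \refCo{canonical} gives $a;g=\L_\eta;a$, and absorbing $a;a\ssm=\tsum\lal$ shows that $a;g;a\ssm$ is the sum of the $\lal$-coset $\vph\inv_\al(\R_\eta)$. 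As $g$ runs through $\lbl$, the cosets $\R_\eta$ that occur are precisely those contained in $\rar;\lbl$, so the union of the corresponding cosets $\vph\inv_\al(\R_\eta)$ is $\vph\inv_\al[\rar;\lbl]$, giving $X_{\abp}=\vph\inv_\al[\rar;\lbl]$ since an element is the sum of a unique set of atoms. In parallel I would compute $\LS\abp$ from \refL{alfabeta} taken with $h=\wz$: the condition $f;\abp=\abp$ reads $f;a;b=a;b;\wz$, which is equivalent to the existence of $g\in\G\wy$ with $f;a=a;g$ and $g;b=b$, that is, $g\in\lbl$; translating $f;a=a;g$ through $\vphs\al$ (via $\L_\eta;a=a;\R_\eta$) shows this occurs exactly when $f\in\vph\inv_\al[\rar;\lbl]$. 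Hence $X_{\abp}=\LS\abp=\vph\inv_\al[\rar;\lbl]$, so $\abp$ is left-regular by \refC{lrreg}; this stabilizer is normal in $\G\wx$, being the $\vphs\al$-preimage of the normal subgroup $(\rar;\lbl)/\rar$ of $\G\wy/\rar$. The dual argument, using the right-regularity of $a$ and \refL{alfabeta} with $f=\wx$, yields right-regularity and $\RS\abp=\vphs\bl[\rar;\lbl]$.

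Finally, to identify $\vphs\abp$ I would verify the defining equation of \refCo{canonical} for the composite $\hvphs\al\rp\hvphs\bl$ and then appeal to the uniqueness clause of \refT{isom}. Given a coset of $\LS\abp$ with representative $f$, I would choose $g\in\G\wy$ and $h\in\G\wz$ with $f;a=a;g$ and $g;b=b;h$ (available from \refT{isom} and \refC{eqtrans}). By construction $\hvphs\al$ carries the $\LS\abp$-coset of $f$ to the $(\rar;\lbl)$-coset of $g$, and $\hvphs\bl$ carries that to the $\RS\abp$-coset of $h$. Then
\[f;\abp=(f;a);b=(a;g);b=a;(g;b)=a;(b;h)=\abp;h,\]
so the isomorphism determined by $\abp$ sends the $\LS\abp$-coset of $f$ to the $\RS\abp$-coset of $h=(\hvphs\al\rp\hvphs\bl)(f)$. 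By uniqueness this forces $\vphs\abp=\hvphs\al\rp\hvphs\bl$.

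The step I expect to be the main obstacle is the bookkeeping in the two stabilizer computations: keeping straight which cosets are being summed---those of the small subgroups $\lal,\rar,\lbl,\rbr$ versus those of the enlarged subgroups $\LS\abp$, $\rar;\lbl$, $\RS\abp$---and in particular establishing cleanly that a coset $\R_\eta$ of $\rar$ meets $\lbl$ if and only if $\R_\eta\seq\rar;\lbl$. Each individual relation-algebraic manipulation is routine once \refL{alfabeta} and the canonical equation of \refCo{canonical} are in hand.
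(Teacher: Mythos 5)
Your proposal is correct. The regularity and stabilizer computations track the paper's proof closely: both arguments expand $(a;b);(a;b)\ssm$ by complete distributivity and the regularity of $a$ and $b$ to identify $X_{a;b}$ with the union of the cosets $\vph_a\mo(\cs K\xi)$ for those $\cs K\xi\seq\rar;\lbl$, and both transfer normality through $\vphi a$; the only difference here is that the paper pins down $\cs H{a;b}$ by sandwiching it between that union and $X_{a;b}$ via \refL{leftcos}(ii), whereas you compute it directly from \refL{alfabeta} with $h=\wz$---both work, and the coset bookkeeping you flag (a coset $\cs K\eta$ of $\rar$ meets $\lbl$ if and only if $\cs K\eta\seq\rar;\lbl$, because $\rar;(k;h)=\rar;h$ for $k$ in $\rar$ and $h$ in $\lbl$) is indeed the only point needing care. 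Where you genuinely depart from the paper is in identifying $\vphs{a;b}$. The paper introduces the auxiliary elements $c=a;b;b\ssm$ and $d=a\ssm;a;b$, proves each is regular with normal stabilizers satisfying $\cs Hc=\cs H{a;b}$, $\cs Kc=\rar;\lbl=\cs Hd$, and $\cs Kd=\cs K{a;b}$, invokes Refinement \refT{th.what} to conclude $\vphi c=\hvphs a$ and $\vphi d=\hvphs b$, and only then composes. You bypass $c$ and $d$ entirely: for each $f$ you produce $g$ and $h$ with $f;a=a;g$ and $g;b=b;h$ (available from \refC{eqtrans}), observe that once $\cs H{a;b}=\vph_a\mo[\rar;\lbl]$ is established the induced maps $\hvphs a$ and $\hvphs b$ carry the coset of $f$ to that of $g$ to that of $h$, conclude $f;(a;b)=(a;b);h$, and appeal to the uniqueness clause of Isomorphism \refT{isom}. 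Your route is shorter and avoids two further regularity verifications; what the paper's detour buys is the explicit regularity of $c$ and $d$ together with their stabilizers, information of independent use, but for the theorem as stated your direct verification suffices.
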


\begin{proof} 
The stabilizers $\L_a$ and $\R_b$ are assumed to be normal subgroups of
$G_\wy$\comma so the  product group $\R_a;\L_b$ is also a normal subgroup of
$G_\wy$. It
can of course be written as the union of distinct cosets $\RS\x$ of
$\rar$, say  with $\x<\ka$, so that
\begin{equation*} 
\R_a;\L_b=\tsbigcreg_{\x<\ka} \R_\x\per \tag{1} \labelp{Eq:P23/1}
\end{equation*}
Write
\begin{equation*} 
\L_\x= \vph_a\mo (\R_\x) \tag{2} \labelp{Eq:P23/2}
\end{equation*}
for $
\x<\ka$, and use the definition of $\vphs a$ and \refCo{canonical} to obtain
\begin{equation*} 
\L_\x;a=a;\R_\x
\tag{3} \labelp{Eq:P23/3}
\end{equation*}
for each $
\x$. The first goal is to prove that $a;b$ is left-regular, by showing that
\begin{equation*} 
X_{a;b}=\tsbigcreg_{\x<\ka} \L_\x=\L_{a;b}\per \tag{4}
\labelp{Eq:P23/4}
\end{equation*}

Since $\R_\x$ is a coset of $\R_a$ in $\R_a;\L_b$\comma there must be
an element $h_\x$ in $\L_b$ such that
\begin{equation*} 
\R_\x=\R_a;h_\x\per \tag{5} \labelp{Eq:P23/5}
\end{equation*} Use \refEq{P23/3}, \refEq{P23/5}, the definition of $\R_a$ as the right stabilizer of $a$\comma and the assumption that $h_\x $ belongs to the left stabilizer of $b$, to get
\begin{equation*}
\L_\x;a;b  =a;\R_\x;b
 =a;\R_a;h_\x;b
=a;h_\x;b
= a;b
\end{equation*}
This argument shows that $\cs H\xi$ is included in the left stabilizer $\cs H{a;b}$ for every $\x<\ka$, and therefore
\begin{equation*} 
\tsbigcreg_{\x<\ka} \L_\x\seq \L_{a;b}\per \tag{6}
\labelp{Eq:P23/6}
\end{equation*}
Compute:
\begin{align*} \tag{7}\labelp{Eq:else.7}
\ssum X_{a;b}  &= (a;b);(a;b)\ssm
 =a;b;b\ssm;a\ssm\\
&= a;(\ssum\L_b);a\ssm
  =a;\R_a; (\ssum\L_b);a\ssm\\
&=\ssum a;\R_a;\L_b;a\ssm=\ssum a;(\tbigcup_{\x<\ka}\cs K\x);a\ssm\\
& =\sumreg_{\x<\ka} a;\R_\x;a\ssm
 =\sumreg_{\x<\ka} \L_\x;a;a\ssm\\
&=\sumreg_{\x<\ka} \L_\x; (\ssum\L_a)
=\sumreg(\tsbigcreg_{\x<\ka} (\L_\x;\lal))
= \ssum (\tsbigcreg_{\x<\ka} \L_\x)\comma
\end{align*}
by the definition of $X_{a;b}$\comma the second involution law and the associative law, the left-regularity of $\bl$,  the definition of $\R_a$ as the right stabilizer of $a$\comma  complete distributivity, \refEq{P23/1}, complete distributivity, \refEq{P23/3}, the left-regularity of $\al$, complete distributivity,
and the fact that $\L_a$ is the identity coset of the quotient $\cs Gx/\cs Ha$.
Since  $X_{a;b}$ and $\tsbigcreg_{\x<\ka}\L_\x$ are
both sets of atoms, it follows from  \refEq{else.7} that these sets must be equal. Consequently,
\[
\tsbigcreg_{\x<\ka} \L_\x \seq \L_{a;b} \seq X_{a;b}=
\tsbigcreg_{\x<\ka}\L_\x
\]
by \refEq{P23/6},
\refL{leftcos}(ii),  and the remark following \refEq{else.7}.  The first and last terms are the same, so equality holds everywhere, which   proves \refEq{P23/4}.

Use  \refEq{P23/4} and \refC{lrreg} to conclude
that
the relative product
$\abp$ is left-regular.
 Also,
 \begin{equation*}\tag{8}\labelp{Eq:else.8}
   \L_{a;b}= \tsbigcreg_{\x<\ka} \L_\x =
\tsbigcreg_{\x<\ka} \vph_a\mo (\R_\x)=
\vph_a\mo [\,\tsbigcreg_{\x<\ka} \R_\x\,]=
\vph_a\mo [\R_a;\L_b]\comma
 \end{equation*}
by \refEq{P23/4}, \refEq{P23/2}, the preservation of unions under inverse images, and \refEq{P23/1}.

The group $\R_a;\L_b$ is  normal in $G_\wy$, so
the quotient $\ralbra$ is a normal subgroup of $\gyra$\per Because
$\vph_a$ maps $\gxla$ isomorphically to $\gyra$\comma the inverse image
of
$\ralbra$ under $\vphs a$, which is $\labla$, by \refEq{else.8} and the definition of $\vphs a$, must be a normal subgroup of $\gxla$\comma and therefore $\L_{a;b}$
must be a normal subgroup of $G_\wx$, by group theory.

It has been shown that the relative product $\abp$ is left-regular  and   that its left
stabilizer
$\L_{a;b}$ is  normal  in $\G \wx$ and  coincides
with
$\vph_a\mo [\R_a;\L_b]$\per A symmetric argument shows that
$a;b$ is right-regular, and that its right stabilizer  $\R_{a;b}$ is
normal in
$G_\wz$  and coincides with
$\vph_b [\R_a;\L_b]$\per
It remains to prove that
\begin{equation*}\vphs\abp=\hvphs\al\,\vert\,\hvphs\bl\per
\tag{9}\labelp{Eq:P23/7}
\end{equation*}

Write $c=\abb$, and observe that $c\le \xoy$, since
\begin{multline*}
  c=a;b;b\ssm\le (\xoy);(\yoz);(\yoz)\ssm\\=(\xoy);(\yoz);(\zoy)\le \xoy\comma
\end{multline*}
by the definition of $c$, the assumptions on $a$ and $b$, monotony, and \refL{square}(iii),(iv). Also,
\begin{equation*} 
c=\abb=\al;(\tsum\lbl)=\tsum a;\lbl\comma \tag{10} \labelp{Eq:P23/9}
\end{equation*}
by the left-regularity of $\bl$ and complete distributivity.
Compute:
\begin{multline*} \tag{11}\labelp{Eq:else.11}
\ssum X_c   = c;c\ssm
  = (\abb); (\abb)\ssm\\
 = \abb;\bba
=a;\bl; (\ssum\R_b); \bl\ssm;a\ssm\\
 =\ssum a;\bl; \R_b; \bl\ssm;a\ssm
=a;\bl;\bl\ssm;a\ssm\\
=(\abp);(\abp)\ssm
= \ssum\L_{a;b}\comma
\end{multline*}
by the definition of $X_c$\comma \refEq{P23/9}, the two involution laws and the associative law, the right-regularity of $\bl$,
complete distributivity, the definition of $\RS \bl$ as the right stabilizer of $b$\comma the second involution law and the associative law,
and the left-regularity of $\abp$.
Because $X_c$ and $\L_{a;b}$ are sets of atoms, it follows from \refEq{else.11} that
$X_c=\L_{a;b}$\per Use this observation, the definition of $c$ and  \refL{include} (with $a;b$ and $b\ssm$ in place of $a$ and $b$ respectively), and \refL{leftcos}(ii) (with $c$ in place of $a$) to arrive at
\[X_c=\L_{a;b}\seq\LS c\seq \cs Xc\per
\] The first and last sets are equal, so equality  holds everywhere. Apply \refC{lrreg} to conclude that   $c$ is a left-regular element with  normal left stabilizer
$\LS c=\LS\abp\per$

The proof that $c$ is  right-regular  with    normal right stabilizer
$\cs Kc=\rar;\lbl$ involves a  similar computation.
In more detail,
\begin{multline*}\tag{12}\labelp{Eq:else.12}
\tsum \cs Yc   = c\ssm;c
= (\abb)\ssm;(\abb)\\
 = \bba;\abb
= (\ssum\L_b);(\ssum\rar); (\ssum\L_b)\\
=\ssum \L_b; \R_a;\L_b
= \ssum\R_a;\L_b;\L_b= \ssum \R_a;\L_b\comma
\end{multline*}
by the definition of $Y_c$\comma  the definition of $c$, the two involution laws and the associative law,  the right-regularity of $\al$ and the left-regularity of $\bl$, complete distributivity,
 the assumption that $\R_a$ is a normal subgroup of $\cs Gy$, and \refC{stab} (with $b$ in place of $a$), which implies that $\cs Hb$ is a subgroup of $\cs Gy$ and hence closed under the group operaton of relative multiplication.  The sets $Y_c$ and $\R_a;\L_b$ consist of atoms in $G_\wy$, so \refEq{else.12} implies that these sets
 are equal,
 \begin{equation*}
   \cs Yc=\cs  Ka;\cs Hb\per\tag{13}\labelp{Eq:else.13}
 \end{equation*}

\renewcommand\wk{k}
The next step is to check that \begin{equation*}
\R_a;\L_b\seq\R_c\per
\tag{14} \labelp{Eq:P23/14}
\end{equation*}
Consider an element $g$   in $ \R_a;\L_b$\comma and let  $\wk$ in $
\R_a$ and  $\wh$ in $\L_b$ be elements such that
\begin{equation*} 
g= \wk;\wh\per \tag{15} \labelp{Eq:P23/15}
\end{equation*}
Compute:
\begin{multline*}
c;g  =\ssum a;\L_b;g
 =\ssum a;\L_b;\wk;\wh\\
  =\ssum a;\wk;\L_b;\wh
=\ssum a;\L_b;\wh
 =\ssum a;\L_b
 =c
\end{multline*}
by \refEq{P23/9} and complete distributivity, \refEq{P23/15}, the assumption that  $\L_b$ is normal in $\cs Gy$,
the assumption that $\wk $ is in the right stabilizer $ \R_a$ of $a$\comma the assumption that $\wh$ is in the subgroup $\L_b$ of $b$, which is closed
under the group operation of relative multiplication\comma and \refEq{P23/9}.
It follows from this computation that $g$ belongs to the right stabilizer $\RS c$, which completes the proof of \refEq{P23/14}\per

Use \refEq{else.13}, \refEq{P23/14}, and the right-regular version of  \refL{leftcos}(ii) to arrive at
\[Y_c=\R_a;\L_b\seq\RS c\seq Y_c\per\]
The first and last sets are equal, so equality holds everywhere.  Apply \refC{lrreg} (with $c$ in place of $a$) to conclude that $c$ is a right-regular element with   normal
right stabilizer $\cs Kc=\rar;\lbl$\per

The element  $\al\le \xoy $ is regular with normal stabilizers,  by assumption, and the same properties have been established for the element $c$.
Moreover,
\[a = a;y\le \abb=c
\]
by \refL{domain}(iii),(i) (with $b$, $y$, and $z$ in place of $a$, $x$, and $y$ respectively), and the definition of $c$.  Apply   Refinement \refT{th.what} (with $c$ in place of $b$) to
conclude that the  isomorphism $\hvphs a$ from $\cs Gx/\cs Hc$ to $\cs Gy/\cs Kc$ induced by $\vphs a$ coincides with $\vphs c$, in symbols,
\begin{equation*}\vphs c=\hvphs \al\per\tag{16}\labelp{Eq:P23/10}
\end{equation*}

Now write $d=\al\ssm;\al;\bl$.  In an entirely analogous fashion, one shows that
$d$ is a regular element below $\yoz$ with normal left and right stabilizers
\[\LS d = \rar;\lbl\qquad\text{and}\qquad \RS d = \RS\abp\comma
\]
and therefore, since $b\le d$,
\begin{equation*}\vphs d=\hvphs \bl\comma\tag{17}\labelp{Eq:P23/11}
\end{equation*} by Refinement \refT{th.what}.

The last step in the link is to prove that
\begin{equation*}\vphs \abp=\vphs c\,\vert\vphs d\per\tag{18}\labelp{Eq:P23/12}
\end{equation*}
Recall that
$\vph_{a;b}$ maps $\gxlab$ isomorphically to $\gzrab$\per  Fix an element $f $ in
$G_\wx$,  and select an element $h
$ in $G_\wz$ so that the coset $f;\cs H{a;b}$ of the normal subgroup $\cs H{a;b}$ is mapped by $\vphs{a;b}$ to the coset $\cs K{a;b};h$ of the normal subgroup $\cs K{a;b}$. It follows from the
definition of $\vphi{a;b}$ and \refCo{canonical} (with $a;b$ in place of $a$) that
\begin{equation*}\tag{19}\labelp{Eq:P23/12.1}
  f;\cs H{a;b};a;b=a;b;\cs K{a;b};h\per
\end{equation*}
 Use the left- and right-regular versions of \refC{stab} and the remark following it, together with \refEq{P23/12.1},   to arrive at
\begin{equation*}
f;a;b=(f;\L_{a;b});a;b= a;b;(\R_{a;b};h)=a;b;h\per\tag{20}\labelp{Eq:P23/16}
\end{equation*}
Use \refEq{P23/16} and  the implication from (i) to (iii) in \lmref{L:alfabeta} to obtain an element $g$ in $G_\wy$ such that
\begin{equation*} 
f;a=a;g
\quad\text{and}\quad
g;b=b;h\per \tag{21} \labelp{Eq:P23/18}
\end{equation*}
Compute:
\begin{multline*}\tag{22}\labelp{Eq:else.21}
f;c   = f;a;(\ssum\L_b)
 =a;g;(\ssum\L_b)\\=\ssum a;g;\L_b
  =\ssum a;\L_b;g
 =a;(\ssum\L_b);g
 = c;g
\end{multline*}
by \refEq{P23/9}, \refEq{P23/18}, complete distributivity, the assumption that $\L_b$ is normal, complete distributivity, and \refEq{P23/9}.
Use the left- and right-regular versions of \refC{stab} and the remark following it, together with \refEq{else.21}, to arrive at
\begin{equation*}(f;\LS c);c=f;c=c;g=c;(\RS c;g)\per\tag{23}\labelp{Eq:else.22}
\end{equation*}
Apply the definition of $\vphs c$ and \refCo{canonical} (with $c$ in place of $a$) to conclude from \refEq{else.22} that $\vphs c$ maps the coset $f;\cs Hc$ of $\cs Hc$ to the coset $\cs Kc;g$ of $\cs Kc$.
Use the equations
\[\LS c=\LS\abp\qquad\text{and}\qquad\RS c=\rar;\lbl
\] established earlier in the proof to conclude that
\begin{equation*} 
\vph_c (f;\L_{a;b})= \vphs c (f;\cs Hc)= \cs Kc;g=  \R_a;\L_b;g\per
\end{equation*}

 In an entirely analogous fashion, one shows that
\begin{equation*} 
\vph_d (g;\R_a;\L_b)=  \R_{a;b};h\per 
\end{equation*}

The subgroup $\cs Kb;\cs Hb$ is normal in $\cs Gy$, so the cosets  $g;\cs Ka;\cs Hb$ and $\cs Ka;\cs Hb;g$ are equal.
Combine this observation with the preceding equations to see that   the composite isomorphism  $\vphs c\,\vert\,\vphs d$ maps the coset
$f;\LS\abp$ to  the coset $\RS\abp;h$\comma just as does the isomorphism $\vphs\abp$\comma by \refEq{P23/12.1} and \refCo{canonical} (with $a;b$ in place of $a$)\per
Consequently, the two isomorphisms coincide.  This proves \refEq{P23/12}.

Together, \refEq{P23/10}--\refEq{P23/12} immediately yield \refEq{P23/7}:
\[\vphs{a;b}=\vphs c\rp\vphs d=\hvphs a\rp\hvphs b\per\] This completes the
proof of the theorem.
\end{proof}

 Relative Product \refT{else} can be combined with  Refinement
\refT{th.what} and   Isomorphism \refT{chariso} to obtain
an important characterization of the relational composition of two
quotient isomorphisms.  Recall  the notation introduced before \refT{chariso}:  the inner
automorphism of a quotient group $\cs Gx/\cs Hc$ determined by a coset $\h\eta$ of  $\cs Hc$   is the
mapping
$\tau$ defined by
\[\tau(\h\x)=\h\eta\ssm;\h\x;\h\eta
\]
for all cosets $\h\x$ of $\cs Hc$\per This inner automorphism induces an inner automorphism $\hat\tau$ of the quotient group $\cs Gx/\cs H{a;b}$ under the assumption that $\cs Hc$ is a subgroup of $\cs H{a;b}$.  In the statement of the next theorem, the mappings $\hvphs a$ and $\hvphs b$ are the isomorphism described before Relative Product \refT{else},  while $\hvphs c$ is the isomorphism from $\cs Gx/\cs H{a;b}$ to $\cs Gz/\cs K{a;b}$ induced by the isomorphism $\vphi c$ from $\cs Gx/\cs Hc$ to $\cs Gz/\cs Kc$, and $\hat\tau$ is the inner automorphism of $\cs Gx/\cs H{a;b}$ induced by an inner automorphism $\tau$ of $\cs Gx/\cs Hc$\per

\begin{theorem}[Second Relative Product Theorem] \labelp{T:secrp}
Let $\wx$\co $\wy$\co and $\wz$ be measurable atoms\comma and
\[a\le
\xoy\comma\qquad b\le
\yoz,\qquad  c\le\xoz\]    regular elements with normal stabilizers\per In particular\comma they may be atoms\per If
\[\h\eta; c\leq \abp\comma\]
 for some coset $\h\eta$   of $\h
c$\comma then
\[\h\al;\h\cl\seq \vph\inv_\al[\rar;\lbl]\qquad\text{and}\qquad\k\bl;\k\cl\seq \vphi\bl[\rar;\lbl]\comma\]
and
\[\hvphs
a\,\vert\,\hvphs b=\hat\tau\rp\hvphs c \comma
\]
where $\tau$ is the inner automorphism of $\G\wx/\h c$ determined by $\h\eta$\po
\end{theorem}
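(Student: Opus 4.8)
The plan is to reduce the whole statement to the First Relative Product Theorem by replacing $c$ with the left translation $d=\h\eta;c$, which by hypothesis lies below $\abp$, and then transporting information along the inequality $d\le\abp$ by means of the Refinement Theorem.

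First I would record the properties of $d$. Since $\h\eta$ is a coset of $\cs Hc$ and the stabilizers $\cs Hc$, $\cs Kc$ are normal, part (i) of the First Translation Lemma together with its right-regular version shows that $d=\h\eta;c$ is a regular element below $\xoz$ with unchanged stabilizers $\cs Hd=\h\eta;\cs Hc;\h\eta\ssm=\cs Hc$ and $\cs Kd=\cs Kc$. Moreover, applying the Second Isomorphism Theorem with $c$ and $d$ in place of $a$ and $b$, the isomorphism determined by $d$ factors as $\vphs d=\tau\rp\vphs c$, where $\tau$ is precisely the inner automorphism of $\cs Gx/\cs Hc$ determined by $\h\eta$ that appears in the statement.

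Next, the First Relative Product Theorem gives that $\abp$ is regular below $\xoz$ with normal stabilizers $\LS\abp=\vph\inv_\al[\rar;\lbl]$ and $\RS\abp=\vphs\bl[\rar;\lbl]$ and with $\vphs\abp=\hvphs a\,\vert\,\hvphs b$. The hypothesis reads $d\le\abp$, so the Refinement Theorem applies to this pair: it yields $\cs Hc=\cs Hd\seq\LS\abp$ and $\cs Kc=\cs Kd\seq\RS\abp$ (exactly the containment of stabilizers needed for $\hvphs c$ and $\hat\tau$ to be defined), and it identifies the isomorphism $\hvphs d$ induced by $\vphs d$ on $\cs Gx/\LS\abp$ with $\vphs\abp$. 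The two asserted inclusions now follow immediately: since $\vph_\al$ carries the identity coset $\lal$ to the identity coset $\rar\seq\rar;\lbl$, we get $\lal=\vph\inv_\al[\rar]\seq\vph\inv_\al[\rar;\lbl]=\LS\abp$, and since $\LS\abp$ is a subgroup containing both $\lal$ and $\cs Hc$ it contains their complex product, $\lal;\cs Hc\seq\LS\abp=\vph\inv_\al[\rar;\lbl]$; the dual computation gives $\rbr;\cs Kc\seq\RS\abp=\vphs\bl[\rar;\lbl]$.

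The main and only genuinely new step is to show that passing to induced maps on the quotient by $\LS\abp$ respects relational composition, so that the factorization $\vphs d=\tau\rp\vphs c$ descends to $\hvphs d=\hat\tau\rp\hvphs c$. For this I would introduce the canonical projections $\pi\colon\cs Gx/\cs Hc\to\cs Gx/\LS\abp$ and $\rho\colon\cs Gz/\cs Kc\to\cs Gz/\RS\abp$, which exist because $\LS\abp$ and $\RS\abp$ are normal in $\cs Gx$, $\cs Gz$ and contain $\cs Hc$, $\cs Kc$. The induced maps satisfy the defining intertwining relations $\pi\rp\hvphs c=\vphs c\rp\rho$, $\pi\rp\hat\tau=\tau\rp\pi$, and $\pi\rp\hvphs d=\vphs d\rp\rho$, and a short diagram chase gives $\pi\rp(\hat\tau\rp\hvphs c)=(\tau\rp\vphs c)\rp\rho=\vphs d\rp\rho=\pi\rp\hvphs d$; cancelling the surjection $\pi$ yields $\hat\tau\rp\hvphs c=\hvphs d$. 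Chaining the identifications $\hvphs a\,\vert\,\hvphs b=\vphs\abp=\hvphs d=\hat\tau\rp\hvphs c$ then finishes the proof. I expect the only delicate bookkeeping to be verifying that $\hvphs c$ is well defined, i.e. that $\vphs c$ really carries $\LS\abp/\cs Hc$ onto $\RS\abp/\cs Kc$; this follows because $\vphs d$ does so (by the existence of $\hvphs d$) and $\vphs c$ differs from $\vphs d$ only by the inner automorphism $\tau$, which preserves the normal subgroup $\LS\abp/\cs Hc$.
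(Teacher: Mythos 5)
Your proposal is correct and follows essentially the same route as the paper's proof: pass to the translation $d=\h\eta;c$, use the First Translation Lemma and the Second Isomorphism Theorem to obtain $\vphi d=\tau\rp\vphi c$, and combine the First Relative Product Theorem with the Refinement Theorem applied to $d\le a;b$, deriving the two inclusions from $\cs Hc\seq\cs H{a;b}$, $\cs Kc\seq\cs K{a;b}$ together with $\cs Ha\seq\cs H{a;b}$ and $\cs Kb\seq\cs K{a;b}$. The only difference is that you spell out, via canonical projections, the descent of $\vphi d=\tau\rp\vphi c$ to the induced maps on $\cs Gx/\cs H{a;b}$ and the well-definedness of $\hvphs c$, steps the paper treats as immediate.
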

\begin{proof} The assumption that $a$ and $b$ are regular elements
with normal stabilizers implies that  $\al;\bl$ is a regular
element with normal stabilizers
\begin{equation*}
  \h{\al;\bl}=\vph\inv_\al[\rar;\lbl]\qquad\text{and}\qquad
\k{\al;\bl}=\vphi\bl[\rar;\lbl]\comma\tag{1}\labelp{Eq:secrp3.1}
\end{equation*} and that
\begin{equation*}\tag{2}\labelp{Eq:secrp2.01}
\vphi{\al;\bl}=\hvphs\al\rp\hvphs\bl\comma
\end{equation*}
by Relative Product \refT{else}.

The element $c$ is assumed to be regular with normal stabilizers, so its left translation
\begin{equation*}\tag{3}\labelp{Eq:secrp1}
d=\h\eta;\cl\leq \al;\bl\comma
\end{equation*} is also regular with normal stabilizers, and in fact,
\begin{align*}
\h\dl=\h\cl\qquad&\text{and}\qquad\k\dl=\k\cl\comma\tag{4}\labelp{Eq:secrp1.01}\\
\intertext{ by   Translation \refL{left.reg}(i).  These observations  show that  Refinement \refT{th.what} may be applied to    \refEq{secrp1} to conclude, first, that}
\h\dl\seq\h{\al;\bl}\qquad&\text{and}\qquad\k\dl\seq\k{\al;\bl}\comma\tag{5}\labelp{Eq:secrp2.1}\end{align*}
and, second, that the isomorphism  $\vphi d$ from $\cs Gx/\cs Hd$ to $\cs Gz/\cs Kd$ induces an
isomorphism
$\hvphs d$ from $\cs Gx/\cs H{a;b}$ to $\cs Gz/\cs K{a;b}$ with the property
\begin{equation*}\tag{6}\labelp{Eq:secrp5.01}
\hvphs\dl=\vphi{\al;\bl}\per
\end{equation*} Combine \refEq{secrp2.01} and \refEq{secrp5.01} to arrive at
\begin{equation*}\tag{7}\labelp{Eq:secrp5}
\hvphs\dl=\hvphs\al\rp\hvphs\bl\per
\end{equation*}

Isomorphism \refT{chariso} (with $z$, $c$, and $d$ in place of $y$, $a$, and $b$ respectively)  says that
\begin{equation*}\tag{8}\labelp{Eq:secrp6}
\vphi\dl=\tau\rp\vphi\cl\comma
\end{equation*}
where $\tau$ is the inner automorphism of $\G\wx/\h\cl$ determined by the coset $\h\eta$\per  The inclusions in \refEq{secrp1.01} and \refEq{secrp2.1} imply
that
\begin{equation*}\tag{9}\labelp{Eq:secrp9.1}
  \cs Hc\seq\h{\al;\bl}\qquad\text{and}\qquad\k c\seq\k{\al;\bl}\comma
\end{equation*}
  so the inner automorphism $\tau$ of $\cs Gx/\cs Hc$ induces an inner automorphism $\hat\tau$ of $\cs Gx/\cs H{a;b}$, and the isomorphism $\vphi c$  from $\cs Gx/\cs Hc$ to $\cs Gz/\cs Kc$ induces an isomorphism $\hvphs c$ from
$\cs Gx/\cs H{a;b}$ to $\cs Gz/\cs K{a;b}$, and
\begin{align*}
\hvphs\dl&=\hat\tau\rp\hvphs\cl\comma\tag{10}\labelp{Eq:secrp7}\\
\intertext{by     \refEq{secrp6}.
Combine \refEq{secrp5} and \refEq{secrp7} to  arrive at}
\hvphs\al\rp\hvphs\bl&=\hat\tau\rp\hvphs\cl\per
\end{align*}

Together, \refEq{secrp3.1}, \refEq{secrp1.01},  and \refEq{secrp2.1} show that
  $\h c$ and $\k c$ are
 subgroups of
 \begin{align*}
   \vph\inv_\al[\rar;\lbl]\qquad&\text{and}\qquad
\vphi\bl[\rar;\lbl] \tag{11}\labelp{Eq:secrp5.1}\\
\intertext{respectively. Since $\cs Ka$ and $\cs Hb$ are subgroups of $\cs Ka;\cs Hb$, the two groups in \refEq{secrp5.1} also include $\vphi a\mo[\cs Ka]$ and $\vphi b[\cs Hb]$ respectively, that is to say, they include $\cs Ha$ and $\cs Kb$, by  the definitions of $\vphi a$ and $\vphi b$. Use these observation   to arrive at}
\h\al;\h\cl\seq \vph\inv_\al[\rar;\lbl]\qquad&\text{and}\qquad\k\bl;\k\cl\seq \vphi\bl[\rar;\lbl]\per
 \end{align*}
 \end{proof}

The special case of the theorem in which
$c$ is below $ \al;\bl$ is important enough to merit separate formulation. In this case, the   translating coset $\cs H\eta$ is the identity coset $\cs Hc$, and therefore the inner automorphism $\tau$ determined by this coset is the
identity automorphism of $\cs Gx/\cs Hc$.

\begin{cor}
\labelp{C:abccor} Let $\wx$\comma $\wy$\comma and $\wz$ be measurable atoms\comma and
\[a\le
\xoy\comma\qquad b\le
\yoz,\qquad c\le\xoz\]    regular elements with normal
stabilizers\per If
 $\cl\leq\al;\bl$ \comma then
$\hvphs
a\,\vert\,\hvphs b =\hvphs c
$.
\end{cor}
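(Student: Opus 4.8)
The plan is to read this off from Second Relative Product \refT{secrp} by specializing the translating coset $\h\eta$ there to be the identity coset of $\cs Hc$. The point is that $\cl$ is its own left translation by that coset: since $\cs Hc$ is the left stabilizer of $c$, every $f$ in $\cs Hc$ satisfies $f;\cl=\cl$, so that $\cs Hc;\cl=\cl$ in the translation notation introduced after \refC{stab}. Hence the hypothesis $\cl\le\al;\bl$ of the corollary is exactly the hypothesis $\h\eta;\cl\le\al;\bl$ of \refT{secrp} with $\h\eta=\cs Hc$, and the theorem then applies without any further work.

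First I would invoke \refT{secrp} with this choice of $\h\eta$. It yields at once the inclusions $\h\al;\h\cl\seq\vph\inv_\al[\rar;\lbl]$ and $\k\bl;\k\cl\seq\vphi\bl[\rar;\lbl]$ --- which play no role here --- together with the key identity
\[\hvphs a\,\vert\,\hvphs b=\hat\tau\rp\hvphs c\comma\]
where $\tau$ is the inner automorphism of $\G\wx/\cs Hc$ determined by the coset $\h\eta=\cs Hc$.

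It then remains only to observe that $\hat\tau$ is the identity. The coset $\cs Hc$ is the identity element of the quotient group $\G\wx/\cs Hc$, so the inner automorphism it determines is trivial: $\tau(\h\x)=\cs Hc\ssm;\h\x;\cs Hc=\h\x$ for every coset $\h\x$ of $\cs Hc$. Consequently the automorphism $\hat\tau$ that $\tau$ induces on $\G\wx/\cs H{a;b}$ is also the identity, and relational composition with the identity leaves $\hvphs c$ unaltered, so $\hat\tau\rp\hvphs c=\hvphs c$. Substituting this into the displayed identity gives $\hvphs a\,\vert\,\hvphs b=\hvphs c$, as desired. There is no real obstacle to overcome: every step is forced, all the substantive work having already been done in \refT{secrp}; the only care needed is the small bookkeeping that the identity coset $\cs Hc$ simultaneously reproduces $\cl$ under translation and determines the trivial inner automorphism, both being immediate from $\cs Hc$ being the stabilizer of $c$.
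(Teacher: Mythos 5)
Your proof is correct and is exactly the paper's argument: the corollary is obtained from Relative Product \refT{secrp} by taking the translating coset $\h\eta$ to be the identity coset $\h\cl$ (which fixes $\cl$ under left translation, so the hypothesis $\cl\le\al;\bl$ suffices), whereupon the inner automorphism $\tau$, and hence $\hat\tau$, is the identity. Nothing to add.
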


Another special case of Relative Product \refT{secrp} is when one of the quotient groups
\[\cs Gx/\cs H{a;b},\qquad \cs Gy/(\cs Ka;\cs Hb)\comma\qquad\cs Gz/\cs K{a;b}\] is abelian (and hence all of them are abelian, since they are isomorphic to one another).  In this case, the inner automorphism
$\hat\tau$ mentioned in the theorem is again the identity automorphism, and therefore may be omitted from the final equation of the theorem.
\begin{cor} \labelp{C:abelrelprod}Let $\wx$\comma $\wy$\comma and $\wz$ be measurable atoms\comma and
\[a\le
\xoy\comma\qquad b\le
\yoz,\qquad\text{and}\qquad c\le\xoz\]    regular elements with normal
stabilizers\per  If the quotient   group $\G\wx/\cs H{a;b}$  is abelian\co
then
\[\hvphs a\,\vert\,\hvphs b = \hvphs c
\]
whenever  some translation of $\cl$ is below $\al;\bl$\po
\end{cor}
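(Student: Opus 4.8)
The plan is to obtain the corollary as an immediate specialization of Second Relative Product \refT{secrp}, exploiting the fact that inner automorphisms of an abelian group are trivial. First I would put the hypothesis into the exact form demanded by that theorem. By assumption some translation of $c$ lies below $\abp$. Since $c$ is regular with normal stabilizers, \refC{eqtrans} guarantees that every translation of $c$ may be realized as a left translation; hence this translation can be written as $\h\eta;c$ for a suitable coset $\h\eta$ of $\h c$, and so $\h\eta;c\le\abp$. This is precisely the hypothesis under which Second Relative Product \refT{secrp} applies.

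Invoking that theorem, I would obtain
\[\hvphs a\rp\hvphs b=\hat\tau\rp\hvphs c\comma\]
where $\tau$ is the inner automorphism of $\G\wx/\h c$ determined by the coset $\h\eta$, and $\hat\tau$ is the inner automorphism of $\G\wx/\cs H{a;b}$ induced by $\tau$. The decisive step is the observation that $\hat\tau$ is an inner automorphism of the quotient group $\G\wx/\cs H{a;b}$, which is assumed to be abelian. Conjugation by any element of an abelian group is the identity map, so every inner automorphism of an abelian group is the identity automorphism; thus $\hat\tau$ is the identity automorphism of $\G\wx/\cs H{a;b}$, and relational composition with it has no effect, giving $\hat\tau\rp\hvphs c=\hvphs c$. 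Substituting into the displayed equation yields $\hvphs a\rp\hvphs b=\hvphs c$, as desired.

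I do not expect any genuine difficulty, as the corollary is merely a special case of the theorem. The only point needing attention is the mild mismatch between the two formulations of the hypothesis: the theorem requires a specific left translation $\h\eta;c$ to lie below $\abp$, whereas the corollary refers loosely to ``some translation'' of $c$. \refC{eqtrans} closes this gap by guaranteeing that, for regular elements with normal stabilizers, the families of left and right translations coincide, so that an arbitrary translation of $c$ can always be exhibited in the left-translation form $\h\eta;c$ required by the theorem.
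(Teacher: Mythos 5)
Your proposal is correct and follows exactly the route the paper takes: the corollary is obtained by specializing Second Relative Product \refT{secrp}, observing that the induced inner automorphism $\hat\tau$ lives on the abelian quotient $\G\wx/\cs H{a;b}$ and is therefore the identity. Your use of \refC{eqtrans} to rewrite an arbitrary translation of $c$ as a left translation $\h\eta;c$ is a sensible (and correct) way to make explicit a step the paper leaves implicit.
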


  Relative Product \refT{else} says that, under
suitable hypotheses, the inverse image of
$\rar;\lbl$ under $\vphs\al\mo$ is  $\LS\abp$\comma and the
  image of $\rar;\lbl$ under $\vphs\bl$ is $\RS\abp$\per
The following corollary gives, for atoms, an alternative
description of these images\per
\begin{theorem}[Image Theorem] \labelp{T:image}
Let $\wx$\comma $\wy$\comma and $\wz$ be measurable atoms\per If
\[a\le
\xoy\comma\qquad b\le
\yoz,\qquad c\le\xoz\]  are all atoms\comma
then
\[\vphs
a[\LS\al;\LS
c]=\rar;\lbl\comma\qquad
\vphs \bl[\rar;\lbl]=\RS \bl;\RS c\comma\]
and
\[\vphs \cl[\h\al;\h\cl]=\RS \bl;\RS c
\per\]
In particular\co
\[\LS\abp =\LS\al;\LS c\qquad\text{and}\qquad\RS\abp =\RS \bl;\RS c\per
\]
\end{theorem}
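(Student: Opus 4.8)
The plan is to reduce the whole theorem to the two stabilizer identities
\[\L_{a;b}=\L_a;\L_c\qquad\text{and}\qquad \R_{a;b}=\R_b;\R_c,\]
and to prove these by showing that the subgroup $\L_a$ already acts transitively, by left translation, on the set of atoms below $a;b$ (and dually for $\R_b$).

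First I would dispose of the reductions. Since $a$, $b$, $c$ are atoms, all six stabilizers are normal (Corollary~\ref{C:a-b.atoms}), and $\L_c$, $\R_c$ are the same for every atom below $\xoz$; hence $\L_a;\L_c$ and $\R_b;\R_c$ are products of normal subgroups, so are themselves normal, and are independent of the choice of $c$. Any two atoms $c$, $c'$ below $\xoz$ are left translations of one another (Atomic Partition Lemma~\ref{L:sumatom}), so $\vphs c$ and $\vphs{c'}$ differ by an inner automorphism (Second Isomorphism Theorem~\ref{T:chariso}); as an inner automorphism fixes every normal subgroup setwise, all three displayed equations are insensitive to the choice of $c$. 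Because $a;b\neq 0$ (Lemma~\ref{L:domain}(iv)) there is an atom beneath it, so I may assume throughout that $c\le a;b$. Granting the two identities, the three displayed equations follow formally: the First Relative Product Theorem~\ref{T:else} says $\vphs a$ carries $\L_{a;b}$ onto $\rar;\lbl$ and $\vphs b$ carries $\rar;\lbl$ onto $\R_{a;b}$, which are the first two equations after rewriting $\L_{a;b}$ and $\R_{a;b}$; and since $c\le a;b$, the Refinement Theorem~\ref{T:th.what} shows that the isomorphism $\G x/\L_{a;b}\to\G z/\R_{a;b}$ induced by $\vphs c$ sends identity coset to identity coset, i.e.\ $\vphs c[\L_{a;b}]=\R_{a;b}$, which is the third equation after rewriting both subgroups.

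The heart of the matter — and the step I expect to be the main obstacle — is the transitivity claim. By the cycle law for atoms (Lemma~\ref{L:laws.1}(v)), $c\le a;b$ is equivalent to $b\le a\ssm;c$; then monotony, the regularity of the atom $a$ in the form $a;a\ssm=\tsum\L_a$ (Corollary~\ref{C:atoms}, Corollary~\ref{C:lrreg}), and complete distributivity give
\[a;b\le a;a\ssm;c=(\tsum\L_a);c=\tsum\{f;c:f\in\L_a\}.\]
Each $f;c$ is an atom (Lemma~\ref{L:auto}), so every atom below $a;b$ must coincide with one of the left translations $f;c$ for $f$ in $\L_a$. This is exactly the asserted transitivity, and it is the point where the relation-algebraic structure (rather than pure group theory) does the work; note that every inequality available from group theory and the Second Relative Product Theorem points the other way, so this covering estimate is genuinely needed.

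From here the two identities are immediate. The inclusion $\L_a;\L_c\subseteq\L_{a;b}$ holds because $\L_a\subseteq\L_{a;b}$ (if $f;a=a$ then $f;a;b=a;b$) and $\L_c\subseteq\L_{a;b}$ (from $c\le a;b$ by Lemma~\ref{L:lessthan}), and $\L_{a;b}$ is a group. For the reverse inclusion, take $f_0\in\L_{a;b}$; then $f_0;c$ is an atom below $a;b$, so by transitivity $f_0;c=f;c$ for some $f\in\L_a$, whence $f\ssm;f_0\in\L_c$ and $f_0\in\L_a;\L_c$. This gives $\L_{a;b}=\L_a;\L_c$. The dual computation — starting from the equivalent form $a\le c;b\ssm$ of $c\le a;b$ and using $b\ssm;b=\tsum\R_b$ to obtain $a;b\le\tsum\{c;g:g\in\R_b\}$ — yields the transitivity of the right action of $\R_b$ and hence $\R_{a;b}=\R_b;\R_c$, which completes the proof.
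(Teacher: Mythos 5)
Your proof is correct, but it takes a genuinely different route from the paper's. The paper obtains all three image equations by applying the Second Relative Product Theorem (Theorem~\ref{T:secrp}) three times, to the cyclically rotated triples of atoms $(a,b,c)$, $(a\ssm,c,b)$, and $(b,c\ssm,a\ssm)$, and then uses the Converse Theorem (Theorem~\ref{T:convthm}) to rewrite the stabilizers and quotient isomorphisms of the converses in terms of those of $a$, $b$, $c$; pairing off the resulting inclusions gives the three equalities, and the identities $\L_{a;b}=\L_a;\L_c$ and $\R_{a;b}=\R_b;\R_c$ are extracted last, from Theorem~\ref{T:else}. You invert this order: you first prove the two stabilizer identities directly, via the covering
\[a;b\le a;a\ssm;c=\tsum\{f;c:f\in\L_a\}\]
that the cycle law for atoms yields from $c\le a;b$, so that every atom below $a;b$ is an $\L_a$-translate of $c$ (and dually an $\R_b$-translate); the three displayed equations then follow from Theorem~\ref{T:else} and the Refinement Theorem (Theorem~\ref{T:th.what}) applied to $c\le a;b$. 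Your preliminary reductions are sound: the statement's independence of the choice of $c$ rests on Corollary~\ref{C:a-b.atoms} together with the inner-automorphism description of Theorem~\ref{T:chariso}, since an inner automorphism of $\cs Gx/\L_c$ fixes the normal subgroup $\L_a;\L_c$ setwise, and the existence of an atom below the non-zero element $a;b$ comes from the Atomic Partition Lemma applied to the given atom $c$ (mere non-vanishing would not suffice, but you have already invoked that lemma). What your route buys is a concrete combinatorial description of the atoms below $a;b$ and a proof that avoids both the Second Relative Product Theorem and the bookkeeping of how converse transforms stabilizers and isomorphisms; what the paper's route buys is uniformity, one lemma applied three times in symmetric positions, at the cost of exactly that bookkeeping.
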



\begin{proof} 
 Consider elements  $a$, $b$, and $c$  satisfying the hypotheses of the
theorem.  All three  are   regular elements
with normal stabilizers, by Corollaries \ref{C:atoms} and
\ref{C:a-b.atoms}.  The relative product $a;b$ is a regular element below $\xoz$ with normal stabilizers, by Relative Product
\refT{else}, and in particular, it is non-zero, by \refL{regnonzero}.  The left-translations
of $\cl$ are atoms that partition  $\xoz$, by   Atomic Partition \refL{sumatom}, so one of
these left translations must be below the non-zero element $\al;\bl$.  Apply Relative Product
\refT{secrp} to obtain
\begin{equation*}\h\al;\h\cl\seq \vph\inv_\al[\rar;\lbl]\qquad\text{and}\qquad\k\bl;\k\cl
\seq\vphs \bl[\rar;\lbl]\per\tag{1}\labelp{Eq:P30/0}
\end{equation*}

The converses  $\al\ssm$, $\bl\ssm$, and $\cl\ssm$ are atoms  below
\[\yox,\qquad\zoy,\qquad\text{and}\qquad \zox\]
 respectively, by Lemmas \ref{L:laws}(vi) and \ref{L:square}(iii), the assumptions on $a$, $b$, and $c$, and monotony. Apply the argument from the
preceding paragraph to the atoms
$\al\ssm$,
$\cl$, and  $\bl$ in place of $a$, $b$, and $c$ respectively to get
\begin{align*}\h{\al\ssm};\h\bl\seq
\vph\inv_{\al\ssm}[\k{\al\ssm};\h\cl]\qquad&\text{and}\qquad\k\cl;\k\bl
\seq\vphs \cl[\k{\al\ssm};\h\cl]\per\tag{2}\labelp{Eq:P30/1}\\
\intertext{Apply  the same argument again to the atoms   $\bl$, $\cl\ssm$, and $\al\ssm$ in place of $a$, $b$, and $c$ respectively to get}
\h\bl;\h{\al\ssm}\seq
\vph\inv_\bl[\k\bl;\h{\cl\ssm}]\qquad&\text{and}\qquad\k{\cl\ssm};\k{\al\ssm}
\seq\vphs {\cl\ssm}[\k\bl;\h{\cl\ssm}]\per\tag{3}\labelp{Eq:P30/2}
\end{align*}

The equations
\begin{alignat*}{3}
\vphs {a\ssm}&=\vphs a\mo,&\qquad\LS{a\ssm}&=\RS
a\comma&\qquad
\RS{a\ssm}&=\LS a\comma\tag{4}\labelp{Eq:image.4}\\
\vphs {c\ssm}&=\vphs c\mo,&\qquad\LS{c\ssm}&=\RS
c\comma&\qquad
\RS{c\ssm}&=\LS c\tag{5}\labelp{Eq:image.5} \end{alignat*} are all valid,
by   Converse \refT{convthm}, so the inclusions in \refEq{P30/1} and \refEq{P30/2} may be rewritten in the forms
\begin{align*}\k\al;\h\bl\seq
\vphi\al[\h\al;\h\cl]\qquad&\text{and}\qquad\k\cl;\k\bl
\seq\vphs \cl[\h\al;\h\cl]\comma\tag{6}\labelp{Eq:P30/3}\\
\intertext{and} \h\bl;\k\al\seq
\vph\inv_\bl[\k\bl;\k\cl]\qquad&\text{and}\qquad\h\cl;\h\al
\seq\vphs \cl\mo[\k\bl;\k\cl]\per\tag{7}\labelp{Eq:P30/4}
\end{align*} For example, to obtain the first inclusion in \refEq{P30/3}, use the second equation in \refEq{image.4}, the first inclusion
in \refEq{P30/1}, the first equation in \refEq{image.4}, and the fact that the inverse of the inverse of $\vphi a$ is $\vphi a$, to arrive at
\begin{equation*}
  \cs Ka;\cs Hb=\cs H{a\ssm};\cs Hb\seq \vph\inv_{\al\ssm}[\k{\al\ssm};\h\cl]
  =(\vph\inv_a)\inv[\cs H a;\h\cl]=\vphi a[\cs Ha;\cs Hc]\per
\end{equation*}
The first equations in \refEq{P30/0} and \refEq{P30/3} yield
\begin{equation*}
  \vphi\al[\h\al;\h\cl]=\k\al;\h\bl\per\tag{8}\labelp{Eq:image.8}
\end{equation*}
Similarly, the second equation in \refEq{P30/0} and the first equation in \refEq{P30/4}, together with the assumption that the stabilizers
are normal subgroups, yield
\begin{equation*}
  \vphi\bl[\k\al;\h\bl]=\k\bl;\k\cl\per\tag{9}\labelp{Eq:image.9}
\end{equation*}
Finally, the second equations in \refEq{P30/3} and   \refEq{P30/4}, and the assumption that the stabilizers are normal, yield
\begin{equation*}
  \vphi\cl[\h\al;\h\cl]=\k\bl;\k\cl\per\tag{10}\labelp{Eq:image.10}
\end{equation*}

To obtain the first equation in the final assertion of the theorem, use   Relative Product
\refT{else} and \refEq{image.8},
\[\cs H{a;b}=\vph\inv_a[\cs Ka;\cs Hb]=\cs Ha;\cs Hc\per\] The second equation is obtained in a similar fashion, using \refEq{image.9} instead of \refEq{image.8}.
\end{proof}

\comment{

The following application of Relative Product \refT{secrp} and Image \refT{image} to atoms is the one of the main results that
will be needed in the sequel.

\begin{theorem}[Relative Product Theorem for Atoms] \labelp{T:abelrelprod}
Let $\wx$\comma $\wy$\comma and $\wz$ be measurable atoms\comma and
\[a\le
\xoy,\qquad
 b\le
\yoz,\qquad\text{and}\qquad c\le\xoz\]  atoms\per If
\[\h\eta; c\leq \abp\]
for some coset $\h\eta$   of $\h
c$\comma then
\[\h{a;b}=\h\al;\LS\cl=\vph\inv_\al[\rar;\lbl]\qquad\text{and}\qquad
\vphi\bl[\rar;\lbl]=\k\bl;\k\cl=\k{a;b}
\comma\]
and
\[\hvphs
a\,\vert\,\hvphs b=\hat\tau\rp\hvphs c \comma\]
where $\tau$ is the inner automorphism of $\G\wx/\h\cl$ induced by $\h\eta$\po
If
$c\leq a;b$\co then
\[\hvphs
a\,\vert\,\hvphs b=\hvphs c \per\]
\end{theorem}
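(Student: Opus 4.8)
The plan is to read this theorem as a consolidation, restricted to the case of atoms, of three results already in hand: the First Relative Product \refT{else}, the Image \refT{image}, and the Second Relative Product \refT{secrp}. I would prove it by invoking these three in turn and splicing their conclusions together, with no genuinely new computation. The preliminary step is to observe that, being atoms, $\al$, $\bl$, and $\cl$ are automatically regular elements with normal stabilizers, by Corollaries \ref{C:atoms} and \ref{C:a-b.atoms}; this is what makes each of the cited theorems applicable.

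For the two displayed stabilizer identities I would argue as follows. The First Relative Product \refT{else}, applied to the atoms $\al$ and $\bl$, gives that $\abp$ is regular with normal stabilizers $\h{a;b}=\vph\inv_\al[\rar;\lbl]$ and $\k{a;b}=\vphi\bl[\rar;\lbl]$, and with quotient isomorphism $\vphi{a;b}=\hvphs a\rp\hvphs b$. Independently---and this uses only that $\al$, $\bl$, $\cl$ are atoms---the Image \refT{image} supplies the two equalities $\h{a;b}=\h\al;\LS\cl$ and $\k{a;b}=\k\bl;\k\cl$. Chaining each pair through the common middle term then yields
\[\h{a;b}=\h\al;\LS\cl=\vph\inv_\al[\rar;\lbl]\qquad\text{and}\qquad\vphi\bl[\rar;\lbl]=\k\bl;\k\cl=\k{a;b},\]
which are the first two assertions. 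I would flag that the Image Theorem's equalities rest internally on the existence of \emph{some} left translation of $\cl$ lying below the non-zero element $\abp$, which holds because those translations partition $\xoz$ by Atomic Partition \refL{sumatom}.

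For the composition equation I would appeal directly to the Second Relative Product \refT{secrp}: the hypothesis $\h\eta;c\le\abp$ is exactly its hypothesis, so it returns $\hvphs a\rp\hvphs b=\hat\tau\rp\hvphs c$, with $\tau$ the inner automorphism of $\G\wx/\h\cl$ determined by the translating coset $\h\eta$. For the final assertion, when $\cl\le\abp$ I would take $\h\eta$ to be the identity coset $\h\cl$ itself, so that $\h\eta;c=\cl\le\abp$ trivially; the inner automorphism determined by the identity coset is the identity, hence so is the induced map $\hat\tau$, and the composition equation collapses to $\hvphs a\rp\hvphs b=\hvphs c$---this is precisely \refC{abccor}. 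The one place deserving care, and what I regard as the main obstacle, is keeping the roles of the translating coset straight: the stabilizer identities are insensitive to which translation of $\cl$ lies below $\abp$ and so follow from the unconditional First Relative Product and Image Theorems, whereas the specific coset $\h\eta$ of the hypothesis is what pins down the inner automorphism $\tau$ in the composition equation. I would therefore be explicit that only the composition equation consumes the choice of $\h\eta$.
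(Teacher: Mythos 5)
Your proposal is correct and takes essentially the same route the paper intends: the theorem is presented there precisely as the specialization to atoms of Relative Product \refT{secrp} and Image \refT{image}, with Relative Product \refT{else} supplying the stabilizer formulas and \refC{abccor} the final assertion. Your assembly of these results---including the remarks that atoms are automatically regular with normal stabilizers by Corollaries \ref{C:atoms} and \ref{C:a-b.atoms}, and that only the composition equation actually consumes the choice of the translating coset $\h\eta$---is exactly the intended argument.
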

}
\section{Semi-scaffolds and scaffolds}\labelp{S:sec6}

The previous sections take  a ``local" perspective, and study
properties of certain types of elements that lie below rectangles
with  measurable atoms for sides.  The most important elements of
this kind are regular elements with normal stabilizers, and in
particular, atoms.  This and the next sections take a
``global" perspective, and focus on the entire measurable relation
algebra.

\begin{df} \labelp{D:funct-dens}
A relation algebra is called
\textit{measurable} if the identity element is the sum of measurable atoms.  If the
identity element is the sum of  finitely
measurable atoms\comma  then the algebra is said to be
 \textit{finitely measurable}\po\qeddef
\end{df}

Fix a complete and atomic, measurable relation algebra $\f A$ for the remainder of the discussion. The assumption of completeness is only
needed in order not to have to worry about the existence of certain infinite sums.  All elements and operations are assumed to be those of $\f A$.
 Let $I$ be the set
of   measurable atoms in $\f A$.  The assumption of measurability is
expressed symbolically by the equation $\ident = \tsum I $\per
Define a binary relation $\mc E$ on the set $I$ by putting
\[\mc E=\{\pair x y:\wx,\wy\in I \text{ and } \xoy\neq 0\}\per
\]
For atoms $x$ and $\wy$, the conditions
\[\xoy\neq 0\qquad\text{and}\qquad 1;x;1=1;y;1\]
are equivalent, by \refL{laws.1}(viii), so an alternative definition of $\mc E$ is given by
\[\mc E=\{\pair x y:\wx,\wy\in I \text{ and } 1;x;1=1;y;1\}\per
\]
This second way of defining $\mc E$ makes clear that $\mc E$ is an equivalence
relation on the set $I$.

The principal notion that will be in this section is that of a
semi-scaffold.

\medskip
\begin{df} \labelp{D:scaffold}
A system $a= \langle a_{xy}:\pair x y\in\mc E\,\rangle$ of atoms
  is called a
\textit{semi-scaffold} if\co for
all pairs $\pair x y$  in $\mc E$\co and for all atoms $x$ in $I$\co the following
conditions are satisfied\per
\begin{enumerate}
\item[(i)] $\axy \le \xoy$\per
\item[(ii)] $\axx =\wx$\per
\item[(iii)] $\ayx=a_{xy}\ssm$\,\po
\end{enumerate}
A semi-scaffold is called a \textit{scaffold} if, for all pairs $\pair x y$ and $\pair y z$ in $\mc E$\comma
\begin{enumerate}
\item[(iv)] $\axz\le \axy; \ayz$\per
\end{enumerate}\qeddef
\end{df}

The conditions defining a semi-scaffold and a scaffold can  be
weakened somewhat. To this end, it is helpful to assume that the
set $I $ of measurable atoms is linearly ordered, say by a
relation $\,<\,$.
\begin{lm}\labelp{L:scafchar}  A system   $a=\langle a_{xy}:\pair x y\in\mc
E\,\rangle$ of atoms is a semi-scaffold if and only if the following assumptions  are satisfied\per
\begin{enumerate}
\item[(i)] $\axy\le \xoy$ for all $\pair xy$  in $\mc E$ with $x<y$\per
\item[((ii)] $\axx =\wx$ for  all $\wx$ in $I$\per
\item[(iii)] $\ayx=a_{xy}\ssm$  for all  $\pair x y$ in $\mc E$ with $x<y$\po
\end{enumerate}  A semi-scaffold is a scaffold if and only if
\begin{enumerate}
\item[(iv)] $\axz\le \axy; \ayz$ for all $\pair xy$ and $\pair yz$ in $\mc E$ with $x<y<z$\per
\end{enumerate}
\end{lm}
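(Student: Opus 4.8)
The plan is to prove both biconditionals by noting that the ``forward'' directions are immediate, since the weakened conditions are literally special cases of the conditions in \refD{scaffold}, and then recovering the full conditions from the weakened ones by exploiting the symmetry of $\mc E$ together with the converse and cycle laws. Throughout I would use that $\mc E$ is an equivalence relation, so that whenever the relevant indices are $\mc E$-related all the corresponding atoms are defined, and that each $a_{xy}$ is a nonzero atom below $\xoy$ and hence has domain $x$ and range $y$ by \refL{domain}.

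For the semi-scaffold biconditional, I would first observe that condition (ii) is identical in the two formulations. To recover the full condition (iii), I would split into cases: for $x<y$ it is given; for $x=y$ it follows from (ii) and the fact that subidentity atoms are self-converse, \refL{laws}(vii); and for $x>y$ I would apply the given (iii) to the pair $(y,x)$ and take converses, using the first involution law (R6). Full condition (i) then follows: for $x<y$ it is given, for $x=y$ it follows from (ii) together with $x\le\xox$ (a consequence of \refL{square}(i)), and for $x>y$ I would write $a_{xy}=a_{yx}\ssm$ (the full (iii) just established), bound $a_{yx}\le y;1;x$ by the given (i), and take converses using \refL{square}(iii) to get $a_{xy}\le\xoy$.

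The heart of the argument is the scaffold biconditional, and the key idea is that for \emph{distinct} atoms condition (iv) is invariant under \emph{all} permutations of its three indices. I would establish this from the cycle laws for atoms, \refL{laws.1}(v): applied to the atoms $a_{xy}$, $a_{yz}$, $a_{xz}$ they give
\[
a_{xz}\le a_{xy};a_{yz}\quad\Longleftrightarrow\quad a_{yz}\le a_{xy}\ssm;a_{xz}\quad\Longleftrightarrow\quad a_{xy}\le a_{xz};a_{yz}\ssm,
\]
and rewriting the converses by the already-established full condition (iii), via $a_{xy}\ssm=a_{yx}$ and $a_{yz}\ssm=a_{zy}$, these three statements read as condition (iv) for the triples $(x,y,z)$, $(y,x,z)$, and $(x,z,y)$ respectively. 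Thus (iv) for a triple is equivalent to (iv) for the triple obtained by transposing its first two indices, and also to the one obtained by transposing its last two indices. Since these two transpositions generate the full symmetric group on three letters, condition (iv) holds for one ordering of three distinct $\mc E$-related atoms if and only if it holds for every ordering. Given the hypothesis (iv) for the increasing arrangement $x<y<z$, I would sort any three distinct $\mc E$-related atoms and conclude (iv) for every arrangement of them.

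It remains to dispose of the degenerate triples in which two of $x,y,z$ coincide, and these I would check directly: when $x=y$ or $y=z$ the inequality collapses to $a_{xz}\le a_{xz}$ using $a_{xx}=x$ (respectively $a_{yy}=y$) and the absorption laws $x;a_{xz}=a_{xz}$ and $a_{xy};y=a_{xy}$ from \refL{domain}(iii); and when $x=z$ it becomes $x\le a_{xy};a_{xy}\ssm$, which holds because $x=(a_{xy};a_{xy}\ssm)\cdot\ident$ is the domain of the nonzero atom $a_{xy}$ by \refL{domain}(i). I expect the one genuinely nontrivial point to be the recognition that the three cycle-law equivalences are \emph{precisely} condition (iv) for the three transposed triples, so that the weakened hypothesis for the single increasing order already forces the condition across all of the symmetric group; everything else is routine bookkeeping with converses and domains.
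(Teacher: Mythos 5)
Your proof is correct and rests on the same ingredients as the paper's: the involution laws and assumption (iii) for the semi-scaffold biconditional, and the cycle laws for atoms of \refL{laws.1}(v) together with the full condition (iii) for the scaffold biconditional. The difference is one of organization. The paper verifies scaffold condition (iv) for distinct $x$, $y$, $z$ by working through the orderings one at a time ($x<y<z$, then $x<z<y$, then $z<x<y$), each time rewriting with converses and invoking the cycle laws, and leaves the remaining three cases to the reader with a ``treated in a similar fashion.'' You instead observe once and for all that the two cycle-law equivalences, after rewriting $a_{xy}\ssm=a_{yx}$ and $a_{yz}\ssm=a_{zy}$ via the full condition (iii), say exactly that condition (iv) for a triple of distinct indices is equivalent to condition (iv) for the triples obtained by transposing the first two or the last two indices; since these transpositions generate the symmetric group on three letters, the single sorted instance propagates to all six orderings. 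This is the same mechanism, but it replaces the case-by-case bookkeeping with a uniform argument that makes clear no ordering can fail. Your treatment of the degenerate case $x=z$ is also slightly more elementary: you obtain $x\le a_{xy};a_{xy}\ssm$ directly from \refL{domain}(i), whereas the paper routes through the regularity of the atom $a_{xy}$ (\refC{atoms}) and the fact that its left stabilizer is a subgroup containing $x$. Everything else---the three-way case split in the semi-scaffold direction and the absorption identities $x;a_{xz}=a_{xz}$ and $a_{xy};y=a_{xy}$ for the cases $x=y$ and $y=z$---matches the paper's proof.
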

\begin{proof}  It is obvious that a system
of atoms satisfying conditions (i)--(iii) in the definition of a
semi-scaffold satisfies assumptions (i)--(iii) of the lemma.
 To establish
the reverse implication, assume that $a$ is  a system of atoms  satisfying
assumptions (i)--(iii) of the lemma.
Fix a pair $\pair xy$ in $\mc E$.  The first step is to verify that semi-scaffold condition (i) holds for this pair. If $x<y$, then condition (i) holds
by assumption.  If $x=y$, then
 \[\cs a\xy=\cs a\xx=x\le \xox=\xoy\comma\] by the hypothesis of this case, assumption (ii) of the lemma, and \refL{square}(i) (with $y=x$).
 If $x>y$, then
 $\cs a\yx\le\yox$, by assumption (i) of the lemma (with $x$ and $y$ interchanged), and consequently
 \[\cs a\xy=a\ssm_\yx\le(\yox)\ssm=\xoy\comma\]
by assumption (iii) of the lemma (with $x$ and $y$ interchanged), the preceding observation, monotony, and \refL{square}(iii). This completes the verification
of semi-scaffold condition (i).

Semi-scaffold condition (ii) coincides with assumption (ii) of the
lemma, so there is nothing to verify. Turn now to the verification
of semi-scaffold condition (iii). If $x<y$, then condition (iii)
holds by  assumption (iii) of the lemma. If $x=y$\comma then\[\cs
a\yx=\cs a\xx=x=x\ssm=\cs a\xx\ssm=\cs a\xy\ssm\comma\] by the
hypothesis $x=y$, assumption (ii) of the lemma, and
\refL{laws}(vii). If $x>y$, then $\cs a\xy=\cs a\yx\ssm$, by
assumption (iii) of the lemma (with $x$ and $y$ interchanged), and
therefore
\[\cs a\yx=(\cs a\yx\ssm)\ssm=\cs a\xy\ssm\comma\] by the first involution law and the preceding observation.

To prove the second assertion of the theorem, assume  that the system of atoms $a$   satisfies assumptions (i)--(iii) of the lemma.  It then satisfies
semi-scaffold conditions (i)--(iii), by the arguments of the previous paragraphs, so it may be assumed that these conditions hold for $a$.  The goal is to derive  condition (iv) from  these conditions and assumption (iv) of the lemma. Consider pairs $\pair xy$ and $\pair yz$  in $\mc E$. The atom $\cs a\xz$ is below $\xoz$, by semi-scaffold condition (i), so its domain is $x$, and $x;\cs a\xz=\cs a\xz$, by \refL{domain}. Consequently, if  $x=y$\comma
then\[\cs a\xy;\cs a\yz=\axx;\axz= x;\cs a\xz=\cs a\xz\comma\] by the hypothesis $x=y$,  semi-scaffold condition (ii), and the preceding observation.  A similar argument applies if $y=z$.   If $x=z$, then
\[\axy;\ayz=\axy;\ayx=\axy;\cs a\xy\ssm=\tsum\h{\axy}\ge \wx=\axx=\axz\comma
\]
by the hypothesis $x=z$, semi-scaffold condition (iii), the assumption that $\cs a\xy$ is an atom and hence regular, by \refC{atoms},  the fact that
the stabilizer $\h{\axy}$ is a subgroup of $\cs Gx$ and therefore contains the group identity element $x$, and semi-scaffold condition (ii).
Assume now that $x$, $y$, and $z$ are distinct.  If $x<y<z$, then  condition (iv) holds by assumption (iv) of the lemma. If $x<z<y$\comma
then
\[\axy\le\axz;\azy=\axz;\cs a\yz\ssm\comma
\] by the hypothesis of this case, assumption (iv)   of the lemma (with $y$ and $z$ interchanged), and  semi-scaffold  condition (iii). Apply the cycle laws for atoms in \refL{laws.1}(v) to arrive at
$\cs a\xz\le\cs a\xy;\cs a\yz$. Similarly, if say $z<x<y$, then
\[\cs a\zy\le\cs a\zx;\cs a\xy=\cs a\xz\ssm;\cs a\xy\comma\] by the hypothesis of this case, assumption (iv) of the lemma (with $z$, $x$, and $y$ in place of $x$, $y$, and $z$ respectively) and   semi-scaffold condition (iii) (with $z$  in place of $y$). Form the converse of both sides of this inequality, and use  semi-scaffold  condition (iii) (with $z$ in place of   $x$), monotony, and the two involution laws to obtain
\[\cs a\yz=\cs a\zy\ssm\le (\cs a\xz\ssm;\cs a\xy)\ssm=\cs a\xy\ssm;(\cs a\xz\ssm)\ssm=\cs a\xy\ssm;\cs a\xz\per\] Apply the cycle laws for atoms in \refL{laws.1}(v) to arrive at $\cs a\xz\le\cs a\xy;\cs a\yz$.  The remaining three cases of condition (iv) are treated in a similar fashion.
\end{proof}

With the help of the preceding lemma, it is easy to show that every atomic, measurable relation algebra has a semi-scaffold.

\begin{lm}[Semi-scaffold Existence Lemma]\labelp{L:semil} Every atomic\comma  measurable
relation algebra has a semi-scaffold\per
\end{lm}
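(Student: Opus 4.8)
The plan is to lean on the reduction provided by \refL{scafchar}, which guarantees that a system of atoms is a semi-scaffold as soon as its conditions (i)--(iii) are verified only for pairs with $x<y$ (together with the diagonal). First I would invoke the axiom of choice to fix a linear ordering $<$ of the set $I$ of measurable atoms, as anticipated in the remarks preceding \refL{scafchar}.

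Next I would define the system $a=\langle\axy:\pair xy\in\mc E\rangle$ by cases according to this order. For each pair $\pair xy$ in $\mc E$ with $x<y$, the defining condition for membership in $\mc E$ is precisely that $\xoy\neq 0$; since $\f A$ is atomic, there is an atom below $\xoy$, and I would let $\axy$ be such an atom. (As there may be infinitely many such pairs, all these selections are made at once by a single appeal to choice.) On the diagonal I would set $\axx=\wx$, which is an atom because $\wx$ is a measurable atom. Finally, for $x<y$ I would define the sub-diagonal entry by $\ayx=\axy\ssm$; each such entry is again an atom, since the converse of an atom is an atom by \refL{laws}(vi).

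With these definitions the three hypotheses of \refL{scafchar} hold essentially by construction: hypothesis (i) holds because $\axy$ was chosen below $\xoy$ whenever $x<y$; hypothesis (ii) holds because $\axx=\wx$; and hypothesis (iii), that $\ayx=\axy\ssm$ for $x<y$, holds because this is exactly how the sub-diagonal entries were defined. Applying \refL{scafchar} then shows that $a$ is a semi-scaffold, which is the assertion of the lemma.

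The point worth emphasizing is that there is essentially no obstacle here: \refL{scafchar} has already absorbed all the delicate bookkeeping, its whole purpose being to prevent the converse requirement (iii) from clashing with the independent choices made for the remaining entries. The only two ingredients needed beyond that lemma are the atomicity of $\f A$, which upgrades the nonvanishing of each rectangle $\xoy$ to the existence of an atom beneath it, and the linear order on $I$, which lets one choose the entries freely on one side of the diagonal and transport them to the other side by converse.
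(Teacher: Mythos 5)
Your proposal is correct and follows essentially the same route as the paper's proof: linearly order $I$, use atomicity to select an atom $\axy\le\xoy$ for each pair with $x<y$, set $\axx=\wx$ and $\ayx=\axy\ssm$, and then invoke \refL{scafchar}. The only cosmetic difference is that you make the appeal to the axiom of choice explicit, which the paper leaves tacit.
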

\begin{proof} Assume that the set $I$ of measurable atoms is linearly ordered, say by a relation $\,<\,$.
 For each
$x$ in $I$, put
\begin{equation*}
\axx=x\comma
\end{equation*}  and observe that assumption (ii) of \refL{scafchar} holds.  For each pair $\pair xy$ in $\mc E$ with $x<y$, choose an atom $b\le \xoy$, and put
\[\axy=b\qquad\text{and}\qquad\ayx=b\ssm\per
\]
Such an atom  $b$ exists because the relation algebra is assumed to be atomic, and the
definition of $\mc E$, together with  the assumption that $\pair xy$ is in $\mc E$, imply that the rectangle  $\xoy$ is non-zero.  Observe that  $\ayx$ is also  an atom, by \refL{laws}(vi).  The choice of $\axy$ and the definition of $\ayx$ imply that assumptions (i) and (iii) of \refL{scafchar}  hold. Consequently, the system
\[a=\langle a_{xy}:\pair x y\in\mc
E\,\rangle\] of atoms is a semi-scaffold, by \refL{scafchar}.
\end{proof}

It is worth pointing out that, in general, it is not true that
every atomic, measurable relation algebra has a scaffold.

Fix a  semi-scaffold
\[a=\langle a_\wi : \pair x y\in \mc E \,\rangle
\]
in  $\f A$, and consider an arbitrary atom $b=\axy$ in this
semi-scaffold.  The right and left stabilizers $\cs Hb$ and $\cs
Kb$ are normal subgroups of the groups $\cs Gx$ and $\G y$ of permutations of
$x$ and $y$ respectively, by \refC{a-b.atoms}. Write
\[\cs H\xy=\cs Hb\qquad\text{and}\qquad \cs K\xy=\cs Kb\per\] The quotient groups $\cs Gx/\cs H\xy$ and $\cs Gy/\cs K\xy$ are isomorphic\comma and in fact there are cosets systems
\[\langle H_{\wi,\xi} :\xi <\kai\wi\,\rangle\qquad\text{and}\qquad \langle
K_{\wi,\xi} :\xi <\kai\wi\,\rangle\] of $\cs H\xy$ and $\cs K\xy$ in $\cs Gx$ and $\cs Gy$ respectively such that
\[ \ls\wi\xi;\ai=\ai;\rs\wi\xi\comma\] and the function $\vphi\xy$ from $\cs Gx/\cs H\xy$ to $\cs Gy/\cs K\xy$ defined by
\begin{equation*}
\vph_\wi (H_{\wi,\xi}) = K_{\wi,\xi}
\end{equation*} for each $\xi$
is an isomorphism, by Isomorphism \refT{isom} and the subsequent remarks.
  Without loss of generality, it may be
assumed that
\begin{equation*}
H_{\wi,0}= H_\wi
\qquad\text{and}\qquad
K_{\wi,0}= K_\wi\per
\end{equation*}

The left stabilizer $\cs H{b\ssm} $ coincides with $\cs Kb$,  the right stabilizer $\cs K{b\ssm}$ coincides with $\cs Hb$, and the quotient isomorphism $\vphi {b\ssm}$ coincides with $\vphi b\mo$, by Converse \refT{convthm} (with $b$ in place of $a$). Consequently,
since \[\cs a\yx=\cs a\xy\ssm=b\ssm\comma\] by semi-scaffold condition (ii) and the definition of $b$, we  have
\begin{align*}
  \cs H\yx=\cs K\xy\qquad&\text{and}\qquad \cs K\yx=\cs H\xy\per\\
  \intertext{Apply \refCo{conver} to  choose coset systems}
 \langle H_{\yx,\xi} :\xi <\kai\wi\,\rangle\qquad&\text{and}\qquad \langle
K_{\yx,\xi} :\xi <\kai\wi\,\rangle\\
\intertext{of $\cs H\yx$ and $\cs K\yx$ in $\cs Gy$ and $\cs Gx$ respectively such that}
\ls\yx\xi=\rs\xy\xi\qquad&\text{and}\qquad \rs\yx\xi=\ls\xy\xi
\end{align*}
 for each $\xi<\kai\xy$, and the function $\vphi\yx$ from $\cs Gy/\cs H\yx$ to $\cs Gx/\cs K\yx$ that is the inverse of
 $\vphi\xy$ is determined by
\begin{equation*}
\vph_\yx (H_{\yx,\xi}) = K_{\yx,\xi}\per
\end{equation*}

For each $\a <\kai\wi$, the element
\[\as\wi\a =\ls\wi\a;\ai=\ai;\rs\wi\a
\]  is well defined, by \refC{stab}, and  the system of elements $\langle\as\wi\a:\a<\kai\wi\rangle$ is a partition of
$\xoy$ into atoms, by Atomic Partition \refL{sumatom}. The following lemma summarizes these observations.

\begin{lm} \labelp{L:each.atom}  Let $\pair \wx \wy$ be a pair  in $\mc E$\po
The system of elements
$\langle\as\wi\a:\a<\kai\wi\rangle$  is an atomic partition of $\xoy$\po
Consequently\comma an element below $\xoy$ is an atom if and only if it has the form $\as\wi\a$ for a \opar
unique\cpar\
$\a<\kai \wi$\per
\end{lm}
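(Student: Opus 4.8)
The plan is to obtain the lemma as a direct consequence of the Atomic Partition Lemma \refL{sumatom}. First I would observe that $b=\ai$ is an atom lying below $\xoy$: it is an atom because a semi-scaffold is by definition a system of atoms, and it satisfies $b\le\xoy$ by semi-scaffold condition (i). Its left stabilizer is $\cs Hb=\cs H\xy$, and by construction $\langle\ls\wi\xi:\xi<\kai\wi\rangle$ is a coset system for $\cs H\xy$ in $\cs Gx$. By the very definition of the elements appearing in the lemma, together with the well-definedness granted by \refC{stab}, we have $\as\wi\a=\ls\wi\a;\ai=\ls\wi\a;b$ for each $\a<\kai\wi$. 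Thus the system $\langle\as\wi\a:\a<\kai\wi\rangle$ is precisely the system of left translations $\langle\ls\wi\a;b:\a<\kai\wi\rangle$ of the atom $b$.

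Next I would apply \refL{sumatom} to the atom $b\le\xoy$. That lemma asserts that the left translations $\langle\ls\wi\a;b:\a<\kai\wi\rangle$ form a listing of the \emph{distinct} atoms below $\xoy$ and that these atoms sum to $\xoy$; transcribed into the present notation, this says exactly that $\langle\as\wi\a:\a<\kai\wi\rangle$ is an atomic partition of $\xoy$, which is the first assertion of the lemma.

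For the second assertion I would reason as follows. Because the elements $\as\wi\a$ enumerate precisely the atoms below $\xoy$, an element $c\le\xoy$ is an atom if and only if $c=\as\wi\a$ for some $\a<\kai\wi$; and since the enumeration lists these atoms without repetition, the index $\a$ realizing $c=\as\wi\a$ is unique. This delivers the stated equivalence together with the uniqueness clause.

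There is essentially no obstacle here, since the lemma is an immediate corollary of the Atomic Partition Lemma. The only point calling for care is the bookkeeping identification of the canonically defined atoms $\as\wi\a=\ls\wi\a;\ai$ with the left translations $\ls\wi\a;b$ of $b=\ai$; once this identification is in place, both assertions follow at once.
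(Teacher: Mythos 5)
Your proposal is correct and follows exactly the route the paper takes: the paper introduces the elements $\as\wi\a=\ls\wi\a;\ai=\ai;\rs\wi\a$ as well defined by \refC{stab} and states that the system is an atomic partition of $\xoy$ by Atomic Partition \refL{sumatom}, presenting the lemma merely as a summary of these observations. Your identification of the $\as\wi\a$ with the left translations of the scaffold atom $\cs a\xy$ and the appeal to \refL{sumatom} is precisely the intended argument.
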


The preceding lemma is local in character, referring to atoms below a given rectangle with measurable sides.
The lemma implies a corresponding global result.

\begin{lm}[Semi-scaffold Partition Lemma] \labelp{L:total.atom}  The system of elements
\[\langle\as\wi\a:\pair\wx\wy\in \mc E\text{ and }\a<\kai \wi\rangle\] is an atomic partition of the unit\per   Consequently\comma
an element in the algebra is an
atom if and only if it has the form $\as\wi\a$ for some \opar unique\cpar\ pair
$\pair\wx\wy$ in $
\mc E$ and a \opar unique\cpar\
$\a<\kai
\wi$\per
\end{lm}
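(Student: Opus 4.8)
The plan is to deduce this global statement from its local counterpart, \refL{each.atom}, in two stages: first I would show that the rectangles $\xoy$, one for each pair in $\mc E$, partition the unit, and then I would glue together the atomic partitions of the individual rectangles already supplied by that lemma. Concretely, I will verify that the system $\langle\xoy:\pair\wx\wy\in\mc E\rangle$ consists of non-zero, pairwise disjoint rectangles summing to $1$, and then observe that refining each rectangle by its own atomic partition $\langle\as\wi\a:\a<\kai\wi\rangle$ produces a partition of $1$ into atoms.

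For the rectangle partition, the sum is computed from the measurability hypothesis $\ident=\tsum I$ together with complete distributivity,
\[\tsum\{\xoy:\wx,\wy\in I\}=(\tsum I);1;(\tsum I)=\ident;1;\ident=1\comma\]
the last equality holding by the identity laws for relative multiplication. Since $\xoy=0$ exactly when $\pair\wx\wy\notin\mc E$, by the definition of $\mc E$, the zero terms drop out and $\tsum\{\xoy:\pair\wx\wy\in\mc E\}=1$. Disjointness is immediate from \refL{square}(ii): for distinct pairs $\pair\wx\wy$ and $\pair wz$ in $\mc E$ at least one coordinate differs, so $\wx\cdot w=0$ or $\wy\cdot z=0$ because distinct atoms have zero product, and hence
\[(\xoy)\cdot(w;1;z)=(\wx\cdot w);1;(\wy\cdot z)=0\comma\]
by \refL{square}(ii) and \refL{laws.1}(i). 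Each rectangle indexed by $\mc E$ is non-zero, so the rectangles form a genuine partition of $1$.

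Next I would assemble the global partition by checking its three defining properties. Fix $\pair\wx\wy$ in $\mc E$; by \refL{each.atom} the elements $\as\wi\a$ for $\a<\kai\wi$ are pairwise distinct atoms below $\xoy$, pairwise disjoint, and summing to $\xoy$. Thus each listed element is a non-zero atom. Two such elements are disjoint: if they come from the same pair this is the local disjointness of \refL{each.atom}, and if they come from distinct pairs they lie below disjoint rectangles and are therefore disjoint. Finally, iterating the sum with complete distributivity gives
\[\tsum\{\as\wi\a:\pair\wx\wy\in\mc E\text{ and }\a<\kai\wi\}=\tsum\{\xoy:\pair\wx\wy\in\mc E\}=1\comma\]
using \refL{each.atom} for the inner sums and the rectangle partition for the outer one. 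Hence the whole system is an atomic partition of the unit.

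The \emph{consequently} clause then follows formally. Every listed element is an atom, which is the forward direction. For the converse, let $c$ be any atom; then $c=c\cdot 1=\tsum\{c\cdot\as\wi\a\}$ by complete distributivity, so $c\cdot\as\wi\a\neq 0$ for some pair and index, whence $c=\as\wi\a$ since both are atoms. Uniqueness of the representing pair and index is inherited from the two disjointness facts already established: distinct pairs give elements in disjoint rectangles, and within a fixed pair the index is unique by \refL{each.atom}. I expect no serious obstacle here; essentially all the substantive content is packaged into \refL{each.atom}, and the only step requiring genuine care is the verification that the rectangles sum to the unit (in particular the reduction $\ident;1;\ident=1$ and the bookkeeping of the iterated sum), the rest being routine gluing.
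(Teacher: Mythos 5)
Your proposal is correct and follows essentially the same route as the paper: the unit is decomposed as $\ident;1;\ident=(\tsum I);1;(\tsum I)$ via measurability and complete distributivity, disjointness across distinct pairs is obtained from \refL{square}(ii) and \refL{laws.1}(i), and the local atomic partitions from \refL{each.atom} are glued in. Your explicit intermediate step (that the rectangles themselves partition the unit) is a harmless reorganization of the same argument, and your derivation of the \emph{consequently} clause is the standard one.
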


\begin{proof} 
 The given relation algebra is assumed to be measurable, so
\begin{equation*} 
\ident=\ssum I\comma \tag*{(1)} \labelp{Eq:P29/1}
\end{equation*} by the definition of measurability.  Use the identity laws, \ref{Eq:P29/1}, complete distributivity, and definition of
$\mc E$ as the set of pairs $\pair xy$ such that
$x;1;y\neq 0
$ to obtain
\begin{multline*}
\tag{2}\labelp{Eq:total.atom.2}
1   = \ident;1; \ident
= (\ssum I);1; (\ssum I)\\
= \ssum \{ \xoy:\wx,\wy\in I\}
= \ssum \{ \xoy:\pair \wx \wy\in \mc E\}\per
\end{multline*} For each pair $\pair xy$ in $\mc E$, we have
\begin{equation*}\tag{3}\labelp{Eq:total.atom.3}
  \xoy=\tsum\{\as\wi\a:\a<\kai\wi\},
\end{equation*}
by \refL{each.atom}.  Combine \refEq{total.atom.2} and \refEq{total.atom.3} to arrive at
\begin{equation*}
  1=\tsum\{\as\wi\a:\pair xy\in\mc E\text{ and }\a<\kai\wi\}\per
\end{equation*} It remains to prove that two atoms $\as\wi\a$ and $\as {uv}\b$ are disjoint if $x\neq u$, or $y\neq v$, or $x=y$ and $u=v$, but $\a\neq \b$. In the first case, we have
\begin{equation*} 
\as\wi\a\cdot\as{uv}\b \le (\xoy)\cdot  (\uov)= (\wx\cdot \wu); 1;
(\wy\cdot \wv) =0;1;(y\cdot v)=0\comma \tag*{(4)}
\labelp{Eq:P29/2.1}\end{equation*} by monotony, the assumption
that $\langle\cs a\wi:\pair xy\in\mc E\rangle $ is a
semi-scaffold, \refL{square}(ii),  the assumption that $x$ and $u
$ are distinct atoms, and \refL{laws.1}(i).  A similar argument
applies in the second case, when $y\neq v$. In the third case, the
desired disjointness follows directly from \refL{each.atom}.
\end{proof}

The next lemma specifies the elements $\as\wi\a$ that are subidentity atoms

\begin{lm}[Semi-scaffold Identity Lemma] \labelp{L:diag.atom} An element $\as\wi\a$ is a
subidentity atom if and only if $\wx=\wy$  and $\a=0$\po
Consequently\co
\[\ident=\tsum\{\as{\wx\wx} 0:\wx\in I\}\per\]
\end{lm}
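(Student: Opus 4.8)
The plan is to establish the biconditional through two reductions and then read off the displayed identity by summation.

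First I would settle the off-diagonal cases. Suppose $\as\wi\a$ is a subidentity atom, where $\wi=\pair xy$. By construction $\as\wi\a\le\xoy$, and being a subidentity atom it also lies below $\ident$, so $\as\wi\a\le(\xoy)\cdot\ident=x\cdot y$ by \refL{square}(i). Since $\as\wi\a$ is non-zero, the product $x\cdot y$ of the two subidentity atoms $x$ and $y$ must be non-zero, which forces $x=y$. This establishes the necessity of $x=y$ and reduces the problem to the diagonal.

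Next, with $x=y$, I would identify the unique subidentity atom below $\xox$ and locate it among the $\as{\wx\wx}\a$. Again by \refL{square}(i), $(\xox)\cdot\ident=x\cdot x=x$, so every subidentity element below $\xox$ lies below the atom $x$ and is therefore $0$ or $x$; hence $x$ is the only subidentity atom below $\xox$. The crucial computation is that $\as{\wx\wx}0=x$: semi-scaffold condition (ii) gives $\axx=x$, and the Identity \refT{identthm} gives $\cs H{\axx}=\cs Hx=\{\wx\}$, so the identity coset $\ls{\wx\wx}0$ equals $\{\wx\}$ and thus $\as{\wx\wx}0=\ls{\wx\wx}0;\axx=x;x=x$ (using that $x$ is the group identity of $\cs Gx$, by \refL{nonzerofunct}). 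Since $\langle\as{\wx\wx}\a:\a<\kai{\wx\wx}\rangle$ is a partition of $\xox$ into pairwise distinct atoms, by \refL{each.atom}, the equation $\as{\wx\wx}\a=x$ holds exactly when $\a=0$. Combining this with the uniqueness statement, $\as{\wx\wx}\a$ is a subidentity atom if and only if $\a=0$; together with the first reduction this yields both directions of the biconditional.

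Finally, for the consequence I would invoke measurability in the form $\ident=\tsum I$ and substitute the identity $\as{\wx\wx}0=x$ to obtain $\ident=\tsum\{\wx:\wx\in I\}=\tsum\{\as{\wx\wx}0:\wx\in I\}$. No step looks genuinely hard here; the one place that demands care is the computation $\as{\wx\wx}0=x$, where one must use both the normalization $\axx=x$ from condition (ii) and the triviality $\cs Hx=\{\wx\}$ from the Identity Theorem, so that the chosen identity coset really does fix $x$ and returns $x$ itself.
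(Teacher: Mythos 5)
Your proof is correct and follows essentially the same route as the paper's: reduce to the diagonal via \refL{square}(i) (the meet of $\xoy$ with $\ident$ is $x\cdot y$, forcing $x=y$), then identify the unique subidentity atom below $\xox$ with $\as{\wx\wx}0=x$ via the indexing convention, and read off the displayed sum from measurability. You merely spell out in more detail than the paper does why $\as{\wx\wx}0=x$ (invoking the Identity Theorem for $\cs H\xx=\{x\}$), which the paper compresses into a reference to the indexing of the coset systems.
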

\begin{proof}  The rectangles $\xoy$ have a non-zero meet with the identity element
$\ident$ if and only if $x=y$, and in this case that meet is $\wx$, by \refL{square}(i) and the assumption that $x$ and $y$ are atoms.
This means that the  atom $\as\wi\a$, which is below  the rectangle $\xoy$, by \refL{each.atom}, can lie below $\ident$ only when
$ \wx=\wy$ and \[\as\xx\a= \wx=\al_\xx\per\] For this last
equality to hold, the index $\a$ must be $0$, by  the definition of $\as\wi\a$ and the assumption  about the indexing
of the coset systems. This proves the first assertion of the lemma. The second  follows
from the first  and the assumption that the \ra\ is measurable.
\end{proof}

The next lemma determines the converse of each atom $\as\wi\a $\per

\renewcommand\wj{\yx}
\begin{lm}[Semi-scaffold Converse Lemma] \labelp{L:aia.eq.ajb} For each pair
$\pair\wx\wy$  in
$\mc E$\comma and each $\a<\kai\wi$\comma we have
$\as\wi\a\ssm=\as\wj\b$\co where
 $H_{\wi,\lph}\ssm=H_{\wi,\bt}$\per
\end{lm}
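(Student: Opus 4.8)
The plan is to compute $\as\wi\a\ssm$ directly by means of the involution laws and then to recognize the result as a right translation of the atom $a_\wj$. First I would fix a representative $f$ of the coset $H_{\wi,\a}$, so that $\as\wi\a=f;\ai$, by the definition of $\as\wi\a$ and the remark following \refC{stab}. Since $\ai$ is an atom below $\xoy$, its left stabilizer $H_\wi$ is a normal subgroup of $\cs Gx$, by \refC{a-b.atoms}. A subgroup is closed under converse, and the converse of a coset of a \emph{normal} subgroup is again a coset of that subgroup; hence $H_{\wi,\a}\ssm$ is a coset of $H_\wi$, and this coset is exactly the $H_{\wi,\b}$ named in the statement. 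In particular, $f\ssm$ belongs to $H_{\wi,\b}$.

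Next I would take the converse of the left translation, using the second involution law (R7) together with semi-scaffold condition (iii), which gives $\ai\ssm=a_\wj$:
\[\as\wi\a\ssm=(f;\ai)\ssm=\ai\ssm;f\ssm=a_\wj;f\ssm\per\]
This displays $\as\wi\a\ssm$ as the right translation of the atom $a_\wj$ by the element $f\ssm$. To identify this translation with one of the atoms below $\yox$, I would invoke the choice of coset systems made in accordance with \refCo{conver}, which ties the two coordinate orders together by $K_{\wj,\b}=H_{\wi,\b}$. Combined with the previous paragraph, this yields $f\ssm\in H_{\wi,\b}=K_{\wj,\b}$, so the right-regular version of \refC{stab}, applied to the atom $a_\wj$, gives
\[a_\wj;f\ssm=a_\wj;K_{\wj,\b}=\as\wj\b\comma\]
by the definition of $\as\wj\b$. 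Combining the two displays produces $\as\wi\a\ssm=\as\wj\b$, as required.

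The genuine mathematical content here is the single application of the involution law; the rest is bookkeeping with cosets and indices. The step I expect to require the most care is keeping the two coordinate orders straight and, above all, checking that the index $\b$ furnished by the group-theoretic identity $H_{\wi,\a}\ssm=H_{\wi,\b}$ is the very index that governs the right coset $K_{\wj,\b}$ of $a_\wj$. This matching is not automatic: it rests on the fact that the coset systems for the reversed pair $\wj$ were not chosen freely, but were tied to those for $\wi$ through \refCo{conver} (so that $K_{\wj,\b}=H_{\wi,\b}$). Without that convention, the computation would pin down $\as\wi\a\ssm$ only up to the labeling of the atoms below $\yox$.
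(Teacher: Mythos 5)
Your proof is correct and follows essentially the same route as the paper's: take the converse of $\as\wi\a=H_{\wi,\a};\ai$ via the second involution law, apply semi-scaffold condition (iii) to get $\ai\ssm=a_{\yx}$, and use the coset identification $K_{\yx,\b}=H_{\wi,\b}$ from \refCo{conver} together with the relation $H_{\yx,\b};a_{\yx}=a_{\yx};K_{\yx,\b}$ to land on $\as\yx\b$. The only differences are cosmetic — you work with a representative $f$ of the coset rather than the coset-translation notation, and you explicitly justify via normality that $H_{\wi,\a}\ssm$ is again a coset, a point the paper builds into the statement.
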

\begin{proof} Assume
\begin{equation*}\tag{1}\labelp{Eq:aia.eq.ajb.1}
  H_{\wi,\lph}\ssm=H_{\wi,\bt}\per
\end{equation*} Use the definition of $\as\wi\a$, the second involution law,
semi-scaffold condition (iii) and \refEq{aia.eq.ajb.1}, the assumption
 about the system of cosets $K_{\wj,\bt}$, described after \refL{semil}
 and based on \refCo{conver}, and \refCo{canonical} and the definition of
$\as\yx\bt$ to arrive at
\begin{multline*}
(a_{\wi,\lph})\ssm  =( H_{\wi,\lph};a_\wi)\ssm = a_\wi\ssm; H_{\wi,\lph}\ssm
  =a_\wj; H_{\wi,\bt}\\
 = a_\wj; K_{\wj,\bt}
  = H_{\wj,\bt};a_\wj
 = a_{\wj,\bt}\per
\end{multline*}
\end{proof}

\renewcommand\wj{\wwz}
The next step is to determine the relative product of two atoms $\as\wi\a$ and $\as\wj\b$\per
 Begin with the  trivial  case.

\begin{lm} \labelp{L:eqzero}
Let $ (\wx,\wy)$ and $  (\ww,\wz)$ be pairs in $\mc E$\co and $\lph <
\kai\wi$ and $\bt <
\kai\wj$\per  If $\wy \not= \ww$\co
then $a_{\wi,\lph}; a_{\wj,\bt}=0$\per
\end{lm}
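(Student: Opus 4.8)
The plan is to show that relative multiplication cannot bridge the two atoms because their ``inner'' measurable sides $\wy$ and $\ww$ fail to match. First I would recall from \lmref{L:each.atom} that $a_{\wi,\lph}$ is an atom, hence a non-zero element, lying below the rectangle $\wx;1;\wy$, and likewise that $a_{\wj,\bt}$ is a non-zero element below $\ww;1;\wz$. By \lmref{L:domain}, the range of $a_{\wi,\lph}$ is $\wy$ and the domain of $a_{\wj,\bt}$ is $\ww$; in particular \lmref{L:domain}(iii) (and its dual) yields
\[
a_{\wi,\lph};\wy = a_{\wi,\lph}
\qquad\text{and}\qquad
\ww;a_{\wj,\bt} = a_{\wj,\bt}.
\]

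The crux is then the identity $\wy;\ww = 0$. Since $\wy$ and $\ww$ are distinct subidentity atoms, their Boolean meet $\wy\cdot\ww$ is $0$; and for subidentity elements monotony together with the identity law gives $\wy;\ww\le\wy;\ident=\wy$, while $(\wy;\ww)\ssm=\ww\ssm;\wy\ssm=\ww;\wy\le\ww$ by the involution laws and \lmref{L:laws}(vii) yields $\wy;\ww\le\ww$ as well, so that $\wy;\ww\le\wy\cdot\ww=0$. Substituting the two displayed equations and inserting $\wy;\ww$ by associativity then gives
\[
a_{\wi,\lph};a_{\wj,\bt}
= (a_{\wi,\lph};\wy);(\ww;a_{\wj,\bt})
= a_{\wi,\lph};(\wy;\ww);a_{\wj,\bt}
= a_{\wi,\lph};0;a_{\wj,\bt}
= 0,
\]
where the final equality is \lmref{L:laws.1}(i).

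I do not expect any genuine obstacle here; the only point requiring a word of justification is $\wy;\ww=0$ for distinct subidentity atoms, which is handled above. An equally short alternative is to bound $a_{\wi,\lph};a_{\wj,\bt}\le(\wx;1;\wy);(\ww;1;\wz)$ by monotony and then collapse the middle product $\wy;\ww$ to $0$ in exactly the same way, using associativity and \lmref{L:laws.1}(i) to conclude that the product of the two rectangles already vanishes. Either route records the expected fact that a relative product of two semi-scaffold atoms is forced to be $0$ whenever the right-hand measurable side $\wy$ of the first fails to coincide with the left-hand measurable side $\ww$ of the second.
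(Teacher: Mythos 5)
Your proof is correct, and the ``equally short alternative'' you mention at the end --- bounding $a_{\wi,\lph};a_{\wj,\bt}$ by the product of the two rectangles and collapsing it to $0$ --- is exactly the paper's own one-line argument. Your primary route via $a_{\wi,\lph};\wy=a_{\wi,\lph}$, $\ww;a_{\wj,\bt}=a_{\wj,\bt}$, and $\wy;\ww=0$ is only a cosmetic variant of the same idea, so there is nothing substantive to add.
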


\begin{proof} 
If  $\wy$ and $\ww$ are not equal, then they are
disjoint, because they are  atoms, and therefore
\begin{equation*}
a_{\wi,\lph}; a_{\wj,\bt} \le (\xoy); (\woz)
= 0\comma
\end{equation*}
by semi-scaffold condition (i), monotony, and \refL{square}(v)\per
\end{proof}

\renewcommand\wj{\yz}
The non-trivial case of relative multiplication is more interesting and more involved.
\begin{lm}[Semi-scaffold Relative Product Lemma] \labelp{L:prewhat}
Let $(\wx,\wy)$ and $(\wy,\wz)$ be pairs in $\mc E$\co  and
$\lph < \kai\wi$ and $\bt <
\kai\wj$\per If $\x<\kai\wk$ is any index such that \[\hs\wk\x;\al_\wk\le
\al_\wi;\al_\wj\comma\] then
\[a_{\wi,\lph}; a_{\wj,\bt}=\tsum\{a_{\wk,\gm}:\gm<\kai \xz\text{ and }
\ls\wk\gm\seq\vphs\wi\mo[\rs\wi\a;\ls\wj\b];\hs\wk\x\myspace\}
\per
\]
\end{lm}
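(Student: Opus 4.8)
The plan is to identify the product $a_{\wi,\lph};a_{\wj,\bt}$ with a single left translation of the regular element $c=\al_\wi;\al_\wj$, and then to read off the atoms below that translation. As a preliminary I would invoke Relative Product \refT{else}: since $\al_\wi\le\xoy$ and $\al_\wj\le\yoz$ are atoms, hence regular elements with normal stabilizers by Corollaries \ref{C:atoms} and \ref{C:a-b.atoms}, the product $c=\al_\wi;\al_\wj$ is a regular element below $\xoz$ whose normal left stabilizer is $\cs Hc=\vphs\wi\mo[\cs K\wi;\cs H\wj]$. Writing $N=\rs\wi\lph;\ls\wj\bt$ and $M=\vphs\wi\mo[N]$, I note that $N$ is a coset of the normal subgroup $\cs K\wi;\cs H\wj$ in $\cs Gy$, so $M$ is a coset of $\cs Hc$ in $\cs Gx$ and $M;c$ is a genuine left translation of $c$.

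The crux is the equation $a_{\wi,\lph};a_{\wj,\bt}=M;c$. First I would establish that $M;\al_\wi=\al_\wi;N$. Because $\cs H\wi\seq\cs Hc$, the coset $M$ is a union of cosets $\ls\wi\zeta$ of $\cs H\wi$, and for each of these \refCo{canonical} supplies $\ls\wi\zeta;\al_\wi=\al_\wi;\rs\wi\zeta$; summing over the cosets comprising $M$ and applying complete distributivity yields
\[M;\al_\wi=\tsum\{\ls\wi\zeta;\al_\wi:\ls\wi\zeta\seq M\}=\tsum\{\al_\wi;\rs\wi\zeta:\ls\wi\zeta\seq M\}=\al_\wi;\vphs\wi[M]=\al_\wi;N,\]
the last step being that $\vphs\wi[M]=N$ since $M=\vphs\wi\mo[N]$. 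Feeding this into $c$ and using associativity together with the factorizations $\al_\wi;\rs\wi\lph=a_{\wi,\lph}$ and $\ls\wj\bt;\al_\wj=a_{\wj,\bt}$ from \refCo{canonical}, I obtain
\[M;c=M;\al_\wi;\al_\wj=\al_\wi;N;\al_\wj=(\al_\wi;\rs\wi\lph);(\ls\wj\bt;\al_\wj)=a_{\wi,\lph};a_{\wj,\bt}.\]

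It remains to expand the left translation $M;c$ as a sum of atoms. The hypothesis provides the atom $a_{\wk,\x}=\hs\wk\x;\al_\wk\le c$; it is regular with left stabilizer $\cs H\wk$, whereas $c$ is regular with the larger normal stabilizer $\cs Hc$ (the inclusion $\cs H\wk\seq\cs Hc$ coming from \refL{lessthan}), so Second Partition \refL{GeneralizedPartitionLemma} applies to the pair $a_{\wk,\x}\le c$. Reading off the block of that partition indexed by the coset $M$ of $\cs Hc$ gives $M;c=\tsum\{\ls\wk\gm;a_{\wk,\x}:\ls\wk\gm\seq M\}$. Finally I would rewrite each translate using $\ls\wk\gm;a_{\wk,\x}=\ls\wk\gm;\hs\wk\x;\al_\wk=a_{\wk,\dl}$, where $\ls\wk\dl=\ls\wk\gm;\hs\wk\x$; as $\ls\wk\gm$ runs through the cosets of $\cs H\wk$ inside $M$, the coset $\ls\wk\dl$ runs bijectively through those inside $M;\hs\wk\x$, so the index condition $\ls\wk\gm\seq M$ becomes $\ls\wk\dl\seq M;\hs\wk\x=\vphs\wi\mo[\rs\wi\lph;\ls\wj\bt];\hs\wk\x$. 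This is precisely the condition in the statement, and combined with $a_{\wi,\lph};a_{\wj,\bt}=M;c$ it completes the argument.

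I expect the main obstacle to lie in the bookkeeping of the second paragraph: one must be scrupulous that $\vphs\wi\mo[N]$ is read through the isomorphism that $\vphs\wi$ induces on the coarser quotient $\cs Gx/\cs Hc\to\cs Gy/(\cs K\wi;\cs H\wj)$---legitimate exactly because $\cs Hc=\vphs\wi\mo[\cs K\wi;\cs H\wj]$ by Relative Product \refT{else}---so that $\vphs\wi[M]=N$ holds at the level of unions of cosets, and that the passage from $M;\al_\wi$ to $\al_\wi;N$ respects the complex-product conventions. A secondary point worth recording is that, since $a_{\wk,\x}\le c$ forces $\hs\wk\x\seq\cs Hc$, one has $M;\hs\wk\x=M$; thus the right-hand side is in fact independent of the anchor index $\x$, as it must be.
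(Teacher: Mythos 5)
Your main argument is correct and follows essentially the same route as the paper's proof: both identify $a_{\wi,\lph};a_{\wj,\bt}$ with the left translation of $c=\al_\wi;\al_\wj$ by the coset $\vphs\wi\mo[\rs\wi\a;\ls\wj\b]$ of $\cs Hc=\h\wi;\h\wk$, and both then invoke the Second Partition Lemma together with the hypothesis $\hs\wk\x;\al_\wk\le\al_\wi;\al_\wj$ to re-expand that translation as a sum of atoms $\as\wk\gm$. Your bookkeeping of the induced isomorphism $\hvphs\wi$ on the coarser quotient, and the computation $M;\al_\wi=\al_\wi;N$, match the paper's equations almost step for step, so as a proof of the lemma this is sound.

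Your closing ``secondary point,'' however, is false, and the error is worth dwelling on because it touches the central subtlety of the whole paper. You claim that $\as\wk\x=\hs\wk\x;\al_\wk\le c$ forces $\hs\wk\x\seq\cs Hc$, whence $M;\hs\wk\x=M$. This does not follow. The atoms below $c$ are exactly those $\as\wk\dl$ with $\ls\wk\dl$ contained in the \emph{coset} $\cs Hc;\hs\wk\x$ of $\cs Hc$, and that coset is the identity coset $\cs Hc$ only in the special case where $\al_\wk\le\al_\wi;\al_\wj$, i.e.\ where the semi-scaffold behaves like a scaffold at the triple $\trip xyz$. In general $\hs\wk\x$ is \emph{not} contained in $\cs Hc$; if it always were, the shifting coset $\cc xyz$ would always be the identity coset, every semi-scaffold would be a scaffold, and every atomic measurable relation algebra would be completely representable --- contradicting the existence of non-representable examples cited in the introduction. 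The conclusion you wanted (that the right-hand side of the formula does not depend on the choice of admissible $\x$) is nevertheless true, but for a different reason: any two admissible indices $\x$ and $\x'$ yield atoms $\as\wk\x$ and $\as\wk{\x'}$ below the same element $c$, so $\hs\wk{\x'}\seq\cs Hc;\hs\wk\x$ and hence $M;\hs\wk{\x'}=M;\cs Hc;\hs\wk\x=M;\hs\wk\x$, since $M$ is a coset of $\cs Hc$. This is exactly the well-definedness argument the paper gives for $\cc xyz$ after the Semi-frame Theorem.
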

\begin{proof} The relative product $\ai;\aj$ is a regular element below $\xoz$ with normal
  stabilizers, by  Relative Product \refT{else}, and the
left stabilizer is the product group $\cs H\xy;\cs H\xz$, by Image \refT{image} (with $\pair xy$, $\pair yz$, and $\pair xz$ in place of $a$, $b$, and $c$ respectively). The
 isomorphism $\vphi\xy$ from $\cs Gx/\cs H\xy$ to $\cs Gy/\cs K\xy$ induces an isomorphism $\hvphs \xy$ from
\begin{align*}
\cs Gx/(\cs H\xy;\cs H\xz)\qquad&\text{to}\qquad \cs Gy/(\cs K\xy;\cs H\yz)\comma\\
\intertext{by Image \refT{image}.  To simplify notation, write}
\M=\LS\wi;\LS\wk\qquad&\text{and}\qquad\P=\RS\wi;\LS\wj\comma
\end{align*}
so that $\M$ is the left
stabilizer of $\ai;\aj$\comma  and $\hvphs\wi$ maps $\G\wx/\M$ isomorphically to $\G\wy/\P$\per Let
\[\langle\MS\eta:\eta<\la\rangle\qquad\text{and}\qquad
\langle\PS\eta:\eta<\la\rangle\] be coset systems of $\M$ and $P$ in $\G\wx$ and   $\G\wy$ respectively such that
\[\vphs\wi[\MS\eta]=\PS\eta
\]
for each $\eta<\la$.  The  coset product  $\rs\wi\a;\ls\wj\b$ is a coset of $\P$, so it must  coincide with  $\PS\rho $ for some $\rho<\la$, and therefore
\begin{equation*}\MS\rho=\vphs\wi\mo[\PS\rho]=\vphs\wi\mo[\rs\wi\a;\ls\wj\b]\per
\tag{1}\labelp{Eq:70/1}
\end{equation*}
The principal step in the proof is showing that
\begin{equation*}\MS\rho;\ai;\aj=a_{\wi,\lph};
a_{\wj,\bt}\per\tag{2}\labelp{Eq:70/2}
\end{equation*}

As $\LS\wi$ is a  subgroup of $\M$, there must be a partition
$\langle\Gamma_\eta:\eta<\la\rangle$
 of the index set $\kai\wi$ such that
\begin{equation*}\MS\eta=\tbigcup_{\zeta\in\Gamma_\eta}\ls\wi\zeta\tag{3}\labelp{Eq:70/3}
\end{equation*}
for each $\eta$.  Taking $\rho$ for $\eta$
gives
\begin{multline*}\rs\wi\a;\ls\wj\b=\vphs\wi[\MS\rho]=\vphs\wi[\tbigcup_{\zeta\in\Gamma_\rho}\ls\wi\zeta]\\
=\tbigcup_{\zeta\in\Gamma_\rho}\vphs\wi(\ls\wi\zeta)=\tbigcup_{\zeta\in\Gamma_\rho}\rs\wi\zeta\comma \tag{4}\labelp{Eq:70/4}
\end{multline*}
by \refEq{70/1}, \refEq{70/3}, the distributivity of function images over unions, and the
definition of
$\vphs\wi$\per
Recall from the remarks preceding \refL{each.atom} that
\begin{equation*}\ls\wi\zeta;\ai=\ai;\rs\wi\zeta\per\tag{5}\labelp{Eq:70/5}
\end{equation*}
for each $\zeta<\kai\wi$\per  Compute:
 \begin{align*}
 \MS\rho;\ai;\aj &=(\tbigcup_{\zeta\in\Gamma_\rho}\ls\wi\zeta);\ai;\cs a\yz=\tsum_{\zeta\in\Gamma_\rho}\ls\wi\zeta;\ai;\aj\\&=\tsum_{\zeta\in\Gamma_\rho}\ai;\rs\wi\zeta;\aj
  =\ai;(\tbigcup_{\zeta\in\Gamma_\rho}\rs\wi\zeta);\aj\\&=\ai;(\,\rs\wi\a;\ls\wj\b);\aj
 =(\ai;\,\rs\wi\a);(\ls\wj\b;\aj)\\&=a_{\wi,\lph};
a_{\wj,\bt}\comma
\end{align*}
by
\refEq{70/3},  complete
distributivity, \refEq{70/5}, complete
distributivity, \refEq{70/4}, associativity, and the definitions of $\as\wi\a$ and $\as\wj\b$\per
This proves \refEq{70/2}.

The group $\LS\wk$ is included in $\M$, by the definition of $\M$, so there is a
partition
$\langle\Delta_\eta:\eta<\la\rangle$ of $\kai \wk$ such that
\begin{equation*}\MS\eta=\tbigcup_{\g\in\Delta_\eta}\ls\wk\g\tag{6}\labelp{Eq:70/6}
\end{equation*}
for each $\eta<\la$. Let $\xi<\kai\xz$ be an index such that
\begin{equation*}\tag{7}\labelp{Eq:70/7.1}
  \hs\wk\x;\al_\wk\le
\al_\wi;\al_\wj\per
\end{equation*}
There is a unique index $\sigma<\mu$
such that
\begin{equation*}\tag{8}\labelp{Eq:70/10}
\MS\sigma=\MS\rho;\ls\wk\x\comma
\end{equation*}
and for this index we have
\begin{equation*}
\tbigcup_{\g\in\Delta_\sigma}\ls\wk\g=\MS\sigma=\MS\rho;\ls\wk\x=\vphs\wi\mo[\rs\wi\a;\rs\wj\b];\ls\wk\x
\end{equation*}by \refEq{70/6} (with $\sigma$ in place of $\eta$),  \refEq{70/10}, , and  \refEq{70/1}.
Thus, $\cs\Delta\sigma$ is precisely the  set of indices $\g$  such that
\begin{equation*}\tag{9}\labelp{Eq:70/11}
\ls\wk\g\seq\vphs\wi\mo(\rs\wi\a;\rs\wj\b);\ls\wk\x\qquad
\text{if and only if}\qquad\g\in\Delta_\sigma\per
\end{equation*}

Use assumption \refEq{70/7.1}  for the first and only time,
   Partition  \refL{GeneralizedPartitionLemma} (with $M$, $\rho$, $\ai;\aj$,  and $\ls\wk\x;\ak$  in place of
   $\cs Hb$,   $\eta$, $b$,  and  $a$ respectively), and \refEq{70/10}, to arrive at
\begin{equation*}\MS\rho;\ai;\aj=\tsum\MS\rho;\ls\wk\x;\ak=\tsum\MS\sigma;\ak\per\tag{10}
\labelp{Eq:70/7}
\end{equation*}
Conclude that
\begin{multline*}\tag{11}\labelp{Eq:70/11.1}
a_{\wi,\lph};a_{\wj,\bt}=\MS\rho;\ai;\aj=\tsum\MS\sigma;\ak\\=\tsum(\tbigcup_{\g\in\Delta_\sigma}\ls\wk\g);\ak
=\tsum_{\g\in\Delta_\sigma}\ls\wk\g;\ak =\tsum_{\g\in\Delta_\sigma}\as\wk\g\comma
\end{multline*}
by \refEq{70/2}, \refEq{70/7}, \refEq{70/6} (with $\sigma$ in
place of $\eta$), complete distributivity, and the definition of
$\as\wk\g$. In view of \refEq{70/11}, the equality of the first
and last terms in \refEq{70/11.1} is just what was to be shown.
\end{proof}

The  formula in \refL{prewhat} for computing the relative product $a_{\wi,\lph};a_{\wj,\bt}$ takes on a more
familiar form when the system
$a$ is actually a scaffold, and not just a semi-scaffold.

\begin{lm}[Scaffold Relative Product Lemma]\labelp{C:what}
 Let $ (\wx,\wy)$ and $  (\wy,\wz)$ be pairs in
$\mc E$\co  and
$\lph < \kai\wi$ and $\bt <
\kai\wj$\per  If   $a$ is a scaffold\comma then
\[a_{\wi,\lph}; a_{\wj,\bt}=\tsum\{a_{\wk,\gm}:\gm<\kai \wk\text{ and }
\ls\wk\gm\seq\vphs\wi\mo[\rs\wi\a;\ls\wj\b]\myspace\}
\per
\]
\end{lm}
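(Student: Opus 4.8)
The plan is to obtain this as the special case $\x=0$ of Semi-scaffold Relative Product \refL{prewhat}. That lemma already computes the product $a_{\wi,\lph};a_{\wj,\bt}$ for \emph{any} index $\x<\kai\wk$ satisfying $\hs\wk\x;\al_\wk\le\al_\wi;\al_\wj$, the only difference from the present statement being the extra factor $\hs\wk\x$ that is appended on the right of $\vphs\wi\mo[\rs\wi\a;\ls\wj\b]$ in the condition defining the summands. So the whole task reduces to two points: showing that $\x=0$ is an admissible index when $a$ is a scaffold, and showing that for this index the trailing factor $\hs\wk 0$ can be deleted.

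First I would verify that $\x=0$ satisfies the hypothesis of \refL{prewhat}. By the indexing conventions adopted in the discussion following \refL{semil} (which rest on \refCo{canonical}), the coset $\hs\wk 0$ is the identity coset $\LS\wk$, that is, the left stabilizer of the atom $\al_\wk$; hence $\hs\wk 0;\al_\wk=\al_\wk$ by the very definition of the left stabilizer. Because $a$ is assumed to be a scaffold, scaffold condition (iv) gives $\al_\wk\le\al_\wi;\al_\wj$. Combining the two yields $\hs\wk 0;\al_\wk=\al_\wk\le\al_\wi;\al_\wj$, so $\x=0$ does satisfy the hypothesis of \refL{prewhat}.

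Applying \refL{prewhat} with $\x=0$ then gives
\[a_{\wi,\lph};a_{\wj,\bt}=\tsum\{a_{\wk,\gm}:\gm<\kai\wk\text{ and }\ls\wk\gm\seq\vphs\wi\mo[\rs\wi\a;\ls\wj\b];\hs\wk 0\}\per\]
It remains to erase the factor $\hs\wk 0$. As noted in the proof of \refL{prewhat}, the set $\vphs\wi\mo[\rs\wi\a;\ls\wj\b]$ is a coset in $\G\wx$ of the product subgroup $\LS\wi;\LS\wk$, which is the left stabilizer of $\al_\wi;\al_\wj$ by Image \refT{image}. Since $\hs\wk 0=\LS\wk$ is a subgroup of this product subgroup, right multiplication by $\LS\wk$ permutes among themselves the $\LS\wk$-cosets making up such a coset and so leaves the coset unchanged; that is, $\vphs\wi\mo[\rs\wi\a;\ls\wj\b];\hs\wk 0=\vphs\wi\mo[\rs\wi\a;\ls\wj\b]$. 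The condition defining the summands therefore simplifies to $\ls\wk\gm\seq\vphs\wi\mo[\rs\wi\a;\ls\wj\b]$, which is exactly the condition in the statement of the lemma.

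Since the argument is essentially a specialization of \refL{prewhat}, I do not expect a serious obstacle. The only points needing care are the bookkeeping that identifies $\hs\wk 0$ with the identity coset (so that scaffold condition (iv) becomes precisely the hypothesis of \refL{prewhat} at $\x=0$) and the elementary group-theoretic fact that a coset of $\LS\wi;\LS\wk$ absorbs the subgroup $\LS\wk$ on the right.
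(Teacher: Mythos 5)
Your proposal is correct and follows essentially the same route as the paper: take $\xi=0$ in the Semi-scaffold Relative Product Lemma (admissible because scaffold condition (iv) gives $a_\xz\le a_\xy;a_\yz$ and $H_{\xz,0}=H_\xz$ stabilizes $a_\xz$), and then delete the trailing factor $H_{\xz,0}$ because a coset of the product subgroup $H_\xy;H_\xz$ absorbs the subgroup $H_\xz$ on the right. No gaps.
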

\begin{proof}  The assumption that $a$ is a scaffold implies
 that $\ak\le\ai;\aj$\per  Consequently, the index $\x<\kai\xz$ in
the statement of \refL{prewhat} may  be taken to be $0$.  The term
$\vphs\wi\mo[\rs\wi\a;\ls\wj\b]$ in the formula for computing the
relative product $\as\xy\a;\as\yz\b$ is a coset of the product
group
\[\M=\h\wi;\h\wk\comma
\] which is the identity element of the quotient group $\cs Gz/M$.
Forming the relative product of this term on the right by the
subset $\hs\wk 0=\h\wk$ of $M$ does  not  change it:
\[\vphs\wi\mo[\rs\wi\a;\ls\wj\b];\hs\wk 0=\vphs\wi\mo[\rs\wi\a;\ls\wj\b]\per
\]
Thus, the formula for computing the relative product
$a_{\wi,\lph}; a_{\wj,\bt}$ that is given in  \refL{prewhat}
reduces in this case to  the formula that is given in the
statement of the present lemma.
\end{proof}

\section{Representation theorems}\labelp{S:sec7}

This section contains the main result of the paper, a
representation theorem for atomic, measurable relation algebras.
Continue with the assumption that $\f A$ is a complete and atomic,
measurable relation algebra. Here is a summary of what has been
accomplished so far in the analysis of the structure of $\f A$.

In \refD{gxdef}, a system of mutually disjoint groups
\[G=\langle \cs Gx:x\in I\rangle\] indexed by the set $I$ of measurable atoms in $\f A$ is defined,
where $\cs Gx$ is the group of permutations in $\f A$ of the measurable atom $x$ (see Lemmas \ref{L:nonzerofunct} and \ref{L:disjointgrps}).
 After \refD{funct-dens}, an equivalence relation $\mc E$ is defined on the set $I$ by
 putting a pair $\pair xy$ in $\mc E$ just in case the rectangle $\xoy$ is not zero. It is shown in Semi-scaffold Existence \refL{semil} that $\f A$ has a semi-scaffold \[a=\langle\cs a\xy:\pair xy \in \mc E\rangle\per\]
Fix such a semi-scaffold $a$
 for the remainder of the discussion.

 In terms of $a$, for each pair
  $\pair xy$ in $\mc E$, normal subgroups $\cs H\xy$ and $\cs K\xy$ of
   $\cs Gx$ and $\cs Gy$  respectively are
    defined after Semi-scaffold Existence \refL{semil} as the left and
    right stabilizers of the atom $\cs a\xy$.
    With the help of Isomorphism \refT{isom}, it is shown that there is a canonical quotient isomorphism $\vphi\xy$ from $\cs Gx/\cs H\xy$ to $\cs Gy/\cs K\xy$. In fact, there are coset systems
 \begin{align*}   \langle\ls\xy\x:\x<\kai\xy\rangle\qquad&\text{and}\qquad \langle\rs\xy\x:\x<\kai\xy\rangle\tag{1}\labelp{Eq:ss8.0}\\
\intertext{such that \[\ls\xy\x;\ai=\ai;\rs \xy\x\] for each $\x$,
and the isomorphism $\vphi\xy$ maps $\ls\xy\x$ to $\rs\xy\x$ for
each $\xi$. The system of isomorphisms
 \[\vph=\langle\vphi\xy:\pair xy\in\mc E\rangle\] has the following properties.
  First, for each $x$ in $I$, the atom $\cs a\xx$ coincides with $x$ by semi-scaffold condition (ii), so the isomorphism
 $\vphi\xx$ is the identity automorphism of $\cs Gx/\{x\}$, by Identity \refT{identthm}.
 Second, for each pair $\pair xy$ in $\mc E$, the atom $\cs a\yx$  coincides with the converse $\cs a\xy\ssm$, by semi-scaffold condition (iii), so the isomorphism
$\vphi \yx$ is the inverse of the isomorphism  $\vphi\xy$, by Converse \refT{convthm}. In fact,}
  \cs H\yx=\cs K\xy\qquad&\text{and}\qquad \cs K\yx=\cs H\xy\comma\\
  \intertext{and we may choose cosets systems}
 \langle H_{\yx,\xi} :\xi <\kai\wi\,\rangle\qquad&\text{and}\qquad \langle
K_{\yx,\xi} :\xi <\kai\wi\,\rangle\\
\intertext{of $\cs H\yx$ and $\cs K\yx$ in $\cs Gy$ and $\cs Gx$ respectively such that}
\ls\yx\xi=\rs\xy\xi\qquad&\text{and}\qquad \rs\yx\xi=\ls\xy\xi\comma\tag{2}\labelp{Eq:ss8.00}
\end{align*} so that $\vphi\yx$
 maps $\rs\xy\xi$ to $\ls\xy \x$ for each $\x$.

 Third, for every triple $\trip xyz$ in the set \[\ez 3=\{\trip xyz:\pair xy\text{ and }\pair yz\in\mc E\}\comma\]   the  atoms $\cs a\xy$, $\cs a\yz$, and $\cs a\xz$  satisfy the inequalities \[\cs a\xy\le\xoy\comma\qquad \cs a\yz\le \yoz\comma\qquad\text{and}\qquad \cs a\xz \le \xoz\comma\]  by semi-scaffold condition (i), and therefore
 \[\vphi \xy[\h
\xy;\h \xz]=\k \xy;\h \yz\comma\] by Image \refT{image}.
The preceding observations combine to show that conditions
(i)--(iii) in \refD{semiframedef} of a coset semi-frame are
satisfied.

 Turn now to the task of defining a coset system
 \begin{equation*}\tag{3}\labelp{Eq:ss8.1}
   C=\langle\cc xyz:\trip xyz\in\ez 3\rangle\end{equation*}  such that condition (iv) in \refD{semiframedef} is satisfied.
 For each triple $\trip x y z$  in $\ez 3$\, the relative product
$\axy;\ayz$ is a regular element below $\xoz$ with normal sta\-bi\-li\-zers,
 by Relative Product \refT{else}.  In particular, this relative product is not zero, by
 \refL{regnonzero}.  The atom $\cs a\xz$ is also below $\xoz$, by semi-scaffold condition (i),
  so there must be a coset $\ls\wk\zeta$ of $\cs H\xz$ (with $\zeta<\kai\xz$) such that
\begin{equation*}\tag{4}\labelp{Eq:ss7.1}
\as\wk\zeta =\ls\wk\zeta;\cs a\xz\le\cs a\xy;\cs a\yz\comma
\end{equation*}
 by \refL{each.atom} (with $z$ in place of $y$). Notice in passing that   \refL{each.atom} is  an easy consequence of Atomic Partition \refL{sumatom}.
Choose any such index $\zeta<\kai\xz$, and write
\begin{equation*}\tag{5}\labelp{Eq:ss8}
\cc x y z=\h\xy;\hs\xz\zeta\per
\end{equation*}
 Observe that $\cc \wx\wy\wz$ is a coset of the product
group $\h\xy;\h\xz$ in $\G\wx$\per  The coset $\ls\wk\zeta$
determines an inner automorphism $\tau $ of $\cs Gz/\cs H\xz$ that
is defined by
\[\tau(\ls\xz\eta)=\ls\xz\zeta\ssm;\ls\xz\eta;\ls \xz\zeta\] for each $\eta<\kai\xz$.
 In turn, $\tau$ induces an inner automorphism $\hat\tau$ of $\cs Gz/(\cs H\xy;\cs H\xz)$
 that is defined as follows. Write a given coset $\ls \xy\a;\ls\xz\b$ of $\cs H\xy;\cs H\xz$ as a union
\[\ls \xy\a;\ls\xz\b=\tbigcup  \{\ls\xz\eta:\eta\in \Gamma\}\]   of cosets of $\cs H\xz$, where
 $\Gamma$ is some subset  of the index set $\kai\xz$, and define $\hat\tau$ by
\begin{equation*}\tag{6}\labelp{Eq:ss8.3}
  \hat\tau(\ls \xy\a;\ls\xz\b)=\tbigcup\{\tau(\ls\xz\eta):\eta\in \Gamma\}
  =\tbigcup\{\ls\xz\zeta\ssm;\ls\xz\eta;\ls \xz\zeta:\eta\in \Gamma\}\per
\end{equation*}
The definition of $\cc xyz$ in \refEq{ss8}, the second involution law, complete distributivity, and the fact that $\cs H\xz$ is the identity coset of the quotient group $\cs Gx/\cs H\xz$ together imply that
\begin{multline*}\tag{7}\labelp{Eq:ss8.4}
\cc xyz\ssm;(\ls \xy\a;\ls\xz\b);\cc xyz\\=(\cs H\xz;\ls
\xz\zeta)\ssm;(\tbigcup\{\ls\xz\eta:\eta\in \Gamma\});(\cs H\xz;\ls
\xz\zeta)\\
 =\tbigcup\{\ls \xz\zeta\ssm;\cs H\xz\ssm; \ls\xz\eta;\cs H\xz;\ls \xz\zeta:\eta\in \Gamma\}\\
=\tbigcup\{\ls \xz\zeta\ssm; \ls\xz\eta;\ls \xz\zeta:\eta\in
\Gamma\}\per
\end{multline*}
Compare \refEq{ss8.3} with \refEq{ss8.4} to see that the inner
automorphism $\hat\tau$ of $\cs Gx/(\cs H\xy;\cs H\xz)$ induced by
$\tau$, that is to say, induced by the coset $\ls\xz\zeta$,
coincides with the inner automorphism determined by $\cc xyz$\per

 The index $\zeta$
was chosen so  that   \refEq{ss7.1} is satisfied.  Apply  Relative
Product \refT{secrp} (with $\zeta$ in place of $\xi$) to conclude
that
\[\hvphs\xy\rp\hvphs\yz=\hat\tau\rp\hvphs\xz\per
\] The remarks of the previous paragraph show that $\hat\tau$ coincides with the inner automorphism
  of $\G\wx/(\cs H\xy;\cs H\xz)$
determined by the coset $\cc x y z$\per  Consequently, semi-frame
condition (iv) is satisfied.

Take $C$ to be the system of cosets defined in \refEq{ss8.1} and \refEq{ss8}, and form the group triple
\[\mc F=(G\smbcomma \varphi\smbcomma C)\per\]
  The following theorem about $\mc F$ has been proved.
\begin{theorem}[Semi-frame Theorem]\labelp{T:sfthm}The group triple $\mc F$
is a coset semi-frame\per
\end{theorem}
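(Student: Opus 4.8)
The plan is to verify, one at a time, the four semi-frame conditions of \refD{semiframedef} for the triple $\mc F$, relying on the local structure theorems proved in the previous sections and on the explicit construction of the coset system $C$ carried out in the discussion preceding the theorem. Conditions (i)--(iii) should fall out quickly from the three semi-scaffold conditions combined with the Identity, Converse, and Image Theorems; condition (iv) is the substantive one and is where the Second Relative Product Theorem does the real work.

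First I would dispatch conditions (i)--(iii). For (i), semi-scaffold condition (ii) gives $a_{xx}=x$ for each $x$ in $I$, so Identity \refT{identthm} makes $\varphi_{xx}$ the identity automorphism of $G_x/\{x\}$. For (ii), semi-scaffold condition (iii) gives $a_{yx}=a_{xy}\ssm$, so Converse \refT{convthm} yields $\varphi_{yx}=\varphi_{xy}^{-1}$, and the same theorem supplies the stabilizer relations $H_{yx}=K_{xy}$ and $K_{yx}=H_{xy}$ that were used to set up the coset systems. For (iii), semi-scaffold condition (i) places $a_{xy}$, $a_{yz}$, $a_{xz}$ below the appropriate rectangles, so they are regular with normal stabilizers by Corollaries \refC{atoms} and \refC{a-b.atoms}; the required identity $\varphi_{xy}[H_{xy};H_{xz}]=K_{xy};H_{yz}$ is then exactly the first conclusion of Image \refT{image}.

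The heart of the proof is condition (iv), and here I would follow the construction of $C$ directly. For a triple with $(x,y)$ and $(y,z)$ in $\mc E$, the relative product $a_{xy};a_{yz}$ is a regular element below $\xoz$, nonzero by \refL{regnonzero}, while the left translations of the atom $a_{xz}$ partition $\xoz$ into atoms by Atomic Partition \refL{sumatom}. Hence some translation $H_{xz,\zeta};a_{xz}$ lies below $a_{xy};a_{yz}$, which is the defining property \refEq{ss7.1} of the index $\zeta$, and the coset is set to $c_{xyz}=H_{xy};H_{xz,\zeta}$ as in \refEq{ss8}. Applying Second Relative Product \refT{secrp} with this $\zeta$ (and with $a_{xy},a_{yz},a_{xz}$ in the roles of $a,b,c$) gives $\hat\varphi_{xy}\cdot\hat\varphi_{yz}=\hat\tau\cdot\hat\varphi_{xz}$, where $\tau$ is the inner automorphism of $G_x/H_{xz}$ determined by the translating coset $H_{xz,\zeta}$. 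The last step is to recognize $\hat\tau$ as the inner automorphism of $G_x/(H_{xy};H_{xz})$ determined by $c_{xyz}$, so that the equation becomes precisely semi-frame condition (iv); this recognition is the elementary computation comparing \refEq{ss8.3} with \refEq{ss8.4}, which uses only the second involution law, complete distributivity, and the fact that $H_{xz}$ is the identity coset of its quotient.

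The hard part will not be any single step but the bookkeeping that binds them: one must track the interplay of the left and right coset systems of $H_{xy}$, $K_{xy}$, $H_{yz}$, $K_{yz}$, $H_{xz}$, $K_{xz}$ across the Image and Relative Product Theorems, and one should confirm that $c_{xyz}$ does not depend on the particular admissible choice of $\zeta$, since distinct such $\zeta$ yield translations of $a_{xz}$ all lying below $a_{xy};a_{yz}$ and hence determine the same induced inner automorphism on the relevant quotient. Because every structural ingredient is already in hand, the theorem reduces to assembling these facts under the four headings of \refD{semiframedef}.
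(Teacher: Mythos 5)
Your proposal is correct and follows essentially the same route as the paper: conditions (i)--(iii) are read off from the semi-scaffold conditions via the Identity, Converse, and Image Theorems, and condition (iv) is obtained by choosing the translating coset $\cs H{\xz,\zeta}$ with $\cs H{\xz,\zeta};\cs a\xz\le\cs a\xy;\cs a\yz$, applying Second Relative Product \refT{secrp}, and identifying $\hat\tau$ with the inner automorphism determined by $\cc xyz=\cs H\xy;\cs H{\xz,\zeta}$ exactly as in the comparison of \refEq{ss8.3} with \refEq{ss8.4}. The only slight divergence is in your independence-of-$\zeta$ remark: the paper proves the stronger fact that the coset $\cs H\xy;\cs H{\xz,\zeta}$ itself is independent of the admissible $\zeta$ (via \refC{equalmult}), not merely that the induced inner automorphisms agree, but this is a supplementary observation made after the theorem and does not affect the proof.
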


We shall refer to $\mc F$ as the semi-frame \textit{associated with}
 the semi-scaffold $a$.

 It may appear as if the semi-frame
depends not only on the particular semi-scaffold $a$ that has been
selected, but also on the particular coset that is chosen to
satisfy \refEq{ss7.1}. However, it is not difficult to see that
this is in fact not the case.  If $\hs\xz{\zeta'}$ is any other
coset such that \refEq{ss7.1} (with $\zeta'$ in place of $\zeta$)
holds, then
\[\h\xy;\hs\xz{\zeta'}=\h\xy;\hs\xz{\zeta}=\cc xyz\per
\]
In more detail, the product subgroup $\cs H\xy;\cs H\xz$ is the
left stabilizer of the relative product $\axy;\ayz$\comma  by
Relative Product \refT{else} and Image \refT{image}. In
particular, it   leaves this relative product fixed under relative
multiplication on the left.   Use this fact (at the end of the
computation), together with the identity property of $\cs H\xz$ in
the  quotient group $\cs Gx/\cs H\xz$, \refEq{ss7.1}, and
Partition \refL{GeneralizedPartitionLemma} (with
\[\hs\xz\zeta;\axz\comma\qquad\axy;\ayz\comma\qquad \text{and}\qquad \cs H\xy;\cs H\xz\] in place of $a$,  $b$, and  $H_{b,\et}$ respectively), to obtain
\begin{multline*}
\tsum\h\xy;\hs\xz\zeta;\axz =\tsum\cs H\xy;\cs
H\xz;\hs\xz\zeta;\axz\\ =\cs H\xy;\cs H\xz;\axy;\ayz
=\axy;\ayz\per
\end{multline*} A similar argument shows that
\[\tsum\h\xy;\hs\xz{\zeta'};\axz=\axy;\ayz\comma
\]so
\[\tsum\h\xy;\hs\xz\zeta;\axz=\tsum\h\xy;\hs\xz{\zeta'};\axz\per
\]
Use  \refC{equalmult} to conclude that the two cosets
$\h\xy;\hs\xz\zeta$ and $\h\xy;\hs\xz{\zeta'}$ are equal.

In terms of the semi-frame $\mc F$, a complete and atomic Boolean
algebra with additional completely distributive operations, that is to say, a
complete and atomic Boolean algebra with complete operators
 \[\cra CF=\langle \craset C F\smbcomma\cup\smbcomma\diff
\smbcomma\otimes\smbcomma\mo\smbcomma\id U\rangle
\]  of the same similarity type as relation algebras, can be defined.  The atoms  of this algebra are the binary relations $\r
\xy
\a$ that are the subsets of the Cartesian product $\cs Gx\times\cs Gy$  defined by
\[\r\xy\a=\tbigcup\{\ls\xy\xi\times(\rs\xy\xi;\rs
\xy\a):\xi<\kai\xy\}\] for each pair $\pair xy$ in $\mc E$ and
each $\a<\kai\xy$. In particular, for $x=y$, the left and right
stabilizers are the trivial subgroups \[\cs
H\xy=\{x\}\qquad\text{and}\qquad\cs K\xy=\{y\}\comma\] and the
cosets are the singletons of elements of the respective groups, by
semi-frame condition (i), so that the definition of $\r\xx \a$\
assumes the form
\begin{equation*}\r{\xx}{\a}= \{\pair g {g; g_\a} : g\in \G
x\}\comma
\end{equation*} where $\rs\xx\a=\{\cs g\a\}$.  When $\a=0$, the coset
$\rs\xx\a$ coincides with the trivial subgroup $\cs K0=\{x\}$, by
convention, so that  $\cs g0=x$, and therefore $\r\xx 0$ is the
identity relation on the set $\cs Gx$.

 The elements of the algebra are arbitrary unions of sets of atoms, so the universe of
 the algebra is a set of binary relations on the base set \[U=\tbigcup\{\cs Gx:x\in I\}\per\]  The Boolean operations of the algebra are the binary set-theoretic operation $\,\cup\,$ of forming unions of  binary relations, and the unary set-theoretic operation $\,\sim\,
$ of forming complements of binary relations with respect to   the
unit, or universal, relation $U\times U$, which is the union of
the set of atoms.  The distinguished constant $\id U$ is the
identity relation on $U$, and \[\id U=\tbigcup\{\r\xx0:x\in
I\}\per\]

The  operation ${}\mo $ is the unary set-theoretic operation on
binary relations of forming the converse, or inverse, of a
relation. Semi-frame condition (ii) in \refD{semiframedef} implies
that this operation is
 determined on atoms by
\[\r\xy\a\mo=\r\yx\b\comma\qquad\text{where}\qquad \ls\xy\a\ssm=\ls\xy\b\comma\] that is
to say, where $\ls\xy\b$ is the coset inverse of $\ls\xy\a$ in the
quotient group $\cs Gx/\cs H\xy$, and the operation is extended to
all elements in the algebra by making it  completely distributive
over unions.

The binary operation $\,\otimes\,$ is defined on atoms as follows.
   For    pairs $\pair \wx\wy$ and $\pair \ww \wz$ in $\mc
E$ with $\wy\neq \ww$,
\[\r\xy\lph\otimes\r\wwz\bt=\varnothing
\]
for all $\lph<\kai \xy$ and $\bt<\kai\wwz$\comma and  for pairs
$\pair \wx\wy$ and $\pair \wy \wz$ in $\mc E$\comma
\[\r \xy \a \otimes \r \yz\b=\tbigcup\{\r \xz \g:
\hs \xz \g \seq \vphi \xy\mo[ \ks\xy \a;\hs \yz \b];\cc x y z\}\]
for all $\lph<\kai \xy$ and all $\bt<\kai\yz$. The operation is
extended to arbitrary pairs of elements in $\cra CF$ by making it
be completely distributive over arbitrary unions.

The algebra $\cra CF$ turns out to be a relation algebra if and
only if certain conditions
 called the \emph{coset conditions} are
satisfied, and in this case $\cra CF$ is called a \emph{coset
relation algebra}.  The coset conditions do not play a role in the
discussion below, so we do not go into them further (see
\cite{andgiv1}).

 In the proof of the representation theorem for atomic, measurable relation algebras,
  we shall use a form of the Atomic Isomorphism Theorem (see \cite{giv18}).
  The hypothesis of this form of the theorem is that two complete and atomic
  Boolean algebras with completely distributive operators are given, say $\f A$ and $\f B$, and say
  of the same similarity type as relation algebras, together with a bijection $\vth$
  from the set of atoms in $\f A$ to the set of atoms in $\f B$. Using relation algebraic
   notation for the operations, the conclusion of the theorem may be formulated as follows.
   The bijection $\vth$ can be extended to an isomorphism from $\f A$ to $\f B$ if and only if $\vth$ preserves the Peircean operations on atoms in the sense that
 \begin{alignat*}{3}
 c&\le a;b&\qquad&\text{if and only if}&\qquad \vth(c)&\le \vth(a);\vth(b),\\
  c&\le a\ssm&\qquad&\text{if and only if}&\qquad \vth(c)&\le \vth(a)\ssm,\\
   c&\le \ident&\qquad&\text{if and only if}&\qquad \vth(c)&\le \ident\comma
 \end{alignat*} for all atoms $a$, $b$, and $c$ in $\f A$, where the operations on the left
  (including the operation $\ident$ of rank $0$) are those of $\f A$, and the ones on the right are those of $\f B$. If these conditions are satisfied, then the isomorphism from $\f A$ to $\f B$ is the function $\psi$ defined by
 \[\psi(r)=\tsum\{\vth(a):a\in X\}\] for every element $r$ in $\f A$, where $X$ is the set of atoms in $\f A$ that are below $r$.

The next theorem  says that  coset relation algebras are
essentially the only possible examples of atomic, measurable
relation algebras.

\begin{theorem}[Representation Theorem]\labelp{T:main}Every
atomic\comma  measurable relation algebra  is
essentially isomorphic to a  coset
\ra\per
\end{theorem}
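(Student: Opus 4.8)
The plan is to dispatch the complete, atomic case directly by means of the Atomic Isomorphism Theorem recorded just above, and then to obtain the general statement by passing to the completion. So I would first assume that $\f A$ is complete, atomic, and measurable, keeping all the notation fixed in this section: the semi-scaffold $a$, the associated semi-frame $\mc F$, and the complete, atomic Boolean algebra with complete operators $\cra CF$. By Semi-scaffold Partition \refL{L:total.atom}, every atom of $\f A$ is $\as\xy\a$ for a unique pair $\pair xy$ in $\mc E$ and a unique $\a<\kai\xy$, while by construction the atoms of $\cra CF$ are exactly the relations $\r\xy\a$. I would therefore set
\[\vth(\as\xy\a)=\r\xy\a\comma\]
which is a bijection between the two atom sets, and then verify that $\vth$ preserves the three Peircean conditions on atoms; the Atomic Isomorphism Theorem would then furnish an isomorphism extending $\vth$.

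Two of the three conditions should be routine. For the identity condition I would invoke Semi-scaffold Identity \refL{L:diag.atom}, by which $\as\xy\a\le\ident$ exactly when $x=y$ and $\a=0$; on the other side $\id U$ is the union of the atoms $\r\xx0$, $x\in I$ (with $\r\xx0$ the identity relation on $\G x$), so, the $\r\xy\a$ being pairwise disjoint atoms, $\r\xy\a\seq\id U$ holds in precisely the same case. For the converse condition I would use Semi-scaffold Converse \refL{L:aia.eq.ajb}, which gives $\as\xy\a\ssm=\as\yx\bt$ with $\hs\xy\a\ssm=\hs\xy\bt$; as the operation $\mo$ on $\cra CF$ was defined on atoms by exactly this rule, $\vth$ commutes with converse.

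The relative product condition is where I expect the real work to lie, and it is the step I single out as the main obstacle. When the inner sides of the two rectangles disagree the product is $0$ in $\f A$ by \refL{L:eqzero} and $\otimes$ returns $\varnothing$ by definition, so nothing is required there. In the essential case I would compare the formula of Semi-scaffold Relative Product \refL{L:prewhat}, which expresses $\as\xy\a;\as\yz\bt$ as the sum of those $\as\xz\gm$ with
\[\hs\xz\gm\seq\vphi\xy\mo[\ks\xy\a;\hs\yz\bt];\hs\xz\x\comma\]
against the defining formula of $\otimes$, which collects those $\r\xz\gm$ with
\[\hs\xz\gm\seq\vphi\xy\mo[\ks\xy\a;\hs\yz\bt];\cc xyz\per\]
These differ only in their final factor, $\hs\xz\x$ versus $\cc xyz$, and the crux is that they nonetheless select the same indices $\gm$. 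I would argue that $\vphi\xy\mo[\ks\xy\a;\hs\yz\bt]$ is a coset of the product group $M=\h\xy;\h\xz$, the left stabilizer of $\cs a\xy;\cs a\yz$ by Image \refT{T:image}; such a coset absorbs on the right the factor $\h\xy$ of $\cc xyz=\h\xy;\hs\xz\zeta$, so that, taking the free index $\x$ in \refL{L:prewhat} to be the very index $\zeta$ used to define $\cc xyz$,
\[\vphi\xy\mo[\ks\xy\a;\hs\yz\bt];\cc xyz=\vphi\xy\mo[\ks\xy\a;\hs\yz\bt];\hs\xz\zeta\per\]
Here $\zeta$ is a legitimate choice of $\x$ because it was chosen with $\hs\xz\zeta;\cs a\xz\le\cs a\xy;\cs a\yz$, and the value of $\cc xyz$ does not depend on this choice, as shown after the Semi-frame Theorem. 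This identification yields $\as\xz\gm\le\as\xy\a;\as\yz\bt$ if and only if $\vth(\as\xz\gm)\seq\vth(\as\xy\a)\otimes\vth(\as\yz\bt)$, completing the verification.

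Having the three conditions, I would apply the Atomic Isomorphism Theorem to get an isomorphism of complete Boolean algebras with operators from $\f A$ onto $\cra CF$. Since $\f A$ is a relation algebra and this isomorphism preserves every operation, $\cra CF$ is then a relation algebra, hence satisfies the coset conditions and is a coset relation algebra; this proves the theorem for complete, atomic $\f A$. Finally, for an arbitrary atomic, measurable relation algebra $\f B$ I would pass to its completion, which is again complete, atomic, and measurable (it has the same atoms, the same identity element, and the same groups $\G x$, with each square $\xox$ still the sum of the functions in $\G x$), apply the case just proved, and conclude that $\f B$ is essentially isomorphic to the resulting coset relation algebra, as required.
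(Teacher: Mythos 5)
Your proposal is correct and follows essentially the same route as the paper: pass to the completion, use the semi-scaffold and its associated semi-frame, define $\vth(\as\xy\a)=\r\xy\a$, verify the three Peircean conditions via the Semi-scaffold Identity, Converse, and Relative Product Lemmas (with the key observation that $\vphi\xy\mo[\ks\xy\a;\hs\yz\bt]$, being a coset of $\h\xy;\h\xz$, absorbs the factor $\h\xy$ so that the two right-hand sides coincide when $\x$ is taken to be $\zeta$), and invoke the Atomic Isomorphism Theorem. No substantive differences from the paper's argument.
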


\begin{proof}  Start with an atomic, measurable relation algebra  $\f B$, and pass to its
 completion $\f A$, that is to say, pass to its minimal complete extension.
  The completion $\f A$ is well known to be  a complete and atomic relation algebra, and its
  atoms are the same as the atoms in $\f B$ (see \cite{mo}).  It follows that each subidentity atom $x$ is measurable not only in $\f B$, but also in $\f A$, because the same atoms are below the rectangle $x;1;x$ in both $\f B$ and $\f A$, and consequently $\f A$ is a complete and atomic, measurable relation algebra.

Let $I$ be the set of measurable atoms in $\f A$, and $\mc E$ the equivalence
 relation defined on $I$ by putting $\pair xy$ in $\mc E$ if and only if $x;1;y\neq 0$.  The algebra $\f A$ has a semi-scaffold, by Semi-scaffold Existence \refL{semil}.  Fix such a semi-scaffold
\[a=\langle\cs a\xy:\pair xy\in \mc E\rangle\comma\]   and let \[\mc F=\trip G \vph C\] be the coset semi-frame associated with $a$, where
\[G=\langle \cs Gx:x\in I\rangle\comma\quad \vph=\langle\vphi\xy:\pair xy\in\mc E\rangle\comma\quad
 C=\langle\cc xyz:\trip xyz\in\ez 3\rangle \] are as defined at the beginning of the section.
 (Here, Semi-frame \refT{sfthm} is being used.) The goal is to show that $\f A$ is
 isomorphic to $\cra CF$. It then follows   that $\cra CF$ is a relation algebra, and
 therefore  it automatically satisfies the coset conditions.  Conclusion: $\f A$
 is isomorphic to a coset relation algebra, so $\f B$ is essentially isomorphic to a coset relation algebra.

The distinct atoms in $\f A$ are the elements $\as\wi\a$ defined before \refL{each.atom},  by Semi-scaffold Partition \refL{total.atom}.
  The distinct atoms in $\cra CF$ are the binary relations $\r\xy\a$ defined after Semi-frame \refT{sfthm}.
 Let $\vth$ be the bijection from the set of atoms in $\f A$ to
the set of atoms in $\cra C F$ that is defined by
\begin{equation*}\tag{1}\labelp{Eq:rep81}
\vth(\as\xy\a)=\r\xy\a
\end{equation*} for every pair $\pair xy$ in $\mc E$ and every $\a<\kai\xy$.  It must be shown that $\vth$ preserves the Peircean
operations on atoms in the sense of the Atomic Isomorphism Theorem.

Fix    three arbitrary atoms  in $\f A$ and the corresponding images, under $\vth$, of these three atoms in $\cra CF$, say
\[\as\xy\a,\quad \as\wwz\b,\quad \as\uv\g\qquad\text{and}\qquad \r\xy\a,\quad \r\wwy \b,\quad \r\uv\g\] respectively.
Treat first the case of the operation of relative multiplication.  In view of \refEq{rep81}, it is to be shown that
\begin{align*}
               \as\uv\g\le\as\xy\a;\as\wwz\b\qquad&\text{if and only if}\qquad
               \r\uv\g\seq\r\xy\a\otimes\r\wwz\b\per\tag{2}\labelp{Eq:rep81.7}\\
               \intertext{\refL{eqzero}
and Semi-scaffold Relative Product \refL{prewhat} imply that}
\as\uv\g\le\as\xy\a;\as\wwz\b\qquad&\text{if and only if}\qquad
y=w\comma u=x\comma v=z\comma\tag{3}\labelp{Eq:rep81.2}
             \end{align*}
             and the coset $\ls\xz\g$ determined by the index $\g$
satisfies the inclusion
\begin{gather*}
\ls\wk\gm\seq\vphs\wi\mo[\rs\wi\a;\ls\wj\b];\hs\wk\zeta\comma\tag{4}\labelp{Eq:rep81.3}\\
\intertext{where $\zeta$ is the index chosen for the triple $\trip
xyz$ so that} \ls\xz\zeta;\cs a\xz\le\cs a\xy;\cs a\yz\per
 \end{gather*}
  The definition of the operation $\,\otimes\,$ implies that
  \begin{gather*}\r\uv\g\seq\r\xy\a\otimes\r\wwz\b\qquad\text{if and only if}\qquad y=w\comma u=x\comma v=z\comma\tag{5}\labelp{Eq:rep81.4}\\
\intertext{and the coset $\ls\xz\g$ determined by the index $\g$
satisfies the inclusion}
\ls\wk\gm\seq\vphs\wi\mo[\rs\wi\a;\ls\wj\b];\cc
xyz\comma\tag{6}\labelp{Eq:rep81.5}\\ \intertext{where $\cc xyz$
is the coset of the product group $\cs H\xy;\cs H\xz$ that is
defined by} \cc x yz=\cs
H\xy;\ls\xz\zeta\per\tag{7}\labelp{Eq:rep81.6}
 \end{gather*}
  Observe that the inclusion in \refEq{rep81.5} is equivalent to the one in \refEq{rep81.3}, because the right sides of these two inclusions are equal.  In more detail,\begin{multline*}
             \vphs\wi\mo[\rs\wi\a;\ls\wj\b];\cc xyz=\vphs\wi\mo[\rs\wi\a;\ls\wj\b];\cs H\xy;\ls\xz\zeta\\
             =\vphs\wi\mo[\rs\wi\a;\ls\wj\b];\cs H\xy;\cs H\xz;\ls\xz\zeta=\vphs\wi\mo[\rs\wi\a;\ls\wj\b];\ls\xz\zeta\comma
             \end{multline*} by \refEq{rep81.6},   the identity property of the coset  $\cs H\xz$ in the quotient group $\cs Gx/\cs H\xz$, and   the facts that $\vphs\wi\mo[\rs\wi\a;\ls\wj\b]$ is a  coset of the product group $\cs H\xy;\cs H\xz$, and this product group is the identity element
             in the quotient group $\cs Gx/(\cs H\xy;\cs H\xz)$.
              It follows that the condition in \refEq{rep81.3} may be replaced by the one in \refEq{rep81.5}, so that the inequality on the left side of \refEq{rep81.2} and the inclusion on the left side of \refEq{rep81.4} are both equivalent to the same condition, and therefore they are equivalent to each other. This establishes \refEq{rep81.7}.

 Turn next to the operation of converse, with the goal of showing that
 \begin{equation*}\tag{9}\labelp{Eq:rep81.06}
 \as\uv\g\le\as\xy\a\ssm\qquad\text{if and only if}\qquad  \r\uv\g\seq\r\xy\a\mo\per
\end{equation*}
 Semi-scaffold Converse \refL{aia.eq.ajb} implies that
 \begin{align*}\as \uv\g\le\as\xy\a\ssm\qquad&\text{if and only if}\qquad  u=y,\hspace{4pt}
  v=x,\hspace{4pt} \text{and}\hspace{4pt}\ls\xy\a\ssm=\ls\xy\g\comma\tag{10}\labelp{Eq:rep81.10}\\
 \intertext{and if the conditions on the right side of this equivalence are satisfied,
  then equality actually holds on the left side. Semi-frame condition (ii) implies that}
  \r \uv\g\seq\r\xy\a\mo\qquad&\text{if and only if}\qquad  u=y,\hspace{4pt}
  v=x,\hspace{4pt}
  \text{and}\hspace{4pt}\ls\xy\a\ssm=\ls\xy\g\comma\tag{11}\labelp{Eq:rep81.11}
  \end{align*} and if the conditions on the right side of this equivalence are satisfied,
  then equality actually holds. The conditions on the right sides of
  \refEq{rep81.10} and \refEq{rep81.11} are the same, so the inequalities on the left
  sides must be equivalent.  This establishes \refEq{rep81.06}.

  Turn finally to the identity element.  It is to be shown that
  that
\begin{align*} \as\uv\g\le\ident\qquad&\text{if and only if}\qquad
  \r\uv\g\seq\id U\per\tag{12}\label{Eq:rep81.12}\\
  \intertext{Semi-scaffold Identity \refL{diag.atom} implies
that}
  \as\uv\g\le\ident\qquad&\text{if and only if}\qquad u=v\text{ and
  }\g=0.\tag{13}\labelp{Eq:rep81.13}\\
  \intertext{The definition of the relation $\r\uv\g$ and semi-frame condition (i) imply that}
   \r\uv\g\seq\id U\qquad&\text{if and only if}\qquad u=v\text{ and
  }\g=0\per\tag{14}\labelp{Eq:rep81.14}
\end{align*}
As before, the conditions on the right sides of \refEq{rep81.13}
and \refEq{rep81.14} are the same, so the inequalities on the left
sides must be equivalent.  This proves \refEq{rep81.12}.

It has been shown that the bijection $\vth$ satisfies the
conditions of the Atomic Isomorphism Theorem.  Apply that theorem
to conclude that $\vth$ can be extended to an isomorphism from $\f
A$ to $\cra CF$.
\end{proof}

As was mentioned after \refD{semiframedef}, not every atomic, measurable relation algebra   has a scaffold, but
if there is a scaffold, then a stronger result than Representation \refT{main} is true.  The existence of a scaffold
means that for every triple $\trip xyz$ in $\ez 3$ with $x<y<z$, it is always possible to choose the atoms $\cs a\xy$, $\cs a\yz$ and   $\cs a\xz$ so that
the inequality \begin{align*}
              \cs a\xz&\le\cs a\xy;\cs a\yz \\
              \intertext{holds.  Consequently, the coset $\ls\xy\zeta$ of $\cs H\xz$ that is chosen to tanslate $\cs a\xz$ to a position below $\cs a\xy;\cs a\xz$ in the sense that}
                \ls\xz\zeta;\cs a\xz&\le\cs a\xy;\cs a\yz
               \end{align*}
 may always be taken to be the identity coset $\cs H\xz$, or put another way, one may always   choose $\zeta=0$. The  inner automorphism $\tau$ of the the quotient group
$\cs Gx/\cs H\xz$
that is determined by this  coset is then the identity automorphism of this group, and therefore the inner automorphism $\hat\tau$ of the quotient group $\cs Gx/(\cs H\xz;\cs H\xz)$ that is induced by $\tau$ is
identity automorphism of its group.  As a result, semi-frame condition (iv) assumes the form
\begin{equation*}\tag{1}\label{Eq:frame.01}
  \hvphs\xy\rp\hvphs\yz=\hvphs\xz\per
\end{equation*}  A semi-frame satisfying this condition instead of semi-frame condition (iv) is called a \emph{frame}.

Under these conditions, the shifting coset
$\cc xyz$ that is defined in terms of the coset $\ls\xz\zeta$ becomes the identity coset
\[\cc xyz=\cs H\xy;\cs H\xz \] of the quotient group $\cs Gx/(\cs H\xy;\cs H\xz)$, and the definition of the operator $\,\otimes\,$   between the atomic relations  $ \r\xy\a$ and $\r\yz\b$
assumes the form
\begin{equation*}\tag{2}\labelp{Eq:frame.02}
  \r\xy\a\otimes\r\yz\b=\tbigcup\{\r\xz\g:\ls\xz\g\seq \vphs\wi\mo[\rs\wi\a;\ls\wj\b]\}\per
\end{equation*}
  It was shown in Composition Theorem 3.7 of \cite{giv1} that under the hypothesis of \refEq{frame.01} and semi-frame condition (iii), the operation of relational composition
  between the atomic relations  $ \r\xy\a$ and $\r\yz\b$ satisfies the equation
\begin{equation*}\tag{3}\labelp{Eq:frame.03}
  \r\xy\a\rp\r\yz\b=\tbigcup\{\r\xz\g:\ls\xz\g\seq \vphs\wi\mo[\rs\wi\a;\ls\wj\b]\}\per
\end{equation*}  The right sides of  \refEq{frame.02} and \refEq{frame.03} are the same, so the left sides must be equal. Conclusion:  when an atomic, measurable relation algebra has a scaffold, the equation
\[\r\xy\a\otimes\r\yz\b=\r\xy\a\rp\r\yz\b\] holds, so that the defined operation $\,\otimes\,$ coincides with the set-theoretic operation of relational composition.

In this case, the coset relation algebra $\cra C F$ is  a set relation algebra, and actually a subalgebra of the full set
relation algebra    with   base set and
unit \[U=\tbigcup  \{\G x:x\in I\}\qquad\text{and}\qquad
E=\tbigcup\{\cs Gx\times\cs Gy:\pair xy\in\mc E\}\]
respectively\per  This algebra is called the \emph{group relation algebra on the frame} $\mc F$ in  \cite{giv1}, and is denoted  by
$\cra GF$.

The system of cosets \[C=\langle \cc xyz:\trip xyz\in\ez 3\rangle\] is entirely unnecessary in this case, as are the inner automorphisms $\tau$
determined by these cosets, so instead of considering group triples  $\mc F=\trip G \vph C
$ satisfying semi-frame conditions (i)--(iv) in \refD{semiframedef}, it suffices to consider group pairs $\mc F=\pair G \vph
$ satisfying the four frame conditions, namely conditions (i)--(iii) from \refD{semiframedef} and condition \refEq{frame.01} above.
This is the approach that is taken in \cite{giv1}.

 The group pair constructed from a scaffold $a=\langle\cs a\xy:\pair xy\in\mc E\rangle$ in a measurable relation algebra is called the \emph{group pair associated with} $a$.
\begin{theorem}\textnormal{(Frame Theorem)}
\labelp{T:cosetframe} The  group pair associated with a scaffold
in a measurable relation algebra is always   a frame\per
\end{theorem}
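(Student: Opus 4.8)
The plan is to recognize that the group pair $(G,\vph)$ associated with the scaffold $a$ consists of exactly the first two components of the coset semi-frame $\mc F=\trip G\vph C$ associated with $a$ when $a$ is regarded merely as a semi-scaffold. Since every scaffold is in particular a semi-scaffold (\refD{scaffold}), Semi-frame \refT{sfthm} applies and guarantees that $\mc F$ satisfies semi-frame conditions (i)--(iv) of \refD{semiframedef}. Conditions (i), (ii), and (iii) there coincide word for word with frame conditions (i), (ii), and (iii); hence these hold for $(G,\vph)$ with no further work, and the entire content of the theorem reduces to verifying the one remaining frame condition, namely the strengthened equation \refEq{frame.01}, $\hvphs\xy\rp\hvphs\yz=\hvphs\xz$, for every triple $\trip xyz$ in $\ez 3$.

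To verify this, I would fix a triple $\trip xyz$ in $\ez 3$ and return to the construction of the third component $C$ of $\mc F$. There one selects an index $\zeta<\kai\xz$ with $\ls\xz\zeta;\cs a\xz\le\cs a\xy;\cs a\yz$ (as in \refEq{ss7.1}) and then sets $\cc xyz=\cs H\xy;\ls\xz\zeta$ (as in \refEq{ss8}). The crucial use of the scaffold hypothesis enters here: scaffold condition (iv) of \refD{scaffold} gives $\cs a\xz\le\cs a\xy;\cs a\yz$ outright, and since $\ls\xz 0=\cs H\xz$ is the identity coset---which is the left stabilizer of $\cs a\xz$---we have $\cs a\xz=\ls\xz 0;\cs a\xz$; thus $\zeta=0$ is an admissible choice in \refEq{ss7.1}. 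Because the associated semi-frame is independent of which admissible index is chosen (as established in the paragraph following \refT{sfthm}), I have the freedom to take $\zeta=0$.

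With $\zeta=0$ the shifting coset becomes $\cc xyz=\cs H\xy;\cs H\xz$, the identity element of the quotient group $\cs Gx/(\cs H\xy;\cs H\xz)$. The inner automorphism $\tau$ that semi-frame condition (iv) attaches to the coset $\cc xyz$---namely the inner automorphism of $\cs Gx/(\cs H\xy;\cs H\xz)$ determined by $\cc xyz$---is therefore conjugation by the identity, that is, the identity automorphism. Consequently condition (iv), which in general reads $\hvphs\xy\rp\hvphs\yz=\tau\rp\hvphs\xz$, collapses to $\hvphs\xy\rp\hvphs\yz=\hvphs\xz$. Since $\trip xyz$ was arbitrary in $\ez 3$, this is exactly \refEq{frame.01}, so all four frame conditions hold and $(G,\vph)$ is a frame.

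I do not expect a serious obstacle: conditions (i)--(iii) are inherited verbatim from the Semi-frame Theorem, and the whole substance of the argument is the observation that the shifting coset degenerates to the identity. The only step demanding genuine care is the licence to take $\zeta=0$, which rests jointly on the scaffold inequality $\cs a\xz\le\cs a\xy;\cs a\yz$ (making the identity coset admissible in \refEq{ss7.1}) and on the earlier demonstration that $\mc F$ is independent of the admissible index selected. Once that is secured, the degeneration of $\tau$ and the reduction of condition (iv) to \refEq{frame.01} are immediate.
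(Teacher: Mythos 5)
Your proposal is correct and follows essentially the same route as the paper: the paper's justification of the Frame Theorem is precisely the observation that scaffold condition (iv) permits the choice $\zeta=0$ in \refEq{ss7.1}, whence the shifting coset $\cc xyz$ degenerates to the identity coset $\cs H\xy;\cs H\xz$, the inner automorphism $\hat\tau$ becomes the identity, and semi-frame condition (iv) collapses to \refEq{frame.01}, with conditions (i)--(iii) inherited from Semi-frame \refT{sfthm}. Your extra remark that the construction is independent of the admissible index chosen is a correct (and slightly more careful) appeal to the paragraph following \refT{sfthm}, but it is not strictly needed, since the construction only requires that some admissible $\zeta$ be fixed and one is free to fix $\zeta=0$.
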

Notice that the hypothesis  on the measurable relation algebra of being atomic is unnecessary when there is a scaffold (or even a semi-scaffold).  The existence of a scaffold always implies that the measurable relation algebra
under consideration is atomic, by
the argument of Semi-scaffold Partition \refL{total.atom}.

\comment{The preceding theorem can be considerably improved if we make some additional
assumptions about the groups $\G\wx$\per

\begin{theorem}[Abelian Frame Theorem]\labelp{T:abelianframe}
 Suppose that $\mc F$ is the group pair associated with an  atomic semi-scaffold in a measurable
\ra\po If\comma  for all pairs $\pair \wx\wy$ and $\pair\wy\wz$ in $\mc E$  with $\wx$\co
$\wy$\co and $\wz$ distinct\,\co   the
quotient group
\[\G\wx/(\h\xy;\h\xz)\]
is Abelian\co  then $\mc F$ is a frame\per
\end{theorem}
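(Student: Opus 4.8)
The plan is to obtain the theorem directly from Semi-frame \refT{sfthm}, which guarantees that the group triple $\mc F=\trip G\vph C$ built from the given atomic semi-scaffold $a$ is a coset semi-frame; thus semi-frame conditions (i)--(iv) of \refD{semiframedef} all hold. Since conditions (i)--(iii) make no reference to the coset system $C$, the underlying group pair $\pair G\vph$ automatically satisfies the first three frame conditions, and the only thing left to prove is the fourth frame condition \refEq{frame.01}, namely $\hvphs\xy\rp\hvphs\yz=\hvphs\xz$ for every triple $\trip xyz$ in $\ez 3$. To get this I would compare it with semi-frame condition (iv), which asserts $\hvphs\xy\rp\hvphs\yz=\tau\rp\hvphs\xz$, where $\tau$ is the inner automorphism of $\cs Gx/(\cs H\xy;\cs H\xz)$ determined by the shifting coset $\cc xyz$. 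Because $\hvphs\xz$ is an isomorphism, condition \refEq{frame.01} for a given triple is equivalent to the statement that this $\tau$ is the identity automorphism, so the entire theorem reduces to showing that every such $\tau$ is trivial.

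For a triple $\trip xyz$ with $x$, $y$, and $z$ distinct this is immediate from the hypothesis: the quotient $\cs Gx/(\cs H\xy;\cs H\xz)$ is then abelian, and in an abelian group every element is central, so every inner automorphism is the identity; in particular $\tau$ is the identity. This is the conceptual core of the argument, but once the reduction of the first paragraph is in place it amounts to a one-line observation.

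The hypothesis restricts attention to distinct triples, so the main obstacle will be the degenerate triples, in which two or all three of $x,y,z$ coincide and the relevant quotient need not be abelian. Here I would avoid commutativity altogether and instead show directly that $\cc xyz$ is the identity coset. The key point is that when $x,y,z$ are not all distinct one has $\cs a\xz\le\cs a\xy;\cs a\yz$; this is exactly what is verified, case by case ($x=y$, $y=z$, $x=z$), in the proof of \refL{scafchar} while checking semi-scaffold condition (iv), using semi-scaffold conditions (ii),(iii) and \refL{domain}. Granting this inequality, the index $\zeta=0$ meets the defining inequality \refEq{ss7.1} for $\cc xyz$, since $\hs\xz 0=\cs H\xz$ is the left stabilizer of $\cs a\xz$ and hence $\as\xz 0=\cs a\xz\le\cs a\xy;\cs a\yz$; thus $\zeta=0$ is an admissible choice and gives $\cc xyz=\cs H\xy;\hs\xz 0=\cs H\xy;\cs H\xz$, which is the identity coset of $\cs Gx/(\cs H\xy;\cs H\xz)$. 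Since the shifting coset was shown, in the remarks following Semi-frame \refT{sfthm}, to be independent of the admissible index used to define it, this identifies $\cc xyz$ with the identity coset, so the inner automorphism $\tau$ it determines is again the identity. Combining the two cases yields \refEq{frame.01} for every triple in $\ez 3$, whence $\pair G\vph$ satisfies all four frame conditions and is a frame.
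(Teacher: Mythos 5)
Your proposal is correct and follows essentially the same route as the paper: both arguments reduce the fourth frame condition to the triviality of the inner automorphism $\hat\tau$ appearing in semi-frame condition (iv), handle the triples with $x$, $y$, $z$ distinct by observing that an inner automorphism of the abelian quotient $\G\wx/(\h\xy;\h\xz)$ is the identity (the paper packages this step as \refC{abelrelprod}), and rely on the fact that for degenerate triples the inequality $\axz\le\axy;\ayz$ holds automatically, so the shifting coset $\cc xyz$ is the identity coset. Your explicit verification of the degenerate cases via the computations in \refL{scafchar}, together with the well-definedness of $\cc xyz$ established after Semi-frame \refT{sfthm}, merely makes explicit what the paper leaves implicit by deferring to the proof of Frame \refT{cosetframe}.
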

\begin{proof}
The proof  proceeds exactly as the proof of \refT{cosetframe}, except for that case in
the verification of the  final frame condition when the  three
atoms $\wx,\wy,\wz$ are distinct. In this case, the product $\axy;\ayz $ is non-zero
(it is a regular element), so there is an atom $c$ below it.  The atom $c$  must be a
translation of the atom $\axz$\comma  by the Atomic Partiton Lemma, and $c$ is regular
with the same normal stabilizers as $\axz$\comma  by the Translation Lemmas
(or Corollaries \ref{C:atoms} and \ref{C:a-b.atoms}).  Since $\LS\cl$ is
included  $\vphs
\xy\mo[\RS\xy;\LS\yz]$  by \refC{abccor}, it follows that
$\LS \xz$ is included in
 $\vphs \xy\mo[\RS\xy;\LS\yz]$\per  Furthermore, the quotient
$\G\wy/(\RS\xy;\LS\yz)$\comma  and hence also its  image under $\vphs \xy\mo$, is
Abelian by assumption.  Therefore,
$\hvphs\xz=\hvphs\xy\,\vert\,\hvphs\yz$ by \refC{abelrelprod}.
\end{proof}

The point of the previous theorem is that, in an atomic, measurable \ra\ it  is trivial
to construct an atomic semi-scaffold, whereas it may be impossible to construct an atomic
scaffold. Nevertheless, if certain relevant quotient groups are Abelian, then by
starting with an atomic semi-scaffold, we still end up with a frame.}

The preceding observations and remarks, including Frame \refT{cosetframe}, show that the existence of a scaffold in a measurable relation algebra implies that the semi-frame $\mc F$ associated
with the given scaffold is actually a frame, and consequently the coset relation algebra $\cra CF$ constructed from $\mc F$ is actually the group relation algebra $\cra GF$, which is of course
a set relation algebra.    Representation \refT{main}
therefore assumes the
the following stronger form.

\begin{theorem}[Scaffold Representation Theorem]\labelp{T:loc} Every measurable relation algebra with a scaffold is essentially isomorphic to a group relation algebra\per
\end{theorem}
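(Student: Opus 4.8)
The plan is to assemble the theorem from three ingredients that are already in hand: Representation \refT{main}, Frame \refT{cosetframe}, and the observations recorded in the paragraphs immediately preceding the statement. Let $\f B$ be a measurable relation algebra carrying a scaffold $a=\langle\cs a\xy:\pair xy\in\mc E\rangle$. As noted after Frame \refT{cosetframe}, the existence of a scaffold already forces $\f B$ to be atomic: the argument of Semi-scaffold Partition \refL{total.atom} exhibits the atoms $\as\xy\a$ as an atomic partition of the unit, so every nonzero element dominates one of them. Thus $\f B$ is an atomic, measurable relation algebra, and Representation \refT{main} becomes available.

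First I would pass to the completion $\f A$ of $\f B$ and observe that $a$ remains a scaffold there. The completion preserves atoms and has the same measurable atoms and the same equivalence relation $\mc E$, while conditions (i)--(iv) of \refD{scaffold} refer only to atoms together with converse and relative product, which are computed identically in $\f B$ and $\f A$; hence $a$ is a scaffold of $\f A$. I would then run the construction used in the proof of Representation \refT{main}, but feeding in this particular scaffold in the role of the semi-scaffold, to produce the group triple $\mc F=\trip G\vph C$ associated with $a$ together with the isomorphism $\f A\cong\cra CF$.

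The heart of the argument is to see that the scaffold hypothesis collapses $\cra CF$ onto a group relation algebra. By Frame \refT{cosetframe}, the group pair $\pair G\vph$ underlying $\mc F$ is in fact a frame: concretely, scaffold condition (iv) permits the choice $\zeta=0$ of the translating index for every triple of $\ez 3$, whereupon each shifting coset degenerates to the identity coset $\cc xyz=\cs H\xy;\cs H\xz$, the induced inner automorphism $\hat\tau$ becomes the identity, and semi-frame condition (iv) reduces to the frame condition $\hvphs\xy\rp\hvphs\yz=\hvphs\xz$ of \refEq{frame.01}. With every $\cc xyz$ trivial, the defined product $\otimes$ on atoms assumes the form \refEq{frame.02}, which by Composition Theorem~3.7 of \cite{giv1} (displayed as \refEq{frame.03}) is precisely set-theoretic relational composition. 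Hence $\cra CF$ coincides with the group relation algebra $\cra GF$, a subalgebra of the full set relation algebra on the base set $U=\tbigcup\{\cs Gx:x\in I\}$.

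Combining the pieces yields $\f A\cong\cra CF=\cra GF$, so $\f B$ is essentially isomorphic to the group relation algebra $\cra GF$, as required. The step that demands genuine care is the reduction of $\otimes$ to ordinary composition, since it hinges on the scaffold hypothesis propagating through the whole construction so as to force every translating coset, and therefore every $\cc xyz$, to be trivial; this is exactly what the discussion preceding the theorem secures. Everything else amounts to invoking the cited results and checking that the scaffold survives the passage to the completion, so once that reduction is in place the proof is essentially bookkeeping.
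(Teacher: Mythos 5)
Your proposal is correct and follows essentially the same route as the paper: the paper derives this theorem from the discussion immediately preceding it (scaffold $\Rightarrow$ atomicity via the argument of Semi-scaffold Partition \refL{total.atom}, scaffold $\Rightarrow$ trivial shifting cosets $\Rightarrow$ the associated semi-frame is a frame and $\cra CF=\cra GF$ via Composition Theorem~3.7 of \cite{giv1}), combined with Representation \refT{main}. Your added remark that the scaffold survives the passage to the completion is a point the paper leaves implicit, but it is the same argument, not a different one.
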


The preceding theorem implies that a measurable relation algebra with a scaffold is representable as a set relation algebra in a  stronger sense than is usually intended.  To explain this stronger sense, consider
an arbitrary relation algebra $\f B$  and its completion $\f A$.  An isomorphism $\vth$ from $\f A$ to a complete set relation algebra $\f C$---that is to say, to a set relation
algebra $\f C$ in which the union of every set of relations in  the algebra is again a relation in the algebra---must preserve all existing suprema as unions in the sense that,
for every subset $X$ of $\f A$ with $a=\tsum X$, we have
\[\vth(a)=\vth(\tsum X)=\tbigcup\{\vth(b):b\in X\}\per\]  The reason is that isomorphisms preserve  suprema, and the supremum of each subset $Y$ of $\f C$ is, by assumption, the union of the
relations in $ Y$.  The restriction of $\vth$ to $\f B$ is  an embedding of $\f B$ into
$\f C$ and therefore a representation of $\f B$ as a set relation algebra, but it also inherits from $\vth$ the stronger property of preserving all existing suprema  as unions. Indeed,   if $X$ is any subset of $\f B$ such that the supremum $a=\tsum X$ exists in $\f B$, then $a$
remains the supremum of $X$ in $\f A$, because $\f A$ is the completion of $\f B$, and therefore $\vth$ preserves this supremum as a union.     Representations that preserve all existing suprema as unions
are called \emph{complete representations}, and a relation algebra with a complete representation is said to be   \emph{completely representable}.

\begin{cor}\labelp{C:comprep}
Every measurable relation algebra with a scaffold is completely representable\per
\end{cor}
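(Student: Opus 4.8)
The plan is to derive the corollary directly from Scaffold Representation Theorem~\ref{T:loc} together with the general discussion of complete representations given just above it. Let $\f B$ be a measurable relation algebra with a scaffold, and let $\f A$ be its completion. By Theorem~\ref{T:loc}, $\f B$ is essentially isomorphic to a group relation algebra $\cra GF$; that is to say, there is an isomorphism $\vth$ from $\f A$ onto $\cra GF$. The first thing I would record is that $\cra GF$ is a \emph{complete} set relation algebra: by its very construction its universe consists of all unions of sets of its atomic relations $\r\xy\a$, so the union of an arbitrary family of its elements is again one of its elements, and this union is the supremum of the family in $\cra GF$.

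The second step is to observe that $\vth$ preserves every existing supremum as a union. Indeed, isomorphisms preserve all suprema that exist, and in $\cra GF$ the supremum of any family is just its set-theoretic union, by the completeness noted above. Hence, for every subset $X$ of $\f A$ whose supremum $a=\tsum X$ exists, we have $\vth(a)=\tbigcup\{\vth(b):b\in X\}$.

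Finally I would restrict $\vth$ to $\f B$. Since $\f B$ is a subalgebra of its completion $\f A$, this restriction is an embedding of $\f B$ into $\cra GF$, and hence a representation of $\f B$ as a set relation algebra. To see that this representation is complete, let $X\seq \f B$ and suppose the supremum $a=\tsum X$ exists in $\f B$. Because $\f A$ is the completion of $\f B$, the element $a$ remains the supremum of $X$ in $\f A$, so the previous step yields $\vth(a)=\tbigcup\{\vth(b):b\in X\}$. Thus the restriction preserves all existing suprema as unions, which is precisely the definition of a complete representation, and so $\f B$ is completely representable.

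The point worth flagging is that there is no genuinely hard step here: the substantive content has already been carried by Theorem~\ref{T:loc}. What remains is only the two essentially bookkeeping facts that $\cra GF$ is closed under arbitrary unions (so that it is a complete set relation algebra whose suprema are unions) and that suprema computed in $\f B$ agree with those computed in its completion $\f A$ — the latter being nothing more than the defining property of the completion.
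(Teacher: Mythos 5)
Your proof is correct and follows essentially the same route as the paper: the paper derives this corollary from Scaffold Representation Theorem~\ref{T:loc} together with exactly the discussion you reproduce (the group relation algebra is a complete set relation algebra whose suprema are unions, isomorphisms preserve suprema, and suprema in $\f B$ persist in its completion $\f A$, so the restriction of the isomorphism to $\f B$ is a complete representation). Nothing is missing.
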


\medskip
It turns out that a kind of converse to \refC{comprep} is true.

\begin{theorem}\labelp{T:repscaf}Every  measurable relation algebra
that is completely representable has a scaffold\po
\end{theorem}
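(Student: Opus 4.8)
The plan is to build a scaffold directly from a complete representation by choosing one base point in each measurable atom's field and letting $a_{xy}$ be the unique atom that carries the resulting pair of base points. First I note that complete representability forces the algebra to be atomic (see \cite{hh02}), so the set $I$ of measurable atoms, the equivalence relation $\mc E$, and the groups $\cs Gx$ all make sense, and what must be produced is precisely a system of atoms satisfying conditions (i)--(iv) of \refD{scaffold}. Fix a complete representation $h$ of the given algebra on a base set $U$; thus $h$ is an embedding carrying each existing supremum to the corresponding union of relations. For each measurable atom $x$ the image $h(x)$ is a nonempty set of diagonal pairs, and I write $U_x=\{p:(p,p)\in h(x)\}$. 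Since $1\ssm=1$ and $1;1=1$ by \refL{laws}(i) and \refL{laws.1}(ii), and $h$ preserves converse and relative multiplication, the unit relation $h(1)$ is symmetric and transitive, i.e. an equivalence relation on its field.

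The technical heart of the set-up is choosing the base points coherently within each $\mc E$-class. Because $\mc E$ is an equivalence relation and $\pair xy\in\mc E$ exactly when $\xoy\neq 0$, I would work one $\mc E$-class $J$ at a time, so that for $x,y\in J$ every pair $\pair xy$ lies in $\mc E$. Pick $x_0\in J$ and a point $p_{x_0}\in U_{x_0}$. For each $x\in J$ the rectangle $x_0;1;x$ is nonzero and has domain $x_0$ by \refL{domain}, so its image has full set-theoretic domain $U_{x_0}$; hence there is a point $p_x\in U_x$ with $(p_{x_0},p_x)\in h(x_0;1;x)\subseteq h(1)$. Using symmetry and transitivity of $h(1)$ I then obtain $(p_x,p_y)\in h(1)$ for all $x,y\in J$, so that $(p_x,p_y)\in(U_x\times U_y)\cap h(1)=h(\xoy)$. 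Because $h$ is complete and the algebra is atomic, $h(\xoy)$ is the disjoint union of the images of the atoms below $\xoy$; therefore there is a \emph{unique} atom, which I call $a_{xy}$, with $(p_x,p_y)\in h(a_{xy})$, and it satisfies $a_{xy}\le\xoy$ (uniqueness is forced by disjointness of images of distinct atoms).

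It remains to verify the scaffold conditions of \refD{scaffold}, and this is where the construction pays off. Condition (i) holds by construction. For (ii), the pair $(p_x,p_x)$ lies in $h(x)$ while $x$ is an atom with $x\le\xox$ by \refL{square}(i), so uniqueness gives $a_{xx}=x$. For (iii), $(p_x,p_y)\in h(a_{xy})$ yields $(p_y,p_x)\in h(a_{xy})\ssm=h(a_{xy}\ssm)$, and $a_{xy}\ssm$ is an atom below $(\xoy)\ssm=\yox$ by \refL{laws}(vi) and \refL{square}(iii); by uniqueness $a_{yx}=a_{xy}\ssm$. The decisive point is condition (iv): for $\pair xy,\pair yz\in\mc E$ (hence $\pair xz\in\mc E$ by transitivity of $\mc E$), the common base point $p_y$ factors the pair $(p_x,p_z)$, since $(p_x,p_y)\in h(a_{xy})$ and $(p_y,p_z)\in h(a_{yz})$ give $(p_x,p_z)\in h(a_{xy})\circ h(a_{yz})=h(a_{xy};a_{yz})$. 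As also $(p_x,p_z)\in h(a_{xz})$, the element $a_{xz}\cdot(a_{xy};a_{yz})$ is nonzero, and since $a_{xz}$ is an atom this forces $a_{xz}\le a_{xy};a_{yz}$. Thus $\langle a_{xy}:\pair xy\in\mc E\rangle$ is a scaffold.

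The main obstacle I expect is not the verification of (iv) --- which is immediate once all the $a_{xy}$ are read off from a single coherent family of base points --- but rather the two places where the representation's global shape intervenes: securing that the base points $p_x$ for $x$ in a fixed $\mc E$-class can be taken in a single $h(1)$-class (handled by the domain argument together with the equivalence-relation property of $h(1)$), and invoking \emph{completeness} of the representation at exactly the step guaranteeing that the pair $(p_x,p_y)$ actually belongs to the image of some atom below $\xoy$. A merely classical representation would not suffice there, which is precisely why complete representability, and not plain representability, is the right hypothesis.
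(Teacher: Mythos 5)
Your proposal is correct and follows essentially the same route as the paper's own proof: pick one base point per measurable atom so that all base points of a single $\mc E$-class lie in one class of the unit relation, read off $a_{xy}$ as the unique atom below $\xoy$ whose image contains $(p_x,p_y)$ (this is exactly where complete representability is needed), and verify conditions (i)--(iv) of \refD{scaffold} by chasing the pairs $(p_x,p_y)$. The only cosmetic difference is how the base points are made coherent: you anchor at $p_{x_0}$ and propagate along the rectangles $x_0;1;x$ using that $h(1)$ is an equivalence relation, while the paper fixes an equivalence class $V_\xi$ of $E$ with $V_\xi\times V_\xi\subseteq E\rp\vth(w_\xi)\rp E$ and chooses every $p_x$ inside it --- the two devices accomplish the same thing.
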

\begin{proof}Let $\f A$ be a measurable relation algebra, and assume that $\vth$ is a complete representation of $\f A$. Thus,  $\vth$ is an embedding of $\f A$
 into the set relation
algebra of all subrelations of some equivalence relation $E$ on a base
set $U$,  so that
\begin{equation*}\tag{1}\labelp{Eq:repscaf2}
\vth(1)=E\comma
\end{equation*}
and $\vth$ preserves all existing suprema in $\f A$ as
unions A completely representable
relation algebra is  always atomic (see \cite{hh97}), so  $\f A$
must be atomic. Take $X$ to be the set of atoms in $\f A$.  Each
element $d$ in $\f A$ is the sum of the atoms below it,
\begin{equation*}
d=\tsum\{a\in X:a\le d\}\comma
\end{equation*} and therefore
\begin{equation*}\tag{2}\labelp{Eq:repscaf3}
\vth(d)=\tbigcup\{\vth(a):a\in X\text{ and }a\leq d\}\comma
\end{equation*}  by  the assumption that the representation $\vth$ is
complete.

The set  $\mc E$   of   pairs $\pair\wx\wy$ of measurable atoms
such that $x;1;y\neq 0$, or equivalently, such that
\begin{equation*}
 1;x;1=1;y;1\comma
\end{equation*}   is  an equivalence relation on the set of measurable
atoms.  Let $\langle \ww_\xi:\xi\le\kp\rangle$ be a system of
representatives for the equivalence classes, so that each
equivalence  class of $\mc E$ contains exactly one $\ww_\xi$\per

The relations $\vth(\ww_\xi)$ are non-empty, because the elements $\ww_\xi$ are atoms, and they are included
in the equivalence relation $E$, which is  the   unit of the representing algebra. It follows that
each   relation of the form
 \begin{equation*}\tag{4}\labelp{Eq:repscaf5}
E\rp\vth(\ww_\xi)\rp E
\end{equation*}
is a non-empty union of  components of $E$, that is to say, it is a non-empty union of relations of the form $V\times V$, where each
$V$ is an   equivalence  class
of
$E$.  In more detail, if $\pair pq$ is a pair in $\vth(\cs w\x)$, then $p$ and $q$ are in the same equivalence class of $E$, call it $V$. If $r$ and $s$ are any other elements in $V$,
then the pairs $\pair rp$ and $\pair qs$ are both in $E$, and therefore the pair $\pair rs$ must be in \refEq{repscaf5}, by the definition of relational composition.  Thus, every pair in $V\times V$ belongs to \refEq{repscaf5}.   For each representative $\cs w\x$, choose an equivalence class $\cs V\x$ of $E$ such that
\begin{equation*}\tag{5}\labelp{Eq:repscaf6}
V_\xi\times V_\xi\seq E\rp\vth(\cs w\x)\rp
E\per
\end{equation*}

Consider now an arbitrary measurable atom $x$.  Let  $\ww_\xi$ be the representative of   $\wx$, so that
\begin{equation*}\tag{6}\labelp{Eq:repscaf.5}
  1;x;1=1;\cs w\x;1\comma
\end{equation*}
by the definition of $\mc E$\per Observe that
\begin{multline*}
 E\rp\vth(\wx)\rp
E=\vth(1)\rp\vth(\wx)\rp\vth(1)=\vth(1;\wx;1)=\vth(1;\ww_\xi;1)\\= \vth(1)\rp\vth(\ww_\xi)\rp \vth(1)=E\rp\vth(\ww_\xi)\rp E\comma
\end{multline*}
by  \refEq{repscaf2}, the representation properties of $\vth$, and \refEq{repscaf.5}.  It follows from this computation, the choice of $\cs w\x$, and \refEq{repscaf6} that
\begin{equation*}\tag{7}\labelp{Eq:repscaf.7}
V_\xi\times V_\xi\seq E\rp\vth(x)\rp
E\per
\end{equation*} Since $x$ is assumed to be a measurable atom in $\f A$, we have $0\neq x\le\ident$, and therefore
\[\varnot\neq \vth(x)\seq \vth(\ident)=\id U\comma\] by the representation properties of $\vth$.  Use this observation and  \refEq{repscaf.7}  to
choose an element $\cs px$ in $\cs V\x$ wih the property that the
pair $\pair{\cs px}{\cs px}$ belongs to the relation $\vth(x)$.

Consider next an arbitrary  pair
$\pair
\wx\wy$   in
$\mc E$\per The elements $\wx$ and $\wy$ are, by definition, in
the same equivalence class of $\mc E$,  and therefore they have the same representative, say $\ww_\xi$\per  The elements
$\alp x$ and
$\alp y$  are both chosen to be in $V_\xi$\comma so the pair  $\pairalp \wx\wy$ belongs to the component $V_\xi\times
V_\xi$\comma  and therefore also to the unit relation $E$.  The pairs  $\pairalp \wx\wx$  and
$\pairalp
\wy\wy$ are chosen to be in $\vth(x)$ and   $\vth(y)$ respectively, so   the pair  $\pairalp \wx\wy$ belongs to the relation
\[\vth(x)\rp E\rp\vth(y)\comma
\]
by the definition of relational composition. Since
\[\vth(x;1;y)=\vth(x)\rp\vth(1)\rp\vth(y)=\vth(x)\rp E\rp\vth(y)\comma \] by the representation properties of $\vth$ and\refEq{repscaf2}, it follows that
\begin{equation*}\tag{8}\labelp{Eq:repscaf7}
\pairalp\wx\wy\in\vth(x;1;y)\per
\end{equation*}

We now  come to the heart of the argument.    The element $\xoy$ is non-zero, by the definition of $\mc E$, so it must be
the sum of a non-empty set of atoms.  In view of \refEq{repscaf7}  and \refEq{repscaf3} (with
$\xoy$ in place of $d$), there must be an atom  below $\xoy$ whose image under $\vth $
contains  the pair
$\pairalp
\wx\wy$.  Moreover, this atom is  unique, because   \refEq{repscaf3} is a disjoint union of images of mutually distinct atoms.  Call the atom
$\axy$\per  In other words, $\axy$ is the unique atom in $\f A$ with the property that
\begin{equation*}\tag{9}\labelp{Eq:repscaf8}
\pairalp\wx\wy\in\vth(\axy)\per
\end{equation*}
We shall show that the system
\begin{equation*}\tag{10}\labelp{Eq:repscaf11}
\langle \axy:\pair \wx\wy\in\mc E\rangle
\end{equation*}
 is a scaffold.

First of all, $x$ is an atom below $x;1;x$, and the pair
$\pairalp
\wx\wx$ belongs to $\vth(x)$, by the choice of the element $\cs px$.  On the hand,  $\axx$ is defined to be the unique atom below $x;1;x$ with the
  property that its image under $\vth$
contains the pair $\pairalp
\wx\wx$.  Consequently,
\begin{equation*}\tag{11}\labelp{Eq:repscaf12}
\axx=\wx\per
\end{equation*}

 Second, $\cs a\xy$ is an atom below $\xoy$, so its converse $\cs a\xy\ssm$ is   an atom below $\yox$, by Lemmas \ref{L:laws}(vi) and \ref{L:square}(iii), and monotony.   Moreover, the pair $\pair {\cs py}{\cs px}$ belongs to
 the image $\vth(\cs a\xy\ssm)$, because the pair $\pair {\cs px}{\cs py}$ belongs to  $\vth(\cs a\xy)$, by definition, and
 \[\vth(\cs a\xy\ssm)=\vth(\axy)\mo\comma
\] by the representation properties of $\vth$.  On the other hand, the element $\cs a\yx$ is defined to be the unique atom below $\yox$
whose image under $\vth$ contains the pair $\pair {\cs py}{\cs
px}$, so it follows that
\begin{equation*}\tag{12}\labelp{Eq:repscaf13}
\ayx=\cs a\xy\ssm\per
\end{equation*}

 Finally, the pairs $\pairalp
\wx\wy$ and $\pairalp
\wy\wz$ are  in $\vth(\axy)$ and   $\vth(\ayz)$ respectively, by \refEq{repscaf8}, so the pair  $\pairalp
\wx\wz$ is in the relational composition  $\vth(\axy)\rp \vth(\axy)$, by the definition of relational composition.  Since
\[\vth(\axy)\rp\vth(\ayz)=\vth(\axy;\ayz)\comma
\] it follows that the pair  $\pairalp
\wx\wz$  belongs to the relation $\vth(\cs a\xy;\cs a\yz)$. The same pair also belongs to the relation $\vth(\axz)$, by \refEq{repscaf8} so the two relations
have a non-empty intersection.  Use the representation properties of $\vth$ to see that the elements    $\axy;\ayz$ and $\axz$ cannot be
disjoint.  Since $\cs a\xz$ is an atom, it follows that
\begin{equation*}\tag{13}\labelp{Eq:repscaf14}
\axz\leq\axy;\ayz\per
\end{equation*}
 Equations \refEq{repscaf12}--\refEq{repscaf14} are just
the conditions required for \refEq{repscaf11} to be a scaffold.
\end{proof}

When the unit element $E$ of the representation coincides with the universal relation
$U\times U$, the steps leading up to  \refEq{repscaf7} in the preceding proof are
unnecessary. In  this case, the equivalence relation $\mc E$ consists of all pairs of measurable atoms.  For each
measurable atom $\wx$, choose an element $\alp\wx$ in  $U$ such that the pair $\pairalp
\wx\wx$ is in $\vth(x)$.  Certainly, all of the pairs $\pairalp
\wx\wy$ are in $U\times U$, that is, in $\vth(1)$.  Because the pairs $\pairalp
\wx\wx$ and $\pairalp
\wy\wy$ are in $\vth(\wx)$ and $\vth(\wy)$ respectively, the pair $\pairalp
\wx\wy$ must be in
\[\vth(\wx)\rp\vth(1)\rp\vth(\wy)\per
\]
Thus, we obtain \refEq{repscaf7} directly.  The remainder of the proof is just  as before.

The next corollary is a direct consequence of the preceding
theorem and   Scaffold Isomorphism \refT{loc}.

\begin{cor}\labelp{C:measrep}Every measurable relation algebra that is completely representable is
essentially isomorphic to a group relation algebra\po
\end{cor}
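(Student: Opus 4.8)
The plan is to obtain this corollary by chaining together the two immediately preceding results, since it asserts nothing beyond their combination. I would begin with an arbitrary measurable relation algebra $\f B$ that is completely representable. By \refT{repscaf}, the assumption of complete representability guarantees that $\f B$ possesses a scaffold $a=\langle\cs a\xy:\pair xy\in\mc E\rangle$; the proof of that theorem extracts such a scaffold directly from a complete representation $\vth$ by choosing, for each measurable atom $x$, a representative element $\alp x$ with $\pairalp xx\in\vth(x)$, and then letting $\axy$ be the unique atom below $\xoy$ whose image under $\vth$ contains $\pairalp xy$.

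With a scaffold now in hand, I would simply invoke Scaffold Representation \refT{loc}, whose hypothesis is exactly that the measurable relation algebra have a scaffold, and whose conclusion is that the algebra is essentially isomorphic to a group relation algebra. Applying \refT{loc} to $\f B$ therefore yields that $\f B$ is essentially isomorphic to a group relation algebra, which is the desired statement. Concretely, the composite argument shows that the completion of $\f B$ is isomorphic to the group relation algebra $\cra GF$ built on the frame $\mc F$ associated with the scaffold $a$.

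Since the corollary is a formal consequence of two theorems already proved in full, there is essentially no obstacle to overcome: no new construction or verification is required beyond confirming that $\f B$ is measurable (so that both \refT{repscaf} and \refT{loc} apply) and that it is completely representable (so that \refT{repscaf} applies). The only subtlety worth a sentence is that $\f B$ is automatically atomic here—complete representability forces atomicity, as noted in the proof of \refT{repscaf}—but this is already subsumed in the cited theorems and need not be re-derived. Thus the entire proof reduces to the single implication: complete representability $\Rightarrow$ existence of a scaffold (by \refT{repscaf}) $\Rightarrow$ essential isomorphism with a group relation algebra (by \refT{loc}).
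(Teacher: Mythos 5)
Your proposal is correct and matches the paper exactly: the corollary is stated there as a direct consequence of \refT{repscaf} (complete representability yields a scaffold) combined with Scaffold Representation \refT{loc} (a scaffold yields essential isomorphism with a group relation algebra). Nothing further is needed.
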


 A finite relation algebra is necessarily complete, and any representation of it is
necessarily a complete representation.  Thus, the preceding corollary can be given
a strong formulation when the algebra in question is finite.

\begin{cor}\labelp{C:finmeasrep}Every  measurable relation algebra that is finite and
representable is  isomorphic to a group relation algebra\po
\end{cor}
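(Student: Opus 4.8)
The plan is to reduce the statement to Corollary~\refC{measrep}, which asserts that every completely representable measurable relation algebra is essentially isomorphic to a group relation algebra. Two facts, both consequences of finiteness, bridge the gap: first, that a finite relation algebra is complete, and second, that every representation of a finite relation algebra is automatically a \emph{complete} representation. Granting these, a finite, representable, measurable relation algebra $\f B$ is completely representable, so Corollary~\refC{measrep} applies directly.

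First I would record that the Boolean part of a finite relation algebra is a finite Boolean algebra, in which every subset has a supremum, namely the finite join of its elements; hence $\f B$ is complete as a relation algebra. Next, let $\vth$ be any representation of $\f B$, that is, an embedding into the set relation algebra of all binary relations on some base set. For an arbitrary subset $X\seq\f B$, the supremum $\tsum X$ is in fact a finite sum, since $\f B$ is finite; any embedding preserves finite sums, and in a set relation algebra a finite sum is simply a union. Thus $\vth(\tsum X)=\tbigcup\{\vth(b):b\in X\}$ for every such $X$, which is precisely the assertion that $\vth$ preserves all existing suprema as unions, i.e.\ that $\vth$ is a complete representation.

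Finally, Corollary~\refC{measrep} yields that $\f B$ is essentially isomorphic to a group relation algebra, meaning that the completion (minimal complete extension) of $\f B$ is isomorphic to one. The concluding step is to observe that, because $\f B$ is already complete, its completion is $\f B$ itself; hence the essential isomorphism is a genuine isomorphism, and $\f B$ is isomorphic to a group relation algebra. I do not anticipate a substantive obstacle here: the argument is essentially the assembly of earlier results, and the only point requiring a moment's care is the verification that finiteness upgrades an arbitrary representation to a complete one, which rests on the elementary fact that embeddings preserve finite suprema and that finite suprema in a set relation algebra are unions.
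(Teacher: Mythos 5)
Your proposal is correct and follows the paper's own route exactly: the paper likewise observes that a finite relation algebra is complete and that any representation of it is automatically a complete representation, and then invokes \refC{measrep}, with completeness of the algebra turning ``essentially isomorphic'' into ``isomorphic.'' Your explicit justification of the two finiteness facts (finite Boolean algebras are complete; embeddings preserve finite sums, which are unions in a set relation algebra) merely spells out what the paper leaves as a remark.
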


\comment{

\noindent
\textbf{Variety generated by the coset relation algebras}

\smallskip One rather surprising consequence of the
Representation Theorem is that the class of algebras embeddable
into coset relation algebras forms an equationally axiomable
class, or a \textit{variety}, as such classes are usually
called.  The proof of this theorem is analogous to the proof of
Tarski's theorem that the class of representable \ra s forms a
variety.

\begin{theorem}\labelp{T:var} The class of algebras  embeddable
into coset \ra s is a variety\per
\end{theorem}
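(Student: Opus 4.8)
The plan is to prove that the class $\mathsf K$ of all algebras embeddable into coset relation algebras is a variety by verifying the three closure conditions of Birkhoff's $\mathsf{HSP}$ theorem: closure under subalgebras, direct products, and homomorphic images. Writing $\mathsf{CRA}$ for the class of coset relation algebras, we have $\mathsf K=\mathsf S(\mathsf{CRA})$ by definition, so closure under subalgebras is immediate from the idempotence of $\mathsf S$, namely $\mathsf S(\mathsf K)=\mathsf S\mathsf S(\mathsf{CRA})=\mathsf S(\mathsf{CRA})=\mathsf K$.

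For closure under products I would first check that a direct product of coset relation algebras is again a coset relation algebra. Given a family $\langle\mc F_i:i\in J\rangle$ of semi-frames with pairwise disjoint group index sets $I_i$, form their disjoint union $\mc F$: its index set is $\bigcup_i I_i$, its groups, quotient isomorphisms and cosets are inherited componentwise, and its equivalence relation is the disjoint union of the relations $\mc E_i$, so that no pair links two distinct components. The semi-frame conditions of \refD{semiframedef}, and likewise the coset conditions, refer only to pairs and triples lying inside a single $\mc E_i$; hence they hold for $\mc F$ because they hold for each $\mc F_i$, and $\mc F$ yields a genuine coset relation algebra. Since the base sets of distinct components are disjoint and $x;1;y=0$ whenever $x$ and $y$ lie in different components, relational composition never crosses components, and the coset relation algebra on $\mc F$ splits as the direct product of the coset relation algebras on the $\mc F_i$, its atoms being precisely the union of the atom sets of the factors. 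Thus $\mathsf P(\mathsf{CRA})\seq\mathsf{CRA}$, and combining this with the general inclusion $\mathsf P\mathsf S\seq\mathsf S\mathsf P$ gives $\mathsf P(\mathsf K)=\mathsf P\mathsf S(\mathsf{CRA})\seq\mathsf S\mathsf P(\mathsf{CRA})\seq\mathsf S(\mathsf{CRA})=\mathsf K$.

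The hard part, exactly as in Tarski's proof that the representable relation algebras form a variety, is closure under homomorphic images, and this is where I expect the real obstacle to lie. I would reduce it to the subdirectly irreducible case: by Birkhoff's subdirect representation theorem every algebra is a subdirect product of its subdirectly irreducible homomorphic images, so a class already closed under $\mathsf S$ and $\mathsf P$ is closed under $\mathsf H$ as soon as it is closed under the formation of subdirectly irreducible homomorphic images. For relation algebras the subdirectly irreducible algebras coincide with the simple ones (J\'onsson--Tarski), so it remains to show that every simple homomorphic image $\f B$ of an algebra $\f A\in\mathsf K$ again lies in $\mathsf K$. The kernel of the quotient map is an ideal $N$ of $\f A$ that is maximal because $\f B$ is simple, and it is determined by a maximal ideal of the Boolean algebra of ideal elements of $\f A$. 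Mirroring Tarski, the idea is to localize a faithful coset-relation-algebra representation of $\f A$ at the corresponding ultrafilter so as to annihilate precisely the elements of $N$, and then to recognize the simple quotient as embeddable into a coset relation algebra. At this last step the paper's own Representation \refT{main} is the natural tool: once $\f B$ is exhibited as embeddable into an atomic, measurable relation algebra, its completion is a coset relation algebra by \refT{main}, whence $\f B\in\mathsf K$.

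The delicate points in the homomorphic-image step, and the reason a naive argument fails, are that the quotient map need not be complete, so the infinite sums witnessing measurability need not be preserved, and that the measurable atoms of $\f A$ may be merged or destroyed in $\f B$. Overcoming this is exactly where Tarski's ultrafilter-and-relativization technique is needed in place of a direct ``the image is again measurable'' argument; adapting that technique so as to land inside a coset relation algebra, via \refT{main}, is the crux on which the whole proof turns.
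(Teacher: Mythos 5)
Your closure arguments for $\mathsf S$ and $\mathsf P$ are fine and agree in substance with what the paper relies on (the disjoint union of semi-frames realizing the direct product is exactly the content of the product result the paper cites), and your reduction of $\mathsf H$-closure to simple homomorphic images via Birkhoff's subdirect representation theorem and the J\'onsson--Tarski identification of subdirectly irreducible relation algebras with simple ones is correct. But the proposal stops exactly where the proof has to begin: you never establish that a simple homomorphic image of an algebra embeddable into a coset relation algebra is again embeddable into a coset relation algebra. The ``localize a faithful representation at the ultrafilter corresponding to the maximal ideal $N$'' step is only a plan, and it is not clear it can be carried out as stated: when $N$ is non-principal the quotient $\f A/N$ is not a relativization of $\f A$, and the natural way to realize it inside something concrete is an ultraproduct-style construction --- at which point you need the target class to be closed under ultraproducts, i.e.\ you need elementary axiomatizability, which is precisely the ingredient your argument never invokes. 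As you yourself flag, this is ``the crux on which the whole proof turns,'' so the proposal has a genuine gap rather than a complete alternative proof.

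The paper's argument avoids $\mathsf H$ entirely and runs through model theory instead. Let $\mathsf K$ be the class of atomic, measurable relation algebras. The key observation --- and the real point of the whole ``measurable'' framework, emphasized in the introduction --- is that $\mathsf K$ is first-order axiomatizable: atomicity and measurability are each expressible by a first-order sentence. Tarski's preservation theorem then gives that $\mathsf S(\mathsf K)$ is axiomatized by universal sentences; by Representation \refT{main} together with the fact that every coset relation algebra is itself an atomic measurable relation algebra, $\mathsf S(\mathsf K)$ coincides with the class of algebras embeddable into coset relation algebras. Finally, each universal sentence is equivalent, over simple relation algebras, to an equation, and since every relation algebra is a subdirect product of simple ones while $\mathsf S(\mathsf K)$ is closed under subalgebras and direct products, the resulting set of equations (together with the relation algebra axioms) axiomatizes $\mathsf S(\mathsf K)$. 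Equational axiomatizability gives the variety property at once, with no direct analysis of quotients. If you want to rescue your Birkhoff-style approach, the missing lemma you must supply is exactly closure of $\mathsf S(\mathsf K)$ under ultraproducts or, equivalently, the first-order definability of measurability; once you have that, the paper's route is the shorter path to the conclusion.
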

\begin{proof} Let $\sk K$ be the  class of all atomic,
measurable \ra s, and  denote the class of algebras that are
embeddable in some algebra of $\sk K$ by
$\sk S(\sk K)$.
\num 1 {$\sk K $ is first-order axiomatizable.}\enum In other
words, there is a set $\Gamma$ of first order sentences such
that an algebra $\f A$ is in $\sk K$ just in case it is a model
of $\Gamma$, that is, all the sentences of
$\Gamma$ are true of $\f A$. First of all, put the relation
algebraic axioms into
$\Gamma$.  Next, recall that the property of being an atom is
expressible in first-order logic: an atom is a non-zero
element that is disjoint from every element that it is not
below.  Consequently, the property of being an atomic algebra
is expressible by a first-order sentence
$\vph$  that says: below every non-zero element there is an
atom.  Put
$\vph$ into $\Gamma$.  The property of being a measurable atom
is also first-order expressible: an element $x$ is a measurable
atom just in case $\wx$ is a diagonal atom (an atom below
$\ident$), and  every non-zero element below $\xox$ is above some
non-zero  functional element.  Therefore, there is a
first-order sentence $\psi$ that expresses the property of an
algebra being measurable: below every non-zero diagonal element
there is a measurable atom.  Put $\psi$ into $\Gamma$.
Clearly, $\Gamma$ is a set of axioms for
$\sk K$.

A well-known theorem of Tarski \cite{t55} says that, for any
first-order axiomatizable class $\sk L$, the class $\sk S(\sk
L)$ of  algebras embeddable into algebras of $\sk L$ is
axiomatizable by a set of (first-order) universal sentences.
In particular,
\num 2 {$\sk S(\sk K) $ is axiomatizable by a set of universal
sentences.}\enum

 Every coset
\ra\ is in $\sk K$, by
\refT{cosetmeas}.  Consequently, every algebra embeddable into
a coset \ra\ is in $\sk S(\sk K) $.  For the reverse inclusion,
use  the Representation Theorem:  it says that every algebra in
$\sk K$ is embeddable into a coset
\ra.  Hence, every algebra in $\sk S(\sk K) $ is  embeddable
into a coset relation algebra.  This proves that
\num 3 {$\sk S(\sk K) $ is just the class of algebras
embeddable  into coset \ra s.}\enum

The direct product  of coset
\ra s is isomorphic to a coset \ra, by
\refC{directprod}.  Consequently, the direct product of
algebras embeddable into coset
\ra s  is itself embeddable into a coset \ra.  In other words,
$\sk S(\sk K)$ is closed under direct products.  It  is
obviously closed under subalgebras.  Hence,

\num 4 {$\sk S(\sk K) $ is closed under subdirect
products.}\enum

Let $\Theta$ be a set of universal sentences that axiomatizes
$\sk S(\sk K) $.  Such a set exists by (2).  It is well known
that for every universal sentence $\theta$ in the  language of
\ra s, there is an (effectively constructible)  equation
$\varepsilon_\theta$ in the language of relation algebras such
that
$\theta$ and $\varepsilon_\theta$ are equivalent in all simple
\ra s, that  is,
$\theta$ is valid in a simple \ra\ $\f A$ just in case
$\varepsilon_\theta$ is valid in
$\f A$ (see ???).  Let $\Delta$ be the set of equations
corresponding to  universal sentences in
$\Theta$,
\[\{\varepsilon_\theta:\theta\in\Theta\}\comma
\] together with the axioms of the theory of relation algebras.
\num 5 {$\Delta$ is a set of axioms for $\sk S(\sk K) $.}\enum

To prove (5), consider any model $\f A$  of $\Delta$.  Then
$\f A$ is a relation algebra.  Every relation algebra is
isomorphic to a subdirect product of simple relation algebras
(see \refT{}).  Let $\f B$ be a simple, subdirect factor of $\f
A$.  Then $\f B$ is a homomorphic image of $\f A$, so every
equation true of $\f A$ is true of $\f B$.  (Recall that
equations are preserved under the passage to homomorphic
images.)  It follows that each equation in
$\Delta$ is valid in $\f B$.  But $\f B$ is simple, by
assumption. Therefore, each sentence in
$\Theta$ is valid in $\f B$.  Consequently, $\f B$ is in $\sk
S(\sk K) $.  This shows that every simple, subdirect factor of
$\f A$ is in $\sk S(\sk K) $.  Since $\sk S(\sk K) $ is closed
under subalgebras and direct products, it follows that $\f A$
is in $\sk S(\sk K)$. In other words, every model of $\Delta$
is in $\sk S(\sk K)$.

For the  reverse inclusion, consider  any coset relation
algebra $\cra C F$, where
$\mc F$ is a  semi-frame that satisfies the  coset
conditions.  Certainly, $\cra C F$ is  in
$\sk S(\sk K)$, by (3), and hence it is a model of $\Theta$.
Suppose  that the  semi-frame $\mc F$ is  simple.  Then
$\cra C F$ is simple (in the algebraic sense of the word), by
\refT{simple1}.  Therefore each   equation  corresponding to a
sentence in $\Theta$ is true of $\cra C F$. Thus, $\cra C F$ is
a model of $\Delta$. Next, consider the case when $\mc F$ is
not simple.  By \refT{cosdecomp}, the algebra $\cra C F$ is
isomorphic to a direct product of coset relation algebras $\cra
C {F\famxi}$, where each $\mc F\famxi$ is a  \textit{simple}
semi-frame satisfying the coset conditions.  (In fact, the
semi-frames
$\mc F\famxi$ are just  the components of $\mc F$.)  We have
seen that each algebra
$\cra C {F\famxi}$ must be a model of $\Delta$.  Since
equations are preserved under the passage to direct products,
it follows that $\cra C F$ is a model of $\Delta$.  In other
words, every coset \ra\ is a model of $\Delta$.  Finally,
equations are also preserved under that passage to
subalgebras.  Thus, any algebra embeddable into a coset \ra\
will be a model of $\Delta$.  In view of (3), this establishes
(5), and hence the theorem.
\end{proof}

}
\section{Finitely measurable relation algebras}\labelp{S:sec8}

 Atomic Partition \refL{sumatom}   raises several  question.  For example, under what conditions
will there be at least one atom below a given rectangle  $\xoy$
when the sides $x$ and $y$ are  measurable atoms?  Speaking more
broadly, under what conditions will a measurable relation algebra
automatically be atomic?  It turns out that finite measurability
implies atomicity.  In other words, if the group $\cs Gx$ is
finite for every measurable atom $x$ in a measurable relation
algebra, then the algebra is automatically atomic.    Keep in mind
that finitely measurable relation algebras may be infinite in
size, because there may be infinitely many measurable atoms in the
algebra.

 The key observation is contained in the following lemma, which could have been proved immediately after   Product \refL{a-b.leftregular}. Recall that the set $X_a$ is a union of cosets
of $\h a$\per If it is the union of only finitely many such
cosets, then we shall say that $\h a$ has \textit{finite index} in
$X_a$\per This terminology parallels that of group theory, where
one speaks of the index of a subgroup in a group.

\begin{lm} \labelp{L:finiteindex}
Let $\wx$ and $\wy$ be measurable atoms\comma and $0<a\le\xoy$\po
If $\h\al$ has finite index in $X_\al$\comma then there is
left-regular element below $a$\po
\end{lm}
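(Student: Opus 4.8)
The plan is to produce the left-regular element as the \emph{private part} of $a$ --- the part of $a$ that lies below no proper translate of $a$ --- and then to guarantee that this part is nonzero by a minimality argument. To set up, I would first unwind the hypothesis. By the remarks following \refL{leftcos} the set $X_a$ is a union of left cosets of $\L_a$, and the assumption that $\L_a$ has finite index in $X_a$ says that there are only finitely many such cosets, say $C_0=\L_a,C_1,\dots,C_{n-1}$. By \refC{stab} these correspond to the finitely many distinct left translations $a_0=a,a_1,\dots,a_{n-1}$ of $a$ (with $a_i=C_i;a$), and by \refL{leftcos}(i) these are exactly the translations $g;a$, $g\in G_x$, that are not disjoint from $a$.

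The core of the argument is the element $b=a\cdot\comp_{\xoy}a_1\cdots\comp_{\xoy}a_{n-1}$, the part of $a$ below no proper overlapping translate. I would show that \emph{if $b\neq 0$ then $b$ is left-regular}, using the criterion \refL{equiv}(ii). Suppose $(g;b)\cdot b\neq 0$. Since $b\le a$ we have $(g;b)\cdot b\le (g;a)\cdot a$, so $g\in X_a$ by \refL{leftcos}(i), whence $g;a=a_k$ for some $k$. If $k\neq 0$ then $g;b\le a_k$ while $b\le\comp_{\xoy}a_k$, forcing $(g;b)\cdot b=0$, a contradiction; hence $g\in\L_a$. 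Finally, for $g\in\L_a$ the translation $\vth_g$ is a Boolean automorphism of $A(\xoy)$, by \refL{auto}, fixing $a$, so it permutes the finite set $\{a_0,\dots,a_{n-1}\}$ of translates meeting $a$ and fixes $a_0$; therefore it permutes $\{a_1,\dots,a_{n-1}\}$ and fixes $b$, giving $g;b=b$. Thus \refL{equiv} shows $b$ is left-regular, and $b\le a$.

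It remains to guarantee nonemptiness, and here the difficulty is genuine: $a$ may be covered by its proper overlapping translates, in which case $b=0$. To handle this I would argue by minimality. Let $\Phi$ be the set of nonzero $e\le a$ for which $\L_e$ has finite index in $X_e$. It contains $a$, and it is closed under passing from $e$ to $e\cdot(f;e)$ for $f\in X_e$: since $e\cdot(f;e)\le e$ and $e\cdot(f;e)\le f;e$, any translation $g;(e\cdot(f;e))=(g;e)\cdot(g;f;e)$ meeting $e\cdot(f;e)$ forces $g;e$ to meet $e$ and $g;f;e$ to meet $f;e$, so such a translation is determined by the pair $(g;e,\,g;f;e)$ and there are at most $[X_e:\L_e]^2$ of them, keeping the index finite. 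I would then choose $e\in\Phi$ with $[X_e:\L_e]$ minimal. If the private part of $e$ is nonzero, the core construction yields an element of index $1$, so minimality forces $[X_e:\L_e]=1$ and $e$ itself is left-regular by \refC{lrreg}.

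The hard part is therefore the coverage case, in which $e$ equals the union of its proper overlapping translates; equivalently, I must rule this out for an index-minimal $e$. The plan is to show that in this case intersecting $e$ with a single overlapping translate $e\cdot(f;e)$ \emph{strictly} lowers the index, contradicting minimality and thus finishing by finite descent. This is the delicate step, because under the conjugation $\L_{f;e}=f;\L_e;f\ssm$ the stabilizer and the coset structure of $X_e$ interact nontrivially; in the abelian case one checks directly that the stabilizer of $e\cdot(f;e)$ enlarges and the index drops (for instance when $f$ satisfies $f;(e\cdot f;e)=e\cdot f;e$), but the general non-abelian bookkeeping of how $X_e\cap f;X_e;f\ssm$ decomposes into cosets of $\L_{e\cdot(f;e)}$ is the main obstacle, and it is exactly where the finite-index hypothesis must be used decisively.
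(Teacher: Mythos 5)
Your core construction is sound as far as it goes: if the ``private part'' $b=a\cdot\comp_{\xoy}a_1\cdot\,\cdots\,\cdot\comp_{\xoy}a_{n-1}$ is non-zero, your argument via \refL{equiv} does show that it is left-regular, and the verification that left translation by an element of $\L_\al$ permutes the finitely many translates of $a$ that meet $a$ is correct. The genuine gap is exactly the one you flag yourself: nothing rules out $b=0$, and the proposed repair by ``finite descent'' on the index of $\L_e$ in $X_e$ is never carried out. Worse, the only quantitative control you establish when passing from $e$ to $e\cdot(f;e)$ is the upper bound $[X_e:\L_e]^2$ on the new index, which points in the wrong direction: there is no argument that the index ever \emph{strictly decreases} in the coverage case, nor even that it fails to increase, so the minimality argument has nothing to bite on. As written, the proof is incomplete at its essential point, and the missing step is not a routine bookkeeping matter---it is the entire content of the lemma.

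The paper sidesteps the problem by maximizing an intersection instead of complementing. Among the finitely many subsets $Z$ of $X_\al$ that are unions of cosets of $\L_\al$, contain $x$, and satisfy $\tprod_{f\in Z}f;a\neq 0$ (the set $\L_\al$ itself is one such), choose one that is maximal, and put $b=\tprod_{f\in Z}f;a$. Non-zeroness is then built in, and $b\le a$ because $x;a=a$. For $g\in X_\bl$ the distributive law for functions gives $g;b=\tprod_{f\in Z}g;f;a$, so $(g;b)\cdot b\neq 0$ shows that $Z\cup g;Z$ is again admissible; maximality forces $g;Z\seq Z$, and since left multiplication by $g$ permutes the finitely many cosets injectively, $g;Z=Z$ and hence $g;b=b$. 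Thus $X_\bl=\L_\bl$ and $b$ is left-regular by \refC{lrreg}. If you want to salvage the private-part idea you would have to prove that the coverage case cannot occur for a suitably chosen element below $a$, which is precisely what the maximal-intersection construction delivers for free.
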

\begin{proof} 
Consider the collection of subsets $Z$ of $
X_\al$ with the following properties.  First,   $x$ is in $Z$. Second,
\begin{equation*} 
\prodreg_{f\in Z} f;a\not= 0\per 
\end{equation*}
Because $f;a=g;a$ if and  only if $f$ and
$g$ are in the same coset of $\h a$\comma by \refC{stab}, it may also be assumed   that
 $Z$ is a union of cosets of $\h a$\per

There are only finitely many cosets of $\h a$ included in $X_a$,
by assumption, and   each $Z$ under consideration is assumed to be
a union of some of these cosets,  so there are only   finitely
many possible choices for $Z$.
 Moreover, there certainly exist
sets $Z$ with the required properties. For instance, the left stabilizer
$\h a$ is such a set. In more detail, the identity element $x$ is in $\cs Ha$, because $\cs Ha$ is a subgroup of $\cs Gx$. Also,
\[\tprod_{f\in\cs Ha}f;a =a\not=0\comma\] because $\cs Ha$ is the left stabilizer of $a$. Finally, $\cs Ha$   is the union of the singleton coset $\{\cs Ha\}$.

  As there are only finitely many choices for the set $Z$, it is possible to choose one  of maximal cardinality. Let $Z$ be such a
  maximal choice, and  write
\begin{equation*} 
b=\prodreg_{f\in Z} f;a\comma \tag*{(1)} \labelp{Eq:P27/3}
\end{equation*}
with the goal of showing that $b$ is a left-regular element below
$a$. First of all,  $b$ is not $0$,  by the second property. Also,
$\wx$ is in $Z$, by the first property, so $\wx;a$ is one of the
elements in the product \ref{Eq:P27/3}. \refL{domain}(iii) implies
that $x;a=a$, so $a$ must be one of the elements in in the product
\ref{Eq:P27/3}, and therefore $b$ must be below $a$, by Boolean
algebra. It remains to show that $b$ is left-regular. This amounts
to proving  that
\begin{equation*} 
X_\bl= \L_\bl\comma \tag*{(2)} \labelp{Eq:P27/6}
\end{equation*} by \refC{lrreg}.

The inclusion from right to left in \ref{Eq:P27/6} is a  consequence of \refL{leftcos}(ii).  To establish the reverse inclusion,
consider an element $g$ in $ X_\bl$\comma with the goal of showing that $g$ is in $\cs Hb$\per Observe that \ref{Eq:P27/3} is essentially a finite product. Use \ref{Eq:P27/3} and the distributive law for   functions to obtain
\begin{equation*} 
g;b= g; (\prodreg_{f\in Z} f;a) =\prodreg_{f\in Z} g;f;a\per \tag*{(3)}
\labelp{Eq:P27/7}
\end{equation*}
  The product
$(g;b)\cdot b$ is non-zero, by \refL{leftcos}(i) (with $g$ and $b$
in place of $f$ and $a$ respectively), because $g$ is assumed to
belong to $X_\bl$. Consequently,
\begin{equation*}\tag{4}\label{Eq:P27.4}
(\prodreg_{f\in Z} g;f;a) \cdot  (\prodreg_{f\in Z} f;a)
=(g;b)\cdot b\not= 0\comma
\end{equation*}
by \ref{Eq:P27/7} and \ref{Eq:P27/3}.  Use \ref{Eq:P27/7},
\refEq{P27.4}, and the assumed maximality of the cardinality of
$Z$ to conclude that every element in the set
\begin{equation*} 
g;Z=\{g;f:f\in Z\}\tag*{(5)} \labelp{Eq:P27/8}
\end{equation*} must also belong to $Z$, that is to say,
\begin{equation*} 
g;Z\seq Z\per\tag*{(6)} \labelp{Eq:P27/9}
\end{equation*}

 For each coset $\cs H\xi$ of $\cs Ha$ that is included in $Z$, the coset $g;\h\xi$ is included in $g;Z$, by \ref{Eq:P27/8}, and
  these cosets are distinct for distinct $\xi$, by the cancellation law for groups. Consequently,  the sets $g;Z$
and
$Z$ are unions of the same finite number of cosets of $\h a$\per  Use this observation and
   \ref{Eq:P27/9} to arrive at
  $g;Z=Z$, from which it follows that
\[g;b=\prodreg_{f\in Z}g;f;a= \prodreg_{f\in Z} f;a=b\comma
 \]by \ref{Eq:P27/7} and \ref{Eq:P27/3}. This proves that $g$ is in the left stabilizer $\L_\bl$\comma so \ref{Eq:P27/6} holds and therefore $b$ is left-regular.\end{proof}

\begin{lm} \labelp{L:finite}
For any  measurable atoms $\wx$ and $\wy$\comma and any non-zero element $a\le\xoy$\comma if the set
$X_\al$ is finite\co then there is an atom below
$a$\po
\end{lm}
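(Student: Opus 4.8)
The plan is to reduce to \refL{finiteindex} and then iterate, using a minimality argument on the orders of the left stabilizers. First I would record the starting point: since $X_\al$ is a union of left cosets of $\L_\al$ in $\cs Gx$ (by the remarks following \refL{leftcos}) and $X_\al$ is finite by hypothesis, the subgroup $\L_\al$ has only finitely many cosets in $X_\al$; that is, $\L_\al$ has finite index in $X_\al$. Hence \refL{finiteindex} applies and yields a left-regular element below $a$. Thus the set $S$ of non-zero left-regular elements below $a$ is non-empty.

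Next I would set up the invariant to minimize. For any $b$ in $S$ we have $b\le a$, so $b;b\ssm\le a;a\ssm$ by monotony, and therefore $X_\bl\seq X_\al$ since both are sets of atoms; in particular $X_\bl$ is finite. Because $\L_\bl\seq X_\bl$ by \refL{leftcos}(ii), the group $\L_\bl$ is then finite as well. Consequently $\{\,\lvert\L_\bl\rvert : b\in S\,\}$ is a non-empty set of positive integers and so has a least element. I would fix an element $b$ in $S$ that realizes this minimum, and claim that $b$ is an atom; this finishes the proof, since $b\le a$.

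The heart of the argument is to show that the minimality of $\lvert\L_\bl\rvert$ forces $b$ to be an atom. Supposing otherwise, there is some $c$ with $0<c<b$. Since $c\le a$, the set $X_\cl\seq X_\al$ is finite, so $\L_\cl$ has finite index in $X_\cl$, and \refL{finiteindex} (with $c$ in place of $a$) produces a left-regular element $d\le c$. Then $d$ lies in $S$ with $d\le c<b$, so $d\neq b$ while $d\le b$; as $d\cdot b=d\neq 0$ (using \refL{regnonzero}) and $d,b$ are both left-regular, \refL{char.less} gives $\L_\dl\neq\L_\bl$, whereas $\L_\dl\seq\L_\bl$ by \refL{lessthan}. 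Hence $\L_\dl\subsetneq\L_\bl$, so $\lvert\L_\dl\rvert<\lvert\L_\bl\rvert$, contradicting the choice of $b$. Therefore $b$ has no proper non-zero part, so $b$ is an atom below $a$.

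The step I expect to be the main obstacle is precisely this minimality argument: one must guarantee that passing to a smaller left-regular element strictly decreases a finite integer invariant. The key mechanism is that \refL{char.less} upgrades the mere inclusion $\L_\dl\seq\L_\bl$ coming from \refL{lessthan} into a \emph{strict} inclusion whenever $d<b$ are left-regular with $d\cdot b\neq 0$. Minimizing over the finite quantities $\lvert\L_\bl\rvert$ rather than over the elements themselves then turns the whole matter into a clean well-ordering argument, so that no descending chain condition or appeal to dependent choice is needed.
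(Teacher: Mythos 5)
Your proof is correct and follows essentially the same route as the paper's: both arguments use \refL{finiteindex} to show that the set of left-regular elements below $a$ is non-empty, select one whose (finite) left stabilizer has minimal cardinality, and then derive a contradiction from a proper non-zero part by producing a left-regular element with a strictly smaller stabilizer via \refL{finiteindex}, \refL{lessthan}, and \refL{char.less}. The only cosmetic difference is that you spell out explicitly that finiteness of $X_\al$ yields the finite-index hypothesis of \refL{finiteindex}, a point the paper leaves implicit.
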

\begin{proof} Observe first that if $b\le a$, then $\cs Xb\seq\cs Xa$. Indeed, if $b\le a$, then the monotony laws imply
that
$b\ssm\le a\ssm$ and therefore that $b;b\ssm\le a;a\ssm$. Consequently,
\begin{equation*}
\ssum X_\bl =b;b\ssm\le a;a\ssm = \ssum X_\al\comma
\end{equation*}
by definition of the sets $X_\al$ and $X_\bl$\per
 As these   are sets of atoms, it follows that
$X_\bl $ is included in $ X_\al$\per

Let $W$ be the set  of left-regular elements
below $a$.
For every element $b$   in $W$,
\begin{equation*}
\L_\bl=X_\bl\seq X_\al\comma\end{equation*}
by \refC{lrreg} and the initial observation of the proof, so  the stabilizer of every element in $W$ is   finite. Moreover,
the set $W$ is not empty, by \refL{finiteindex}.  It is therefore possible to choose an element $b$ in $W$ such that its left stabilizer $\cs Hb$
has minimal finite cardinality among the left stabilizers of elements in $W$.  Clearly,
\begin{equation*}
  0<b\le a\comma\tag{1}\labelp{Eq:finite.1}
\end{equation*}
 by \refL{regnonzero} and the definition of $W$.

 The argument that
$b$ must be an atom proceeds by contraposition.  If $b$ is not an atom, then there
is a non-zero element $c$ that is strictly below $b$, by Boolean algebra and the first inequality in \refEq{finite.1}.  Obviously,
$c$ is also  below $a$, by the second inequality in \refEq{finite.1}, so the set
 $X_c $ is included in the set $X_\al$\comma by the initial observation of the proof (with $c$ in place of $b$).  In particular,
 the set $X_c$ must  be  finite, since this is true of $\cs Xa$. Apply \refL{finiteindex} (with $c$ in place of $a$) to obtain a left-regular element
$d$ below $ c$. It follows from the definition of the set $W$ that $d$ belongs to this set.  Both $b$ and  $d$
are left-regular elements, and
\[0<d=b\cdot d\le c< b\comma\] by the choices of the elements $c$ and $d$, together with \refL{regnonzero}.  These inequalities and \refL{char.less} imply that   the left stabilizer $\L_d$ must be  a proper
subgroup of the finite left stabilizer $
\L_\bl$. Consequently, the cardinality of  $\L_\bl$ cannot be   minimal among  the left stabilizers of  elements $b$ in $W$.
\end{proof}

\begin{theorem}\labelp{T:atomic} Every finitely measurable relation algebra is atomic\per
\end{theorem}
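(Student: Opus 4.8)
The plan is to derive the theorem directly from \refL{finite}, which already does the hard work of producing an atom below a non-zero element sitting beneath a rectangle with finite $\cs Xa$. Let $\f A$ be a finitely measurable relation algebra and let $I$ be the set of its finitely measurable atoms, so that $\ident=\tsum I$ by the definition of finite measurability, and every group $\cs Gx$ with $x\in I$ is finite. To show that $\f A$ is atomic I would fix an arbitrary non-zero element $r$ and produce an atom below it.

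First I would reduce to an element lying below a rectangle whose sides are finitely measurable atoms. Since the sum $\tsum I$ exists, the complete distributivity of relative multiplication over existing sums gives
\[
1=\ident;1;\ident=(\tsum I);1;(\tsum I)=\tsum\{\xoy:x,y\in I\}.
\]
Multiplying this on the left by $r$ and using the distributivity of Boolean meet over an existing join yields $r=r\cdot 1=\tsum\{r\cdot(\xoy):x,y\in I\}$. Because $r\neq 0$, not every summand can be $0$, so there exist $x,y\in I$ with $a:=r\cdot(\xoy)\neq 0$, and clearly $a\le\xoy$.

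Next I would invoke finite measurability to control $\cs Xa$. Because $x$ is finitely measurable, the square $\xox$ is a finite sum of functions, so by \refL{funcatom1} the relativized Boolean algebra $A(\xox)$ is a finite Boolean algebra whose atoms are precisely the elements of $\cs Gx$; in particular \refL{unique.set} applies and the set $\cs Xa\seq\cs Gx$ is well defined and, being a subset of the finite group $\cs Gx$, is finite. \refL{finite}, applied to these $x$, $y$, and $a$, then furnishes an atom below $a$, and hence below $r$. Since $r$ was an arbitrary non-zero element, $\f A$ is atomic.

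The substantive content has of course already been expended in \refL{finiteindex} and \refL{finite}: the former extracts a left-regular element below $a$ by a maximality argument over the finitely many unions of cosets of $\cs Ha$ contained in $\cs Xa$, and the latter descends from that left-regular element to a genuine atom, using the strictly decreasing chain of finite stabilizers guaranteed by \refL{char.less}. The one point requiring care is that the theorem is asserted for finitely measurable algebras that need not be complete, whereas the earlier local machinery was developed under a global completeness hypothesis; the main obstacle is therefore to be sure that defining $\cs Xa$, forming the sum $\tsum\cs Xa$, and running the cited lemmas remain legitimate here. This causes no difficulty precisely because finite measurability confines every relevant computation to the finite Boolean algebra below a square $\xox$, so all the sums that occur are finite and exist automatically, independently of completeness.
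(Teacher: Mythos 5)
Your proof is correct and follows essentially the same route as the paper's: decompose the unit as $\tsum\{\xoy:x,y\in I\}$ by complete distributivity, extract a non-zero $a\le\xoy$ below the given element, observe that $\cs Xa$ is finite because $\cs Gx$ is, and invoke \refL{finite}. Your closing remark about why the global completeness hypothesis of the earlier sections is harmless here (all relevant sums being finite) is a point the paper passes over silently, but it does not change the argument.
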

\begin{proof}Let $a$ be an
arbitrary non-zero element in the \ra, with the goal of showing that
there is an atom below
$a$.  Consider the set
$ I$  of measurable atoms. Each element $x$ in $I$
 is  finitely
measurable, by assumption, so the corresponding group $\G x$ is finite, by the  definition of finite measurability.
The identity element $\ident$ is the sum of the set $I$, by the assumption of measurability,
so
\begin{equation*}\tag{1}\labelp{Eq:atomic.1} 1 = \ident;1;\ident = (\tsum I);1;(\tsum I) =
\tsum\{\xoy:x,y\in I\}\comma
\end{equation*}
by the complete distributivity of relative multiplication.
It follows from \refEq{atomic.1} and Boolean algebra that
\begin{equation*}\tag{2}\labelp{Eq:atomic.2} a=a\cdot 1 = \tsum\{a\cdot (\xoy):x,y\in I\/\}\per
\end{equation*}
The element $a$ is assumed to be different from zero, so \refEq{atomic.2} implies that there must be measurable atoms $x$ and $y$ in
$I$ such that the element \[d=a\cdot (\xoy)\] is not $0$. The set
$X_d$ is a subset of the finite group $\G x$ and is therefore itself a finite set. Apply \refL{finite} (with $d$ in place of $a$) to
obtain  an atom $b$ below $d$.   Clearly, $b$ is also an atom below $a$.
\end{proof}

\section{A characterization of regular elements}\labelp{S:sec9}

The next theorem and its corollary characterize regular elements
in several illuminating ways. One consequence of the corollary is
that, in the presence of atoms, left-regularity implies
right-regularity and conversely. In other words, left-regular (or
right-regular) elements are regular. Moreover, regular elements
are precisely the elements that can be written in the form $\ssum
M;\al$ for some atom $\al$ below $\xoy$ and some subgroup $M$ of
$\G x$ that includes $\L_\al$\per Equivalently, regular elements
are precisely the elements that can be written in the form $\ssum
{\m \xi};\al$ for some atom $\al$ and some coset $\m \xi$ of a
subgroup $M$ of $\G x$ that includes $\L_\al$\per

\renewcommand\g{\zeta}
\begin{theorem}[Regular Characterization Theorem]\labelp{T:group}
Let $\wx$ and $\wy$ be measurable atoms\comma  and   $a\le \xoy$ a regular element
with normal stabilizers\po For each element $ b\leq \xoy$ with
$\LS\al\seq\LS\bl$\comma the following conditions are equivalent\per
\begin{enumerate}
\item[(i)] $b$ is left-regular\per
\item[(ii)] $b$ is regular\per
\item[(iii)]  $\bl=\tsum \MS\g;\al$ for some coset $\MS\g$
of a  subgroup
$M$  of
$G_x$ such that  $\L_\al\seq M$\per
\item[(iv)] $\bl=\ssum M;\cl$ for some  subgroup $M$  of
$G_x$ such that  $\L_\al\seq M$ and some \opar any\cpar\  left translation $c$ of
$\al$ that is below $\bl$\per
\item[(v)] $\bl = \tsum\LS{b,\rho};\al$ for some coset $\LS{b,\rho}$ of
$\LS\bl$\per
\item[(vi)] $\bl = \ssum\L_\bl;\cl$ for  some \opar any\cpar\  left translation $c$ of
$\al$ that is below $\bl$\po
\end{enumerate}
\end{theorem}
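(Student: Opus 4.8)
The plan is to prove the six conditions equivalent through the cycle
$(ii)\Rightarrow(i)\Rightarrow(vi)\Rightarrow(iv)\Rightarrow(iii)\Rightarrow(ii)$ together with the side chain $(vi)\Rightarrow(v)\Rightarrow(iii)$; since $(iii)$ lies on the cycle, this links all six. Two facts about the standing hypotheses would be used repeatedly: because $a$ is regular with normal stabilizers, every right translation of $a$ is also a left translation (\refC{C:eqtrans}), and conjugating any subgroup $M$ with $\L_a\subseteq M$ by a group element again yields a subgroup containing $\L_a$ (normality of $\L_a$). The implication $(ii)\Rightarrow(i)$ is immediate from the definition of regularity.

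For the heart of the argument, $(i)\Rightarrow(vi)$, I would first produce a left translation $c$ of $a$ lying below $b$. Since $b$ is left-regular it is non-zero (\refL{L:regnonzero}), and the translations $\L_{a,\xi};a$ partition $x;1;y$ (\refL{L:PartitionLemma}), so $b$ meets some translation $d=\L_{a,\xi};a$. Then $b\cdot d\ne 0$, and \refL{L:a-b.leftregular} makes $b\cdot d$ left-regular with $\L_{b\cdot d}=\L_b\cap\L_d=\L_b\cap\L_a=\L_a=\L_d$ (using $\L_a\subseteq\L_b$ and $\L_d=\L_a$ from \refL{L:left.reg}); since $(b\cdot d)\cdot d\ne 0$, \refL{L:char.less} upgrades $\L_{b\cdot d}=\L_d$ to $b\cdot d=d$, that is, $c:=d\le b$. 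With $c\le b$ and both left-regular, \refL{L:GeneralizedPartitionLemma} (applied with $c$ in place of $a$) gives, for the identity coset $\L_{b,0}=\L_b$, the equality $\tsum\L_b;b=\tsum\L_b;c$; since $\tsum\L_b;b=b$ this is exactly $(vi)$, and the argument applies verbatim to any translation $c$ of $a$ below $b$.

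The remaining implications are coset bookkeeping. $(vi)\Rightarrow(iv)$ is simply taking $M=\L_b\supseteq\L_a$. Writing $c=g;a$, the translations making up $\tsum\L_b;c$ are indexed by the coset $\L_b g$, so $b=\tsum\{f;a:f\in\L_b g\}$, which is $(v)$. For $(iv)\Rightarrow(iii)$ and $(v)\Rightarrow(iii)$ I would rewrite the right coset $Mg$ as the left coset $g(g\ssm Mg)$ of the conjugate subgroup $g\ssm Mg$, which still contains $\L_a$ by normality, putting $b=\tsum\{f;a:f\in g(g\ssm Mg)\}$ into the form $(iii)$. Finally, for $(iii)\Rightarrow(ii)$, take $b=\tsum\{f;a:f\in hM\}$ with $hM$ a left coset of a subgroup $M\supseteq\L_a$. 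For $g\in G_x$ the element $g;b$ is indexed by $ghM$, and two left cosets of $M$ are equal or disjoint, so either $g;b=b$ or $(g;b)\cdot b=0$; by \refL{L:equiv} this yields left-regularity. For right-regularity I would use \refC{C:eqtrans} to write $a;g=g';a$ for $g\in G_y$, so $b;g$ is indexed by $hMg'$ (here normality of $\L_a$ lets $\L_a$ slide past $g'$), and the same equal-or-disjoint dichotomy together with the right-hand version of \refL{L:equiv} gives right-regularity. Hence $b$ is regular, closing the cycle.

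The main obstacle is the step $(i)\Rightarrow(vi)$: recognizing that a left-regular $b$ with $\L_a\subseteq\L_b$ is precisely a single $\L_b$-orbit of translations of $a$, which forces the two-stage argument above (locate one translation below $b$ via \refL{L:a-b.leftregular} and \refL{L:char.less}, then recover the whole orbit via \refL{L:GeneralizedPartitionLemma}). The recurring technical nuisance is the left-versus-right coset distinction; it is exactly the normality of $\L_a$ that lets me convert right cosets into left cosets of conjugate subgroups so that forms $(iii)$–$(vi)$ match up. In $(v)$ the coset $\L_b g$ arises naturally as a right coset of $\L_b$, and since $\L_b$ itself need not be normal, that is the sense in which the coset there should be read.
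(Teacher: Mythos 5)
Your proof is correct. The skeleton parallels the paper's --- (ii) to (i) to (vi), with (v) hanging off the chain and the cycle closing back at (ii) --- but the implication that actually closes the cycle is genuinely different. The paper proves (iv) implies (ii) by verifying the definition of regularity head-on: it computes $b;b\ssm=\sum M$ and $b\ssm;b=\sum N$, where $N$ is the subgroup of $G_y$ corresponding to $M$ under the quotient isomorphism of Isomorphism \thref{T:isom}, and then identifies $M$ and $N$ with the left and right stabilizers of $b$. You instead prove (iii) implies (ii) through the equal-or-disjoint criterion of \lmref{L:equiv}: left translation by $g$ (or, via \corref{C:eqtrans}, a right translation converted into a left one) either fixes the indexing coset or moves it onto a disjoint union of cosets of the stabilizer of $a$, so the First Partition Lemma forces $g;b=b$ or $(g;b)\cdot b=0$ on each side. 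Your route is shorter and bypasses the quotient isomorphism entirely; what the paper's computation buys is the explicit identification of the stabilizers of $b$, though the paper re-derives that separately in \corref{C:stab1}, so nothing downstream is lost. Two smaller divergences are worth noting. In the step from (i) to (vi) you re-derive the existence of a translation of $a$ below $b$ from \lmref{L:a-b.leftregular} and \lmref{L:char.less} instead of citing Translation \lmref{L:assertioneq}; that is exactly how the cited lemma is proved internally, so this is harmless. And your conjugation trick, rewriting $M;g$ as the left coset $g;(g\ssm;M;g)$ of a conjugate subgroup that still contains the normal stabilizer of $a$, is actually more scrupulous than the paper, whose own proof of the step from (vi) to (v) produces what is really a right coset of the left stabilizer of $b$ and calls it a coset; your reading of the coset in (v) is the one consistent with the paper's argument.
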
\begin{proof} 
Obviously, (ii) implies (i).  To establish the implication from
(i) to (vi), assume that $\bl$ is left-regular. The   assumptions
that $a$  is regular with normal stabilizers, and that
$\LS\al\seq\LS\bl$, mean that the implication from (i) to (iii) in
Translation \refL{assertioneq} may be applied to obtain a left
translation $\cl$ of $\al$ that is below $\bl$\per
 Any such left translation $\cl$  is a left-regular element with the same normal
stabilizer $\LS\al$ as $a$\comma by  Translation Lemmas \ref{L:left.reg} and \ref{L:assertioneq}, and the assumptions on $a$.  Apply
 Partition \refL{GeneralizedPartitionLemma} (with $c$ and $\cs Hb$ in place of $a$  and  $\LS{b,\eta}$ respectively), and use the definition of $\cs Hb$ as
the stabilizer of $b$, to conclude that
\[\ssum \L_\bl;\cl=\L_\bl;\bl=\bl\comma\]
as desired.

To prove that (vi) implies (v), assume that
\begin{equation*}\tag{1}\labelp{Eq:group.1}
  \cl=\LS{\al,\rho};\al \qquad\text{and}\qquad b=\tsum\cs Hb;c
\end{equation*}
 for some coset $\LS{\al,\rho}$  of
$\LS\al$\per The set defined by
\begin{equation*}\tag{2}\labelp{Eq:group.2}
  \LS{b,\rho}=\LS\bl;\LS{\al,\rho}
\end{equation*}
 is
a coset of $\LS\bl$, because   the assumption   $\cs Ha\seq \cs Hb$  implies that
every element in $\LS{\al,\rho}$ gives rise to the same coset of $\cs Hb$.  Use \refEq{group.1} and \refEq{group.2}   to arrive at
\[\bl=\ssum
\L_\bl;\cl=\tsum\L_\bl;\LS{\al,\rho};\al=\tsum\LS{\bl,\rho};\al\per\]
This proves (v).

To see that
(v)   implies (iii), take \[M=\h\bl\qquad
\text{and}\qquad  \MS\g=\L_{\bl,\rho}\comma\] and use the assumption
that $\LS\al\seq\LS\bl$\per

To establish the implications from (iii) to (iv) and from (iv) to (ii), it is helpful to introduce some notation.
Let
$M$ be  a subgroup of $\G\wx$ that includes $\h\al$\per  Fix
left coset systems
\[\langle\LS\x:\x<\ka\rangle\qquad\text{and}\qquad \langle\MS\eta:\eta<\la\rangle\] for $\LS\al$ and $\M$ respectively in
$\G\wx$\per  The assumption   $\cs Ha\seq \M$ implies the existence of a partition $\langle \Gamma_\eta:\eta<\la\rangle$ of
the index set $\ka$ such that
\begin{equation*} 
\MS\eta =\tsbigcreg_{\x \in\Gamma_\eta} \L_\x\tag*{(3)} \labelp{Eq:P28/2}
\end{equation*}
for each $\eta<\la$.

We now take up the implication from (iii) to (iv). If (iii) holds for the  subgroup $M$, then
\begin{equation*}\bl=\tsum\MS\g;a=\tsum(\,\tbigcup_{\x\in\Gamma_\g}\LS\x\,);a=
\tsum_{\x\in\Gamma_\g}\LS\xi;\al\tag{4}\labelp{Eq:P28/0}
\end{equation*}
by \ref{Eq:P28/2} (with $\zeta $ in place of $\eta$) and complete distributivity. The equality of the first and last terms in \refEq{P28/0}, and Partition \refL{PartitionLemma}, together  imply that the elements $\LS\x;\al$
with
$\x$ in $\Gamma_\g$  are precisely the left translations of $\al$ that are below
$\bl$. Take
$\cl$ to be any one of these translations, say $c=\cs H\x;a$.  Certainly $\cl$ is below $\bl$, by \refEq{P28/0} and the definition of $c$.  The assumption in
\ref{Eq:P28/2} (with $\zeta$ in place of $\eta$) implies that every element in $\cs H\x$ gives rise to the same coset of $\M$ as every element in $\cs M\zeta$, and of course
that coset is $\cs M\zeta$.  Consequently,
\begin{equation*}\M;\LS\x=\M;\MS\g=\MS\g\per\tag{5}\labelp{Eq:P28/11}
\end{equation*} Use \refEq{P28/0} and \refEq{P28/11} to conclude that
\begin{equation*}b=\tsum\MS\g;a=\tsum\M;\LS\x;a=\tsum\M;c\comma
\end{equation*}
as desired.

\renewcommand\RS[1]{K_{#1}}

 \renewcommand\al{c}
Turn, finally, to the implication from (iv) to (ii).  The element
$a$ is assumed to be regular with normal stabilizers, and  $\cl$
is assumed to be a left translation of $a$, so $c$ must be a
regular element with the same  normal stabilizers as $a$, by
Translation \refL{left.reg}(i) and its right-regular version. In
particular,
\begin{equation*}\tag{6}\labelp{Eq:P28/6.1}
  \LS\cl=\LS a\per
\end{equation*}
Isomorphism \refT{isom} guarantees the
existence of  an isomorphism $ \vphs\al$ from $\G\wx/\LS\al$ to $\G\wy/\RS\al$
with the property that, writing $K_\x = \vphs\al (\L_\x)$, we have
\begin{equation*} 
\L_\x;\al = \al; \R_\x\comma \tag*{(7)} \labelp{Eq:P28/4}
\end{equation*}
for each $\xi<\ka$. Take $\eta=0$ in \ref{Eq:P28/2}, and use the convention $\cs M0=M$, to obtain \begin{equation*}
M=\tsbigcreg_{\x\in\Gamma_0} \L_\x\per\tag*{(8)}\labelp{Eq:P28/10}
\end{equation*}
  Put
\begin{equation*}
N=\tsbigcreg_{\x\in\Gamma_0} \R_\x\per\tag*{(9)}\labelp{Eq:P28/3}
\end{equation*}
The subgroup   $M$   of $\cs Gx$ is assumed in  (iv) to include
the subgroup  $\cs Ha$,  so it also includes $\cs Hc$, by
\refEq{P28/6.1}. Use group theory, the definition of a quotient
set   and \ref{Eq:P28/10} to see that   the quotient
\begin{equation*}\tag{10}\labelp{Eq:P28/10.1}
  M/\L_\al=\{ f/\L_c:f\in \M\}=\{\L_\x: \xi\in \cs\Gamma 0\}
\end{equation*}
 is a subgroup of $\G\wx/\LS\al$. The image of this subgroup under the quotient isomorphism $\vphs\al$ is  the set
\begin{multline*}\tag{11}\labelp{Eq:P28/11.1}
\vphs\al(M/\L_\al)=\{\vphs\al(\cs H\x):\x\in\cs \Gamma 0\}=\{\cs K\x:\x\in\cs \Gamma 0\}\\
=\{g/\cs Kc:g\in N\}=N/\cs Kc\comma \end{multline*}
by the definition of an image set,  \refEq{P28/10.1}, the definition of $ \vphs\al$, \ref{Eq:P28/3}, and the definition of the set $N/\cs Kc$. Isomorphisms preserve
the property of being a subgroup,  so the equality of the first and last terms in \refEq{P28/11.1} implies that $N/\cs Kc$ is a subgroup of $\G\wy/\RS\al$\per  It follows
from group theory that   $N$ must be a subgroup of $G_\wy$ that includes $\R_\al$.

Use   \ref{Eq:P28/10}, complete distributivity, \ref{Eq:P28/4}, complete distributivity, and  \ref{Eq:P28/3} to obtain
\begin{multline*} 
\ssum M;\al=\tsum(\tbigcup_{\x\in\Gamma_0}\L_\x );\al=\tsum_{\x\in\Gamma_0}\L_\x ;\al\\ =\tsum_{\x\in\Gamma_0}\al;\R_\x=\tsum \al;(\tbigcup_{\x\in\Gamma_0}\R_\x)=\ssum \al;N.
\tag*{(12)}
\labelp{Eq:P28/6}
\end{multline*}
Use the assumption in  (iv), the second involution law, complete
distributivity, the associative law, the regularity of $c$, and
the assumption that $M$ is a subgroup of $\cs Gx$ (and hence
closed under formation of converses and relative products), and
$\cs Hc\seq M$ to get
\begin{multline*} \tag{13}\labelp{Eq:P28/13.1}
\bl;\bl\ssm   = (\ssum M;\al); (\ssum M;\al)\ssm
   = (\ssum M;\al); (\ssum \al\ssm;M\ssm)\\
= \ssum M;\al; \al\ssm;M\ssm
= \ssum M;\L_\al;M\ssm
=\ssum M\per
\end{multline*}
A similar computation using (iv), the second involution law, complete distributivity, the associative law, the group properties of $M$, \ref{Eq:P28/6}, the regularity of $c$, and the fact proved
above that $N$ is a subgroup of $\cs Gy$ that includes $\cs Kc$, yields
\begin{multline*} \tag{14}\labelp{Eq:P28/14.1}
\bl\ssm;\bl  = (\ssum M;\al)\ssm; (\ssum M;\al)
  = (\ssum \al\ssm;M\ssm); (\ssum M;\al)\\
=\ssum \al\ssm;M\ssm;M;\al =\ssum \al\ssm;M;\al\\
  =\ssum \al\ssm;\al;N
=\ssum \R_\al;N
=\ssum N\per
\end{multline*}

In view of \refEq{P28/13.1}, \refEq{P28/14.1}, and the definition of regularity, the proof that $b$
is regular will be complete once it is shown that
\begin{equation*} 
M=\L_\bl
\quad\text{and}\quad
N= \R_\bl\per \tag*{(15)} \labelp{Eq:P28/8}
\end{equation*}
If $f$ is in
$M$, then  $f;M=M$, since $M$ is a group, and therefore
\begin{equation*}
f;\bl   = f;(\ssum M;\al)
  =\ssum f;M;\al
= \ssum M;\al= \bl,
\end{equation*}
by (iv) and complete distributivity. Thus, every element in $M$ belongs to the left stabilizer of $b$, so   $M\seq \L_\bl$\per To establish the reverse inclusion, observe that
\begin{equation*}\tag{16}\labelp{Eq:P28.9}
\ssum\L_\bl\le \ssum X_\bl=\bl;\bl\ssm =\ssum\M\comma
\end{equation*}
by \refL{leftcos}(ii) and monotony, the definition of the set $\cs
Xb$,  and \refEq{P28/13.1}. The sets being summed on the left and
on the right are sets of atoms, so the inequality in \refEq{P28.9}
implies the inclusion   $\L_\bl\seq M$.

 The proof of the second
equation in \ref{Eq:P28/8} is entirely analogous, but uses
\refEq{P28/14.1} instead of \refEq{P28/13.1}.
\end{proof}

\renewcommand\al{a}

\begin{cor}\labelp{C:group}
Let $\wx$ and $\wy$ be measurable atoms\per If there is an atom below
$\xoy$\comma then for each element
$b\le
\xoy$  the following conditions are equivalent\per
\begin{enumerate}
\item[(i)] $b$ is left-regular\per
\item[(ii)] $b$ is regular\per
\item[(iii)]$\bl=\ssum {\m\g};\al$ for some atom $a\le\xoy$ and some left coset
$\m\g$ of a subgroup
$M$  of
$G_x$ such that  $\L_\al\seq M$\per
\item[(iv)]$\bl=\ssum M;\al$ for some atom $a\le\xoy$ and some  subgroup $M$  of
$G_x$ such that  $\L_\al\seq M$\per
\item[(v)] $\bl = \ssum\L_{\bl,\rho};\al$ for some atom
$a\le\xoy$  and some coset $\L_{\bl,\rho}$ of $\LS\bl$\per
\item[(vi)] $\bl = \ssum\L_\bl;\al$ for some atom $a\le\xoy$\po
\end{enumerate}
\end{cor}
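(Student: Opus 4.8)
The plan is to deduce this corollary directly from Regular Characterization \refT{group}, using the standing hypothesis that some atom lies below $\xoy$ to supply, for each given $\bl$, an atom that can serve as the fixed regular element of that theorem. The first observation is that the hypothesis makes the relativization $A(\xoy)$ atomic: fixing one atom below $\xoy$, Atomic Partition \refL{sumatom} shows that the left translations of it are exactly the atoms below $\xoy$ and that every element below $\xoy$ is the sum of a unique set of them; in particular every non-zero element below $\xoy$ lies above an atom. Moreover, by Corollaries \refC{atoms} and \refC{a-b.atoms}, every atom below $\xoy$ is a regular element whose left and right stabilizers are normal, so any such atom is eligible to be the fixed regular element $\al$ of \refT{group}.

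For the forward implications I would argue as follows. The passage from (ii) to (i) is immediate from the definition of regularity. So suppose $\bl$ is left-regular, hence non-zero by \refL{regnonzero}. By atomicity of $A(\xoy)$ I would choose an atom $\al\le\bl$; it is regular with normal stabilizers, and since $\al\le\bl$ are both left-regular, \refL{lessthan} gives $\L_\al\seq\L_\bl$. Now apply \refT{group} with this $\al$: its hypotheses $\L_\al\seq\L_\bl$ and (i) are met, so all of its conditions (ii)--(vi) hold for $\bl$. Reading these back off, (ii) gives that $\bl$ is regular, while conditions (iii)--(vi) of the theorem are precisely conditions (iii)--(vi) of the corollary for this particular atom $\al$ (the translations $\cl$ appearing in the theorem's (iv) and (vi) are themselves atoms below $\xoy$, being left translations of $\al$, so one may simply take $\cl=\al$).

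For the converse implications I would show that each of (iii)--(vi) forces $\L_\al\seq\L_\bl$ and then invoke \refT{group} once more. Each of these conditions presents $\bl$ in the shape $\bl=\tsum S;\al=\tsum\{f;\al:f\in S\}$ for an atom $\al\le\xoy$ and a subset $S\seq\G\wx$ (namely $S=\m\g$, $M$, $\L_{\bl,\rho}$, or $\L_\bl$ respectively). The uniform computation is: for $g\in\L_\al$ and any $f\in\G\wx$, the normality of $\L_\al$ (\refC{a-b.atoms}) yields $f\ssm;g;f\in\L_\al$, hence $g;f;\al=f;\al$; summing over $f\in S$ and using complete distributivity gives $g;\bl=\bl$, so $g\in\L_\bl$. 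Thus $\L_\al\seq\L_\bl$. Since each presentation is exactly the corresponding condition (iii)--(vi) of \refT{group} with fixed regular element $\al$, and the needed hypothesis $\L_\al\seq\L_\bl$ now holds, the theorem delivers (ii). Combining, (i) and (ii) are equivalent and each of (iii)--(vi) is equivalent to (ii), which establishes the corollary.

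The main obstacle, and the reason this is not a purely formal restriction of \refT{group}, is precisely the inclusion $\L_\al\seq\L_\bl$ in the backward direction. In conditions (iii) and (iv) the atom $\al$ need not lie below $\bl$---the coset $\m\g$ may be nontrivial, so $\bl$ is a sum of translations $f;\al$ none of which equals $\al$---and therefore \refL{lessthan} is unavailable there. What rescues the inclusion is the normality of the stabilizer of an atom, and this is the single nontrivial ingredient; everything else is a matter of matching the quantifier ``some atom $\al\le\xoy$'' in the corollary with the fixed regular element of the theorem.
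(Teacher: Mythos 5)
Your proposal is correct, and at the top level it follows the same route as the paper: both proofs deduce the corollary from Regular Characterization \refT{group} by producing, for the given $\bl$, an atom that can play the role of the fixed regular element with normal stabilizers. The difference lies in how the theorem's standing hypothesis $\cs Ha\seq\cs Hb$ is secured in the backward implications. The paper simply chooses an atom $\al$ below $\bl$ (which exists by Atomic Partition \refL{sumatom}), asserts $\cs Ha\seq\cs Hb$ by \refL{lessthan}, and then remarks that translations of atoms are atoms so that the presentations in (iii)--(vi) can be referred back to that one atom; read literally, \refL{lessthan} requires $\bl$ to be left-regular, which is not yet available when one starts from (iii)--(vi), so the paper's citation is really a shorthand for an argument like yours. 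You instead keep the atom $\al$ supplied by the given presentation $\bl=\tsum S;\al$ and verify $\cs Ha\seq\cs Hb$ directly from the normality of $\cs Ha$ (\refC{a-b.atoms}) and complete distributivity: $g;f;\al=f;(f\ssm;g;f);\al=f;\al$ for $g\in\cs Ha$, $f\in S$. This is sound, and it correctly isolates the one point where the corollary is not a purely formal specialization of the theorem. One small slip in your closing discussion: the cases where the atom $\al$ of the presentation need not lie below $\bl$ are (iii) and (v), not (iii) and (iv) --- in (iv) and (vi) the translating set is a subgroup, hence contains the identity $\wx$, so $\al=\wx;\al\le\bl$ automatically. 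This does not affect your argument, since the uniform normality computation covers all four cases.
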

\begin{proof} It may be assumed
that $\bl$ is non-zero, since this is implied by each of the conditions (i)--(vi). The
hypothesis that there is an atom below the rectangle $\xoy$ implies that this rectangle is a sum of atoms, by
  Atomic Partition \refL{sumatom}.  In particular, there must be an atom $\al$ below
$\bl$, since   \[0<\bl\le \xoy\per\] Of course, $\LS\al\seq\LS\bl$\comma by \refL{lessthan}. Atoms are
regular elements with normal stabilizers, by Corollaries
\ref{C:atoms} and \ref{C:a-b.atoms}, so the hypotheses of \refT{group} are
satisfied.
  The equivalence of  statements (i)--(vi)
in that theorem, and the fact that any translation  of an atom  is
itself an atom, by \refL{sumatom}, immediately yield the
corollary.\end{proof}

\begin{cor}\labelp{C:stab1} Let $\wx$ and $\wy$ be measurable atoms\co and
$\al\le\xoy$   a regular element with normal stabilizers\po If
$\bl=\tsum\MS\g;\al$ for some coset $\MS\g$ of a subgroup $\M$ of $\G\wx$ such that
$\LS\al\seq\M$\co then $\M$ is the left stabilizer of $\bl$\po
\end{cor}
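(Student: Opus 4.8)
The plan is to establish the two inclusions $\M\seq\L_\bl$ and $\L_\bl\seq\M$ by identifying $\M$ with the set $X_\bl$ of atoms that sum to $\bl;\bl\ssm$. Since the hypothesis $\bl=\tsum\MS\g;\al$ is exactly condition (iii) of Regular Characterization \refT{group}, the computation required is essentially the one already carried out in the proof of that theorem (culminating in \refEq{P28/13.1}); I would reproduce it in the present notation so that the corollary reads as a self-contained consequence.

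First I would reduce the representation of $\bl$ to a translation of a single element. Fixing $h$ in $\MS\g$ and putting $\cl=h;\al$, the coset $\MS\g$ equals $\M;h$, so by complete distributivity
\[
\bl=\tsum\MS\g;\al=\tsum\{g;h;\al:g\in\M\}=\tsum\M;\cl .
\]
The element $\cl$ is a left translation of the regular element $\al$, hence is itself left-regular with left stabilizer $\L_\cl=\L_\al$, by Translation \refL{left.reg}(i) and the normality of $\L_\al$; in particular $\cl;\cl\ssm=\tsum\L_\al$. Using $\bl=\tsum\M;\cl$, the two involution laws, complete distributivity, and this last identity, I would compute
\[
\bl;\bl\ssm=(\tsum\M;\cl);(\tsum\M;\cl)\ssm=\tsum\M;\cl;\cl\ssm;\M\ssm=\tsum\M;\L_\al;\M\ssm=\tsum\M ,
\]
the final equality being the group-theoretic fact $\M;\L_\al;\M\ssm=\M$, which holds because $\M$ is a subgroup of $\G\wx$ that contains $\L_\al$ and is closed under converse. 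Since $\bl;\bl\ssm=\tsum X_\bl$ by the definition of $X_\bl$, and since $\M$ and $X_\bl$ are both sets of atoms, equal sums force $X_\bl=\M$.

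It remains to prove $\M\seq\L_\bl$, which is routine: for $f$ in $\M$ one has $f;\MS\g=\MS\g$, since $f$ lies in the group $\M$ and $\MS\g$ is a coset of $\M$; hence $f;\bl=\tsum\{f;g;\al:g\in\MS\g\}=\tsum\MS\g;\al=\bl$ by complete distributivity, so that $f\in\L_\bl$. Combining this with the inclusion $\L_\bl\seq X_\bl$ from \refL{leftcos}(ii) and the equality $X_\bl=\M$ just obtained, I would conclude $\M\seq\L_\bl\seq X_\bl=\M$, whence $\L_\bl=\M$, as required. I do not anticipate a serious obstacle here: every step is either a direct complex-product computation in $\G\wx$ or an invocation of the translation and stabilizer lemmas already proved. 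The only points demanding care are the bookkeeping in passing between $\MS\g$ viewed as a single translation and viewed as the index set of a sum of translations, and the justification of $\M;\L_\al;\M\ssm=\M$ from the closure of the subgroup $\M$ together with $\L_\al\seq\M$.
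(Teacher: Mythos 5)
Your proof is correct, but it takes a more computational route than the paper's. The paper treats the hypothesis as condition (iii) of Regular Characterization \refT{group}, extracts from the implications (iii)$\Rightarrow$(iv) and (iii)$\Rightarrow$(vi) the two representations $\bl=\tsum\M;\cl$ and $\bl=\tsum\L_\bl;\cl$ for a common translation $\cl$ of $\al$ below $\bl$, and then obtains $\M=\L_\bl$ in one stroke from the uniqueness statement of \refC{equalmult}. You instead identify $\M$ with the set $X_\bl$ by computing $\bl;\bl\ssm=\tsum\M$, check $\M\seq\L_\bl$ by hand, and sandwich $\L_\bl$ between $\M$ and $X_\bl$ using \refL{leftcos}(ii); this is sound and self-contained, but it amounts to re-deriving inline the part of the proof of \refT{group} that establishes the implication (iv)$\Rightarrow$(ii) (the computation of $\bl;\bl\ssm$ as $\tsum\M;\L_\al;\M\ssm=\tsum\M$ and the resulting identification of $\M$ with $\L_\bl$), so what you gain in independence from the statement of \refT{group} you pay for in duplicated work. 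One caveat applies to both arguments: your steps $\MS\g=\M;h$ and $f;\MS\g=\MS\g$ silently assume that the coset $\MS\g$ satisfies $\M;\MS\g=\MS\g$, which for a non-normal $\M$ singles out one of the two kinds of cosets; under the other reading the left stabilizer of $\bl$ would be the conjugate $h;\M;h\ssm$ rather than $\M$. The paper makes the same tacit choice in the proof of \refT{group}, so your reading is the intended one and the argument goes through.
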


\begin{proof}  If $b=\tsum \MS\g;\al$\comma then for some (any)
translation
$\cl$ of $\al$ that is below $\bl$, we have
\[b=\tsum\M;\cl\qquad\text{and}\qquad b=\tsum\LS\bl;\cl\comma \]
 by the implications  in \refT{group} from (iii) to (iv) and
from (iii) to (vi) respectively.  Therefore, $M=\L_\bl$\comma by
\refC{equalmult} (with $M$, $\cs Hb$, and $c$ in place of $X$,
$Y$, and $a$ respectively).
\end{proof}

\renewcommand\g{\gamma}

 The  two previous corollaries show that, in general, one cannot expect the
stabilizers of regular elements to be {\it normal\/} subgroups. To
see this, suppose that $\al$ is an atom below $\xoy$.  Take $M$ to
be any subgroup of $G_\wx$ that includes $\L_\al$ but is not normal.
The element $\bl=\ssum M;\al$ is regular, by \refC{group}, and its
left stabilizer  is  the non-normal subgroup $M$, by \refC{stab1}.

\end{document}